\tikzstyle{edge} = [fill opacity=.5,line cap=round, line join=round, line width=20pt]
\tikzstyle{thickedge} = [fill opacity=.5,line cap=round, line join=round, line width=30pt]
\patchcmd{\@startsection}{\@afterindenttrue}{\@afterindentfalse}{}{}             %omit indentation of the first paragraph of a section
\patchcmd{\part}{\bfseries}{\bfseries\LARGE}{}{}
\patchcmd{\section}{\scshape}{\bfseries}{}{}\renewcommand{\@secnumfont}{\bfseries} %boldface no smallcaps section and subsection titles with numbers
\patchcmd{\@settitle}{\uppercasenonmath\@title}{\large}{}{}
\patchcmd{\@setauthors}{\MakeUppercase}{}{}{}
\DeclareFontFamily{OT1}{pzc}{}                                % Script font for small caligraphic letter, like in \cMat
\DeclareFontShape{OT1}{pzc}{m}{it}{<-> s * [1.10] pzcmi7t}{}
\DeclareMathAlphabet{\mathpzc}{OT1}{pzc}{m}{it}
\DeclareSymbolFont{sfoperators}{OT1}{bch}{m}{n} 
\DeclareSymbolFontAlphabet{\mathsf}{sfoperators} 
\DeclareSymbolFont{cmletters}{OML}{cmm}{m}{it}              
\DeclareSymbolFont{cmsymbols}{OMS}{cmsy}{m}{n}
\DeclareSymbolFont{cmlargesymbols}{OMX}{cmex}{m}{n}
\DeclareMathSymbol{\myjmath}{\mathord}{cmletters}{"7C}     \let\jmath\myjmath %Defining the missing commands: \jmath, \amalg and \coprod
\DeclareMathSymbol{\myamalg}{\mathbin}{cmsymbols}{"71}     
\DeclareMathSymbol{\mycoprod}{\mathop}{cmlargesymbols}{"60}
\DeclareMathSymbol{\myalpha}{\mathord}{cmletters}{"0B}     \let\alpha\myalpha %Greek letters from Computer Modern since the Greek letters from mathptmx are too large
\DeclareMathSymbol{\mybeta}{\mathord}{cmletters}{"0C}      \let\beta\mybeta
\DeclareMathSymbol{\mygamma}{\mathord}{cmletters}{"0D}     \let\gamma\mygamma
\DeclareMathSymbol{\mydelta}{\mathord}{cmletters}{"0E}     \let\delta\mydelta
\DeclareMathSymbol{\myepsilon}{\mathord}{cmletters}{"0F}   \let\epsilon\myepsilon
\DeclareMathSymbol{\myzeta}{\mathord}{cmletters}{"10}      \let\zeta\myzeta
\DeclareMathSymbol{\myeta}{\mathord}{cmletters}{"11}       \let\eta\myeta
\DeclareMathSymbol{\mytheta}{\mathord}{cmletters}{"12}     \let\theta\mytheta
\DeclareMathSymbol{\myiota}{\mathord}{cmletters}{"13}      \let\iota\myiota
\DeclareMathSymbol{\mykappa}{\mathord}{cmletters}{"14}     \let\kappa\mykappa
\DeclareMathSymbol{\mylambda}{\mathord}{cmletters}{"15}    \let\lambda\mylambda
\DeclareMathSymbol{\mymu}{\mathord}{cmletters}{"16}        \let\mu\mymu
\DeclareMathSymbol{\mynu}{\mathord}{cmletters}{"17}        \let\nu\mynu
\DeclareMathSymbol{\myxi}{\mathord}{cmletters}{"18}        \let\xi\myxi
\DeclareMathSymbol{\mypi}{\mathord}{cmletters}{"19}        \let\pi\mypi
\DeclareMathSymbol{\myrho}{\mathord}{cmletters}{"1A}       \let\rho\myrho
\DeclareMathSymbol{\mysigma}{\mathord}{cmletters}{"1B}     \let\sigma\mysigma
\DeclareMathSymbol{\mytau}{\mathord}{cmletters}{"1C}       \let\tau\mytau
\DeclareMathSymbol{\myupsilon}{\mathord}{cmletters}{"1D}   \let\upsilon\myupsilon
\DeclareMathSymbol{\myphi}{\mathord}{cmletters}{"1E}       \let\phi\myphi
\DeclareMathSymbol{\mychi}{\mathord}{cmletters}{"1F}       \let\chi\mychi
\DeclareMathSymbol{\mypsi}{\mathord}{cmletters}{"20}       \let\psi\mypsi
\DeclareMathSymbol{\myomega}{\mathord}{cmletters}{"21}     \let\omega\myomega
\DeclareMathSymbol{\myvarepsilon}{\mathord}{cmletters}{"22}\let\varepsilon\myvarepsilon
\DeclareMathSymbol{\myvartheta}{\mathord}{cmletters}{"23}  \let\vartheta\myvartheta
\DeclareMathSymbol{\myvarpi}{\mathord}{cmletters}{"24}     \let\varpi\myvarpi
\DeclareMathSymbol{\myvarrho}{\mathord}{cmletters}{"25}    \let\varrho\myvarrho
\DeclareMathSymbol{\myvarsigma}{\mathord}{cmletters}{"26}  \let\varsigma\myvarsigma
\DeclareMathSymbol{\myvarphi}{\mathord}{cmletters}{"27}    \let\varphi\myvarphi
\newcommand\symdiff{\triangle}
\renewcommand{\emph}[1]{\textbf{#1}}
\newcommand{\skewpair}[1]{\{ #1 , #1 ^* \}}
\newcommand{\nostar}[1]{\overline{#1}}
\newcommand{\ocinterval}[2]{( #1 , #2 ]}
\newcommand{\lift}[1]{\mathrm{lift}(#1)}
\newcommand{\Min}{\mathrm{Min}}
\newcommand{\conj}[1]{\tilde{#1}}
\newcommand{\elem}{\mathrm{Elem}}
\newcommand{\ul}[1]{\underline{#1}}
\newcommand{\sgn}{\mathrm{sgn}}
\newcommand{\charac}{\mathrm{char}}
\newcommand{\supp}{\mathrm{Supp}}
\newcommand\C{{\mathbb C}}
\newcommand\F{{\mathbb F}}
\newcommand\K{{\mathbb K}}
\newcommand\N{{\mathbb N}}
\newcommand\bP{{\mathbb P}}
\newcommand\Q{{\mathbb Q}}
\newcommand\R{{\mathbb R}}
\newcommand\bS{{\mathbb S}}
\newcommand\T{{\mathbb T}}
\newcommand\U{{\mathbb U}}
\newcommand\Z{{\mathbb Z}}
\newcommand\cA{{\mathcal A}}
\newcommand\cB{{\mathcal B}}
\newcommand\cC{{\mathcal C}}
\newcommand\cD{{\mathcal D}}
\newcommand\cT{{\mathcal T}}
\newcommand\cV{{\mathcal V}}
\newcommand\cW{{\mathcal W}}
\renewcommand{\leq}{\leqslant}
\renewcommand{\geq}{\geqslant}
\newtheorem{theorem}{Theorem}[section]
\newtheorem{proposition}[theorem]{Proposition}
\newtheorem{lemma}[theorem]{Lemma}
\newtheorem{corollary}[theorem]{Corollary}
\theoremstyle{definition}
\newtheorem{definition}[theorem]{Definition}
\newtheorem{example}[theorem]{Example}
\theoremstyle{remark}
\newtheorem{remark}[theorem]{Remark}
\title{Orthogonal matroids over tracts}
\author{Tong Jin}
\email{tongjin@gatech.edu}
\address{School of Mathematics, Georgia Institute of Technology, Atlanta, USA}
\author{Donggyu Kim}
\email{donggyu@kaist.ac.kr}
\address{Department of Mathematical Sciences, KAIST, Daejeon, South Korea \and Discrete Mathematics Group, Institute for Basic Science (IBS), Daejeon, South Korea}
\date{\today}
\begin{document}

%\modulolinenumbers[5]
%\linenumbers

\begin{abstract}
We generalize Baker-Bowler's theory of matroids over tracts to orthogonal matroids, define orthogonal matroids with coefficients in tracts in terms of Wick functions, orthogonal signatures, circuit sets, and orthogonal vector sets, and establish basic properties on functoriality, duality, and minors. Our cryptomorphic definitions of orthogonal matroids over tracts provide proofs of several representation theorems for orthogonal matroids. In particular, we give a new proof that an orthogonal matroid is regular if and only if it is representable over $\F_2$ and $\F_3$, which was originally shown by Geelen~\cite{Geelen1996thesis}, and we prove that an orthogonal matroid is representable over the sixth-root-of-unity partial field if and only if it is representable over $\F_3$ and $\F_4$. 
\end{abstract}

\thanks{{\bf Acknowledgements.} The first author was partially supported by a Simons Foundation Travel Grant and NSF Research Grant DMS-2154224. The second author was supported by the Institute for Basic Science (IBS-R029-C1). We thank Matt Baker for suggesting this project and for helpful comments on an early draft of this paper, and thank Nathan Bowler for discussions on Theorem~\ref{thm:main-weak1}. }

\thanks{{\bf Keywords.} Orthogonal matroids, orthogonal Grassmannians, partial fields, tracts, cryptomorphisms}

% \thanks{{\bf MSC codes.} \textit{Primary} -- 05B35; \textit{Secondary} -- 15A63, 20N20.}

\maketitle

\section{Introduction}\label{section:intro}

Let $F$ be a field, and let $V = F^{2n}$ be a $2n$-dimensional vector space over $F$ endowed with a symmetric non-degenerate bilinear form $Q$. We say a subspace $W \subseteq V$ is {\em isotropic} if $Q(W,W) = 0$, and {\em maximal isotropic} or {\em Lagrangian} if it is isotropic and of dimension $n$. Given a maximal isotropic subspace $W$ of $V$, one can associate to $W$ a point $w$ of $\bP^N(F)$ with coordinates $w_I$ indexed by the subsets $I \subseteq \{1, \dots, n\}$, where $N = 2^n - 1$. Just as the usual Grassmannian $G(r, n)$ parameterizes all $r$-dimensional subspaces of an $n$-dimensional vector space, all maximal isotropic subspaces of $V$ can be parameterized by the {\em Lagrangian orthogonal Grassmannian} $OG(n, 2n) \subseteq \bP^N(F)$. The Lagrangian orthogonal Grassmannian is a projective variety cut out by homogeneous quadratic polynomials known to physicists as the {\em Wick equations}~\cite{Mu10}. 

The combinatorial counterpart of Lagrangian orthogonal Grassmannians is the notion of a {\em Lagrangian orthogonal matroid}. For simplicity, we omit the adjective `Lagrangian' and call them {\em orthogonal matroids}. 

Let $E = [n] \cup [n]^\ast = \{1, \dots, n\} \cup \{1^\ast, \dots, n^\ast\}$ with the obvious involution $\ast: E \to E$ that induces an involution on the power set $\mathcal{P}(E)$, and denote by $X \symdiff Y$ the symmetric difference of two sets $X$ and $Y$. A subset $A \subseteq E$ is said to be {\em admissible} or a {\em subtransversal} if $A \cap A^* = \emptyset$. An $n$-element admissible subset is a {\em transversal}. 
% We say that two subtransversals $A$ and $B$ are {\em compatible} if their union $A\cup B$ is admissible. 
We call $\{x, x^\ast\} \subseteq E$ with $x\in E$ a {\em divergence}. 

One of the simplest way to define orthogonal matroids is via the {\em symmetric exchange axiom}.

\begin{definition}
An {\em orthogonal matroid} on $E$ is a pair $M = (E, \cB)$, where the nonempty collection $\cB$ of transversals of $E$ satisfies the following axiom: if $B_1, B_2 \in \cB$, then for every divergence $\{x_1, x_1^\ast\} \subseteq B_1 \symdiff B_2$, there exists $\{x_2, x_2^\ast\} \subseteq B_1 \symdiff B_2$ with $\{x_2, x_2^\ast\} \ne \{x_1, x_1^\ast\}$ such that $B_1 \symdiff\{x_1, x_1^\ast, x_2, x_2^\ast\}  \in \cB$. 
\end{definition}

The finite set $E(M) := E$ is called the {\em ground set} of the orthogonal matroid, and $\cB(M) := \cB$ is the collection of {\em bases}. 

Orthogonal matroids were studied by various researchers from different perspectives. An equivalent definition of orthogonal matroids was firstly introduced by Kung in~\cite{Kung1978} in 1978 under the name of {\em Pfaffian structures}; see also~\cite{Kung1997}. Bouchet studied basic properties of orthogonal matroids, initially under the name of {\em symmetric matroids} and later {\em even $\symdiff$-matroids}, including their bases, independent sets, circuits, the rank function, a greedy algorithm, minors, and representation theory of orthogonal matroids over fields~\cite{Bouchet1987sym,Bouchet1988repre,BouchetMM1,BouchetMM2}. One can associate an orthogonal matroid to a graph embedded on an orientable surface~\cite{Bouchet1989map,Chun2019ribbon}. Orthogonal matroids also coincide with the class of {\em Coxeter matroids of type $D_n$} in the sense of \cite{Borovik2003}.

Tracts were introduced by Baker and Bowler in~\cite{Baker2019}, as an algebraic framework to represent matroids that simultaneously generalizes the notion of linear subspaces, matroids, valuated matroids, oriented matroids, and regular matroids. This framework provides short and conceptual proofs for many matroid representation theorems~\cite{BO21, BO23}. Recently in~\cite{JL2022}, Jarra and Lorscheid extended Baker-Bowler's theory to flag matroids, which also lie in the class of Coxeter matroids of type $A_n$ as ordinary matroids. An introduction to tracts will be given in Section~\ref{sec:tracts}. 

We generalize the theory of matroids over tracts in~\cite{Anderson2019} and~\cite{Baker2019} to orthogonal matroids, and show that there are (at least) three natural notions of orthogonal matroids over a tract~$F$, which we call {\em weak orthogonal $F$-matroids}, {\em moderately weak orthogonal $F$-matroids}, and {\em strong orthogonal $F$-matroids} in order of increasing strength. We give axiom systems for these in terms of Wick functions, orthogonal signatures, circuit sets, and vector sets, and prove the cryptomorphism for strong orthogonal $F$-matroids. 

\begin{theorem}
  Let $E = [n] \cup [n]^*$ and let $F$ be a tract. Then there are natural bijections between: 
  \begin{enumerate}
      \item Strong orthogonal $F$-matroids on $E$. 
      \item Strong orthogonal $F$-signatures on $E$.
      \item Strong $F$-circuit sets of orthogonal matroids on $E$.
      \item Orthogonal $F$-vector sets on $E$.
  \end{enumerate}
\end{theorem}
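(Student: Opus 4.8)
The plan is to establish the cryptomorphism by constructing explicit maps between the four classes of objects and verifying they are mutually inverse, following the template of Baker--Bowler's proof for ordinary matroids over tracts but adapted to the $D_n$ combinatorics. The backbone of all four notions is a single underlying orthogonal matroid $M=(E,\cB)$ (its set of bases being the support of the Wick function, the support of the signature, and so on), so the first step is to fix $M$ and show that each of the four structures is a ``coherent $F$-valued decoration'' of $M$: a Wick function $\varphi:\binom{E}{\text{transversals}}\to F$ supported on $\cB$ satisfying the Wick relations; an orthogonal $F$-signature assigning to each transversal $B$ and each $x$ a value recording the ``coordinate'' $\varphi_{B\symdiff\{x,x^*\}}/\varphi_B$; an $F$-circuit set, i.e.\ a collection of elements of $F^E$ whose supports are the circuits of $M^*$ (equivalently the cocircuits) satisfying the appropriate orthogonality/modular-pair elimination axioms; and an orthogonal $F$-vector set, the ``rows'' dual to the circuits. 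Throughout, the symmetric exchange axiom for $\cB$ is what guarantees the relevant index sets are nonempty, playing the role that the ordinary basis exchange axiom plays in \cite{Baker2019}.

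The key steps, in order: (1) Wick functions $\leftrightarrow$ signatures. Given a Wick function $\varphi$, define the signature by the ratios $\varphi_{B\triangle\{x,x^*\}}/\varphi_B$ (with a suitable normalization/sign convention keyed to a chosen linear order on $E$), and check that the Wick relations translate exactly into the defining relations of a strong orthogonal $F$-signature; conversely reconstruct $\varphi$ from the signature by ``integrating'' along a path of symmetric exchanges from a fixed base $B_0$, using the signature relations to prove path-independence. (2) Signatures $\leftrightarrow$ circuit sets. From a signature one builds, for each cocircuit support, a canonical element of $F^E$ up to scaling, and verifies the circuit elimination axiom from the three-term Wick/signature relations; the reverse direction extracts signature values as the nonzero entries of these circuit vectors. (3) Circuit sets $\leftrightarrow$ vector sets. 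This is the orthogonality duality: vectors are exactly the elements of $F^E$ that are ``orthogonal'' (in the $F$-pairing sense of \cite{Baker2019}) to every circuit and whose supports are unions of circuits, and one checks the vector composition/elimination axioms match. Finally (4), assemble the round trip to confirm the four maps compose to the identity in both directions.

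The main obstacle I expect is the path-independence argument in reconstructing a Wick function from a signature (step 1, reverse direction), together with the parallel ``well-definedness up to scaling'' issues in steps 2 and 3. In the ordinary-matroid setting one reduces path-independence to the case of two bases differing in a single pair and invokes the (hexagon/three-term) Grassmann--Pl\"ucker relations; here the analogous reduction must be made within the $D_n$ exchange graph, where elementary moves swap a divergence $\{x_1,x_1^*\}$ for $\{x_2,x_2^*\}$, and the relevant local relations are the $4$-term Wick relations rather than $3$-term Pl\"ucker relations. Showing that the exchange graph is connected with all ``square'' and ``hexagon'' $2$-cells generated by these Wick relations --- so that any cycle decomposes into local ones over which the signature relations force consistency --- is the technical heart. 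A secondary subtlety is bookkeeping the signs/normalizations so that every identification is canonical and not merely defined up to a global ambiguity; I would handle this by fixing once and for all a reference transversal and a linear order on $E$, exactly as the ordinary theory fixes a basis, and pushing all sign conventions into the definitions of the four objects so that the bijections become tautological on supports and reduce to the relation-matching computations above. I expect steps 3 and 4 to be comparatively routine once the language of $F$-orthogonality from Section~\ref{sec:tracts} is in place, since duality for orthogonal matroids is self-dual in a way that makes the circuit/vector correspondence symmetric.
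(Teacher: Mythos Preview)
Your outline has the right overall shape for (1)$\leftrightarrow$(2) and for (4), but there is a definitional misconception that derails (2)$\leftrightarrow$(3), and you are missing the key external input that the paper uses for path-independence.

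First, the misconception. You describe an orthogonal $F$-signature as a function on pairs (transversal, element) recording ratios $\varphi_{B\symdiff\{x,x^*\}}/\varphi_B$, and then propose to \emph{build} $F$-circuit vectors from these values. In this paper, however, an orthogonal $F$-signature \emph{already is} a set $\cC\subseteq F^E$ whose supports are the circuits of the underlying orthogonal matroid (not cocircuits), and a \emph{strong} one is defined by the orthogonality axiom~\ref{item:O}: $\langle X,Y^*\rangle\in N_F$ for all $X,Y\in\cC$. A strong $F$-circuit set is the \emph{same} kind of object, with~\ref{item:O} replaced by the two axioms~\ref{item:Ot2} and~\ref{item:L} (linear-span of fundamental circuits). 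So the bijection (2)$\leftrightarrow$(3) is the identity map; the content is Theorem~\ref{thm:strong orthogonal signatures and circuit sets}, proved via Lemmas~\ref{lem:consistent to perp} and~\ref{lem:perp to consistent}, which show that ``consistent with $\cC$'' and ``in $\cC^\perp$'' coincide. Your step (2) as written does not address this and would need to be completely redone.

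Second, on path-independence for (2)$\to$(1). You correctly flag that one must show every cycle in the basis graph of the orthogonal matroid decomposes into small cycles over which the ratio $\gamma(B_1,B_2)$ is multiplicatively trivial. The paper does \emph{not} prove this from scratch: it quotes Wenzel's homotopy theorem (Theorem~\ref{thm:homotopy}), which says every directed cycle in $\Gamma_{\ul M}$ is generated by cycles of length $\le 4$, and then checks by hand (Lemma~\ref{lem:34cycle}) that the $2$-term orthogonality~\ref{item:Ot2} forces $\prod\gamma=1$ on triangles and squares. Without Wenzel's theorem your argument has a genuine hole; proving it yourself is a separate paper. Also note that only~\ref{item:Ot2}, not the full~\ref{item:O}, is needed for well-definedness of $\varphi_\cC$; the full orthogonality is used afterwards to verify~\ref{item:W2}.

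Finally, for (3)$\leftrightarrow$(4) your description is slightly off: the paper does not impose ``supports are unions of circuits'' on vectors. The vector set is simply $\cC^\perp$, and conversely $\cC=\elem(\cV)$, the elementary vectors of $\cV$ (Theorem~\ref{thm:OV} via Lemmas~\ref{lem:suppbase} and~\ref{lem:MSS}).
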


We also prove natural bijections between weaker notions.
\begin{theorem}
  % Let $E = [n] \cup [n]^*$ and let $F$ be a tract.
  There is a natural bijection between:
  \begin{enumerate}
    \item Weak orthogonal $F$-matroids on $E$.
    \item Weak $F$-circuit sets of orthogonal matroid on $E$.
  \end{enumerate}
\end{theorem}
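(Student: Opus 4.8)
The plan is to establish a bijection between weak orthogonal $F$-matroids on $E$ and weak $F$-circuit sets of orthogonal matroids on $E$ by constructing maps in both directions and showing they are mutually inverse. This parallels the classical Baker--Bowler cryptomorphism between $F$-matroids and $F$-circuit sets, but here the underlying combinatorial object is an orthogonal matroid (Coxeter matroid of type $D_n$), so the "circuits" must be adapted to the even $\triangle$-matroid setting. First I would recall the precise definitions: a weak orthogonal $F$-matroid is presumably given by a Wick function (or equivalently a rescaling class of coordinates) $\varphi\colon \binom{E}{\text{transversals}} \to F$ satisfying the weak Wick/exchange relations modulo $N_F$, and a weak $F$-circuit set is a collection of "$F$-circuits" --- elements of $F^E$ supported on the circuits of the underlying orthogonal matroid --- closed under $F$-rescaling and satisfying a weak orthogonality (or weak modular elimination) condition against the underlying matroid's structure.

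**The two directions.**

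Given a weak orthogonal $F$-matroid, I would extract its underlying orthogonal matroid $\underline{M}$ (the support of the Wick function) and then, for each circuit $C$ of $\underline{M}$, use the Wick coordinates to write down the associated $F$-circuit: concretely, for a circuit $C$ there is a near-transversal or a pair of transversals differing by $C$, and the ratios of Wick coordinates $\varphi(B)/\varphi(B')$ over such bases determine the entries of the $F$-circuit vector up to global scaling, exactly as in the ordinary matroid case where circuit entries come from ratios of Plücker coordinates of adjacent bases. I would check this is well-defined (independent of the choice of witnessing bases) using the weak Wick relations, and that the resulting collection satisfies the weak $F$-circuit axioms. Conversely, given a weak $F$-circuit set, the underlying orthogonal matroid is recovered from the supports, and the Wick function is reconstructed by a connectedness/path argument: fix one basis, assign it coordinate $1$, and propagate to all other bases via chains of symmetric exchanges, using the $F$-circuit data to supply the multiplicative factors; one then verifies this is consistent (path-independent) and satisfies the weak Wick relations.

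**The main obstacle.**

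The hard part will be the well-definedness and consistency in the reconstruction direction --- showing that propagating Wick coordinates around "cycles" in the basis exchange graph of the orthogonal matroid gives a consistent answer, i.e. that the multiplicative cocycle condition holds. In the type $A$ (ordinary matroid) case this rests on the fact that the basis exchange graph is connected and its cycles are generated by short local relations coming from the three-term Plücker/Grassmann--Plücker syzygies; for orthogonal matroids one must instead use the combinatorics of the symmetric exchange axiom and the four-term Wick relations, and the local cycles are governed by small-rank orthogonal matroids on few elements. I would isolate this as a lemma: the "cycle space" of the symmetric-exchange graph of $\underline{M}$ is generated by the elementary relations visible in rank-$\le 2$ (or few-element) orthogonal minors, and on each such minor the weak $F$-circuit axioms force exactly the weak Wick relation. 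A secondary subtlety is handling the involution $\ast$ and divergences correctly --- ensuring that the $F$-circuit supported on a circuit $C$ interacts properly with its "conjugate" data --- and checking that the two constructions invert each other requires tracking the global rescaling ambiguity, which I would pin down by normalizing at a fixed basis throughout. Functoriality under tract homomorphisms (already set up in the earlier sections) can be invoked to reduce some verifications to the universal/free tract, streamlining the syzygy bookkeeping.
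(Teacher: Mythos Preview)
Your outline is essentially the paper's approach: the two maps $\varphi\mapsto\cC_\varphi$ and $\cC\mapsto\varphi_\cC$ are constructed exactly as you describe (ratios of Wick values along a transversal containing a circuit, respectively propagation along the basis graph), and Lemma~\ref{lem:C and varphi} records that they are mutual inverses. Two points of calibration are worth noting. First, the cycle-consistency obstacle you flag is resolved not by an ad~hoc minor argument but by invoking Wenzel's homotopy theorem for even $\triangle$-matroids (Theorem~\ref{thm:homotopy}), which says the basis graph's cycle space is generated by cycles of length $\le 4$; the verification on $3$- and $4$-cycles (Lemma~\ref{lem:34cycle}) then needs only the $2$-term orthogonality~\ref{item:Ot2}, so well-definedness of $\varphi_\cC$ is actually cheaper than you suggest. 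Second, the genuinely delicate step is the one you pass over: showing that $\varphi_\cC$ satisfies the $4$-term relation~\ref{item:W2''} from the weak circuit axioms, and conversely that $\cC_\varphi$ satisfies those axioms. Here the paper's weak $F$-circuit set axioms include not only the expected two-circuit elimination~\ref{item:L1} but also a three-circuit condition~\ref{item:L2} with no type~$A$ analogue (it is vacuous for lifts of matroids), and the casework in Theorems~\ref{prop:weak circuit set to weak Wick function} and~\ref{prop:weak Wick function to weak circuit set} uses both. Your suggestion to pass to a universal tract is not what the paper does and is unlikely to shortcut this casework, since the relations involved are already the generators in any such universal object.
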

\begin{theorem}
  % Let $E = [n] \cup [n]^*$ and let $F$ be a tract.
  There is a natural bijection between:
  \begin{enumerate}
    \item Moderately weak orthogonal $F$-matroids on $E$.
    \item Weak orthogonal $F$-signatures on $E$.
  \end{enumerate}
\end{theorem}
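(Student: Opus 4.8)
The plan is to exhibit two mutually inverse constructions. The direction from a moderately weak orthogonal $F$-matroid to a weak orthogonal $F$-signature should be the routine one. A moderately weak orthogonal $F$-matroid is encoded by a Wick function $\varphi$ on $E$ (an $F$-valued function on the transversals of $E$, defined up to a global scalar in $F^\times$) satisfying the relevant family of Wick relations; its nonzero values sit exactly on the bases of an orthogonal matroid $\underline{M}$. From $\varphi$ one reads off a signature $\sigma_\varphi$ by recording the values of $\varphi$ together with the combinatorial sign data carried by the Wick monomials. The content here is bookkeeping: one checks that the relations defining a weak orthogonal $F$-signature are obtained from the moderately weak Wick relations by deleting the terms that vanish for support reasons and by tracking signs, so that $\sigma_\varphi$ satisfies the signature axioms automatically.

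The substantive direction produces, from a weak orthogonal $F$-signature $\sigma$, a moderately weak orthogonal $F$-matroid. One forms the candidate Wick function $\varphi_\sigma$ by transporting the signature data back to the transversals of $E$ and extending by zero; the work is to verify (i) that the support of $\sigma$ is the collection of bases $\cB$ of an orthogonal matroid on $E$, i.e. satisfies the symmetric exchange axiom, and (ii) that $\varphi_\sigma$ satisfies the Wick relations cutting out the moderately weak notion. For (i) I would argue in the style of Bouchet's symmetric exchange theory for even $\symdiff$-matroids: given bases $B_1, B_2$ and a divergence $\{x, x^\ast\} \subseteq B_1 \symdiff B_2$, one uses a quadratic signature relation attached to the relevant transversals in which the nonzero product $\varphi_\sigma(B_1)\varphi_\sigma(B_2)$ is a term; this forces the relation to contain a further nonzero term, and such a term exhibits a second divergence $\{y, y^\ast\} \subseteq B_1 \symdiff B_2$ distinct from $\{x, x^\ast\}$ with $B_1 \symdiff \{x, x^\ast, y, y^\ast\} \in \cB$, which is exactly the axiom. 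Part (ii) is the reverse of the bookkeeping from the first paragraph. This is also where the apparent mismatch in strength is reconciled: rewritten additively, the weak signature relations are precisely the Wick relations of the moderately weak level, neither more nor fewer, so a weak signature matches a moderately weak --- not a weak --- orthogonal $F$-matroid.

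Finally I would verify that $\varphi \mapsto \sigma_\varphi$ and $\sigma \mapsto \varphi_\sigma$ are mutually inverse, which is immediate once the support/relation dictionary above is in place and the normalizations are pinned down, and that the bijection is natural, i.e. compatible with tract homomorphisms $F \to F'$ (acting coefficientwise on both sides) and with relabellings of the ground set $E$. I expect the main obstacle to be step (i): extracting the full symmetric exchange axiom for the support of a merely weak signature, since the weak relations supply only a restricted stock of exchanges and one must bootstrap while keeping careful control of the Wick signs --- the orthogonal-matroid analogue of Baker and Bowler's argument that the support of a weak Grassmann--Pl\"ucker function is the set of bases of a matroid.
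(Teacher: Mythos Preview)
Your plan misidentifies where the real work lies. By the paper's definition, a weak orthogonal $F$-signature $\cC$ is an $F$-signature \emph{of some given ordinary orthogonal matroid} $\underline{M}$: the supports $\{\underline{X}:X\in\cC\}$ are the \emph{circuits} of $\underline{M}$, not its bases, and the underlying orthogonal matroid is part of the data. So your step (i) --- proving symmetric exchange for the ``support of $\sigma$'' --- is not needed and does not even type-check (the signature is a set of vectors in $F^E$, not a function on transversals). What you call the ``main obstacle'' is in fact vacuous here.

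The substantive content you are missing is the \emph{well-definedness} of the candidate Wick function $\varphi_{\cC}$. The paper constructs $\varphi_{\cC}$ by fixing a basis $B_0$ with $\varphi_{\cC}(B_0)=1$ and then propagating along edges of the basis graph via transition ratios $\gamma(B_1,B_2)=(-1)^{m^T_{e,f}}\conj{X}(e)/\conj{X}(f)$ read off from the $F$-circuits. For this to give a function on bases rather than on paths, one must show that the product of $\gamma$'s around any directed cycle of the basis graph is $1$; the paper does this using only the $2$-term orthogonality (O$_2$) to handle $3$- and $4$-cycles and then invokes Wenzel's Homotopy Theorem for basis graphs of orthogonal matroids to reduce arbitrary cycles to these. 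None of this appears in your outline. Likewise, in the direction $\varphi\mapsto\cC_\varphi$, defining an $F$-circuit with given support $C$ requires choosing a transversal $T\supseteq C$ with $T\symdiff\{x,x^*\}$ a basis for every $x\in C$, and one must prove the resulting ratios $\conj{X}(e)/\conj{X}(f)$ are independent of that choice (the paper's Lemma on $\frac{\conj{X}(e)}{\conj{X}(f)}$); ``bookkeeping'' does not cover this. Once well-definedness is in hand, the translation between (O)$'$ and (W2)$'$ is indeed the computation you sketch, and the two constructions are then checked to be mutual inverses.
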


Our definitions show compatibility with various existing definitions in the following ways; see Section~\ref{sec:equiv of diff defs}. 

\begin{enumerate}
    \item If the support of a strong or weak orthogonal matroid on $E$ over $F$ is the lift of an ordinary matroid on $[n]$, then an orthogonal matroid on $E$ over $F$ is the same thing as a strong or weak matroid on $[n]$ over $F$ in the sense of~\cite{Baker2019}. 
    
    \item A strong or weak orthogonal matroid over the Krasner hyperfield $\mathbb{K}$ is the same thing as an ordinary orthogonal matroid. 
    
    \item A strong or weak orthogonal matroid over a field $K$ is the same thing as a projective solution to the Wick equations in $\bP^N(K)$, or an orthogonal matroid representable over $K$ in the sense of~\cite{Bouchet1988repre}. 

    \item A strong or weak orthogonal matroid over the regular partial field $\mathbb{U}_0$ is the same thing as a regular orthogonal matroid in the sense of~\cite{Geelen1996thesis}.
    
    \item A strong or weak orthogonal matroid over the tropical hyperfield $\T$ is the same thing as a valuated orthogonal matroid in the sense of~\cite{Dress1991,Wenzel1993,Wenzel1996}, or a tropical Wick vector in the sense of~\cite{Rincon2012}. 

    \item A strong orthogonal matroid over the sign hyperfield $\bS$ is the same thing as an oriented orthogonal matroid in the sense of~\cite{Wenzel1993,Wenzel1996}. 
\end{enumerate}

%Strong orthogonal $F$-matroids are always weak orthogonal $F$-matroids, as shown in Proposition~\ref{prop:strong-is-weak}, but they do not coincide over an arbitrary tract.
%Various examples and counterexamples are provided in Section~\ref{sec:more-examples}.
%We note that if $K$ is a field not of characteristic two, then the equivalence of strong orthogonal $K$-signatures and strong orthogonal $K$-matroids corresponds to the embedding of the Lagrangian orthogonal Grassmannian $OG(n,2n)$ into the projective space of dimension $2^{n}-1$.
%However, it has not been studied natural cryptomorphisms between axiom systems for weak orthogonal $F$-matroids.

Together with several properties of tracts, we are able to prove representation theorems for orthogonal matroids. For instance, we give new proofs of the following characterizations of regular orthogonal matroids. 

\begin{theorem}[Geelen,~Theorem~4.13~of~\cite{Geelen1996thesis}]
  Let $M$ be an orthogonal matroid.
  Then the following are equivalent:
  \begin{enumerate}[label=\rm(\roman*)]
      \item $M$ is representable over $\F_2$ and $\F_3$.
      \item $M$ is representable over the regular partial field $\mathbb{U}_0$.
      \item $M$ is representable over all fields. 
  \end{enumerate}
\end{theorem}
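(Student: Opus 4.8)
The plan is to prove the chain of equivalences (i) $\Leftrightarrow$ (ii) $\Leftrightarrow$ (iii) by leveraging the cryptomorphism theorem and functoriality already established, reducing everything to facts about the relevant tracts. The implication (iii) $\Rightarrow$ (i) is trivial. For (ii) $\Rightarrow$ (iii), I would observe that the regular partial field $\mathbb{U}_0$ is initial among partial fields (it is the ``free'' tract in a suitable sense), so there is a tract homomorphism $\mathbb{U}_0 \to F$ for every field $F$; applying the pushforward functoriality for orthogonal matroids over tracts — which takes an orthogonal $\mathbb{U}_0$-matroid to an orthogonal $F$-matroid with the same underlying orthogonal matroid — gives representability over every field. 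The substantive direction is (i) $\Rightarrow$ (ii): from a representation over $\F_2$ and one over $\F_3$ of the \emph{same} underlying orthogonal matroid $M$, I need to manufacture a representation over $\mathbb{U}_0$.

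For the hard direction I would use the \emph{weak} cryptomorphism between weak orthogonal $F$-matroids and weak $F$-circuit sets (the second displayed theorem in the excerpt), together with the ``fiber product'' / pullback description of $\mathbb{U}_0$. The key algebraic input is that $\mathbb{U}_0$ is (up to the relevant notion of equivalence) the fiber product $\F_2 \times_{\mathbb{K}} \F_3$ in the category of tracts: a partial-field element of $\mathbb{U}_0$ is determined by its images in $\F_2$ and $\F_3$, and the null set is the pullback of the two null sets. Concretely, I would take the $\F_2$-representation and the $\F_3$-representation, note that both push forward to the \emph{same} ordinary orthogonal matroid over $\mathbb{K}$ (namely $M$ itself, using compatibility item (2)/(3) in the excerpt), and then glue the two Wick functions (or circuit sets) coordinatewise to get a $\mathbb{U}_0$-valued Wick function. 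One must check that the glued function still satisfies the Wick (orthogonal Grassmann–Plücker) relations over $\mathbb{U}_0$: this holds because each relation is an element of $N_{\mathbb{U}_0}$ exactly when its images in $N_{\F_2}$ and $N_{\F_3}$ both vanish, which is guaranteed by the two given representations. A small amount of care is needed to normalize the two representations so that their supports literally coincide as sign/scaling data — this is where rescaling of the projective coordinates and the homogeneity of the Wick relations come in.

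The step I expect to be the main obstacle is making the ``gluing'' precise at the level of the chosen cryptomorphic data. Over a field the projective point is only defined up to a global scalar, and the two representations over $\F_2$ and $\F_3$ come with independent choices of scaling; over $\F_2$ scaling is trivial, but over $\F_3$ one has a sign ambiguity, and over $\mathbb{U}_0$ one has a whole group $\mathbb{U}_0^\times$ of ambiguities. So I cannot simply pair up coordinates blindly: I need to first pin down both representations by a common normalization (say, fixing the value on one basis of $M$ to be $1$, using that the Wick function is nowhere zero on bases and transitivity of rescaling on bases of a connected component), then extend componentwise, handling disconnected orthogonal matroids by working one connected component at a time and invoking that a direct sum of representable orthogonal matroids is representable. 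Once the normalization is set up, verifying the Wick relations is the routine ``pullback of null sets'' check described above, and then the weak cryptomorphism converts the resulting Wick function back into a bona fide weak orthogonal $\mathbb{U}_0$-matroid. Finally I would remark that, since $\mathbb{U}_0$ is a partial field and representations over partial fields are automatically strong (every weak orthogonal matroid over a partial field is strong, as partial fields have no zero divisors in the relevant sense — a fact that should be recorded earlier for partial fields), the produced object is in fact a strong orthogonal $\mathbb{U}_0$-matroid, closing the loop with statement (ii) as phrased in terms of representability.
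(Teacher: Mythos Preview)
Your overall strategy matches the paper's, but there is a genuine error in your justification of (i)~$\Rightarrow$~(ii). You assert that $\U_0$ is the fiber product $\F_2 \times_{\K} \F_3$, i.e.\ that an element lies in $N_{\U_0}$ exactly when its images in $\F_2$ and $\F_3$ both vanish. This is false: the formal sum $1+1+1+1+1+1$ maps to $0$ in both $\F_2$ and $\F_3$, but $6 \neq 0$ in $\Z$, so it is not in $N_{\U_0}$. The set bijection $f\colon \F_2\times\F_3 \to \U_0$ given by $(1,\pm 1)\mapsto \pm 1$ is therefore \emph{not} a tract homomorphism (the paper notes this explicitly), and your gluing does not directly produce a strong Wick function over $\U_0$.

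What rescues the argument is that the identity $f(N_{\F_2\times\F_3}^{\leq k}) = N_{\U_0}^{\leq k}$ \emph{does} hold for $k\leq 4$ (check by hand: in either tract the only null sums of length at most $4$ are $0$, $1+(-1)$, and $1+1+(-1)+(-1)$). Since the weak Wick relations~\ref{item:W2''} involve sums of at most four terms, postcomposing the product Wick function $\varphi_1\times\varphi_2$ with $f$ yields a \emph{weak} Wick function over $\U_0$; one then upgrades to strong via Theorem~\ref{thm:BJ}. So the passage through weak Wick functions is not bookkeeping --- it is precisely what absorbs the failure of the fiber-product identity beyond length $4$. This is exactly how the paper proceeds.

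Your normalization concerns are unnecessary. The product construction (Proposition~\ref{prop:product}) takes any Wick functions $\varphi_1,\varphi_2$ with the same support and returns $(\varphi_1\times\varphi_2)(T)=(\varphi_1(T),\varphi_2(T))$; since the zero of $F_1\times F_2$ is a single element rather than a pair, having identical supports is all that is required, and the result is automatically a strong Wick function over $\F_2\times\F_3$. No alignment of scalars, no decomposition into connected components, and no detour through circuit sets is needed.
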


We say that an orthogonal matroid is {\em regular} if it satisfies one of the three equivalent conditions in the above theorem. We also give two more characterizations of regular orthognal matroids without a specific minor $\ul{M_4}$ on $[4]\cup[4]^*$ (see Section~\ref{sec:applications} for a precise description of $\ul{M_4}$). 

\begin{theorem}
    Let $M$ be an orthogonal matroid with no minor isomorphic to $\ul{M_4}$ and
    let $(K,\prec)$ be an ordered field.
    Then the following are equivalent:
    \begin{enumerate}[label=\rm(\roman*)]
        \item $M$ is regular.
        \item $M$ is representable over $\F_{2}$ and $K$.
        \item $M$ is representable over $\F_{2}$ and the sign hyperfield $\bS$.
    \end{enumerate}
\end{theorem}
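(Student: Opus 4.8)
The plan is to prove the cycle of implications (i) $\Rightarrow$ (ii) $\Rightarrow$ (iii) $\Rightarrow$ (i), with the first two implications formal and all the content concentrated in the last one. For (i) $\Rightarrow$ (ii): by Geelen's theorem above, $M$ regular means $M$ is representable over $\mathbb{U}_0$, and there are canonical tract homomorphisms $\mathbb{U}_0 \to \F_2$ and $\mathbb{U}_0 \hookrightarrow K$; pushing the $\mathbb{U}_0$-representation forward along these, using that pushforward along a tract homomorphism carries orthogonal matroids over tracts to orthogonal matroids over tracts (established earlier in the functoriality section), yields representations of $M$ over $\F_2$ and over $K$. For (ii) $\Rightarrow$ (iii): since $(K,\prec)$ is ordered, the sign map $\sgn\colon K \to \bS$ is a tract homomorphism, so pushing the $K$-representation forward along $\sgn$ produces an $\bS$-representation, while the $\F_2$-representation is retained. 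Throughout we may ignore the distinction between the weak, moderately weak, and strong notions, which coincide over each of $\F_2$, $\F_3$, $K$, $\mathbb{U}_0$, and $\bS$.

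The substance is (iii) $\Rightarrow$ (i). Assume $M$ is representable over $\F_2$ and $\bS$ and has no minor isomorphic to $\ul{M_4}$; by Geelen's theorem it suffices to produce a representation of $M$ over $\mathbb{U}_0$. Since $\F_2^\times$ is trivial, the $\F_2$-representation forces the $\F_2$-Wick function to be the indicator function of $\cB(M)$. Fix an orthogonal $\bS$-signature $\sigma$ of $M$; its support is again $\cB(M)$. As $\bS$ and $\mathbb{U}_0$ have the same underlying pointed set $\{0,1,-1\}$, we may reinterpret $\sigma$ as a $\mathbb{U}_0$-valued function $\sigma'$ on transversals, and the claim is that $\sigma'$ is an orthogonal $\mathbb{U}_0$-signature of $M$. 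By the cryptomorphism and the fact that $\mathbb{U}_0$ is doubly distributive (so that the orthogonal $\mathbb{U}_0$-matroid axioms are detected by the basic Wick relations, the orthogonal analogue of the three-term Grassmann--Plücker relations), verifying this reduces to checking, for each basic Wick relation, that the corresponding $\Z$-linear combination of degree-two monomials in the values $\sigma'(B)$ sums to $0$ in $\Z$.

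Two facts constrain each such relation. First, its reduction modulo $2$ is the corresponding relation for the $\F_2$-Wick function, which vanishes; since that function is $\{0,1\}$-valued, the number of nonvanishing monomials in the relation is even. Second, the relation lies in $N_\bS$ because $\sigma$ is an $\bS$-signature, so its nonvanishing monomials are not all of one sign as elements of $\bS$. The key lemma is that, because $M$ has no $\ul{M_4}$-minor, every basic Wick relation of $M$ has at most two nonvanishing monomials — the orthogonal counterpart of the statement that binariness (absence of a $U_{2,4}$-minor) forces every three-term Grassmann--Plücker relation to have at most two nonzero terms. Granting the lemma, each relation has exactly zero or two nonvanishing monomials, and in the two-monomial case the monomials, lying in $\{1,-1\}$ and not both of one sign in $\bS$, are negatives of each other, so the $\Z$-sum is $0$. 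Hence $\sigma'$ satisfies every basic Wick relation over $\mathbb{U}_0$, so $M$ is representable over $\mathbb{U}_0$ and therefore regular, closing the cycle.

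The main obstacle is the key lemma: equivalently, showing that a basic Wick relation of $M$ with four or more nonvanishing monomials produces, after a suitable restriction and contraction, a minor of $M$ on a ground set $[4]\cup[4]^*$ isomorphic to $\ul{M_4}$. I expect to carry this out through the circuit-set or orthogonal vector-set cryptomorphism, where such a four-term local obstruction can be read off directly from the exchange structure; a secondary subtlety, which I would track carefully, is the treatment of those Wick relations whose coefficients are not invertible in $\F_2$, since the mod-$2$ reduction there behaves differently, and this char-$2$ phenomenon is precisely what the $\ul{M_4}$-hypothesis is designed to control.
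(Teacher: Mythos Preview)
Your approach is the paper's: the same cycle (i)$\Rightarrow$(ii)$\Rightarrow$(iii)$\Rightarrow$(i), the same pushforwards for the easy implications, and for (iii)$\Rightarrow$(i) the same idea of reinterpreting the $\bS$-valued Wick function as $\U_0$-valued and verifying the four-term relations. (The paper packages the last step as a set map $g\colon\F_2\times\bS \to \U_0$ with $g(N_{\F_2\times\bS}^{\le 3}) = N_{\U_0}^{\le 3}$, but the content is identical to yours.)

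There is, however, a misstatement in your key lemma. The claim that $\ul{M_4}$-freeness alone forces each four-term Wick relation to have at most \emph{two} nonzero monomials is false: the lift of $U_{2,4}$ has no $\ul{M_3}$-minor (hence no $\ul{M_4}$-minor), yet its three-term Grassmann--Pl\"ucker relations lift to four-term Wick relations with three nonzero terms. The correct statement (Lemma~\ref{lem:M4 free}) is that $\ul{M_4}$-freeness gives at most \emph{three} nonzero terms; your $\F_2$-parity observation is then genuinely needed to cut this down to $0$ or $2$. You do list parity among your ``two facts'', and your final paragraph actually restates the lemma correctly (``four or more nonvanishing monomials produces \ldots\ $\ul{M_4}$''), so the argument is recoverable once the middle paragraph is fixed.

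You also overestimate the lemma's difficulty. It is immediate: if all four products are nonzero, the eight transversals $T_k\symdiff\skewpair{i_j}$ are bases, and the minor $M|S$ with $S=T_1\cap T_2$ is an orthogonal matroid on eight elements containing those eight bases; the parity constraint $|B\symdiff B'|\equiv 0\pmod 4$ rules out the other eight transversals, so $M|S\cong\ul{M_4}$. No circuit-set cryptomorphism is needed, and your char-$2$ concern is misplaced (the Wick-relation coefficients are $\pm 1$). One terminological point: by ``orthogonal $\bS$-signature $\sigma$'' you mean the Wick function, not the circuit signature; and the reason checking weak relations suffices over $\U_0$ is not double distributivity but Theorem~\ref{thm:BJ} (weak $=$ strong over partial fields).
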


We then extend Whittle's theorem~\cite[Theorem~1.2]{Wh97} that a matroid is representable over both $\F_3$ and $\F_4$ if and only if it is representable over the sixth-root-of-unity partial field $R_6$ to orthogonal matroids. 

\begin{theorem}
    Let $M$ be an orthogonal matroid.
    Then the following are equivalent:
    \begin{enumerate}[label=\rm(\roman*)]
        \item $M$ is representable over the sixth-root-of-unity partial field $R_6$.
        \item $M$ is representable over $\F_3$ and $\F_4$.
        \item $M$ is representable over $\F_3$, $\F_{p^2}$ for all primes $p$, and $\F_q$ for all primes $q$ with $q \equiv 1 \pmod{3}$.   
    \end{enumerate}
\end{theorem}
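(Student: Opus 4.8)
The plan is to reduce the orthogonal-matroid statement to the analogous statement for ordinary matroids over tracts, which is exactly Whittle's theorem as reformulated in the Baker--Bowler framework, and then port it along the cryptomorphisms developed earlier in the paper. First I would recall the partial-field identities underlying Whittle's theorem: $R_6$ is the initial object among partial fields in which $1$ is a sum of two units whose product is a unit (equivalently, where $-\zeta$ is a primitive sixth root of unity exists), and a homomorphism $R_6 \to F$ exists precisely when $F$ is a field containing a primitive sixth root of unity, which for finite $F = \F_q$ means $q \equiv 1 \pmod 3$ together with the prime-power and $\F_{p^2}$ cases. The key universal fact is the fiber-product / pullback description: there is a tract isomorphism $R_6 \cong \F_3 \times_{\K} \F_4$ (or more precisely $R_6$ is the universal partial field mapping to both $\F_3$ and $\F_4$ compatibly), so that giving a homomorphism $R_6 \to$ (anything) is the same as giving compatible homomorphisms to $\F_3$ and $\F_4$. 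I would state this as a lemma, citing the partial-field literature (Pendavingh--van Zwam, or van Zwam's thesis) for the identity $R_6 = \F_3 \otimes \F_4$ in the relevant sense.

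Next I would invoke functoriality of orthogonal matroids over tracts, established earlier in the paper: a tract homomorphism $f \colon F \to F'$ induces a pushforward $f_*$ from (strong or weak) orthogonal $F$-matroids to orthogonal $F'$-matroids, compatible with the underlying orthogonal matroid on $E$. Then the implication (i) $\Rightarrow$ (ii) is immediate by pushing an $R_6$-representation forward along $R_6 \to \F_3$ and $R_6 \to \F_4$; and (ii) $\Rightarrow$ (iii) follows similarly from the homomorphisms $\F_3 \to \F_3$, $\F_4 \to \F_{p^2}$ and $\F_4 \to \F_q$ for $q \equiv 1 \pmod 3$ (using that $\F_4$ embeds in $\F_{p^2}$ when $p=2$, and in general one needs a sixth root of unity, so these fields all receive a map from $R_6$ directly). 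The substantive direction is (ii) $\Rightarrow$ (i): given an orthogonal $\F_3$-matroid $M_3$ and an orthogonal $\F_4$-matroid $M_4$ on $E$ with the same underlying orthogonal matroid $M$, I want to produce an orthogonal $R_6$-matroid lifting both. Here I would work at the level of Wick functions (or equivalently circuit sets): a representation over $F$ is a Wick function $\varphi \colon \mathcal{T}_n \to F$ satisfying the Wick relations up to the tract's null set, taken projectively. Using $R_6 = \F_3 \times_{\K} \F_4$, one assembles the candidate $R_6$-Wick function coordinatewise from $\varphi_3$ and $\varphi_4$ after normalizing them to agree on supports; the Wick relations for the pair then force the assembled function into the null set of $R_6$ by the pullback property, giving a strong (hence weak) orthogonal $R_6$-matroid.

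The main obstacle is the projective normalization and rescaling needed to make $\varphi_3$ and $\varphi_4$ literally agree on a spanning set of coordinates before gluing. In the ordinary matroid setting this is handled by the notion of \emph{rescaling classes} and the fact that a representation over a tract is well-defined only up to multiplication by units and a global scalar; here I must check the orthogonal analogue, i.e.\ that the torus acting by rescaling the coordinates $w_I$ (for $I \subseteq [n]$) preserves the Wick relations and acts transitively enough on normalizations, which should follow from the homogeneity of the Wick equations and was presumably set up in the section on functoriality/duality. A secondary subtlety is matching supports: one needs that $M_3$ and $M_4$ having the same underlying orthogonal matroid $M$ really does mean $\supp(\varphi_3) = \supp(\varphi_4) = \cB(M)$, which is part of the definition, so this is bookkeeping rather than a genuine difficulty. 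Once the gluing lemma is in place, the theorem follows formally; I would also remark that, unlike the regular case (Theorem on $\ul{M_4}$), no excluded-minor hypothesis is needed here because the partial field $R_6$ is \emph{universal} for the pair $\{\F_3, \F_4\}$ in the strong sense, so every pair of compatible representations glues without obstruction.
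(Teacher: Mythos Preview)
Your overall strategy matches the paper's, but you have manufactured a difficulty that is not there and muddled the implication (ii)$\Rightarrow$(iii).

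For (ii)$\Rightarrow$(i), the paper uses that $R_6 \cong \F_3 \times \F_4$ is a \emph{tract isomorphism} with the categorical product as defined in Section~\ref{sec:tracts}: as a set, $\F_3 \times \F_4 = (\F_3^\times \times \F_4^\times) \cup \{0\}$, and the null set is the set of formal sums that vanish in each factor. Proposition~\ref{prop:product} then says that if $\varphi_3$ and $\varphi_4$ are strong Wick functions with the same underlying orthogonal matroid $M$, the pointwise pair $T \mapsto (\varphi_3(T),\varphi_4(T))$ is already a strong Wick function over $\F_3 \times \F_4$. The ``same underlying orthogonal matroid'' hypothesis guarantees that $(\varphi_3(T),\varphi_4(T))$ is either $(0,0)$ or in $\F_3^\times \times \F_4^\times$, so it lands in the product tract with no adjustment. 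There is no projective normalization, no rescaling, no torus action to analyze: the ``main obstacle'' you describe simply does not arise, because the product of tracts is designed exactly so that a pair of representations with matching support glues for free. Your fiber-product-over-$\K$ language is not wrong (the product and the fiber product over $\K$ coincide here), but it leads you to look for a compatibility condition that is automatic.

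For the remaining implications, your route (ii)$\Rightarrow$(iii) via ``$\F_4 \to \F_{p^2}$ and $\F_4 \to \F_q$'' is incorrect: there is no tract homomorphism $\F_4 \to \F_{p^2}$ for odd $p$, nor $\F_4 \to \F_q$ for primes $q \equiv 1 \pmod 3$. You half-correct this in your parenthetical, but the clean argument is the one the paper gives: prove (ii)$\Rightarrow$(i) as above, then (i)$\Rightarrow$(iii) by pushing forward along the tract homomorphisms $R_6 \to \F_{p^2}$ and $R_6 \to \F_q$ (Lemma~\ref{lem:sixth-roots-of-unity}), and (iii)$\Rightarrow$(ii) is trivial since $\F_3$ and $\F_4 = \F_{2^2}$ appear in the list.
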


\medskip

\textbf{Structure of the paper.}
In the remaining part of Section~\ref{section:intro}, we recall the classical theory of orthogonal matroids, mainly following~\cite{Borovik2003}, and describe matroids as orthogonal matroids. In Section~\ref{section:F-matroids} we survey some of the main results from the theory of matroids over tracts~\cite{Baker2019,Anderson2019}, including the definition of tracts. In Section~\ref{section:F-orthogonalmatroids} we define three notions of orthogonal matroids over tracts, namely the {\em weak}, {\em moderately weak}, and {\em strong orthogonal matroids over tracts}, using Wick functions, orthogonal $F$-signatures, $F$-circuit sets of orthogonal matroids, and orthogonal $F$-vector sets. These four axiom systems turn out to be cryptomorphic for strong orthogonal matroids over tracts, and the proofs are given in Section~\ref{sec:OMoverTracts}. Section~\ref{sec:OMoverTracts} also includes equivalences between the weaker notions, as well as several examples and counterexamples. In Section~\ref{sec:applications}, we discuss applications to representation theorems for orthogonal matroids.

\subsection{Orthogonal matroids}
\label{sec:course in orthogonal matroids}

Let $E = [n] \cup [n]^*$. The symmetric exchange axiom for orthogonal matroids on $E$ turns out to be equivalent to the {{\em strong symmetric exchange axiom}}~\cite[Theorem~4.2.4]{Borovik2003}. 

\begin{proposition}[Strong Symmetric Exchange]
If $M = (E,\cB)$ is an orthogonal matroid, then for every $B_1, B_2 \in \cB$ and divergence $\{x_1, x_1^\ast\} \subseteq B_1 \symdiff B_2$, there exists $\{x_2, x_2^\ast\} \subseteq \cB$ with $\{x_2, x_2^\ast\} \ne \{x_1, x_1^\ast\}$ such that both $B_1 \symdiff \{x_1, x_1^\ast, x_2, x_2^\ast\}$ and $B_2 \symdiff \{x_1, x_1^\ast, x_2, x_2^\ast\}$ belong to $\cB$. 
\end{proposition}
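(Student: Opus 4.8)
I would reprove this classical fact (Bouchet~\cite{Bouchet1987sym}; Borovik--Gelfand--White~\cite[Theorem~4.2.4]{Borovik2003}) by induction on $k:=\tfrac12|B_1\symdiff B_2|$, after two preliminary remarks. First, since $B_1,B_2$ are transversals, $S:=B_1\symdiff B_2$ meets each divergence $\{i,i^\ast\}$ either in $\emptyset$ or in all of it, so $S$ is a disjoint union of $k$ divergences. Second, no two distinct bases differ by a single divergence --- otherwise the symmetric exchange axiom applied to them would demand a second divergence contained in the first --- and since one exchange step from $B_1$ toward $B_2$ removes two divergences from the symmetric difference, iterating would otherwise descend to such a forbidden pair; hence $k$ is always even.

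It is convenient to reduce to the case $B_1\symdiff B_2=E$. All bases $B$ lying in the interval $\{B\in\cB:B_1\symdiff B\subseteq S\}$ agree off $S$, and the family $\cB':=\{\,B\cap S : B\in\cB,\ B_1\symdiff B\subseteq S\,\}$ is a nonempty set of transversals of $S$ satisfying the symmetric exchange axiom: given two of its members and a divergence in their symmetric difference (which lies in $S$), the divergence that the axiom of $M$ exchanges again lies in $S$, so the new basis stays in the interval. Thus $M':=(S,\cB')$ is an orthogonal matroid in which $B_1\cap S$ and $B_2\cap S$ have symmetric difference all of $S$, and the statement for $(B_1,B_2,D_1)$ in $M$ is equivalent to the statement for $(B_1\cap S,\,B_2\cap S,\,D_1)$ in $M'$. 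So I assume $B_1\symdiff B_2=E$.

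The base cases $k\in\{0,2\}$ are immediate: when $k=2$ the unique divergence $D_2\ne D_1$ inside $E$ gives $B_1\symdiff D_1\symdiff D_2=B_2\in\cB$ and $B_2\symdiff D_1\symdiff D_2=B_1\in\cB$. For $k\ge 4$, the symmetric exchange axiom applied to $(B_1,B_2,D_1)$ gives a divergence $E_1\ne D_1$ with $B_1\symdiff D_1\symdiff E_1\in\cB$; if also $B_2\symdiff D_1\symdiff E_1\in\cB$ we are done, so assume not. Applying the axiom to $(B_2,B_1,D_1)$ gives $E_2\ne D_1$ with $B_2\symdiff D_1\symdiff E_2\in\cB$, and we may assume $E_2\ne E_1$. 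Then $B_3:=B_1\symdiff D_1\symdiff E_1$ and $B_4:=B_2\symdiff D_1\symdiff E_2$ are bases with $B_3\symdiff B_4=E\setminus(E_1\cup E_2)$, a union of $k-2$ divergences still containing $D_1$; by the inductive hypothesis there is a divergence $F\ne D_1$ with $B_1\symdiff E_1\symdiff F\in\cB$ and $B_2\symdiff E_2\symdiff F\in\cB$. Feeding these new bases, together with $B_1,B_2,B_3,B_4$, back into the symmetric exchange axiom and the inductive hypothesis should yield a single divergence $D_2\ne D_1$ with $B_1\symdiff D_1\symdiff D_2\in\cB$ and $B_2\symdiff D_1\symdiff D_2\in\cB$.

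The hard part is exactly this last synchronization: the weak axiom exchanges one divergence at a time, ``anchored'' at one basis, so a naive chain of exchanges lets one control the condition at $B_1$ or the one at $B_2$ while perturbing the other, and some care is needed to make a common $D_2$ emerge. I would handle it by a minimal-counterexample argument (minimizing $k$): the divergences the exchange axiom is forced to use on the bases adjacent to $B_1$ and to $B_2$ become so constrained that some instance of the axiom fails --- this is the combinatorial bookkeeping of~\cite[Theorem~4.2.4]{Borovik2003}, which I would follow.
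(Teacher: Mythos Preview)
The paper does not give its own proof of this proposition; it simply states it and cites~\cite[Theorem~4.2.4]{Borovik2003}. There is therefore no argument in the paper to compare yours against.

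Your preliminary remarks (that $B_1\symdiff B_2$ is a disjoint union of divergences, that two bases cannot differ by a single divergence and hence $k$ is even, and the reduction to $B_1\symdiff B_2=E$ via the interval matroid $M'$) are all correct and standard. The induction setup is also fine: once $B_3=B_1\symdiff D_1\symdiff E_1$ and $B_4=B_2\symdiff D_1\symdiff E_2$ are in hand, the inductive hypothesis on the pair $(B_3,B_4)$ of smaller symmetric difference does produce a common exchange $F$ for them. But, as you yourself say, the ``synchronization'' step --- passing from $B_1\symdiff E_1\symdiff F\in\cB$ and $B_2\symdiff E_2\symdiff F\in\cB$ to a single divergence $D_2\neq D_1$ with $B_i\symdiff D_1\symdiff D_2\in\cB$ for both $i$ --- is not carried out; you explicitly defer it to the very reference the paper cites. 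So in substance your proposal, like the paper, appeals to Borovik--Gelfand--White for the actual content. If you want a self-contained proof, that final case analysis is exactly what remains to be written, and it does not fall out immediately from the bases you have listed without a further nontrivial argument.
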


\begin{example}
Let $M$ be a matroid on $[n]$. Then the pair $\lift{M} := ([n] \cup [n]^\ast, \mathcal{B})$, where $\mathcal{B} := \{B \cup ([n] \setminus B)^\ast : B \text{ is a basis of } M\}$, is an orthogonal matroid. This is called the {\em lift} of the matroid $M$. Notice that an orthogonal matroid $N$ on $E = [n] \cup [n]^*$ is the lift of a matroid if and only if $B \cap [n]$ have the same cardinality for all bases $B$ of $N$. 
\end{example}

\begin{example}
Let $M = (E, \cB)$ be an orthogonal matroid and let $A \subseteq E$ be a subset such that $A=A^*$. Then $M \symdiff A := (E, \cB \symdiff A)$ is an orthogonal matroid, where $\cB \symdiff A := \{B \symdiff A : B \in \cB\}$. This is an example of a general operation on orthogonal matroids called {\em twisting}.
\end{example}

\begin{definition}
Two orthogonal matroids $M_1$ and $M_2$ are {\em isomorphic} if there exists a bijection $f : E(M_1) \to E(M_2)$ that respects the involutions on $E(M_1)$ and $E(M_2)$, and a transversal $T \subseteq E_1(M)$ is a basis of $M_1$ if and only if $f(T)$ is a basis of $M_2$. 
\end{definition}

\begin{definition}
Every subset of a basis is called an {\em independent} set. Admissible subsets of $E$ that are not independent are called {\em dependent}. A {\em circuit} is a minimal dependent set with respect to inclusion. 
\end{definition}

Let $\mathcal{C}(M)$ denote the family of circuits of an orthogonal matroid $M$. There is a characterization of orthogonal matroids in terms of circuits. 

\begin{proposition}[Theorem~4.2.5~of~\cite{Borovik2003}]
%[Booth, Theorem~12 of~\cite{Booth2003}]
Let $\mathcal{C}$ be a set of admissible subsets of $E = [n] \cup [n]^\ast$. Then $\mathcal{C}$ is the family of circuits of an orthogonal matroid if and only if $\mathcal{C}$ satisfies the following five axioms:
  \begin{enumerate}[label=\rm(C\arabic*)]
    \item\label{item:C1} $\emptyset \not \in \mathcal{C}$.
    \item\label{item:C2} If $C_1, C_2 \in \cC$ with $C_1 \subseteq C_2$, then $C_1 = C_2$. 
    \item\label{item:C3} If $C_1 \ne C_2 \in \mathcal{C}$, $x \in C_1\cap C_2$, and $C_1 \cup C_2$ is admissible, then there exists $C_3 \in \mathcal{C}$ such that $C_3 \subseteq (C_1 \cup C_2) \setminus \{x\}$.
    \item\label{item:C4} If $C_1,C_2 \in \mathcal{C}$ and $C_1 \cup C_2$ is not admissible, then $C_1 \cup C_2$ contains at least two divergences.
    \item\label{item:C5} If $T$ is a transversal and $x \not\in T$, then $T \cup \{x\}$ contains an element in $\mathcal{C}$. 
  \end{enumerate}
\end{proposition}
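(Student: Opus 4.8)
The plan is to establish the two directions of this basis/circuit cryptomorphism by the standard route, exploiting the principle that the restriction of an orthogonal matroid to an admissible set is an ordinary matroid. Throughout, ``independent'' and ``circuit'' refer to the notions defined above, and we recall that independent sets, and hence dependent sets and circuits, are automatically admissible.

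For the implication $(\Rightarrow)$, let $M = (E, \cB)$ be an orthogonal matroid and put $\cC = \cC(M)$. Axioms \ref{item:C1} and \ref{item:C2} are immediate, since $\emptyset$ is a subset of any basis and circuits are minimal dependent sets by definition. The key preliminary lemma is that for every admissible $A \subseteq E$ the independent subsets of $A$ are exactly the independent sets of an ordinary matroid $M|_A$ with ground set $A$, whose circuits are the members of $\cC$ contained in $A$; this is proved by checking the ordinary augmentation axiom for these sets, which one reduces to the symmetric exchange axiom (in the strong form of the preceding proposition) by extending independent sets to bases and tracking divergences. Given the lemma, \ref{item:C3} is ordinary circuit elimination inside $M|_{C_1 \cup C_2}$, legitimate precisely because $C_1 \cup C_2$ is assumed admissible. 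For \ref{item:C5}, if $T$ is a transversal and $x \notin T$ then $x^* \in T$, so every admissible subset of $T \cup \{x\}$ omits $x$ or $x^*$ and hence lies in one of the two transversals $T$ and $(T \setminus \{x^*\}) \cup \{x\}$; one checks that at least one of these is dependent, since otherwise they would both be bases whose symmetric difference is the single divergence $\{x, x^*\}$, contradicting symmetric exchange. Axiom \ref{item:C4} is the genuinely orthogonal ingredient: assuming $C_1 \cup C_2$ has a unique divergence $\{x, x^*\}$ with $x \in C_1$ and $x^* \in C_2$, one applies \ref{item:C5}-type reasoning and \ref{item:C3} to the admissible sets obtained from $C_1$ and $C_2$ by deleting $x$ or exchanging $x$ for $x^*$, and extracts a circuit contradicting the minimality of $C_1$ or $C_2$.

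For the implication $(\Leftarrow)$, assume $\cC$ satisfies \ref{item:C1}--\ref{item:C5} and set $\cI := \{I \subseteq E : I \text{ admissible and } C \not\subseteq I \text{ for all } C \in \cC\}$. By \ref{item:C1} we have $\emptyset \in \cI$, and we let $\cB$ be the collection of inclusion-maximal elements of $\cI$, which is nonempty since $E$ is finite. The first claim is that every member of $\cB$ is a transversal: if $I \in \cI$ has size less than $n$, choose a divergence $\{a, a^*\}$ disjoint from $I$; if both $I \cup \{a\}$ and $I \cup \{a^*\}$ were dependent they would contain circuits $C$ and $C'$ with $a \in C$, $a^* \in C'$, and $C \cup C' \subseteq I \cup \{a, a^*\}$, so that $C \cup C'$ is a union of two circuits containing exactly one divergence, contradicting \ref{item:C4}; hence $I$ can be enlarged within $\cI$, contradicting maximality. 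The core of the argument is verifying the symmetric exchange axiom for $\cB$: given $B_1, B_2 \in \cB$ and a divergence $\{x_1, x_1^*\} \subseteq B_1 \symdiff B_2$, axiom \ref{item:C5}, applied to the transversal $B_1$ and the element of $\{x_1, x_1^*\}$ not lying in $B_1$, supplies a circuit inside $B_1 \symdiff \{x_1, x_1^*\}$, and one then combines \ref{item:C3} with \ref{item:C4} to propagate this circuit through $B_2$ and produce the required divergence $\{x_2, x_2^*\} \subseteq B_1 \symdiff B_2$ with $B_1 \symdiff \{x_1, x_1^*, x_2, x_2^*\} \in \cB$. Finally, that $\cC(M) = \cC$ for $M = (E, \cB)$ is a short verification: each $C \in \cC$ is admissible, dependent, and minimal such by \ref{item:C2}, hence a circuit of $M$; conversely a circuit of $M$ is a minimal admissible set containing some member of $\cC$ and so coincides with it.

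The step I expect to be the real obstacle is the verification of symmetric exchange for $\cB$ in the converse direction, together with the derivation of \ref{item:C4} in the forward direction. Unlike \ref{item:C1}--\ref{item:C3}, which are the ordinary matroid circuit axioms in disguise, these conditions govern how circuits interact across a non-admissible union, and establishing them requires carefully interleaving circuit elimination with the combinatorics of divergences rather than appealing directly to classical matroid theory; alternatively one may simply cite Theorem~4.2.5 of \cite{Borovik2003}.
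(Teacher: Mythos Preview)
The paper does not prove this proposition at all; it is stated with the attribution ``Theorem~4.2.5 of \cite{Borovik2003}'' and used as a black box. So there is no proof in the paper for your attempt to be compared against, and indeed your own closing sentence already observes that one may simply cite \cite{Borovik2003}.

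That said, your outline is a reasonable sketch of how such a cryptomorphism argument would go, and the parts you spell out in detail are correct: the reduction of \ref{item:C3} to ordinary circuit elimination inside the matroid on an admissible set, the argument for \ref{item:C5} via the impossibility of two bases differing by a single divergence, the use of \ref{item:C4} to show that maximal independent sets are transversals in the converse direction, and the final identification $\cC(M)=\cC$ are all sound. You have correctly identified the two places where real work remains, namely the derivation of \ref{item:C4} in the forward direction and the verification of the symmetric exchange axiom in the converse direction; your sketches for these (``applies \ref{item:C5}-type reasoning and \ref{item:C3}\ldots'', ``combines \ref{item:C3} with \ref{item:C4} to propagate\ldots'') are not yet arguments, and in Borovik--Gelfand--White these steps do require several pages of careful case analysis. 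If your intent is only to match the paper's treatment, citing \cite{Borovik2003} is exactly what is done there.
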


%We remark that the above proposition is also observed by Bouchet; see~{\cite[Proposition~5.4]{BouchetMM1}} and~{\cite[Theorem~4.2]{BouchetMM3}}.

Recall that for a matroid $M$, there is a unique circuit contained in $B \cup \{e\}$ for every basis $B$ and an element $e \not\in B$, called the {\em fundamental circuit} with respect to $B$ and $e$. The next proposition gives an analogous notion of fundamental circuits for orthogonal matroids. 

\begin{proposition}[Theorem~4.2.1~of~\cite{Borovik2003}]\label{prop:fundamental-circuit}
%[Booth, Lemma~4 of~\cite{Booth2003}]
Let $M = (E, \cB)$ be an orthogonal matroid. Take $B \in \cB$ and $x \not\in B$. Then there exists a unique circuit $C_M(B,x)$ of $M$ such that $C_M(B,x) \subseteq B \cup \{x\}$. Furthermore, $C_M(B,x)$ is given by $C_M(B,x) = \{x\} \cup \{b \in B \setminus \{x^*\} : B \symdiff \{b, b^\ast, x, x^\ast\} \in \cB\}$. 
\end{proposition}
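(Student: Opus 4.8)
The plan is to prove the three assertions in turn — existence of a circuit inside $B\cup\{x\}$, its uniqueness, and the explicit formula — feeding the first two into the proof of the third. The starting point is that since $B$ is a transversal with $x\notin B$, necessarily $x^\ast\in B$, so $B$ is admissible while $B\cup\{x\}$ is not. Existence is then immediate from axiom~\ref{item:C5} applied to the transversal $T=B$ and the element $x\notin B$. For uniqueness I would first note that any circuit $C'\subseteq B\cup\{x\}$ must contain $x$ but not $x^\ast$ (if $x\notin C'$ then $C'\subseteq B$ would be independent; then $x^\ast\notin C'$ by admissibility of $C'$), so every such circuit lies in the admissible set $T':=(B\setminus\{x^\ast\})\cup\{x\}$. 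If $C_1\ne C_2$ were two of them, then $x\in C_1\cap C_2$, $C_1\cup C_2\subseteq T'$ is admissible, and axiom~\ref{item:C3} would produce a circuit inside $(C_1\cup C_2)\setminus\{x\}\subseteq B$, contradicting independence of $B$. This gives the well-defined $C:=C_M(B,x)$, with $C\subseteq(B\setminus\{x^\ast\})\cup\{x\}$.

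For the formula, write $D:=\{x\}\cup\{\,b\in B\setminus\{x^\ast\}:B\symdiff\{b,b^\ast,x,x^\ast\}\in\cB\,\}$ and, for $b\in B\setminus\{x^\ast\}$, abbreviate $B_b:=B\symdiff\{b,b^\ast,x,x^\ast\}=(B\setminus\{b,x^\ast\})\cup\{b^\ast,x\}$; the goal is $C=D$. The inclusion $D\subseteq C$ is the easy half: if $b\in D\setminus\{x\}$ but $b\notin C$, then since $C\subseteq(B\setminus\{x^\ast\})\cup\{x\}$ we get $C\subseteq(B\setminus\{x^\ast,b\})\cup\{x\}\subseteq B_b$, so the circuit $C$ sits inside the basis $B_b$ and would be independent — a contradiction.

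The reverse inclusion $C\subseteq D$ is where the work is, and I would handle it by descent on symmetric difference. Fix $b\in C\setminus\{x\}$. Then $C\setminus\{b\}$ is independent (a circuit with one point removed), hence lies in some basis, and indeed $C\setminus\{b\}\subseteq(B\setminus\{x^\ast,b\})\cup\{x\}\subseteq B_b$. Among all bases $B'$ containing $C\setminus\{b\}$, pick one minimizing $|B\symdiff B'|$; the claim is that it equals $B_b$, giving $B_b\in\cB$ and $b\in D$. First, $x\in B'\setminus B$ and $b\in B\setminus B'$ (else $C\subseteq B'$ is independent), so $B\symdiff B'$ contains the two distinct divergences $\{x,x^\ast\}$ and $\{b,b^\ast\}$; being a disjoint union of divergences, $|B\symdiff B'|\ge 4$, and equality forces $B'=B_b$. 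If instead $|B\symdiff B'|\ge 6$, pick a divergence $\{c,c^\ast\}\subseteq B\symdiff B'$ distinct from $\{x,x^\ast\}$ and $\{b,b^\ast\}$, say with $c\in B$; one checks quickly that $c\notin C$ and $c\ne x^\ast$. Now apply the strong symmetric exchange axiom to $B,B'$ and the divergence $\{c,c^\ast\}$: it returns $\{z,z^\ast\}\subseteq B\symdiff B'$ with $\{z,z^\ast\}\ne\{c,c^\ast\}$ such that $B\symdiff\{c,c^\ast,z,z^\ast\}\in\cB$ and $B'\symdiff\{c,c^\ast,z,z^\ast\}\in\cB$. If $\{z,z^\ast\}=\{x,x^\ast\}$, then $B\symdiff\{c,c^\ast,x,x^\ast\}=B_c\in\cB$, so $c\in D$, and the already-established inclusion $D\subseteq C$ gives $c\in C$, a contradiction; hence $\{z,z^\ast\}\ne\{x,x^\ast\}$. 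In that case a short verification shows $B'':=B'\symdiff\{c,c^\ast,z,z^\ast\}\in\cB$ still contains $C\setminus\{b\}$ — the only elements removed from $B'$ are $c^\ast$ and the unique element of $\{z,z^\ast\}$ lying in $B'$, and neither of these belongs to $C\setminus\{b\}$ — while $|B\symdiff B''|=|B\symdiff B'|-4$, contradicting minimality. So the minimizer has $|B\symdiff B'|=4$, i.e. equals $B_b$, and $C\subseteq D$ follows.

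The main obstacle is exactly this last descent step: one has to prevent the strong symmetric exchange from reversing the $\{x,x^\ast\}$-swap, and the clean device I see for that is to bootstrap on the easy inclusion $D\subseteq C$, which turns the offending case $\{z,z^\ast\}=\{x,x^\ast\}$ into the contradiction $c\in C$. Everything else is routine bookkeeping with symmetric differences of transversals.
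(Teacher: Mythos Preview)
The paper does not supply its own proof of this proposition; it is quoted from \cite{Borovik2003} without argument. Your proof is correct and self-contained: existence via~\ref{item:C5}, uniqueness via~\ref{item:C3} applied inside the admissible set $(B\setminus\{x^\ast\})\cup\{x\}$, and the formula via the two inclusions. The descent argument for $C\subseteq D$ is the only non-routine step, and your device of bootstrapping on the already-proved inclusion $D\subseteq C$ to kill the case $\{z,z^\ast\}=\{x,x^\ast\}$ is clean and works exactly as stated. The verification that $B''$ still contains $C\setminus\{b\}$ goes through because the element of $\{z,z^\ast\}$ removed from $B'$ lies in $B'\setminus B$ and is not $x$ (since $\{z,z^\ast\}\ne\{x,x^\ast\}$), hence cannot lie in $C\subseteq B\cup\{x\}$.
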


We call $C_M(B,x)$ the {\em fundamental circuit} with respect to $B$ and $x$, and we often write it as $C(B,x)$ if $M$ is clear from the context.

We will use the following lemma frequently in Section~\ref{sec:OMoverTracts}.

\begin{lemma}\label{lem:extendcircuit}
Let $C$ be a circuit of an orthogonal matroid $M$. Then there exists a transversal $T$ containing $C$ such that for every $x \in C$, $T \symdiff \skewpair{x}$ is a basis of $M$.
\end{lemma}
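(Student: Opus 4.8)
The plan is to reduce everything to the explicit description of fundamental circuits provided by Proposition~\ref{prop:fundamental-circuit}. Fix any element $x_0 \in C$. Since $C$ is a circuit, i.e.\ a minimal dependent admissible set, the proper subset $C \setminus \{x_0\}$ is independent, hence contained in some basis $B \in \cB(M)$.

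First I would record two elementary observations about $B$. On one hand $x_0 \notin B$, since otherwise $C = (C \setminus \{x_0\}) \cup \{x_0\} \subseteq B$ would be independent, contradicting that the circuit $C$ is dependent. On the other hand, $B$ is a transversal: an $n$-element admissible subset of the $2n$-element set $E$, which is partitioned into its $n$ divergences. An admissible set meets each divergence in at most one element, so an $n$-element admissible set meets each divergence in exactly one element. In particular $x_0 \notin B$ forces $x_0^* \in B$; this is the small point that makes the next step possible without any further choice.

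Now set $T := B \symdiff \skewpair{x_0} = (B \setminus \{x_0^*\}) \cup \{x_0\}$. I would check that $T$ is a transversal — it has $n$ elements, and it is admissible because we only exchanged $x_0^*$ for $x_0$ inside the single divergence $\skewpair{x_0}$ — and that $C \subseteq T$: indeed $x_0 \in T$, and $C \setminus \{x_0\} \subseteq B \setminus \{x_0^*\} \subseteq T$, using that $x_0^* \notin C$ by admissibility of $C$. Moreover $C$ is a circuit satisfying $C \subseteq B \cup \{x_0\}$ with $x_0 \notin B$, so the uniqueness clause of Proposition~\ref{prop:fundamental-circuit} gives $C = C_M(B, x_0)$; feeding this into the explicit formula of that proposition yields $B \symdiff \{b, b^*, x_0, x_0^*\} \in \cB(M)$ for every $b \in C \setminus \{x_0\}$.

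Finally, for each $x \in C$ I would verify $T \symdiff \skewpair{x} \in \cB(M)$ by splitting into two cases. If $x = x_0$, then $T \symdiff \skewpair{x_0} = B \symdiff \skewpair{x_0} \symdiff \skewpair{x_0} = B \in \cB(M)$. If $x = b \in C \setminus \{x_0\}$, then $T \symdiff \skewpair{b} = B \symdiff \{x_0, x_0^*, b, b^*\} \in \cB(M)$ by the conclusion of the previous paragraph. I do not anticipate a genuine obstacle here; the only things requiring a moment's care are noticing that a single basis extension $B$ of $C \setminus \{x_0\}$ simultaneously witnesses the required basis-exchange for all elements of $C$ (this is exactly what the fundamental circuit formula encodes), and the bookkeeping observation that a transversal meets every divergence in exactly one element, which is what delivers $x_0^* \in B$ for free.
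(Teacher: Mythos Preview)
Your proof is correct and follows essentially the same approach as the paper: pick $x_0\in C$, extend $C\setminus\{x_0\}$ to a basis $B$, and set $T=B\symdiff\skewpair{x_0}$. The paper's proof is a terse two-line version that asserts $T$ ``satisfies the desired property'' without explanation, whereas you have spelled out the justification via Proposition~\ref{prop:fundamental-circuit}, which is exactly the intended reason.
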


\begin{proof}
  We choose an arbitrary $y \in C$ and take a basis $B$ containing $C \setminus \{y\}$. Then $T = B \symdiff \skewpair{y}$ satisfies the desired property.
\end{proof}

Orthogonal matroids admit duals. Let $M = (E, \cB)$ be an orthogonal matroid, then the collection of bases $\cB^\ast$ of the {\em dual orthogonal matroid} $M^\ast$ is defined as 
\[
\cB^\ast := \{B^\ast : B \in \cB\}. 
\]
Circuits of $M^\ast$ are called {\em cocircuits} of $M$, and must be of the form $C^\ast$ for some circuit $C$ of~$M$. 

We finally discuss minors of orthogonal matroids in the sense of~\cite{BouchetMM2}. 

Let $M = (E, \cB)$ be an orthogonal matroid. An element $x \in E$ is {\em singular} if $M$ has no basis containing $x$, or equivalently, $\{x\}$ is a circuit of $M$. Otherwise, we call the element $x$ {\em nonsingular}. By~\ref{item:C4}, if an element $x$ is singular, then $x^\ast$ is nonsingular.

Let $M$ be an orthogonal matroid on $E$ and let $x \in E$. If $x$ is nonsingular, then
\begin{align*}
  \{B \setminus \{x\} : x \in B \in \mathcal{B}(M)\}
\end{align*}
is the set of bases of an orthogonal matroid on $E \setminus \{x, x^\ast\}$. We denote this orthogonal matroid by $M|x$. If $x$ is singular, then we define $M|x := M|x^\ast$. We call $M|x$ an {\em elementary minor} of $M$. In particular, if $x \in [n]$ (resp. $x \in [n]^\ast$) then it corresponds to the contraction (resp. deletion) by $x$ in the sense of~\cite{Chun2019ribbon}. 

An orthogonal matroid $N$ is a {\em minor} of another orthogonal matroid $M$ if $N$ can be obtained from $M$ by taking elementary minors sequentially. Note that $M|x|y = M|y|x$, and thus we write $M|x_1|x_2|\dots|x_k$ as $M|S$ where $S = \{x_1,\dots,x_k\}$.

For a collection $\cC$ of subsets of $E$, let $\Min(\cC)$ denote the set of minimal elements of $\cC$ with respect to inclusion. The following proposition characterizes circuits of minors of orthogonal matroids. 

\begin{proposition}\label{prop:circuits of a minor}
For an orthogonal matroid $M$ and an element $x \in E$, we have 
\begin{align*}
  \cC(M|x) = \Min \left( \{C \setminus \{x\} : x^\ast \not\in C \in \cC(M)
  \text{ and } C\neq \{x\}\}.  % C \neq \{x\} <- this condition can be removed if e is nonsingular.
  \right)
\end{align*}
\end{proposition}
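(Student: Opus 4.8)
The plan is to prove the two inclusions $\cC(M|x) \subseteq \Min(\mathcal{S})$ and $\Min(\mathcal{S}) \subseteq \cC(M|x)$, where $\mathcal{S} := \{C \setminus \{x\} : x^\ast \notin C \in \cC(M) \text{ and } C \neq \{x\}\}$. It suffices to treat the case where $x$ is nonsingular, since if $x$ is singular then $M|x = M|x^\ast$ by definition, $\{x\} \in \cC(M)$ forces no other circuit to contain $x$ (by axiom~\ref{item:C2}, no circuit other than $\{x\}$ contains $x$, but also by~\ref{item:C4} every circuit $C$ with $C \ni x$ different from $\{x\}$ would make $C \cup \{x\}$ admissible... ) — actually the cleanest route is to observe that when $x$ is singular, $x^\ast$ is nonsingular by~\ref{item:C4}, run the nonsingular argument for $x^\ast$, and then check the set-theoretic identity that $\mathcal{S}$ is unchanged under swapping the roles; I would handle this symmetry remark first and then assume $x$ nonsingular throughout.

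For the inclusion $\cC(M|x) \subseteq \Min(\mathcal{S})$: take a circuit $D$ of $M|x$. Since $D$ is a dependent admissible subset of $E \setminus \{x, x^\ast\}$ in $M|x$, I would argue that $D$ is not independent in $M$ either when $x \in [n]$-type behaviour is accounted for — more precisely, I expect that $D$ dependent in $M|x$ implies $D \cup \{x\}$ or $D$ itself is dependent in $M$; then take a circuit $C$ of $M$ with $C \subseteq D \cup \{x\}$. One checks $x^\ast \notin C$ (as $C$ is admissible and, if $x$ is nonsingular, $C \ne \{x^\ast\}$... here I must be careful and instead note $C \subseteq D \cup \{x\}$ and $x^\ast \notin D$). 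If $x \notin C$ then $C \subseteq D$, and minimality of $D$ in $M|x$ together with the fact that $C \setminus \{x\} = C \in \mathcal{S}$ forces $C = D$; if $x \in C$, then $C \ne \{x\}$ since $x$ is nonsingular, so $C \setminus \{x\} \in \mathcal{S}$ and $C \setminus \{x\} \subseteq D$. To conclude $D \in \Min(\mathcal{S})$ I need (a) $D \in \mathcal{S}$ and (b) no proper subset of $D$ lies in $\mathcal{S}$; part (b) follows because any element of $\mathcal{S}$ is dependent in $M|x$ (this is the key "bootstrapping" fact, proved via Proposition~\ref{prop:circuits of a minor}'s analogue for dependence, or directly from the definition of $M|x$'s bases), hence cannot be a proper subset of the circuit $D$; part (a) then follows from the previous sentence's case analysis by minimality.

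For the reverse inclusion $\Min(\mathcal{S}) \subseteq \cC(M|x)$: take $D \in \Min(\mathcal{S})$, so $D = C \setminus \{x\}$ for some circuit $C$ of $M$ with $x^\ast \notin C$, $C \ne \{x\}$. First show $D$ is dependent in $M|x$: if $D$ were contained in a basis $B'$ of $M|x$, then $B' \cup \{x\}$ is a basis of $M$ containing $C \setminus \{x\}$, and since it also contains $x$ it contains $C$, contradicting that $C$ is a circuit (dependent). Hence $D$ contains some circuit $D'$ of $M|x$; by the first inclusion $D' \in \Min(\mathcal{S})$, so $D' \in \mathcal{S}$, and $D' \subseteq D$ with both in $\mathcal{S}$ and $D$ minimal forces $D' = D$, so $D \in \cC(M|x)$.

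The main obstacle I anticipate is the precise bookkeeping around the element $x$ versus $x^\ast$ and the reduction to the nonsingular case: getting right exactly when a circuit $C$ of $M$ with $C \subseteq D \cup \{x\}$ must avoid $x^\ast$, and verifying cleanly that "membership in $\mathcal{S}$ implies dependence in $M|x$" — this last fact is really the engine of both inclusions and deserves to be isolated as a short sublemma, proved straight from the definition $\mathcal{B}(M|x) = \{B \setminus \{x\} : x \in B \in \mathcal{B}(M)\}$ (for $x$ nonsingular) by the contrapositive argument used above. Once that sublemma is in place, both inclusions are short; the singular case is then a formal consequence of $M|x = M|x^\ast$ plus the observation that $C \mapsto C \setminus \{x\}$ and $C \mapsto C \setminus \{x^\ast\}$ restricted to circuits avoiding the starred partner give the same family $\mathcal{S}$, because when $x$ is singular $\{x\}$ is a circuit and no other circuit meets $\{x\}$ while circuits meeting $\{x^\ast\}$ get $x^\ast$ removed and are precisely those with $x \notin C$.
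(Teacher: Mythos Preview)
Your approach is essentially the same as the paper's: reduce to the nonsingular case, exploit the equivalence ``$D$ is dependent in $M|x$ iff $D\cup\{x\}$ is dependent in $M$'' (your sublemma is one direction of this), and treat the singular case by applying the nonsingular result to $x^*$ together with a set identity. The paper writes the nonsingular case as a short chain of equalities of $\Min$-families rather than two inclusions, but the content is identical.

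One point to tighten: for the singular-case set identity you need not only that $\{x\}$ is the unique circuit containing $x$ (via~\ref{item:C2}) but also that \emph{no} circuit of $M$ contains $x^*$. This follows from~\ref{item:C4} applied to such a circuit and the circuit $\{x\}$ (their union would contain exactly one divergence). The paper simply asserts this fact; your final paragraph gestures at it but garbles the statement, so make sure to record it explicitly.
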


\begin{proof}
    By the definition of $M|x$, if $x$ is nonsingular, then $\{x\}$ is not a circuit of $M$ and
  \begin{align*}
    \cC(M|x)
    & = \Min \{ C \in \cA : C \not\subseteq B \text{ for all bases $B$ of $M$ with $x \in B$} \} \\
    & = \Min \{ C \in \cA : C\cup\{x\} \text{ is dependent in $M$} \} \\
    & = \Min \{ C \in \cA : C \text{ or } C\cup\{x\} \text{ is a circuit of $M$} \} \\
    & = \Min \{ C \setminus\{x\} : x^\ast \not\in C \in \cC(M)\},
  \end{align*}
where $\cA$ is the set of all subtransversals in $E \setminus \skewpair{x}$.
Now we assume that $x$ is singular. Then $\{x\}$ is the only circuit of $M$ containing $x$, and $M$ has no circuit containing $x^*$. Since $M|x = M|x^*$, by the previous result, we have 
  \begin{align*}
      \cC(M|x) = \cC(M|x^*)
      &=
      \Min \{ C \setminus\{x^*\} : x \not\in C \in \cC(M) \}\\
      % C' \text{ is a circuit of $M$ not containing $x$}\} \\
      &=
      \Min \{ C \setminus\{x\} : x^* \not\in C \in \cC(M) \text{ and } C \neq \{x\} \}.
      \qedhere
  \end{align*}
\end{proof}

\subsection{Matroids and orthogonal matroids}\label{sec:matroid and orthogonal matroid}

%For a matroid $M$, let $\cB(M)$ and $\cC(M)$ denote the families of its bases and circuits, respectively.
Recall that if $M$ is a matroid on $[n]$, then $\lift{M}$ is an orthogonal matroid whose set of bases $\cB(\lift{M})$ is given by $\{B \cup ([n]\setminus B)^* : B \in \cB(M)\}$. There is a similar result for circuits. 

\begin{proposition}[Bouchet,~Proposition~4.1~of~\cite{BouchetMM2}]\label{eq:circuits of lift}
Let $M$ be a matroid. Then the set of circuits of the orthogonal matroid $\lift{M}$ is 
\begin{align*}
  \cC(\lift{M}) = \{C : \text{$C$ is a circuit of $M$}\} \cup \{D^* : \text{$D$ is a cocircuit of $M$}\}.
\end{align*}
\end{proposition}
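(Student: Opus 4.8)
The plan is to show the two inclusions between $\cC(\lift{M})$ and the claimed set by using the correspondence between bases of $M$ and bases of $\lift{M}$, together with the characterization of fundamental circuits in Proposition~\ref{prop:fundamental-circuit}. Recall that every circuit of an orthogonal matroid is a fundamental circuit $C(B,x)$ for a suitable basis $B$ and element $x\notin B$ (indeed, take any circuit $C$, any $x\in C$, a basis $B'$ containing $C\setminus\{x\}$, and note $C = C(B',x)$ by uniqueness). So it suffices to compute $C_{\lift{M}}(B',x)$ for every basis $B'$ of $\lift{M}$ and every $x\notin B'$, and check that the result is either a circuit of $M$ (when $x\in[n]$) or the star of a cocircuit of $M$ (when $x\in[n]^*$), and conversely that every such set arises this way.

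First I would set up notation: a basis of $\lift{M}$ has the form $B' = B\cup([n]\setminus B)^*$ for a basis $B$ of $M$. Split into two cases according to whether the extra element $x$ lies in $[n]$ or in $[n]^*$. If $x = e\in[n]\setminus B$, then by Proposition~\ref{prop:fundamental-circuit}, $C_{\lift{M}}(B',e) = \{e\}\cup\{b\in B' \setminus\{e^*\} : B'\symdiff\skewpair{b}\symdiff\skewpair{e}\in\cB(\lift{M})\}$. I would analyze this twisting operation: twisting $B'$ by $\skewpair{b}\symdiff\skewpair{e}$ corresponds, on the matroid side, to the symmetric difference $B\symdiff\{b,e\}$ when $b\in B\cap[n]$ (which must be a basis of $M$), and one checks that for $b = f^*$ with $f\in[n]\setminus B$ the twist does not yield an element of $\cB(\lift{M})$ because the resulting set fails the ``same cardinality on $[n]$'' condition. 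Hence $C_{\lift{M}}(B',e) = \{e\}\cup\{b\in B\cap[n] : B\symdiff\{b,e\}\in\cB(M)\}$, which is exactly the fundamental circuit $C_M(B,e)$ of the matroid $M$. Since every circuit of $M$ arises as such a fundamental circuit, this case gives precisely $\{C : C\text{ is a circuit of }M\}$.

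Next, the dual case: if $x = e^*$ with $e\in B$, then $C_{\lift{M}}(B',e^*)\subseteq B'\cup\{e^*\}$. By an identical analysis (or by invoking $\lift{M}^* = \lift{M^*}$ up to the natural identification, since dualizing a matroid and lifting commute), $C_{\lift{M}}(B',e^*) = D^*$ where $D = C_{M^*}(([n]\setminus B), e)$ is the fundamental circuit of $M^*$ with respect to the basis $[n]\setminus B$ and element $e\notin[n]\setminus B$; this is a cocircuit of $M$, and conversely every cocircuit of $M$ appears. Combining the two cases gives the claimed equality; one should also note that the two families on the right are disjoint (circuits of $M$ live in $[n]$, stars of cocircuits live in $[n]^*$), so no minimality issue arises.

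The main obstacle I expect is the careful bookkeeping in the twisting computation — verifying exactly which twists $B'\symdiff\skewpair{b}\symdiff\skewpair{e}$ remain bases of $\lift{M}$ and matching them with the matroid exchange $B\symdiff\{b,e\}$, while ruling out the ``mixed'' terms where $b$ and $e$ are on opposite sides of the involution. This is essentially elementary but requires tracking how the parity/cardinality condition $|B\cap[n]|$ is affected by each twist. An alternative, perhaps cleaner, route would be to avoid fundamental circuits entirely and instead verify directly that the right-hand family satisfies axioms \ref{item:C1}--\ref{item:C5} and that it is contained in (hence equal to, by the uniqueness in the circuit characterization) the true circuit family of $\lift{M}$; but the fundamental-circuit approach is more economical given Proposition~\ref{prop:fundamental-circuit} is already available.
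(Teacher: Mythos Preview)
The paper does not give its own proof of this proposition; it is stated with a citation to Bouchet's work and then used as a black box. Your argument via fundamental circuits, using Proposition~\ref{prop:fundamental-circuit}, is correct and self-contained: the bookkeeping you flag as the main obstacle is exactly as simple as you suspect (twisting $B' = B\cup([n]\setminus B)^*$ by $\skewpair{b}\symdiff\skewpair{e}$ changes $|B'\cap[n]|$ by $0$ or $\pm 2$ according to whether $b$ and $e$ lie on the same side of the involution, so only the same-side case can yield a basis of $\lift{M}$), and the duality $\lift{M^*} = (\lift{M})^*$ that you invoke for the cocircuit half is exactly what the paper records in the sentence following the proposition. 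The fact that every circuit of an orthogonal matroid arises as a fundamental circuit is implicit in Lemma~\ref{lem:extendcircuit}. So your proposal supplies a valid proof of a result the paper takes as known.
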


%The circuit axiom~\ref{item:C4} extends the orthogonality of matroids, i.e. for every circuit $C$ and cocircuit $D$ of a matroid, we have $|C\cap D| \neq 1$. 
Furthermore, the lift of the dual matroid $M^*$ can be obtained by taking the involution $*$ for all bases and circuits of the lift of the original matroid $M$. In other words, $\mathcal{B}(\lift{M^*}) = (\mathcal{B}(\lift{M}))^*$ and $\mathcal{C}(\lift{M^*}) = (\mathcal{C}(\lift{M}))^*$.
An element $x \in E$ is singular in $\lift{M}$ if and only if either $x \in [n]$ and $x$ is a loop in $M$, or $x \in [n]^*$ and $x^*$ is a coloop in $M$. Finally, minors of the lift of a matroid $M$ can be expressed as lifts of minors of $M$.

\begin{proposition}[Bouchet,~Corollary~5.3~of~\cite{BouchetMM2}]
Let $M$ be a matroid on $[n]$ and let $x \in [n]$. Then $\lift{M}|x = \lift{M / x}$ and $\lift{M}|x^* = \lift{M \setminus x}$. As a consequence, we have $\mathcal{C}(\lift{M} | x)
    = \mathcal{C}(M / x) \cup ( \mathcal{C}^*(M / x) )^*
    = \mathcal{C}(M / x) \cup ( \mathcal{C}(M^* \setminus x) )^*$,
where $\mathcal{C}^*(M/x)$ denotes the set of cocircuits of $M/x$.
\end{proposition}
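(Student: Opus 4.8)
The plan is to establish the two base-level identities $\lift{M}|x = \lift{M/x}$ and $\lift{M}|x^\ast = \lift{M\setminus x}$ by unwinding the definition of elementary minors in terms of bases, using only the formula $\cB(\lift{M}) = \{B \cup ([n]\setminus B)^\ast : B \in \cB(M)\}$, and then to read off the circuit statement from Proposition~\ref{eq:circuits of lift} applied to the matroid $M/x$.

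For the first identity I would split on whether $x$ is nonsingular in $\lift{M}$, which by the singularity criterion recalled above is equivalent to $x$ not being a loop of $M$. If $x$ is nonsingular, then by definition $\cB(\lift{M}|x) = \{B' \setminus \{x\} : x \in B' \in \cB(\lift{M})\}$; writing $B' = B \cup ([n]\setminus B)^\ast$ for a basis $B$ of $M$ with $x \in B$, we get $B' \setminus \{x\} = (B \setminus \{x\}) \cup \bigl(([n]\setminus\{x\})\setminus(B\setminus\{x\})\bigr)^\ast$, and since the sets $B \setminus \{x\}$ with $x \in B \in \cB(M)$ are exactly the bases of $M/x$, this collection is precisely $\cB(\lift{M/x})$ on the ground set $E \setminus \skewpair{x}$. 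If $x$ is a loop, then $x$ is singular, so $\lift{M}|x := \lift{M}|x^\ast$; here $x^\ast$ is nonsingular because a loop is never a coloop, and the same computation applies with the roles of $x$ and $x^\ast$ exchanged, using that $M\setminus x = M/x$ has exactly the bases of $M$, all of which avoid $x$.

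The second identity $\lift{M}|x^\ast = \lift{M\setminus x}$ can be handled by the symmetric case analysis — splitting on whether $x$ is a coloop of $M$ (equivalently, whether $x^\ast$ is singular in $\lift{M}$) and using that the bases of $M\setminus x$ are $\{B : x\notin B\in\cB(M)\}$ when $x$ is not a coloop, and $\{B\setminus\{x\} : B \in \cB(M)\}$ when it is — or alternatively deduced from the first identity by duality, via $(\lift{M})^\ast = \lift{M^\ast}$, the identity $N^\ast|y = (N|y^\ast)^\ast$ for orthogonal matroids $N$, and $(M^\ast/x)^\ast = M\setminus x$. The circuit statement then follows immediately: by the first identity and Proposition~\ref{eq:circuits of lift} applied to $M/x$,
\[
  \cC(\lift{M}|x) = \cC(\lift{M/x}) = \cC(M/x) \cup \{D^\ast : D \text{ a cocircuit of } M/x\} = \cC(M/x) \cup (\cC^\ast(M/x))^\ast,
\]
and since the cocircuits of $M/x$ are the circuits of $(M/x)^\ast = M^\ast \setminus x$, we get $(\cC^\ast(M/x))^\ast = (\cC(M^\ast\setminus x))^\ast$.

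I expect the only real subtlety to be the bookkeeping in the singular cases — verifying that loops and coloops of $M$ behave as claimed under the lift, that the degenerate possibilities (such as $x$ being simultaneously a loop and a coloop) cannot occur, and, in the duality route, that elementary minors interact correctly with the dual even when the element in question is singular. Everything else is a direct translation through the lift formula.
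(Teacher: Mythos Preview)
Your proposal is correct. The paper does not supply its own proof of this proposition; it is stated with attribution to Bouchet~\cite{BouchetMM2} and no argument is given, so there is nothing to compare against. Your approach --- unwinding the elementary-minor definition on bases via the formula $\cB(\lift{M}) = \{B \cup ([n]\setminus B)^\ast : B \in \cB(M)\}$, splitting on the loop/coloop cases for singularity, and then reading off the circuit statement from Proposition~\ref{eq:circuits of lift} --- is the natural direct argument and is carried out correctly.
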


%%%%%%%%%%%%%%%%%%%%%%%%%%%%%%%%%%%%%%%%%%%%
%%%%%%%%%%%%%%%%%%%%%%%%%%%%%%%%%%%%%%%%%%%%

\section{Matroids over Tracts}\label{section:F-matroids}

Let $0 \leq r \leq n$ be nonnegative integers and consider the finite set $E = [n] = \{1, \dots, n\}$. Denote by $\binom{E}{r}$ the family of all $r$-element subsets of $E$. In this section, we review the study of matroids over tracts in~\cite{Anderson2019} and~\cite{Baker2019}. 

\subsection{Tracts}\label{sec:tracts}

A {\em tract} $F = (G, N_F)$ is an abelian group $G$ (written multiplicatively), together with an additive relation structure $N_F$, which is a subset of the group semiring $\mathbb{N}[G]$ satisfying:

\begin{enumerate}[label=\rm(T\arabic*)]
    \item The zero element $0$ of $\mathbb{N}[G]$ belongs to $N_F$. 
    \item The identity element $1$ of $G$ does not belong to $N_F$.
    \item There is a unique element $\epsilon$ of $G$ such that $1 + \epsilon \in N_F$.
    \item If $g \in G$ and $a \in N_F$, then $ga \in N_F$. 
\end{enumerate}

We think of $N_F$ as linear combinations of elements of $G$ which `sum to zero', and call it  the {\em null set} of the tract $F$. 

A useful lemma from~\cite{Baker2019} about tracts is as follows. 

\begin{lemma}[Lemma~1.1~of~\cite{Baker2019}]
Let $F = (G,N_F)$ be a tract. Then we have the following:
\begin{enumerate}[label=\rm(\roman*)]
\item If $x,y \in G$ with $x+y \in N_F$, then $y = \epsilon x$.

\item $\epsilon^2 = 1$.

\item $G \cap N_F = \emptyset$.
\end{enumerate}
\end{lemma}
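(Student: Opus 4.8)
The plan is to derive all three parts directly from the tract axioms (T1)--(T4) and the fact that $G$ is a group, with part (i) doing most of the work.

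First I would prove (i). Suppose $x,y\in G$ with $x+y\in N_F$. Since $x\in G$, axiom (T4) applied with $g=x^{-1}$ gives $x^{-1}(x+y)\in N_F$, and distributivity in the group semiring $\N[G]$ rewrites this as $1+x^{-1}y\in N_F$. By the uniqueness clause of (T3), the only element $\epsilon'\in G$ with $1+\epsilon'\in N_F$ is $\epsilon$, so $x^{-1}y=\epsilon$, i.e.\ $y=\epsilon x$. (The degenerate case $x=y$ is not special: it merely forces $1+1\in N_F$, hence $\epsilon=1$ and $y=x=\epsilon x$.)

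Next, (ii) is an immediate corollary of (i). By (T3) we have $1+\epsilon\in N_F$; since addition in $\N[G]$ is commutative this is the same element as $\epsilon+1\in N_F$. Applying (i) with $x=\epsilon$ and $y=1$ yields $1=\epsilon\cdot\epsilon=\epsilon^2$. Finally, for (iii), suppose $g\in G\cap N_F$. Then (T4) with the group element $g^{-1}$ gives $g^{-1}g=1\in N_F$, contradicting (T2); hence $G\cap N_F=\emptyset$.

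There is essentially no real obstacle here; the only thing to be careful about is the bookkeeping between the multiplicative group $G$ and the semiring $\N[G]$ (so that "$x^{-1}(x+y)=1+x^{-1}y$" is justified by distributivity, and that the two-term sum $x+y$ is genuinely of the form needed to invoke (T3) after scaling). I would state these manipulations explicitly but not belabor them.
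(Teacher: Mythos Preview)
Your proof is correct and is the standard argument. The paper itself does not give a proof of this lemma; it is stated there as a citation of Lemma~1.1 from Baker--Bowler, so there is nothing to compare against.
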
 

Because of this lemma, we also write $F$ for the set $G \cup \{0\}$, and write $-1$ instead of $\epsilon$. We will sometimes use $G$ and $F^\times$ interchangeably. 

A {\em tract homomorphism} $\varphi: F_1 \to F_2$ is a group homomorphism $\varphi: F_1^\times \to F_2^\times$ such that the induced semiring homomorphism $\mathbb{N}[F_1^\times] \to \mathbb{N}[F_2^\times]$ maps $N_{F_1}$ to $N_{F_2}$. All tracts together with tract homomorphisms between them form a category. An {\em involution} $\tau$ of a tract $F$ is a tract homomorphism $\tau: F \to F$ such that $\tau^2$ is the identity map.

\begin{example}
The {\em initial tract} is $\mathbb{I} = (\{1, -1\}, \{0, 1 + (-1)\})$, where the multiplication on $\mathbb{I}^\times$ is the usual one. 
\end{example}

\begin{example}
Let $K$ be a field and let $G$ be a subgroup of $K^\times$. The multiplicative monoid $F = K/G = (K^\times/G) \cup \{0\}$ can be endowed with a natural tract structure by setting $N_F := \{\sum_{i = 1}^k x_i \in \mathbb{N}[K^\times/G] : 0 \in \sum_{i = 1}^k x_i\}$. We call tracts of this form {\em quotient hyperfields}. 
Especially, whenever $G = \{1\}$, one can view a field as a tract.
\end{example}

\begin{example}
%The following examples of quotient hyperfields are all tracts. 
The {\em Krasner hyperfield} is $\mathbb{K} = K/K^\times = (\{1\}, \N[1] \setminus \{1\})$ for an arbitrary field $K$ with more than two elements. This is the terminal object in the category of tracts.
The {\em hyperfield of signs} is $\mathbb{S} = \mathbb{R} / \mathbb{R}_{>0} = (\{\pm 1\}, N_{\mathbb{S}})$, where an element $\sum x_i \in \N[\{\pm 1\}]$ is in $N_{\mathbb{S}}$ if and only if there is at least one $x_i = 1$ and at least one $x_j = -1$, or all $x_i$ are zero.  
%The {\em weak hyperfield of signs} is $\mathbb{W} = \mathbb{F}_p / (\mathbb{F}_p^\times)^2$ for any finite field $\mathbb{F}_p$ with prime number $p \equiv 3 \pmod{4}$ but $p = 3$. 
%The {\em tropical hyperfield} is $\mathbb{T} = K/\ker(v)$ where $K$ is any valued field with a surjective valuation map $v$. As a tract, the tropical hyperfield $\mathbb{T} = \R \cup \{+ \infty\}$, where $+\infty$ serves as the zero element in the tract. The multiplication on $\mathbb{T}^\times = \R$ is the usual addition, and the `addition' on $\mathbb{T}$ is defined as $\sum x_i \in N_{\mathbb{T}}$ if and only if the minimum of $x_i$'s is achieved at least twice. 
\end{example}

\begin{example}
The {\em tropical hyperfield} $\mathbb{T} = (\R \cup \{+ \infty\}, N_{\mathbb{T}})$, where $+\infty$ serves as the zero element in the tract. The multiplication on $\mathbb{T}^\times = \R$ is the usual addition, and the `addition' on $\mathbb{T}$ is defined as $\sum x_i \in N_{\mathbb{T}}$ if and only if the minimum of $x_i$'s is achieved at least twice. 
\end{example}

\begin{example}
A {\em{partial field}} $P$ is a pair $(G, R)$ of a commutative ring $R$ with $1$ and a subgroup $G$ of the group of units of $R$ such that $-1$ belongs to $G$ and $G$ generates the ring $R$. We can associate a tract structure on any partial field $P$ by setting the null set to be the set of all formal sums $\sum_{i = 1}^k x_i \in \mathbb{N}[G]$ such that $\sum_{i = 1}^k x_i = 0 \in R$. Notice that a partial field with $G = R \setminus \{0\}$ is the same thing as a field. 
\end{example}

\begin{example}
The {\em regular partial field} is $\U_0 = (\{1, -1\}, \Z)$.
%The {\em dyadic partial field} is $\D := (\langle -1, \frac{1}{2} \rangle, \Z[\frac{1}{2}])$.
The {\em sixth-root-of-unity partial field} is $R_6 := (\langle \zeta \rangle, \Z[\zeta])$ where $\zeta \in \C^\times$ is a root of $x^2-x+1 = 0$.
%We remark that $R_6$ is isomorphic to ${\F}_3 \times {\F}_4$ as partial fields~\cite{vanZwam2009thesis}.
\end{example}

We list some examples of tracts and tract homomorphisms in Figure~\ref{fig:tracts}.

\begin{figure}
  \begin{tikzpicture}
    \node [align=center] at (0.3,0.5) (I) {$\mathbb{I}$};
    \node [align=center] at (2,0.5) (U0) {$\mathbb{U}_0$};

    \node [align=center] at (4,2) (F2) {$\mathbb{F}_2$};
    \node [align=center] at (6,2) (F4) {$\mathbb{F}_4$};

    \node [align=center] at (4,1) (R6) {$R_6$};
    \node [align=center] at (6,1) (F3) {$\mathbb{F}_3$};

    % \node [align=center] at (4,0) (D) {$\mathbb{D}$};
    % \node [align=center] at (6,0) (F5) {$\mathbb{F}_5$};

    \node [align=center] at (3.3,0) (Q) {$\mathbb{Q}$};
    \node [align=center] at (4.7,0) (R) {$\mathbb{R}$};
    \node [align=center] at (6,0) (S) {$\mathbb{S}$};

    \node [align=center] at (6,-1) (T) {$\mathbb{T}$};

    \node [align=center] at (8,0.5) (K) {$\mathbb{K}$};

    \draw [->] (I) -- (U0);
    \draw [->] (U0) -- (F2);
    \draw [->] (U0) -- (R6);
    % \draw [->] (U0) -- (D);
    \draw [->] (U0) -- (Q);

    \draw [->] (F2) -- (F4);

    \draw [->] (R6) -- (F4);
    \draw [->] (R6) -- (F3);

    % \draw [->] (D) -- (F3);
    % \draw [->] (D) -- (F5);

    \draw [->] (Q) -- (R);
    \draw [->] (R) -- (S);
    \draw [->] (Q) -- (T);

    \draw [->] (F4) -- (K);
    \draw [->] (F3) -- (K);
    % \draw [->] (F5) -- (K);
    \draw [->] (S) -- (K);
    \draw [->] (T) -- (K);
  \end{tikzpicture}
  \caption{Examples of tracts and tract homomorphisms.}
  \label{fig:tracts}
\end{figure}

The category of tracts admits products, which will be useful for studying representations of matroids and orthogonal matroids in Section~\ref{sec:applications}. Let $F_1, F_2$ be tracts. The (categorical) {\em product} $F_1 \times F_2$ can be constructed explicitly as follows. As a set, $F_1 \times F_2$ is $(F_1^\times \oplus F_2^\times) \cup \{0\}$, endowed with the coordinate-wise multiplication on $F_1^\times \oplus F_2^\times$, and the rule $0 \cdot (x_1, x_2) = (x_1, x_2) \cdot 0 = 0$. The null set of $F_1 \times F_2$ is 
\[
N_{F_1 \times F_2} := \left\{ \sum_{i=1}^k (x_i,y_i) \in \N[F_1^\times \oplus F_2^\times] : \sum_{i = 1}^k x_i \in N_{F_1} \text{ and } \sum_{i = 1}^k y_k \in N_{F_2}\right\}. 
\]

\subsection{Grassmann-Pl\"{u}cker functions}
The easiest way of defining matroids over tracts is via the Grassmann-Pl\"{u}cker functions. This is also the tract analogue of the basis exchange axiom for ordinary matroids. 

\begin{definition}
Let $F$ be a tract. A {\em strong Grassmann-Pl\"{u}cker function of rank $r$ on $E$ with coefficients in $F$} is a function $\varphi: E^r \to F$ satisfying \ref{GP1}--\ref{GP3}:
\begin{enumerate}[label=\rm(GP\arabic*)]
    \item\label{GP1} $\varphi$ is not identically zero.  
    
    \item\label{GP2} $\varphi$ is alternating, i.e.,
    for all $x_1, \dots x_r \in E$, $\varphi(x_1,\dots,x_r) = 0$ if $x_i = x_j$ for some $i\neq j$, and $\varphi(x_1, \dots, x_i, \dots, x_j, \dots, x_r) = -\varphi(x_1, \dots, x_j, \dots, x_i, \dots, x_r)$.
    
    \item\label{GP3} For any two subsets $\{x_1, \dots, x_{r+1}\}$ and $\{y_1, \dots, y_{r-1}\}$ of $E$, we have the {\em Grassmann-Pl\"{u}cker relations}: 
    \[
    \sum_{k = 1}^{r+1}(-1)^k \varphi(x_1, \dots, \hat{x}_k, \dots, x_{r+1}) \varphi(x_k, y_1, \dots, y_{r-1}) \in N_F. 
    \]
\end{enumerate}
\end{definition}

\begin{definition}
Let $F$ be a tract. A {\em weak Grassmann-Pl\"{u}cker function of rank $r$ on $E$ with coefficients in $F$} is a function $\varphi: E^r \to F$ such that the support $\{\{x_1,\dots,x_r\} \in \binom{E}{r} : \varphi(x_1,\dots,x_r) \neq 0\}$ of $\varphi$ is the set of bases of a rank $r$ matroid on $E$, and $\varphi$ satisfies~\ref{GP1}, \ref{GP2}, and the next weaker replacement of~\ref{GP3}.
\begin{enumerate}[label=\rm(GP3)$'$]
    \item\label{GP3'} For any two subsets $J_1 = \{x_1, \dots, x_{r+1}\}$ and $J_2 = \{y_1, \dots, y_{r-1}\}$ of $E$ with $|J_1|=r+1$, $|J_2|=r-1$, and $|J_1 \setminus J_2| = 3$, we have the {\em $3$-term Grassmann-Pl\"{u}cker relations}: 
    \[
    \sum_{k = 1}^{r+1}(-1)^k \varphi(x_1, \dots, {\hat{x}}_k, \dots, x_{r+1}) \varphi(x_k, y_1, \dots, y_{r-1}) \in N_F. 
    \]
\end{enumerate}
\end{definition}

Two strong (resp. weak) Grassmann-Pl\"{u}cker functions $\varphi_1$ and $\varphi_2$ are {\em equivalent} if $\varphi_1 = c \cdot \varphi_2$ for some $c \in F^\times$, and we call an equivalence class $M_\varphi := [\varphi]$ of strong (resp. weak) Grassmann-Pl\"{u}cker functions a {\em strong \textup{(resp.} weak\textup{)} matroid over the tract $F$}, or simply a {\em strong \textup{(resp.} weak\textup{)} $F$-matroid}. It can be shown that every strong $F$-matroid is a weak $F$-matroid. We denote by $\underline{M}_\varphi$ the underling ordinary matroid of a strong or weak $F$-matroid $M_\varphi$ whose set of bases is $\mathcal{B}(\underline{M}_\varphi) = \{\{x_1,\dots,x_r\} \in \binom{E}{r}: \varphi(x_1,\dots,x_r) \ne 0\}$. 

\subsection{$F$-circuits and dual pairs} 
We now give two cryptomorphic definitions of matroids over a tract $F$ in terms of $F$-circuits and dual pairs of $F$-signatures. 

Denote by $F^E$ the set of all functions from $E$ to $F$. The {\em support} of $X \in F^E$ is the set of elements $e$ in $E$ such that $X(e) \ne 0$, and is denoted by $\underline{X}$ or $\supp{X}$. Given two functions $X = (x_1, \dots, x_n)$ and $Y = (y_1, \dots, y_n) \in F^E$, where $F$ is endowed with an involution $x \mapsto \overline{x}$, the {\em inner product} of $X$ and $Y$ is $X \cdot Y := \sum_{k = 1}^n x_k \overline{y_k}$. We say that two functions $X$ and $Y$ are {\em orthogonal}, denoted by $X \perp Y$, if $X \cdot Y \in N_F$.

When $F$ is the field $\mathbb{C}$ of complex numbers or the sixth-root-of-unity partial field $R_6$, the involution $x \mapsto \overline{x}$ should be taken to be the complex conjugation. For $F \in \{\mathbb{K}, \mathbb{S}, \mathbb{T}\}$, the involution should be taken to be the identity map. 

The {\em linear span} of $X_1, \dots, X_k \in F^E$ is defined to be the set of all functions $X \in F^E$ such that 
\[
c_1X_1 + c_2X_2 + \dots + c_kX_k - X \in (N_F)^E
\]
for some $c_1,\dots,c_k \in F$.

\begin{definition}\label{def:matroid signature}
Let $\ul{M}$ be an ordinary matroid on $E$. An {\em $F$-signature} of $\ul{M}$ is a subset $\mathcal{C} \subseteq F^E$ such that the following hold:
 \begin{enumerate}[label=\rm(\roman*)]
        \item The support $\underline{\mathcal{C}} := \{\underline{X} : X \in \mathcal{C} \}$ of $\mathcal{C}$ is the set of circuits of $\ul{M}$. 
        \item For all $X \in \mathcal{C}$ and $\alpha \in F^\times$, we have $\alpha X \in \mathcal{C}$. 
        \item {If $X,Y \in \mathcal{C}$ and $\underline{X} \subseteq \underline{Y}$, then $X = \alpha Y$ for some $\alpha \in F^\times$.}
    \end{enumerate}
\end{definition}

For an ordinary matroid $\ul{M}$ on $E$, we denote by $C_{\ul{M}}(B,e)$ the fundamental circuit of $M$ with respect to $B\in \cB(\ul{M})$ and $e\in E\setminus B$. 
The subscript will be omitted if no confusion arises.

\begin{definition}\label{def:F-circuit sets of matroids}
    Let $F$ be a tract and let $\ul{M}$ be an ordinary matroid on $E$.
    A subset $\mathcal{C}$ of $F^{E}$ is called a {\em strong $F$-circuit set of $\ul{M}$} if it satisfies the following axioms:
    \begin{enumerate}[label=\rm(CS\arabic*)]
        \item\label{CS1}
        $\mathcal{C}$ is an $F$-signature of $\ul{M}$.
        \item\label{CS2}\label{item:CS2}
        For every basis $B$ of $\ul{M}$ and for each %$F$-circuit 
        $X \in \cC$, $X$ is in the linear span of $\{X_{e}\}_{e\in E\setminus B}$, where $X_{e} \in \cC$ has support $C(B,e)$.
        % Let $B$ be an arbitrary basis of $\ul{M}$ and for each $e \in E \setminus B$, let $X_e$ be the unique element of $\mathcal{C}$ such that $X_e(e) = 1$ and whose support is the fundamental circuit of $B$ in $\ul{M}$ with respect to $e$. Then every $X \in \mathcal{C}$ is in the linear span of $\{X_e\}_{e \in E\setminus B}$. 
    \end{enumerate}
    We call $\mathcal{C}$ a {\em weak $F$-circuit set of $\ul{M}$} if it satisfies \ref{CS1} and the following replacement: 
    \begin{enumerate}[label=\rm(CS\arabic*)$'$]
    \setcounter{enumi}{1}
        \item\label{item:CS2'}
        For every basis $B$ of $\ul{M}$ and distinct elements $e_1,e_2 \in E\setminus B$, if 
        %two $F$-circuits 
        $X_1$ and $X_2$ in $\cC$ have supports $C(B,e_1)$ and $C(B,e_2)$, respectively, and $f$ is a common element of the two supports, then there exists 
        % an $F$-circuit 
        $Y\in \cC$ such that $Y(f) = 0$ and $Y$ is in the linear span of $X_1$ and $X_2$.
        % Let $B$ be an arbitrary basis of $\ul{M}$, and let $e_1, e_2 \in E\backslash B$ be distinct, and for $i \in \{1, 2\}$, let $X_i$ be the unique element of $\mathcal{C}$ such that $X_i(e_i) = 1$ and whose support is the fundamental circuit of $B$ with respect to $e_i$. Then there exists $X \in \mathcal{C}$ belonging to the linear span of $X_1$ and $X_2$.
    \end{enumerate}
\end{definition}

\begin{definition}\label{def:dual pairs of matroids}
    Let $F$ be a tract and let $\ul{M}$ be an ordinary matroid on $E$.
    A pair $(\mathcal{C},\mathcal{D})$ of subsets of $F^E$ is called a {\em strong dual pair of $F$-signatures of $\ul{M}$} if
    \begin{enumerate}[label=\rm(DP\arabic*)]
        \item\label{DP1} $\mathcal{C}$ is an $F$-signature of $\ul{M}$. 
        \item\label{DP2} $\mathcal{D}$ is an $F$-signature of the dual matroid $\ul{M}^*$. 
        \item\label{DP3} $X\perp Y$ for all $X \in \mathcal{C}$ and $Y \in \mathcal{D}$.
    \end{enumerate}
A pair $(\mathcal{C},\mathcal{D})$ is called a {\em weak dual pair of $F$-signatures of $\ul{M}$} if it satisfies \ref{DP1}, \ref{DP2}, and the following weakening of~\ref{DP1}:
    \begin{enumerate}[label=\rm(DP\arabic*)$'$]
        \setcounter{enumi}{2}
        \item\label{Dp3'} $X\perp Y$ for all $X \in \mathcal{C}$ and $Y \in \mathcal{D}$ with $|\underline{X} \cap \underline{Y}| \leq 3$.
    \end{enumerate}
\end{definition}

\subsection{$F$-vectors}
One can naturally ask for an axiomatization of linear spaces over a tract $F$. Anderson answered this question and gave another cryptomorphic definition of strong $F$-matroids in terms of $F$-vectors in~\cite{Anderson2019}.

%Let $F$ be a tract. The set of {\em $F$-vectors} of an $F$-matroid $M$ is defined as the set $V(M)$ of all $X \in F^E$ such that $X \perp D$ for all $F$-cocircuits $D$ of $M$. 

For a subset $\mathcal{W} \subseteq F^E$, a {\em support basis} for $\mathcal{W}$ is a minimal subset of $E$ meeting every element of $\supp(\mathcal{W} \setminus \{0\})$. A {\em reduced row-echelon form} of $\mathcal{W}$ with respect to a support basis $B$ is a subset $\Phi_B = \{w_i^B\}_{i \in B} \subseteq \mathcal{W}$ such that $w_i^B(j) = \delta_{ij}$ for each $i,j\in B$, and every $w \in \cW$ is in the linear span of $\Phi_B$. It is not difficult to see that if $\Phi_B$ exists, then it is unique. We say a collection $\Phi = \{\Phi_B\}$ of reduced row-echelon forms is {\em tight} if $\mathcal{W}$ is precisely the set of elements of $F^E$ which are in the linear span of $\Phi_B$ for all $\Phi_B \in \Phi$. 

\begin{definition}
A subset $\mathcal{W}$ of $F^E$ is an {\em $F$-vector set on $E$} if the following hold:
%of a strong $F$-matroid $M$} if: 
\begin{enumerate}[label=\rm(V\arabic*)]
\item Every support basis has a reduced row-echelon form. 
\item The collection of all such reduced row-echelon forms is tight. 
    \end{enumerate}
\end{definition}

\subsection{Crypotomorphisms} The main results of~\cite{Baker2019,Anderson2019} are the following theorems. 

\begin{theorem}[Theorem~4.17~of~\cite{Baker2019} and Theorem~2.18~of~\cite{Anderson2019}]\label{thm:strong matroid bijection}
  Let $E$ be a finite set and let $F$ be a tract endowed with an involution $x \mapsto \overline{x}$. Then there are natural bijections between: 
  \begin{enumerate}
      \item Strong $F$-matroids on $E$.
      \item Strong $F$-circuit sets of matroids on $E$. 
      \item Ordinary matroids on $E$ endowed with a strong dual pair of $F$-signatures. 
      \item $F$-vector sets on $E$.
  \end{enumerate}
\end{theorem}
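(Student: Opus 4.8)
The plan is to establish the fourfold equivalence by exhibiting explicit maps among the four classes of objects and checking that they compose to the identity, following the template of Baker--Bowler and Anderson. Since every construction below is given by an explicit formula in the Pl\"ucker coordinates, respectively the circuit entries, naturality (compatibility with tract homomorphisms) is automatic, and the real work is verifying axioms and mutual inverseness. First I would pass from a strong $F$-matroid to a strong $F$-circuit set: given a strong Grassmann--Pl\"ucker function $\varphi$ of rank $r$ with underlying matroid $\ul{M} = \ul{M}_\varphi$, for each basis $B$ of $\ul{M}$ and $e \in E \setminus B$ define $X_{B,e} \in F^E$ supported on the fundamental circuit $C_{\ul{M}}(B,e)$ by $X_{B,e}(e) = \varphi(B)$ and $X_{B,e}(b) = \pm\,\varphi((B\setminus b)\cup e)$ for $b \in C(B,e)\setminus\{e\}$, with the sign chosen so the formula is independent of the ordering of $B$; let $\cC_\varphi$ be the set of all $\alpha X_{B,e}$ as $B$ ranges over bases, $e$ over $E\setminus B$, and $\alpha$ over $F^\times$. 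Checking that $\cC_\varphi$ is an $F$-signature of $\ul{M}$ is routine from the matroid axioms and the alternating property. The substantive point is axiom (CS2): for a fixed basis $B$, every element of $\cC_\varphi$ lies in the linear span of $\{X_{B,e}\}_{e\notin B}$. I would prove this by induction on the distance in the basis-exchange graph between $B$ and a basis witnessing the given circuit vector, the inductive step being precisely a Grassmann--Pl\"ucker relation of the appropriate length; this is the one place where the full strength of (GP3), rather than merely its $3$-term weakening, is needed.

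Conversely, from a strong $F$-circuit set $\cC$ of a matroid $\ul{M}$ I would reconstruct $\varphi$ up to a global unit: normalize $\varphi(B_0) = 1$ for a fixed basis $B_0$, then propagate along the basis-exchange graph, using the entries of circuit vectors with support $C(B_0,e)$ to pin down the successive ratios $\varphi(B')/\varphi(B)$ for adjacent bases $B,B'$; path-independence of the propagation, and then the full system of Grassmann--Pl\"ucker relations for the resulting $\varphi$, follow from (CS1)--(CS2) by elimination among circuits. That $\varphi \mapsto \cC_\varphi$ and $\cC \mapsto [\varphi]$ are mutually inverse is then bookkeeping. For the dual-pair picture, pairing $\cC_\varphi$ with the circuit set $\cD$ of the dual matroid $\ul{M}^*$ built from the dual Pl\"ucker function $\varphi^*$ yields a strong dual pair, the orthogonality (DP3) being a bilinear identity relating $\varphi$ and $\varphi^*$. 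The reverse implication --- that the first component $\cC$ of any strong dual pair $(\cC,\cD)$ already satisfies (CS2) --- is the technical heart: for $X \in \cC$ and a basis $B$, orthogonality of $X$ against each cocircuit vector in $\cD$ supported on a fundamental cocircuit $C_{\ul{M}^*}(B^*,f)$ reads off, coordinate by coordinate, the coefficients expressing $X$ in terms of the $X_{B,e}$, so that being orthogonal to every cocircuit upgrades to lying in the span of the fundamental circuit vectors.

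It remains to bring in the $F$-vector description. From a strong $F$-circuit set $\cC$, let $\cW \subseteq F^E$ be its linear span, i.e.\ the set of $X$ with $\sum_i c_i X_i - X \in (N_F)^E$ for some $X_i \in \cC$ and $c_i \in F$. One checks that the support bases of $\cW$ are exactly the bases of $\ul{M}$, that each basis $B$ carries the reduced row-echelon form $\Phi_B = \{w^B_b\}_{b\in B}$ obtained by rescaling suitable sums of the $X_{B,e}$ so that $w^B_b(b') = \delta_{bb'}$, and that $\{\Phi_B\}$ is tight; the nontrivial inclusion here --- an element lying in the span of every $\Phi_B$ must lie in $\cW$ --- is a reformulation of (CS2). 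Conversely, from an $F$-vector set $\cW$ one takes $\cC$ to be the set of nonzero elements of $\cW$ of inclusion-minimal support; their supports are the circuits of a matroid on $E$, and (CS1)--(CS2) hold because the reduced row-echelon forms furnish the linear combinations demanded by (CS2). Mutual inverseness in both directions uses tightness together with the observation that the minimal-support nonzero elements of the linear span of a circuit set are exactly its members.

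I expect the main obstacles to be (a) deducing (CS2) from the full Grassmann--Pl\"ucker relations, where the induction must accommodate circuits whose supports overlap in intricate ways, and (b) the ``orthogonal to all cocircuits forces membership in the span'' step in the dual-pair equivalence; these are two faces of the same local-to-global phenomenon, and are precisely where the distinction between strong and weak structures is felt. Once the circuit picture is secured, the $F$-vector equivalence is by comparison largely formal, and naturality is immediate from the explicit formulas throughout.
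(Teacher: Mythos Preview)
The paper does not give its own proof of this statement: it is recorded in the background Section~\ref{section:F-matroids} as a citation of Baker--Bowler \cite[Theorem~4.17]{Baker2019} and Anderson \cite[Theorem~2.18]{Anderson2019}, with no argument supplied. The paper's original work is the \emph{orthogonal} analogue (Theorems~\ref{thm:main}--\ref{thm:main-weak2}), for which the cryptomorphisms are proved in Section~\ref{sec:OMoverTracts}. So there is no ``paper's proof'' to compare your sketch against; your outline is, in spirit, the argument of the cited sources.

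That said, one piece of your sketch is garbled and would not go through as written. In the passage between circuit sets and $F$-vector sets you take $\cW$ to be the linear span of the circuit set $\cC$ and then assert that the support bases of $\cW$ are the bases of $\ul{M}$ and that the reduced row-echelon forms $\Phi_B=\{w_b^B\}_{b\in B}$ arise by rescaling sums of the fundamental circuit vectors $X_{B,e}$. But with the paper's conventions (Definition preceding Theorem~\ref{thm:strong matroid bijection}), a support basis of $\cW$ is a minimal set meeting every nonzero support in $\cW$, and $\Phi_B$ is indexed by $b\in B$ with $w_b^B(b')=\delta_{bb'}$ for $b'\in B$; hence $\supp(w_b^B)\subseteq\{b\}\cup(E\setminus B)$, which is a fundamental \emph{cocircuit}, not a circuit. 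The minimal-support nonzero elements of an $F$-vector set are the cocircuit-type vectors, and the correct construction pairs the $F$-vector set with the \emph{dual} Pl\"ucker function (equivalently, with the cocircuit signature $\cD$), not with $\cC$. Your maps in that part therefore do not land where you claim; fixing this is just a matter of inserting a duality, but as stated the ``explicit formulas'' do not match, and the claimed tightness/inverse checks would fail.
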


%We also have the weak version stating as follows. 

\begin{theorem}[Theorem~4.18~of~\cite{Baker2019}]\label{thm:weak matroid bijection}
  Let $E$ be a finite set and let $F$ be a tract endowed with an involution $x \mapsto \overline{x}$. Then there are natural bijections between: 
  \begin{enumerate}
      \item Weak $F$-matroids on $E$.
      \item Weak $F$-circuit sets of matroids on $E$. 
      \item Ordinary matroids on $E$ endowed with a weak dual pair of $F$-signatures. 
  \end{enumerate}
\end{theorem}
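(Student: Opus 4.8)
The plan is to reprove this along the lines of Theorem~\ref{thm:strong matroid bijection}, with each global axiom replaced by its $3$-term version; the essential point is that every construction entering the strong cryptomorphism only reads its input through data supported on at most three ``extra'' elements, so the weak axioms~\ref{GP3'}, \ref{item:CS2'}, and~\ref{Dp3'} suffice in place of their global counterparts. I would exhibit three maps around the cycle $(1)\to(2)\to(3)\to(1)$ and check that the composite is the identity, which forces each map to be a bijection.

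From a weak $F$-matroid $M_\varphi=[\varphi]$ of rank $r$ I build a weak $F$-circuit set by the Cramer formula: for $S=\{x_1,\dots,x_{r+1}\}\in\binom{E}{r+1}$ let $X_S\in F^E$ have $X_S(x_k)=(-1)^k\varphi(x_1,\dots,\widehat{x_k},\dots,x_{r+1})$ and $X_S(y)=0$ for $y\notin S$ (well defined up to sign by~\ref{GP2}), and set $\mathcal{C}:=\{\alpha X_S:\alpha\in F^\times,\ \underline{X_S}\in\mathcal{C}(\underline{M}_\varphi)\}$. That $\underline{\mathcal{C}}$ is the circuit set of $\underline{M}_\varphi$ is a matroid fact, and closure under $F^\times$ is immediate; the uniqueness clause of Definition~\ref{def:matroid signature} and axiom~\ref{item:CS2'} both come out of applying~\ref{GP3'} to the $(r+1)$- and $(r-1)$-sets attached to a basis $B$, elements $e_1,e_2\notin B$, and a common element $f\in C(B,e_1)\cap C(B,e_2)$ --- this is the one real computation. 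To pass from a weak $F$-circuit set $\mathcal{C}$ of $\underline{M}$ to a weak dual pair, I take $\mathcal{D}$ to be the analogous object for $\underline{M}^*$: fixing a basis $B$ of $\underline{M}$ and $f\in B$, the orthogonality equations $Y\perp X_e$ ranging over those $e\notin B$ with $f\in C(B,e)$ pin down a vector $Y$ supported on the fundamental cocircuit up to the scalar $Y(f)$ --- and each of these intersections can be arranged to have size $\leq 3$, so only~\ref{Dp3'} is used --- while consistency of these vectors across different choices of $B$ follows from the circuit axioms; then $\mathcal{D}$ is the set of all $F^\times$-scalings. Finally, from a weak dual pair $(\mathcal{C},\mathcal{D})$ I rebuild $\varphi$ by normalizing on a reference basis $B_0$ and propagating along edges of the basis-exchange graph using ratios of circuit and cocircuit entries; well-definedness around the fundamental cycles of that graph is again a $3$-term relation, hence a consequence of~\ref{Dp3'}.

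The remaining work is to check that going once around the cycle returns the original object --- each such check is short bookkeeping once the three constructions are written down --- and to note that restricting all three notions to their strong versions recovers Theorem~\ref{thm:strong matroid bijection}, so the weak bijections are compatible with the strong ones. I expect the main obstacle to be the ``locality'' lemma underlying everything: verifying that in each of the three constructions the only obstruction --- well-definedness of $X_S$ up to scaling, consistency of the defining equations for $\mathcal{D}$, and well-definedness of $\varphi$ on the basis-exchange graph --- is confined to a rank-$\leq 3$ or corank-$\leq 3$ subconfiguration, so that the $3$-term axioms can genuinely substitute for the global ones. Granting this, the argument reduces to a minor-by-minor application of the strong cryptomorphism of Theorem~\ref{thm:strong matroid bijection}.
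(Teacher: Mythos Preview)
The paper does not prove this theorem. Theorem~\ref{thm:weak matroid bijection} sits in Section~\ref{section:F-matroids}, which is explicitly a review of~\cite{Baker2019} and~\cite{Anderson2019}; the result is stated with attribution to~\cite[Theorem~4.18]{Baker2019} and no argument is given. So there is no in-paper proof to compare your proposal against.

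That said, your outline is broadly in the spirit of how Baker--Bowler actually argue, with one caveat worth flagging. Your step $(2)\Rightarrow(3)$ is the delicate one: you propose to manufacture the cocircuit signature $\mathcal{D}$ directly from $\mathcal{C}$ by solving two-term orthogonality relations against fundamental circuits, and you assert that consistency across different bases ``follows from the circuit axioms.'' This is exactly where the real work lives, and it is not as local as you suggest. In~\cite{Baker2019} the passage from circuits to cocircuits goes through the Grassmann--Pl\"ucker function: one first reconstructs $\varphi$ from $\mathcal{C}$ (using Maurer's homotopy theorem to check well-definedness on the basis graph), then obtains $\mathcal{D}$ as the circuit set of the dual $\varphi^*$. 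Your direct route is not wrong, but the consistency check you wave at is essentially equivalent to the homotopy argument, and the claim that ``each of these intersections can be arranged to have size $\leq 3$'' needs justification --- a fundamental circuit and a fundamental cocircuit with respect to the same basis meet in at most two elements, which is what makes the two-term relations suffice, but you should say this explicitly rather than invoke~\ref{Dp3'}. If you tighten that step (or reroute through $\varphi$ as Baker--Bowler do), the rest of your plan is sound.
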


\subsection{Functoriality, duality, and minors. }

Let $F$ be a tract with an involution $x \mapsto \overline{x}$. The theory of functoriality, duality, and minors for matroids over tracts generalizes the corresponding classical theory for matroids.  %oriented matroids, and valuated matroids, etc. 
For simplicity, here we only give the descriptions via the strong Grassmann-Pl\"{u}cker functions. 

Given a strong Grassmann-Pl\"{u}cker function $\varphi: E^r \to F$ and a tract homomorphism $f: F \to F'$, we define the {\em pushforword} $f_\ast\varphi: E^r \to F'$ as  
\[
(f_\ast\varphi)(x_1, \dots, x_r) = f(\varphi(x_1, \dots, x_r)). 
\]
It is not hard to see that $f_\ast\varphi$ is a strong Grassmann-Pl\"{u}cker function. Notice that pushforwards are functorial: if $F_1 \xrightarrow{f} F_2 \xrightarrow{g} F_3$ are tract homomorphisms, then $(g \circ f)_\ast = g_\ast \circ f_\ast$. 

\medskip

The {\em dual Grassmann-Pl\"{u}cker function} $\varphi^\ast : E^{n-r} \to F$ of $\varphi$ is determined by~\ref{GP2} and
\[
\varphi^\ast(x_1, \dots, x_{n-r}) = \sgn(x_1, \dots, x_{n-r}, x_1', \dots, x_r') \cdot \overline{\varphi(x_1', \dots, x_r')}, 
\] 
where $x_1', \dots, x_r'$ is any ordering of $E \setminus \{x_1, \dots, x_{n-r}\}$, and $\sgn(x_1, \dots, x_{n-r}, x_1', \dots, x_r') \in \{\pm 1\}$ is the permutation sign taken as an element of $F$.
%\begin{lemma}[Lemma~4.2 of~\cite{Baker2019}]
  %$\varphi^*$ is a strong Grassmann-Pl\"{u}cker function.
%\end{lemma}
This notion of dual Grassmann-Pl\"{u}cker functions satisfies $\varphi^{\ast\ast} = \varphi$, and the underlying matroid of $\varphi^\ast$ is the dual matroid of the underlying matroid of $\varphi$. 

\medskip

Let $\varphi: E^r \to F$ be a strong Grassmann-Pl\"{u}cker function with the underlying matroid $\ul{M}_{\varphi}$ and let $A \subseteq E$.
We denote by $\ell$ and $k$ the ranks of $A$ and $E\setminus A$ in $\ul{M}_\varphi$, respectively.
%  and let $\ell, k$ be the rank of $A$ and $E \setminus A$ in $\ul{M}_{\varphi}$, respectively. 

Let $\{a_1, \dots, a_\ell\}$ be a maximal independent subset of $A$ in $\ul{M}_{\varphi}$. The {\em contraction} $\varphi / A: (E \setminus A)^{r - \ell} \to F$ is defined by 
\[
(\varphi / A)(x_1, \dots, x_{r-\ell}) = \varphi(x_1, \dots, x_{r - \ell}, a_1, \dots, a_r). 
\]

Choose $\{b_1, \dots, b_{r-k}\}$ such that $\{b_1, \dots, b_{r-k}\}$ is a basis of $\ul{M}_{\varphi} / (E\setminus A)$. Then the {\em deletion} $\varphi \setminus A: (E \setminus A)^k \to F$ is defined by
\[
(\varphi \setminus A)(x_1, \dots, x_k) = \varphi(x_1, \dots, x_k, b_1, \dots, b_{r-k}). 
\]

The following lemma shows that the contractions and deletions are well-defined. 

\begin{lemma}[Lemma~4.4~of~\cite{Baker2019}]
  The following hold.
\begin{enumerate}
\item Both $\varphi / A$ and $\varphi \setminus A$ are strong Grassmann-Pl\"{u}cker functions, and they are independent of all choices up to a global multiplication by a nonzero element of $F$. 

\item $\ul{M}_{\varphi / A} = \ul{M}_\varphi / A$ and $\ul{M}_{\varphi \setminus A} = \ul{M}_\varphi \setminus A$. 

\item $(\varphi \setminus A)^\ast = \varphi^\ast / A$. 
\end{enumerate}
\end{lemma}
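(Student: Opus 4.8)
The plan is to verify \ref{GP1}--\ref{GP3} and assertion (2) for the contraction $\varphi/A$ first, then obtain assertion (3) by unwinding definitions, and finally deduce everything about the deletion $\varphi\setminus A$ from the contraction together with the basic properties of the dual Grassmann--Pl\"{u}cker function already recorded ($\varphi^{\ast\ast}=\varphi$ and $\ul M_{\varphi^\ast}=\ul M_\varphi^\ast$).

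Fix a maximal independent subset $\{a_1,\dots,a_\ell\}\subseteq A$ of $\ul M_\varphi$ and let $\psi:=\varphi/A$ for this choice. Alternation \ref{GP2} of $\psi$ in its $r-\ell$ arguments is inherited from $\varphi$. For \ref{GP1}, the closure of $\{a_1,\dots,a_\ell\}\cup(E\setminus A)$ in $\ul M_\varphi$ contains $\mathrm{cl}(\{a_1,\dots,a_\ell\})\cup(E\setminus A)\supseteq A\cup(E\setminus A)=E$, so $\{a_1,\dots,a_\ell\}$ extends to a basis $B$ of $\ul M_\varphi$ with $B\setminus\{a_1,\dots,a_\ell\}\subseteq E\setminus A$, whence $\psi$ is nonzero on any ordering of $B\setminus\{a_1,\dots,a_\ell\}$. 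For \ref{GP3}, given subsets $\{x_1,\dots,x_{r-\ell+1}\}$ and $\{y_1,\dots,y_{r-\ell-1}\}$ of $E\setminus A$, I would apply the relation \ref{GP3} for $\varphi$ to the tuples $(x_1,\dots,x_{r-\ell+1},a_1,\dots,a_\ell)$ and $(y_1,\dots,y_{r-\ell-1},a_1,\dots,a_\ell)$: each of the $\ell$ summands indexed by an $a_j$ has a repeated entry $a_j$ and vanishes by \ref{GP2}, while the remaining $r-\ell+1$ summands are exactly the terms of the required Grassmann--Pl\"{u}cker relation for $\psi$. Assertion (2) for $\varphi/A$ then follows: $\psi(x_1,\dots,x_{r-\ell})\neq 0$ iff $\{x_1,\dots,x_{r-\ell}\}\cup\{a_1,\dots,a_\ell\}$ is a basis of $\ul M_\varphi$, and the set of such $\{x_1,\dots,x_{r-\ell}\}\subseteq E\setminus A$ is precisely the set of bases of $\ul M_\varphi/A$.

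The substantive point is independence of $\psi$ from the choices, up to a factor in $F^\times$. Reordering $a_1,\dots,a_\ell$ changes $\psi$ only by a sign, by \ref{GP2}, so it remains to compare two maximal independent subsets of $A$, i.e.\ two bases of $\ul M_\varphi|_A$. Since the basis-exchange graph of $\ul M_\varphi|_A$ is connected with edges given by single swaps, it suffices to treat two bases $I_0\cup\{a\}$ and $I_0\cup\{a'\}$ sharing an $(\ell-1)$-subset $I_0$; here $\varphi/(I_0\cup\{a\})=(\varphi/I_0)/a$ and $\varphi/(I_0\cup\{a'\})=(\varphi/I_0)/a'$, where $\varphi/I_0$ is well defined up to a sign by \ref{GP2}, and a rank computation shows $a,a'$ form a parallel pair of non-loops in $\ul M_{\varphi/I_0}=\ul M_\varphi/I_0$. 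So the claim reduces to: for a strong Grassmann--Pl\"{u}cker function $\psi_0$ of rank $s$ and a parallel pair of non-loops $a,a'$, one has $\psi_0/a=c\cdot(\psi_0/a')$ for some $c\in F^\times$. To prove this I would fix a tuple $\mb x_0$ with $\{\mb x_0\}\cup\{a\}$ a basis (hence also $\{\mb x_0\}\cup\{a'\}$, as $a\sim a'$), set $c:=\psi_0(\mb x_0,a)\,\psi_0(\mb x_0,a')^{-1}$, and for any basis $\mb x$ of $\ul M_{\psi_0}/a$ connect it to $\mb x_0$ by single swaps; for two such tuples differing in one coordinate with common part $\mb z$, applying \ref{GP3} for $\psi_0$ to the $(s+1)$-tuple formed by $\mb z$, the two differing elements, and $a$, against the $(s-1)$-tuple formed by $\mb z$ and $a'$, kills all but two summands (the one at $a$ because its argument contains the parallel pair $\{a,a'\}$, the ones at the entries of $\mb z$ by repetition). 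The surviving two-term relation, after collecting reordering signs, has the form $\psi_0(\mb x,a)\,\psi_0(\mb x',a')+\epsilon\,\psi_0(\mb x',a)\,\psi_0(\mb x,a')\in N_F$ for consecutive $\mb x,\mb x'$, and by Lemma~1.1(i) of~\cite{Baker2019} this forces the ratio $\psi_0(\,\cdot\,,a)\,\psi_0(\,\cdot\,,a')^{-1}$ to be constant along the exchange graph, hence equal to $c$; as both $\psi_0(\mb x,a)$ and $\psi_0(\mb x,a')$ vanish when $\{\mb x\}\cup\{a\}$ is not a basis, we get $\psi_0/a=c\cdot(\psi_0/a')$. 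I expect this reduction to the parallel case --- and in particular the choice of an $(s-1)$-tuple containing $a'$, outside the $(s+1)$-tuple, which is exactly what keeps the Pl\"{u}cker relation nondegenerate --- to be the main obstacle.

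For assertion (3) I would unwind definitions. With $k=\rnk_{\ul M_\varphi}(E\setminus A)$, the deletion $\varphi\setminus A$ has rank $k$ on the $(n-|A|)$-element set $E\setminus A$, so $(\varphi\setminus A)^\ast$ has rank $(n-|A|)-k$, while $\varphi^\ast/A$ has rank $(n-r)-\rnk_{\ul M_\varphi^\ast}(A)=(n-r)-(|A|+k-r)=(n-|A|)-k$; moreover $\ul M_{(\varphi\setminus A)^\ast}=\ul M_{\varphi\setminus A}^\ast=(\ul M_\varphi\setminus A)^\ast=\ul M_\varphi^\ast/A=\ul M_{\varphi^\ast/A}$, so the two sides already agree in rank and support. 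What remains is to expand $(\varphi\setminus A)^\ast$ and $\varphi^\ast/A$ through the definitions of the dual Grassmann--Pl\"{u}cker function and of deletion and contraction, and to check that the two resulting permutation signs agree --- one coming from ordering $E$ as ``the $x_i$'s, then a complementary basis, then the $b_j$'s'', the other as ``the $x_i$'s, then the $a_j$'s, then the remaining elements, then dualize'' --- which is routine but slightly tedious sign bookkeeping. Once (3) is in hand, $\varphi\setminus A=\bigl((\varphi\setminus A)^\ast\bigr)^\ast=(\varphi^\ast/A)^\ast$ is a strong Grassmann--Pl\"{u}cker function, well defined up to a global scalar, with $\ul M_{\varphi\setminus A}=(\ul M_\varphi^\ast/A)^\ast=\ul M_\varphi\setminus A$, by applying (1) and (2) to $\varphi^\ast$ and dualizing; alternatively one simply repeats the checks \ref{GP1}--\ref{GP3} directly for $\varphi\setminus A$, which run in complete parallel to those for the contraction.
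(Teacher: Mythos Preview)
The paper does not give its own proof of this lemma; it is quoted verbatim from Baker--Bowler~\cite{Baker2019} as background, with no argument supplied. So there is nothing to compare against, and your proposal stands on its own.

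Your argument is essentially correct and follows the natural route. The checks of \ref{GP1}--\ref{GP3} for $\varphi/A$ are fine, and the reduction of ``independence of the choice of maximal independent subset of $A$'' to the case of a parallel pair of non-loops via the basis-exchange graph is exactly the right idea; the two-term Pl\"{u}cker relation you extract does force the ratio $\psi_0(\,\cdot\,,a)\psi_0(\,\cdot\,,a')^{-1}$ to be constant along the graph.

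The one place to tighten is the logical ordering around (3) and the deletion. As written, when you motivate (3) you invoke $\ul M_{\varphi\setminus A}=\ul M_\varphi\setminus A$, which is part of (2) for the deletion and has not yet been established. More substantively, your direct sign computation for (3) is carried out for a \emph{specific} compatible pair of choices (namely $\{b_j\}=A\setminus\{a'_j\}$), so the deduction ``$\varphi\setminus A=\bigl((\varphi\setminus A)^\ast\bigr)^\ast=(\varphi^\ast/A)^\ast$ is well defined up to scalar'' literally only covers that one choice of $\{b_j\}$. To handle an arbitrary choice $\{b'_j\}$ you must either rerun the computation with $\{a''_j\}=A\setminus\{b'_j\}$ and then invoke independence of $\varphi^\ast/A$ from choices, or---cleaner, and what your ``alternatively'' remark already suggests---first prove (1) and (2) for $\varphi\setminus A$ directly by the same exchange-graph and parallel-pair argument, and only afterwards verify (3). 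With that reordering the proof is complete.
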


%%%%%%%%%%%%%%%%%%%%%%%%%%%%%
%%%%%%%%%%%%%%%%%%%%%%%%%%%%%

\section{Orthogonal Matroids over Tracts}\label{section:F-orthogonalmatroids}

Let $E = [n] \cup [n]^\ast$ be a finite set and let $F$ be a tract endowed with an involution $x \mapsto \overline{x}$. In Section~\ref{sec:Wick functions}, we define strong, moderately weak, and weak orthogonal matroids on $E$ over $F$ in terms of Wick functions. We then establish other cryptomorphic definitions, including orthogonal $F$-signatures and $F$-circuit sets of orthogonal matroids in Section~\ref{sec:OrthogonalSignature}, and orthogonal $F$-vector sets in Section~\ref{sec:VectorSet}.
We then summarize equivalences and implications of various notions in Section~\ref{sec:main results}.
In Sections~\ref{sec:OM functionality}--\ref{sec:OM minors}, we introduce functoriality, duality, and minors, and in Section~\ref{sec:equiv of diff defs}, we explain how orthogonal $F$-matroids generalize historical works on orthogonal matroids by specifying $F$.

%In~\cite{Rincon2012}, Rinc\'{o}n established a tropical analogue of a Lagrangian subspace, named the {\em cocycle space} of a {\em tropical Wick vector}, but it is never been proved the analogue of a Lagrangian subspace is enough to recover a tropical Wick vector. Theorem~\ref{thm:main} implies this missing direction, and we give more descriptions of tropical geometry and our results in Subsection~\ref{sec:tropical}.
% and {\em an isotropical linear space} 

% There were attempts to establish oriented and tropical analogue of Lagrangian subspaces; see~\cite{Booth2001,Rincon2012}.
% It is however that ... (not full characterization corr. to basis/Pffafian structure aspect of view)

\subsection{Wick functions}\label{sec:Wick functions} We describe the first cryptomorphic characterization of strong, moderately weak, and weak orthogonal matroids over tracts in terms of Wick functions. We denote by $\mathcal{T}_n$ the family of all transversals of $E$.

\begin{definition}
A {\em strong Wick function on $E$ with coefficients in $F$} is $\varphi: \mathcal{T}_n \to F$ such that:
\begin{enumerate}[label=\rm(W\arabic*)]
    \item\label{item:W1} $\varphi$ is not identically zero. 
    
    \item\label{item:W2} For all $T_1$, $T_2 \in \mathcal{T}_n$, we have
    \[
      \sum_{k = 1}^m (-1)^k \varphi(T_1 \symdiff \skewpair{x_k}) \varphi(T_2 \symdiff \skewpair{x_k}) \in N_F, 
    \]
    where $(T_1 \symdiff T_2) \cap [n] = \{x_1 < \cdots < x_m\}$.
\end{enumerate}
% We call~\ref{item:W2} {\em Wick relations}.
\end{definition}

\begin{proposition}\label{prop:strong-is-weak}
  The support $\supp(\varphi) := \{T \in \mathcal{T}_n : \varphi(T) \ne 0\}$ of a strong Wick function $\varphi$ is the set of bases of an ordinary orthogonal matroid.
\end{proposition}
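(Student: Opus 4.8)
The plan is to show that $\mathcal{B} := \supp(\varphi)$ satisfies the symmetric exchange axiom. We must verify two things: that $\mathcal{B}$ is nonempty, and that it satisfies the exchange property. Nonemptiness is immediate from \ref{item:W1}. The substance is the exchange axiom: given $B_1, B_2 \in \mathcal{B}$ and a divergence $\{x_1, x_1^\ast\} \subseteq B_1 \symdiff B_2$, we need a second divergence $\{x_2, x_2^\ast\} \subseteq B_1 \symdiff B_2$, distinct from the first, with $B_1 \symdiff \{x_1, x_1^\ast, x_2, x_2^\ast\} \in \mathcal{B}$.

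\textbf{Reduction to the Wick relation.} The key observation is that the Wick relation \ref{item:W2} applied to a well-chosen pair of transversals produces exactly the terms we need. Take $T_1 = B_1 \symdiff \{x_1, x_1^\ast\}$ and $T_2 = B_2$ (or some twist thereof), so that the indexing set $(T_1 \symdiff T_2) \cap [n]$ picks out, up to the $\ast$-involution, the divergences contained in $B_1 \symdiff B_2$. Then the term indexed by $x_1$ in $\sum_k (-1)^k \varphi(T_1 \symdiff \skewpair{x_k})\varphi(T_2 \symdiff \skewpair{x_k})$ equals $\pm\varphi(B_1)\varphi(B_2) \neq 0$, since $T_1 \symdiff \skewpair{x_1} = B_1$ and $T_2 \symdiff \skewpair{x_1}$ will need to be arranged to equal $B_2$. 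Because this nonzero term lies in a sum belonging to $N_F$, and $N_F$ contains no element of $G = F^\times$ by the tract axioms (Lemma 2.3(iii) of the excerpt), the sum cannot be a single nonzero term; hence there must exist another index $x_2 \neq x_1$ with both $\varphi(T_1 \symdiff \skewpair{x_2}) \neq 0$ and $\varphi(T_2 \symdiff \skewpair{x_2}) \neq 0$. The first of these, $\varphi(T_1 \symdiff \skewpair{x_2}) = \varphi(B_1 \symdiff \{x_1, x_1^\ast, x_2, x_2^\ast\}) \neq 0$, is precisely the basis we want, and $\{x_2, x_2^\ast\} \subseteq B_1 \symdiff B_2$ follows from the construction of the index set.

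\textbf{The main obstacle} is bookkeeping the twists and the $\ast$-involution so that (a) $T_1 \symdiff \skewpair{x_1} = B_1$ and $T_2 \symdiff \skewpair{x_1} = B_2$ simultaneously, and (b) the index set $(T_1 \symdiff T_2)\cap[n]$ genuinely corresponds to the divergences in $B_1 \symdiff B_2$ with the correct multiplicity (each divergence in $B_1 \symdiff B_2$ must be ``split'' so that its $[n]$-representative appears). One must choose $T_1$ and $T_2$ so that $T_1 \symdiff T_2 = (B_1 \symdiff B_2) \symdiff \{x_1, x_1^\ast\}$, and then observe that for each divergence $\{y, y^\ast\} \subseteq T_1 \symdiff T_2$ the two transversals $T_1 \symdiff \skewpair{y}$ and $T_2 \symdiff \skewpair{y}$ are exactly $B_1 \symdiff (\text{that divergence} \symdiff \{x_1,x_1^\ast\})$ and $B_2 \symdiff (\ldots)$. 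Care is also needed when a divergence $\{x_2, x_2^\ast\}$ meets $B_1 \symdiff B_2$ in only one element versus both; the admissibility condition $A \cap A^\ast = \emptyset$ and the fact that $B_1, B_2$ are transversals constrains $B_1 \symdiff B_2$ to be $\ast$-invariant, so $B_1 \symdiff B_2$ is a disjoint union of divergences and the index set is well-behaved. Once the correspondence is set up, the conclusion is a one-line appeal to $G \cap N_F = \emptyset$. I would also remark that \ref{item:W2} is symmetric in $T_1, T_2$, so the same argument with roles swapped shows $B_2 \symdiff \{x_1, x_1^\ast, x_2, x_2^\ast\} \in \mathcal{B}$ as well, which is not needed here but foreshadows the strong symmetric exchange property.
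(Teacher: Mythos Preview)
Your approach is essentially the paper's: apply \ref{item:W2} to a pair $T_1,T_2$ chosen so that one term of the Wick sum is $\varphi(B_1)\varphi(B_2)\in F^\times$, then invoke $F^\times\cap N_F=\emptyset$ to force a second nonzero term, which yields the required exchange. The paper makes the clean choice $T_1=B_1\symdiff\skewpair{x_1}$ and $T_2=B_2\symdiff\skewpair{x_1}$, so that $T_1\symdiff T_2=B_1\symdiff B_2$ and the index $\bar{x_1}$ gives exactly $\varphi(B_1)\varphi(B_2)$; this is precisely your condition~(a).

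There is one bookkeeping slip to fix: once you impose~(a), you necessarily get $T_1\symdiff T_2=B_1\symdiff B_2$, not $(B_1\symdiff B_2)\symdiff\{x_1,x_1^\ast\}$ as you wrote. Your initial guess $T_2=B_2$ would instead remove $\{x_1,x_1^\ast\}$ from the index set entirely, so the crucial term $\varphi(B_1)\varphi(B_2)$ would never appear in the sum. With the correct choice $T_2=B_2\symdiff\skewpair{x_1}$ everything goes through as you outline, and as you note in your final remark the argument in fact yields the \emph{strong} symmetric exchange (both $B_1$ and $B_2$ exchange with the same $\{x_2,x_2^\ast\}$).
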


\begin{proof}
  Clearly $\supp(\varphi) \ne \emptyset$ by~\ref{item:W1}. Let $B_1, B_2$ be in $\supp(\varphi)$ with $\{x, x^\ast\} \subseteq B_1 \symdiff B_2$. Let $T_1 = B_1 \symdiff \{x, x^\ast\}$ and $T_2 = B_2 \symdiff \{x, x^\ast\}$, and we write $(B_1 \symdiff B_2) \cap [n] = (T_1 \symdiff T_2) \cap [n] = \{x_1 < \cdots < x_m\}$. Take $i \in [m]$ such that $\skewpair{x_i} = \skewpair{x}$. Then we have $\varphi(T_1 \symdiff \skewpair{x_i}) \varphi(T_2 \symdiff \skewpair{x_i}) = \varphi(B_1) \varphi(B_2) \ne 0$. By~\ref{item:W2}, there exists $y \in \{x_1,\ldots,x_m\} \setminus \{x_i\}$ such that  $\varphi(T_1 \symdiff \skewpair{y}) \varphi(T_2 \symdiff \skewpair{y}) \neq 0$, implying that $B_j \symdiff \skewpair{x} \symdiff \skewpair{y} = T_j \symdiff \skewpair{y} \in \supp(\varphi)$ for both $j\in \{1, 2\}$. 
\end{proof}

\begin{definition}
Let $\varphi: \mathcal{T}_n \to F$ be a map such that the support of $\varphi$ is the set of bases of an orthogonal matroid.
We say that $\varphi$ is
a {\em moderately weak Wick function on $E$ with coefficients in $F$} if $\varphi$ satisfies~\ref{item:W1} and the following weakened version of~\ref{item:W2}:
\begin{enumerate}[label=\rm(W2)$'$]
    \item\label{item:W2'} For all $T_1$, $T_2 \in \mathcal{T}_n$, if $(T_1 \symdiff T_2) \cap [n] = \{x_1 < \cdots < x_m\}$, and at most four of $\varphi(T_1 \symdiff \skewpair{x_k}) \varphi(T_2 \symdiff \skewpair{x_k})$'s are nonzero, then we have
    \[
      \sum_{k = 1}^m (-1)^k \varphi(T_1 \symdiff \skewpair{x_k}) \varphi(T_2 \symdiff \skewpair{x_k}) \in N_F. 
    \]
\end{enumerate}
We say that $\varphi$ is
a {\em weak Wick function on $E$ with coefficients in $F$} if $\varphi$ satisfies~\ref{item:W1} and: 
\begin{enumerate}[label=\rm(W2)$''$]
    \item\label{item:W2''} For all $T_1$, $T_2 \in \mathcal{T}_n$, if $(T_1 \symdiff T_2) \cap [n] = \{x_1 < x_2 < x_3 < x_4\}$, then we have
    \[
      \sum_{k = 1}^4 (-1)^k \varphi(T_1 \symdiff \skewpair{x_k}) \varphi(T_2 \symdiff \skewpair{x_k}) \in N_F. 
    \]
\end{enumerate}
\end{definition}

% It is trivial that every defectively weak Wick function is a weak Wick function.

Two strong %(resp. weak)
Wick functions $\varphi$ and $\psi$ with coefficients in $F$ are {\em equivalent} if $\varphi = c \psi$ for some nonzero $c \in F$, and we call an equivalence class $M_\varphi = [\varphi]$ of strong %(resp. weak) 
Wick functions a {\em strong %\textup{(}resp. weak\textup{)} 
orthogonal matroid over the tract $F$}, or simply a {\em strong %\textup{(}resp. weak\textup{)} 
orthogonal $F$-matroid}.
We similarly define {\em \textup{(}moderately\textup{)} weak orthogonal $F$-matroid}.
It is trivial that every moderately weak orthogonal $F$-matroid is weak.
Proposition~\ref{prop:strong-is-weak} shows that every strong orthogonal $F$-matroid is a moderately weak orthogonal $F$-matroid.
Three notions of orthogonal $F$-matroids are the same when $F$ is a partial field~\cite{BJ2022}, the tropical hyperfield~$\T$~\cite{Rincon2012}, or the Krasner hyperfield $\K$.
We denote by $\underline{M}_\varphi$ the underlying orthogonal matroid of the orthogonal $F$-matroid $M_\varphi$ whose set of bases is $\supp(\varphi)$. 

% \begin{remark}\label{rem:new-weak}
% In~\cite{BJ2022}, one finds a different definition of weak orthogonal $F$-matroids, where instead of~\ref{item:W2'}, we only consider the Wick equations $\sum_{k = 1}^m (-1)^k \varphi(T_1 \symdiff \skewpair{x_k}) \varphi(T_2 \symdiff \skewpair{x_k})$ for $(T_1 \symdiff T_2) \cap [n] = \{x_1 < \cdots < x_4\}$. We adapt to this new notion of weak orthogonal $F$-matroids since it admits cryptomorphic descriptions via orthogonal signatures, see Section~\ref{sec:more-examples}. Notice that condition~\ref{item:W2'} is stronger than the one in~\cite{BJ2022}. 
% \end{remark}

%Recall that every matroid can be identified as an orthogonal matroid via $\lift{\cdot}$. For nonnegative integers $d \leq n$ and a $d \times n$ matrix $ A_1 := \begin{pmatrix} I_d & A \end{pmatrix}$ over a field, let 
%\begin{align*}
%    A_2:=
%    \begin{pmatrix}
%        0 & A \\ -A^T & 0
%    \end{pmatrix}
%\end{align*}
%be the skew-symmetric $n \times n$ matrix. It is known that for each $T\in \binom{[n]}{d}$, the determinant of the $[d]\times T$ submatrix of $A_1$ equals to the the Pfaffian of $T\times T$ submatrix of $A_2$~\cite{Ca49}. Thus, every Pl\"{u}cker vector in $\bP^{\binom{n}{d}-1}$ could be viewed as a Wick vector in $\bP^{2^{n}-1}$, or equivalently, the Grassmannian $G(d,n)$ is embedded in the orthogonal Grassmannian $OG(n,2n)$.

%The following proposition generalizes the above observations for strong $F$-matroids and strong orthogonal $F$-matroids. 

\begin{proposition}\label{prop:W and GP}
There is a natural bijection between the set of all strong $F$-matroids on $[n]$ and the set of all strong orthogonal $F$-matroids $M_{\psi}$ on $[n] \cup [n]^\ast$ 
such that the intersections of bases of $\ul{M}_\psi$ and $[n]$ have the same cardinality.
\end{proposition}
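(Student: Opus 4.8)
The plan is to set up an explicit correspondence between a strong Grassmann--Pl\"ucker function $\varphi\colon [n]^r \to F$ and a strong Wick function $\psi\colon \mathcal{T}_n \to F$ whose support meets $[n]$ in sets of constant size $r$. Given $\varphi$, I would define $\psi$ on a transversal $T$ by $\psi(T) := \sgn(T)\,\varphi(T \cap [n])$, where $T\cap[n] = \{x_1<\cdots<x_r\}$ is listed in increasing order and $\sgn(T)$ is an appropriate normalization sign (a permutation sign depending on how the elements of $T$ sit inside $E$, chosen so that the Grassmann--Pl\"ucker relations \ref{GP3} transform into the Wick relations \ref{item:W2}); conversely, given $\psi$ supported on transversals meeting $[n]$ in $r$-sets, restrict and renormalize to recover $\varphi$ on $\binom{[n]}{r}$. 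Because \ref{GP2} makes $\varphi$ alternating and \ref{item:W2} involves only transversals of the form $T_i \symdiff \skewpair{x_k}$ with $x_k \in (T_1 \symdiff T_2)\cap[n]$, both sides of the comparison naturally range over the same index set once one matches $r+1$-subsets and $r-1$-subsets of $[n]$ with pairs $(T_1,T_2)$ of transversals.

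The key steps, in order, are: first, verify that a strong orthogonal $F$-matroid $M_\psi$ with $|B\cap[n]|$ constant equal to $r$ has underlying orthogonal matroid of the form $\lift{\ul N}$ for an ordinary rank-$r$ matroid $\ul N$ on $[n]$ — this is exactly the condition recalled in the lift example, so $\supp(\psi)$ corresponds to $\cB(\ul N)$. Second, check well-definedness: show that the sign normalization $\sgn(T)$ can be chosen so that $\psi$ as defined above is alternating-compatible (i.e., independent of nothing, since transversals have no repeated "slots") and that scaling $\varphi$ by $c \in F^\times$ scales $\psi$ by $c$, so the construction descends to equivalence classes. Third — the computational heart — show that the Wick relation \ref{item:W2} for the pair $(T_1,T_2)$ is, term by term, the Grassmann--Pl\"ucker relation \ref{GP3} for the sets $J_1 = (T_1\cap[n]) \cup (T_2\cap[n])$ of size $r+1$ and $J_2 = (T_1\cap[n])\cap(T_2\cap[n])$ of size $r-1$, after absorbing signs; here one uses that $\psi(T_i \symdiff \skewpair{x_k})$ equals $\pm\varphi\big((T_i\cap[n]) \symdiff \{x_k\}\big)$ and that $(T_1\symdiff T_2)\cap[n]$ is precisely $J_1 \setminus J_2$. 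Fourth, run the inverse construction and confirm the two maps are mutually inverse, and that \ref{item:W1} $\leftrightarrow$ \ref{GP1}.

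The main obstacle I expect is bookkeeping the signs. In the classical (field) case this is the standard fact that the Wick/spinor coordinates of a Lagrangian of the form $\{(v, Av) : v \in F^n\}$ with $A$ skew-symmetric reduce, on the "affine chart", to the Pl\"ucker coordinates of a subspace together with the Pl\"ucker coordinates of its dual — but over a general tract one cannot invoke any determinantal identity and must instead check the sign convention combinatorially so that every monomial $(-1)^k \psi(T_1\symdiff\skewpair{x_k})\psi(T_2\symdiff\skewpair{x_k})$ matches the corresponding monomial $(-1)^{k'}\varphi(\ldots)\varphi(\ldots)$ in \ref{GP3} with the correct sign, uniformly in $F$. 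I would handle this by fixing once and for all the convention that for a transversal $T$ we write its $[n]$-part in increasing order, and defining $\sgn(T)$ as the sign of the permutation sorting a fixed reference ordering of $E$; then the difference of signs between the two relations becomes a product of transposition signs that I can compute explicitly and absorb into the definition of $\psi$. Once the sign lemma is in place, the equivalence of the two relation systems, hence the bijection, is immediate; duality of Grassmann--Pl\"ucker functions (which swaps $\varphi$ and $\varphi^*$) is what makes the statement symmetric and will be a useful sanity check on the sign convention.
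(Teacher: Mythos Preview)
Your overall strategy is the same as the paper's: define $\psi$ on transversals by reading off $\varphi$ on the $[n]$-part in increasing order, and show that the Wick relation for a pair of transversals becomes a Grassmann--Pl\"ucker relation. Two things in your plan need correcting.

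First, the extra sign $\sgn(T)$ is unnecessary. The paper simply sets $\psi(T)=\varphi(a_1,\dots,a_r)$ with $T\cap[n]=\{a_1<\cdots<a_r\}$, and the signs in the comparison of \ref{item:W2} and \ref{GP3} are absorbed by the alternating property of $\varphi$ alone; no global normalization on $\psi$ is needed.

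Second, and more seriously, your identification of the Grassmann--Pl\"ucker data is wrong. You take $J_1=(T_1\cap[n])\cup(T_2\cap[n])$ and $J_2=(T_1\cap[n])\cap(T_2\cap[n])$ and claim these have sizes $r+1$ and $r-1$; in general they do not (for instance if $T_1\cap[n]$ and $T_2\cap[n]$ are disjoint). The correct correspondence is $J_1=T_1\cap[n]$ and $J_2=T_2\cap[n]$ directly. The missing step is a preliminary case split: for $\psi(T_i\symdiff\skewpair{x_k})$ to be nonzero one needs $|(T_i\symdiff\skewpair{x_k})\cap[n]|=r$, and running through whether $x_k$ lies in $T_1$ or $T_2$ shows that unless (after swapping $T_1,T_2$) $|T_1\cap[n]|=r+1$ and $|T_2\cap[n]|=r-1$, every term in the Wick sum vanishes and \ref{item:W2} is trivial. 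Once in that case, terms with $x_k\in(T_2\setminus T_1)\cap[n]$ also vanish, and the surviving terms over $x_k\in(T_1\setminus T_2)\cap[n]$ match the nonzero terms of \ref{GP3} for $J_1=T_1\cap[n]$, $J_2=T_2\cap[n]$, with the sign computed via $|T_1\cap[x_k]|$ and $|T_2\cap[x_k]|$. The converse direction (from $\psi$ back to $\varphi$) uses the same dictionary with $T_i = J_i\cup([n]\setminus J_i)^*$.
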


\begin{proof}
Let $\varphi: [n]^r \to F$ be a strong Grassmann-Pl\"{u}cker function. Define $\psi: \cT_n \to F$ to be $\psi(T) := \varphi(a_1, \dots, a_r)$ if $T = B \cup ([n] \setminus B)^\ast$ for $B = \{a_1 < \cdots < a_r\}$, and $\psi(T) = 0$ otherwise. It is obvious that $\psi$ is not identically zero, and we claim that $\psi$ satisfies~\ref{item:W2}. To prove~\ref{item:W2}, we take $T_1, T_2 \in \cT_n$ with $(T_1 \symdiff T_2) \cap [n] = \{x_1 < \cdots < x_m\}$. Suppose without loss of generality that $T_1 \cap [n] = \{b_1 < \dots < b_{r+1}\}$ and $T_2 \cap [n] = \{c_1 < \dots < c_{r-1}\}$. If $x_k \in (T_2 \setminus T_1) \cap [n]$, then $\psi(T_1 \symdiff \skewpair{x_k}) = \psi(T_2 \symdiff \skewpair{x_k}) = 0$. If $x_k = b_j \in (T_1 \setminus T_2) \cap [n]$, then since $|T_1 \cap [x_k]| = j$ and $|T_2 \cap [x_k]| = k - j + 2|T_1 \cap T_2 \cap [x_k]|$, we have 
\begin{align*}
    \psi(T_1 \symdiff \skewpair{x_k}) = \varphi(b_1, \dots, \hat{b_j}, \dots, b_{r+1})
    \,\text{ and }\,
    \psi(T_2 \symdiff \skewpair{x_k}) = (-1)^{k-j} \varphi(b_j, c_1, \dots, c_{r-1}). 
\end{align*}
It follows that 
\begin{align*}
 \sum_{k = 1}^m (-1)^k \psi(T_1 \symdiff \skewpair{x_k}) \psi(T_2 \symdiff \skewpair{x_k})
  =
  \sum_{j = 1}^{r+1} (-1)^{j} \varphi(b_1, \dots, \hat{b_j}, \dots, b_{r+1})\varphi(b_j, c_1, \dots, c_{r-1}) \in N_F.
\end{align*}
Therefore, $\psi$ is a strong Wick function whose support forms the bases of $\lift{\underline{M}_\varphi}$. 

Conversely, let $\psi$ be a strong Wick function on $E = [n] \cup [n]^\ast$ such that all elements of $\{B \cap [n] : B \in \supp(\psi)\}$ have the same cardinality $r$. Let $\varphi: [n]^r \to F$ be the (unique) function satisfying~\ref{GP1} and \ref{GP2} defined by $\varphi(a_1, \dots, a_r) := \psi(T)$ where $T = \{a_1, \dots, a_r\} \cup ([n] \setminus \{a_1, \dots, a_r\})^\ast$ for all $\{a_1 < \cdots < a_r\} \subseteq [n]$. Take $J_1 = \{b_1 < \cdots < b_{r+1}\}, J_2 = \{c_1, \dots, c_{r-1}\} \subseteq [n]$, and write $J_1' = J_1 \cup ([n] \setminus J_1)^\ast$ and $J_2' = J_2 \cup ([n] \setminus J_2)^\ast$. Then 
\begin{align*}
 \sum_{j = 1}^{r+1} (-1)^j \varphi(b_1, \dots, \hat{b_j}, \dots, b_{r+1}) \varphi(b_j, c_1, \dots, c_{r-1}) 
= \sum_{j = 1}^{r+1} (-1)^j \cdot \psi(J_1' \symdiff \skewpair{b_j}) \cdot (-1)^{m_k}\psi(J_2' \symdiff \skewpair{b_j}), 
\end{align*}
where $m_j$ is the number of elements in $J_2$ that are less than $b_j$. Write $(J_1' \symdiff J_2') \cap [n] = \{x_1, \dots, x_m\}$. If $e \in J_2$, then since all elements of $\{B \cap [n] : B \in \supp(\psi)\}$ have cardinality $r$, we have $\psi(J_2' \symdiff \skewpair{e}) = 0$. Therefore, we have 
\begin{align*}
 \sum_{j = 1}^{r+1} (-1)^j \varphi(b_1, \dots, \hat{b_j}, \dots, b_{r+1}) \varphi(b_j, c_1, \dots, c_{r-1}) 
= \sum_{k = 1}^m (-1)^k \psi(J_1' \symdiff \skewpair{x_k})\psi(J_2' \symdiff \skewpair{x_k}) \in N_F. 
\end{align*}

It's not hard to see that the two constructions are inverses of each other. 
%The desired natural bijection is concluded by the aforementioned constructions of a strong Wick function and a Grassmann-Pl\"{u}cker function from each other.
\end{proof}

\begin{remark}
The variant of Proposition~\ref{prop:W and GP} 
for weak $F$-matroids and weak orthogonal $F$-matroids
% for weak Wick functions and weak Grassmann-Pl\"{u}cker functions %in the sense of~\cite{BJ2022} 
holds and the proof is similar. 
\end{remark}

\subsection{Orthogonal signatures and circuit sets}\label{sec:OrthogonalSignature}

Let $\ul{M}$ be an ordinary orthogonal matroid on $E$. As in Section~\ref{section:F-matroids}, we denote the {\em support} of $X \in F^E$ by $\underline{X} = \{i \in E: X(i) \neq 0\}$.
If $X \in F^E$, we write $X^\ast \in F^E$ for the function defined by $X^*(i) := X(i^*)$. Notice that this induces an obvious involution $\ast$ on the subsets of $F^E$. 

\begin{definition}
A subset $\cC \subseteq F^{E}$ is an {\em $F$-signature of $\ul{M}$} if the following hold:
\begin{enumerate}[label=\rm(\roman*)]
\item The support $\underline{\cC} = \{\underline{X} : X \in \cC\}$ of $\cC$ is the set of circuits of $\ul{M}$. 
\item If $X \in \cC$ and $\alpha \in F^\times$, then $\alpha X \in \cC$.
% \item If $X,Y \in \cC$ and $\underline{X} \subseteq \underline{Y}$, then $X = \alpha Y$ for some $\alpha \in F^\times X$.
\end{enumerate}
We call $\underline{M}_{\cC} := \ul{M}$ the underlying orthogonal matroid of $\cC$ and call each element of $\mathcal{C}$ an {\em $F$-circuit}.
\end{definition}

The {\em inner product} $\langle \cdot, \cdot \rangle$ on $F^E$ with respect to the involution $x \mapsto \overline{x}$ is defined to be
  \begin{align*}
    \langle X, Y \rangle = \sum_{i\in [n]} ( X(i) \overline{Y(i)} + \overline{X(i^\ast)} Y(i^\ast)).
  \end{align*}
Note that $\langle Y, X \rangle = \overline{\langle X, Y \rangle}$. Let $\conj{\cdot}: F^E \to F^E$ be such that $\conj{X}(i) = X(i)$ if $i\in[n]$ and $\conj{X}(i) = \overline{X(i)}$ otherwise.
Then $\langle X, Y^\ast \rangle = \sum_{i \in E} \conj{X}(i) \conj{Y}(i^\ast)$.

We say that an $F$-signature $\cC$ of $\ul{M}$ satisfies the {\em $2$-term orthogonality} if the following holds:
\begin{enumerate}[label=\rm(O$_2$)]
  \item\label{item:Ot2} $\langle X, Y^* \rangle \in N_F$ for all $X,Y \in \cC$ with $|\underline{X} \cap \underline{Y}^*| = 2$,
\end{enumerate}

\begin{lemma}\label{lem:F-circuit unique up to}
  Let $\cC$ be an $F$-signature of $\ul{M}$ satisfying the $2$-term orthogonality~\ref{item:Ot2}.
  If $X,X' \in \cC$ and $\underline{X} = \underline{X'}$, then $X = \alpha X'$ for some $\alpha \in F^\times$.
\end{lemma}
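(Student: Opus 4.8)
The plan is to mimic the standard proof technique from matroid theory: use a fundamental circuit/cocircuit orthogonality relation to pin down the ratio of two $F$-circuits with the same support. Let $C = \underline{X} = \underline{X'}$ be the common circuit of $\ul M$. By Lemma~\ref{lem:extendcircuit}, there is a transversal $T \supseteq C$ such that $T \symdiff \skewpair{x}$ is a basis of $\ul M$ for every $x \in C$. The idea is that for each $x \in C$, the set $(T \symdiff \skewpair{x})^\ast$ is a basis of the dual $\ul M^\ast$, and the fundamental cocircuit through it interacts with $X$ and $X'$ in a controlled, two-element way, so that $2$-term orthogonality forces $X(x)/X'(x)$ to be constant in $x$.

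More concretely, here is how I would carry it out. Fix two distinct elements $x, y \in C$ (if $|C| = 1$ the claim is immediate since an $F$-signature is closed under $F^\times$-scaling and two functions supported on a single element differ by a unit). Consider the transversal $T$ from Lemma~\ref{lem:extendcircuit}; then $B := T \symdiff \skewpair{x}$ is a basis of $\ul M$ with $x^\ast \in B$ and $y \notin B$ (since $y \in C \subseteq T$ and $y \neq x$). I want a second $F$-circuit $Z \in \cC$ whose support meets $C$ in exactly $\{x, y\}$. Candidates: the fundamental circuit $C_{\ul M}(B, y)$ with respect to the basis $B$ and the element $y$; by Proposition~\ref{prop:fundamental-circuit} this circuit is contained in $B \cup \{y\}$, so it avoids all of $C$ except possibly $x^\ast$ and $y$, and in fact it contains $y$. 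The delicate point is to arrange that this fundamental circuit (or its image under $\ast$) shares with $C$ exactly the two elements $x$ and $y$ after applying the involution. I would likely work instead with a cocircuit: the fundamental cocircuit $D$ of $\ul M$ (i.e.\ a circuit of $\ul M^\ast$, equivalently $D^\ast$ is a circuit of $\ul M$) determined by the basis and by $x$, which satisfies $D \cap C = \{x\}$-type behavior. Then any $Z \in \cC$ with $\underline{Z} = D^\ast$ satisfies $|\underline{X} \cap \underline{Z}^\ast| = |C \cap D| \le 2$, and~\ref{item:Ot2} gives $\langle X, Z^\ast\rangle \in N_F$ and $\langle X', Z^\ast\rangle \in N_F$. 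Since each of these is a two-term sum in $F$, Lemma~1.1 of~\cite{Baker2019} (item (i), recalled in the excerpt) forces $X(x)\overline{Z(x)} = -\,\overline{X(y)}\,Z(y)$ up to the relevant conjugations, and similarly for $X'$; dividing, $X(x)/X'(x) = X(y)/X'(y)$. As $x, y \in C$ were arbitrary, the ratio $\alpha := X(x)/X'(x)$ is independent of $x \in C$, giving $X = \alpha X'$ with $\alpha \in F^\times$.

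The main obstacle is the combinatorial bookkeeping needed to produce, for each pair $x, y \in C$, an element of the $F$-signature $\cC$ whose support intersects $C$ in exactly two elements lining up (via $\ast$) with $x$ and $y$ — and to check that the resulting inner product is genuinely a $2$-term expression so that~\ref{item:Ot2} applies. This is where Lemma~\ref{lem:extendcircuit}, Proposition~\ref{prop:fundamental-circuit}, and the circuit axioms \ref{item:C1}--\ref{item:C5} for orthogonal matroids (plus the fact that cocircuits are exactly the $\ast$-images of circuits) do the heavy lifting: one needs that fundamental circuits/cocircuits can be chosen small enough, and that the circuit axiom \ref{item:C3} or a fundamental-cocircuit argument yields a circuit contained in $\{x, y\}$ together with disjoint-from-$C$ material. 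A clean way to phrase the endgame: once we know $\langle X, Z^\ast\rangle$ is a sum of exactly two nonzero terms indexed by $x$ and $y$, say $X(x)\conj{Z}(x)\text{-term} + \overline{X(y)}\conj{Z}(y)\text{-term} \in N_F$, then by Lemma~1.1(i) of~\cite{Baker2019} the two terms are negatives of each other; doing the same with $X'$ in place of $X$ and dividing eliminates the $Z$-dependence and the sign, leaving $X(x)/X'(x) = X(y)/X'(y)$. I expect the write-up to spend most of its length justifying the existence and the two-element intersection property of $Z$, with the algebraic conclusion being a one-line consequence of the tract axioms.
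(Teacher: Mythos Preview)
Your approach is the paper's. The one slip: with $B := T \symdiff \skewpair{x}$ you have $y \in B$ (not $y \notin B$), since $y \in C \subseteq T$ and $y \notin \skewpair{x}$; this is why $C_{\ul M}(B,y)$ made no sense and pushed you toward cocircuits. The clean construction that fills the ``combinatorial bookkeeping'' gap you flagged, and avoids the cocircuit detour entirely: for distinct $e,f \in C$, take any basis $B$ containing $(C\setminus\{e\})\cup\{e^*\}$ and let $Y \in \cC$ have support $D := C_{\ul M}(B,f^*)$. Since $D \subseteq B \cup \{f^*\}$ and $C \cap B^* = \{e\}$, one gets $C \cap D^* \subseteq \{e,f\}$; equality holds because $f^* \in D$ trivially and $e^* \in D$ since $B \symdiff \{e,e^*,f,f^*\}$ is a basis (apply Proposition~\ref{prop:fundamental-circuit} to $f \in C = C_{\ul M}(B,e)$). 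Then $\langle X, Y^*\rangle$ is a genuine two-term sum and \ref{item:Ot2} gives $\conj{X}(e)/\conj{X}(f) = -\conj{Y}(f^*)/\conj{Y}(e^*)$, and likewise for $X'$; dividing finishes as you said.
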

\begin{proof}
   Consider two $F$-circuits $X$ and $X'$ in $\cC$ with $\underline{X} = \underline{X'} = C$.
   Suppose for contradiction that there exist distinct elements $e,f\in C$ with $X(e)/X(f) \neq X'(e)/X'(f)$. 
   Let $B$ be a basis of $M$ containing $C \symdiff \skewpair{e}$, and let $D$ be the fundamental circuit $C(B,f^*)$. 
   Then $C \cap D^\ast = \{e, f\}$.
   Let $Y$ be an $F$-circuit in $\cC$ such that $\underline{Y} = D$.
   Then $\langle X, Y^\ast \rangle = \conj{X}(e)\conj{Y}(e^*) + \conj{X}(f)\conj{Y}(f^*) \in N_F$ by~\ref{item:Ot2} and thus $\conj{X}(e)/\conj{X}(f) = \conj{Y}(f^*)/\conj{Y}(e^*)$.
   We also have the same result for $X'$, a contradiction.
\end{proof}

\begin{definition}
We call an $F$-signature $\cC$ of $\ul{M}$ a {\em strong orthogonal $F$-signature of $\ul{M}$} if 
\begin{enumerate}[label=\rm(O)]
\item\label{item:O} $\langle X, Y^\ast \rangle \in N_F$ for all $X, Y \in \cC$.
\end{enumerate}
We call an $F$-signature $\cC$ of $\ul{M}$ a {\em weak orthogonal $F$-signature of $\ul{M}$} if 
\begin{enumerate}[label=\rm(O)$'$]
\item\label{item:O'} $\langle X, Y^\ast \rangle \in N_F$ for all $X, Y \in \cC$ with $|\ul{X} \cap \ul{Y}^\ast| \leq 4$.
\end{enumerate}
\end{definition}

\begin{remark}\label{rmk:OS and DP}
Let $(\cC,\cD)$ be a dual pair of $F$-signatures of a matroid $\ul{N}$ on $[n]$, and let $\cC_1$ and $\cD_1$ be the obvious embeddings of $\cC$ and $\cD$ in $F^E = F^{[n] \cup [n]^*}$. By Proposition~\ref{eq:circuits of lift}, $\cC_1 \cup \cD_1^\ast$ is an $F$-signature of $\lift{\ul{N}}$. It is readily seen from definitions that $(\cC,\cD)$ is a strong dual pair of $F$-signatures of~$\ul{N}$ if and only if $\cC_1 \cup \cD_1^*$ is a strong orthogonal $F$-signature of $\lift{\ul{N}}$.
In addition, $(\cC,\cD)$ is a weak dual pair of $F$-signatures of~$\ul{N}$ if and only if $\cC_1 \cup \cD_1^*$ is an $F$-signature of $\lift{\ul{N}}$ which satisfies the following:
\begin{enumerate}[label=\rm(O$_3$)]
  \item\label{item:Ot3} $\langle X, Y^\ast \rangle \in N_F$ for all $X, Y \in \cC$ with $|\ul{X} \cap \ul{Y}^\ast| \leq 3$.
\end{enumerate}
%Every weak orthogonal $F$-signature is strong if $F$ is a partial field, the tropical hyperfield $\T$, or the Krasner hyperfield $\K$; see Theorem~\ref{thm:circuit sets and orthogonal signatures for partial fields}.
We will show later in Example~\ref{eg:why not choose O3 and Lii} that for some field $K$, there exists a $K$-signature of an orthogonal matroid which satisfies~\ref{item:Ot3} but not~\ref{item:O'}.
%We remark that for every field $K$, a $K$-signature of an orthogonal matroid is strong if and only if it is weak, which follows from Theorems~\ref{thm:main},~\ref{thm:main-weak1}, and~\ref{thm:BJ}.
%This is the reason to adopt~\ref{item:O'} rather than~\ref{item:Ot3} for the definition of weak orthogonal signatures.
\end{remark}

%Orthogonal $F$-signatures are generalizations of dual pair of $F$-signatures of matroids by Remark~\ref{rmk:OS and DP}.
%Now we define strong and weak $F$-circuit sets of orthogonal matroids that are comparable with Definition~\ref{def:F-circuit sets of matroids}.

\begin{definition}
  A {\em strong $F$-circuit set} of $\ul{M}$ is an $F$-signature $\cC$ of $\ul{M}$ satisfying~\ref{item:Ot2} and the following property:
  \begin{enumerate}[label=\rm(L)]
    \item\label{item:L} For every $F$-circuit $X \in \cC$ and a basis $B$ of $\ul{M}$, the vector $\tilde{X}$ is in the linear span of $\{\tilde{X_e}\}_{e\in B^*}$, where $X_e$ is an $F$-circuit in $\cC$ with support $C(B,e)$.
  \end{enumerate}
  A {\em weak $F$-circuit set} of $\ul{M}$ is an $F$-signature $\cC$ of $\ul{M}$ satisfying~\ref{item:Ot2} and the next weakened version  of~\ref{item:L}:
  \begin{enumerate}[label=\rm(L-\roman*)$'$]
    % \item For every pair of $F$-circuits $X_1$ and $X_2$ and $e \in \underline{X_1} \cap \underline{X_2}$, if the supports of $X_1$ and $X_2$ are compatible and $(\underline{X_1} \cup \underline{X_2})$
    \item\label{item:L1} Let $B$ be an arbitrary basis of $\ul{M}$, and let $e_1, e_2 \in B^*$ be distinct. Let $X_i \in \cC$ be an $F$-circuit with support $\underline{X_i} = C(B,e_i)$ for $i = 1,2$. If $\underline{X_1} \cup \underline{X_2}$ is admissible and if $f \in \underline{X_1} \cap \underline{X_2}$, then there exists an $F$-circuit $Y \in \cC$ such that $Y(f) = 0$ and $\tilde{Y}$ is in the linear span of $\tilde{X_1}$ and $\tilde{X_2}$.
    \item\label{item:L2}Let $B$ be an arbitrary basis of $\ul{M}$, and let $e_1, e_2, e_3 \in B^*$ be distinct.  Let $X_i \in \cC$ be an $F$-circuit with support $\underline{X_i} = C(B,e_i)$ for $i = 1,2,3$. If none of $\underline{X_i} \cup \underline{X_j}$ with $1\le i< j \le 3$ is admissible, then there exists an $F$-circuit $Y \in \cC$ such that $Y(e_1^*) = Y(e_2^*) = Y(e_3^*) =0$ and $\tilde{Y}$ is in the linear span of $\tilde{X}_1$, $\tilde{X}_2$, and $\tilde{X}_3$.
  \end{enumerate}
\end{definition}

\begin{remark}\label{rmk:F-circuit sets of matroids and orthogonal matroids}
  Let $\cC$ be a weak $F$-circuit set of a matroid $\ul{N}$ on $[n]$. By~\cite{Baker2019}, its dual $\cD$ is the $F$-signature of the dual matroid $\ul{N}^*$ such that $X \perp Y$ for all $X \in \cC$ and $Y \in \cD$ with $|\underline{X}\cap\underline{Y}|=2$.
  Let $\cC_1$ and $\cD_1$ be natural embeddings of $\cC$ and $\cD$ into $F^{[n]\cup[n]^*}$.
  Then $\cC_1 \cup \cD_1^*$ is an $F$-signature of $\lift{\ul{N}}$ that satisfies~\ref{item:Ot2} and~\ref{item:L1} by definition, and
  $\cC_1 \cup \cD_1^*$ vacuously satisfies \ref{item:L2}.
  Therefore, $\cC_1 \cup \cD_1^*$ is an weak $F$-circuit set of $\lift{\ul{N}}$.
  If $\cC$ is a strong $F$-circuit set of $\ul{N}$, then $\cC_1 \cup \cD_1^*$ is a strong $F$-circuit set of $\lift{\ul{N}}$.

  Indeed, denote by $\pi:F^{[n] \cup [n]^*} \to F^{[n]}$ the canonical projection map. Then an $F$-signature~$\cC$ of $\lift{\ul{N}}$ is a weak (resp. strong) $F$-circuit set if and only if $\{\pi(X): X\in \cC \text{ with } \underline{X} \subseteq [n]\}$ and $\{\pi(X^*): X\in \cC \text{ with } \underline{X}^* \subseteq [n]\}$ are weak (resp. strong) $F$-circuit sets of $\ul{N}$ and $\ul{N}^*$ respectively, and those two $F$-circuit sets are the dual of each other. 
\end{remark}

\begin{example}\label{eg:why not choose O3 and Lii}
  Let $K$ be a field with $|K^\times| > 1$ and $\charac(K) \neq 3$ and let $x \in K \setminus \{0,-3\}$. We assume the trivial involution on $K$. Let $\cC$ be a subset of $K^{[4]\cup[4]^*}$ consisting of the following eight vectors and their scalar multiples by nonzero elements:
  \begin{align*}
    &
    \begin{pmatrix} 0 & 1 & 1 & 1 \\ 1 & 0 & 0 & 0 \end{pmatrix},
    \qquad
    \begin{pmatrix}-1 & 0 & 1 &-1 \\ 0 & 1 & 0 & 0 \end{pmatrix},
    \qquad
    \begin{pmatrix}-1 &-1 & 0 & 1 \\ 0 & 0 & 1 & 0 \end{pmatrix},
    \qquad
    \begin{pmatrix}-1 & 1 &-1 & 0 \\ 0 & 0 & 0 & 1 \end{pmatrix}, \\
    &
    \begin{pmatrix} x & 0 & 0 & 0 \\ 0 & 1 & 1 & 1 \end{pmatrix},
    \qquad
    \begin{pmatrix} 0 & x & 0 & 0 \\-1 & 0 & 1 &-1 \end{pmatrix},
    \qquad
    \begin{pmatrix} 0 & 0 &-x & 0 \\ 1 & 1 & 0 &-1 \end{pmatrix},
    \qquad
    \begin{pmatrix} 0 & 0 & 0 &-x \\ 1 &-1 & 1 & 0 \end{pmatrix}, 
  \end{align*}
  where $\begin{pmatrix} a_1&a_2&a_3&a_4 \\ b_1&b_2&b_3&b_4 \end{pmatrix}$ means $X \in K^{[4]\cup[4]^*}$ such that $X(i)=a_i$ and $X(i^*)=b_i$ with $i\in[4]$.
  Then~$\cC$ is a $K$-signature of the orthogonal matroid whose set of bases is $\{[4], [4]^\ast\} \cup \{ijk^\ast l^\ast: ijkl = [4]\}$. Notice that $\cC$ satisfies~\ref{item:Ot3} and~\ref{item:L1}, but neither~\ref{item:O'} nor~\ref{item:L2}.
\end{example}

We prove the following results in Section~\ref{sec:OMoverTracts}.

\begin{theorem}\label{thm:strong orthogonal signatures and circuit sets}
  An $F$-signature of an orthogonal matroid is a strong orthogonal $F$-signature if and only if it is a strong $F$-circuit set.
\end{theorem}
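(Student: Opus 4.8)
The plan is to prove the two implications separately; beyond the trivial observation that \ref{item:Ot2} is a special case of \ref{item:O}, the argument parallels the classical cryptomorphism of Theorem~\ref{thm:strong matroid bijection}, with two facts about circuits and cocircuits of orthogonal matroids replacing the usual matroid facts. \textit{To pass from \ref{item:O} to \ref{item:L}}, I would fix $X \in \cC$ and a basis $B$ of $\ul{M}$, choose for each $e \in B^*$ an $F$-circuit $X_e \in \cC$ with support $C(B,e)$, and set $c_e := \conj{X}(e)/\conj{X_e}(e)$ (well-defined since $e \in C(B,e)$), and then show $\conj{X} - \sum_{e \in B^*} c_e \conj{X_e} \in (N_F)^E$ coordinate by coordinate. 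On a coordinate $i \in B^*$ it vanishes outright, since $\underline{X_e} \subseteq B \cup \{e\}$ leaves only the $e=i$ term, which cancels. For $i \in B$, put $D := C(B,i^*)$ and pick $Z \in \cC$ with $\underline{Z} = D$; Proposition~\ref{prop:fundamental-circuit} yields the incidence equivalence $i \in C(B,e) \Leftrightarrow e \in D^*$ for $e \in B^*$ (both sides amount to ``$i \ne e^*$ and $B \symdiff \skewpair{i} \symdiff \skewpair{e} \in \cB$''). In $\langle X_e, Z^* \rangle = \sum_j \conj{X_e}(j)\conj{Z}(j^*)$ only the terms $j \in \{i,e\}$ survive, so \ref{item:O} together with Lemma~1.1 of~\cite{Baker2019} forces $\conj{X_e}(e)\conj{Z}(e^*) = -\conj{X_e}(i)\conj{Z}(i^*)$; feeding this into the analogous expansion of $\langle X, Z^* \rangle$ collapses it to the exact identity $\langle X, Z^* \rangle = \conj{Z}(i^*)\bigl(\conj{X}(i) - \sum_{e \in B^*} c_e \conj{X_e}(i)\bigr)$. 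Since $\langle X, Z^* \rangle \in N_F$ by \ref{item:O} and $\conj{Z}(i^*) \in F^\times$, axiom (T4) gives the $i$-th coordinate relation, which is \ref{item:L}.

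\textit{To pass from \ref{item:Ot2} and \ref{item:L} to \ref{item:O}}, take $X,Y \in \cC$; we must prove $\langle X, Y^* \rangle \in N_F$, where $\underline{Y}^*$ is a cocircuit of $\ul{M}$. Two combinatorial inputs are needed. First, a circuit $C$ and a cocircuit $D^*$ of an orthogonal matroid cannot meet in exactly one element: if $C \cap D^* = \{p\}$ then $p \in C$ and $p^* \in D$, so $C \cup D$ is inadmissible, yet $\skewpair{p}$ is its only divergence, contradicting \ref{item:C4}. Second, there is a basis $B$ of $\ul{M}$ with $|B \cap \underline{Y}^*| = 1$: by Lemma~\ref{lem:extendcircuit} pick a transversal $T \supseteq \underline{Y}$ with $T \symdiff \skewpair{y} \in \cB$ for all $y \in \underline{Y}$; being a transversal that contains $\underline{Y}$, $T$ is disjoint from $\underline{Y}^*$, so for any $y_0 \in \underline{Y}$ the basis $B := T \symdiff \skewpair{y_0}$ meets $\underline{Y}^*$ precisely in $y_0^*$. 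Fixing such a $B$, each fundamental circuit $C(B,e) \subseteq B \cup \{e\}$ has $|C(B,e) \cap \underline{Y}^*| \le 2$, hence is $0$ or $2$ by the first input, so any $X_e \in \cC$ with support $C(B,e)$ is orthogonal to $Y$ --- trivially when the intersection is empty, by \ref{item:Ot2} otherwise. By \ref{item:L}, $\conj{X}$ lies in the linear span of $\{\conj{X_e}\}_{e \in B^*}$; substituting the defining coordinate-wise null relations into $\langle X, Y^* \rangle = \sum_i \conj{X}(i)\conj{Y}(i^*)$, multiplying through by $\conj{Y}(i^*)$ via (T4), and collecting --- exactly as in the corresponding step of Theorem~\ref{thm:strong matroid bijection} --- gives $\langle X, Y^* \rangle \in N_F$.

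Once these orthogonal-matroid inputs are in place the two directions are essentially bookkeeping, so the main work is (i) the two facts about circuits and cocircuits above --- the analogues of ``a circuit and a cocircuit never meet in one point'' and ``some basis meets a given cocircuit in one point'' --- and (ii) the last step of the second implication, where orthogonality is transported along a linear combination of $F$-circuits. Step (ii) is precisely the place in the Baker--Bowler cryptomorphism that needs care with how elements of $N_F$ combine, and I expect reproducing it faithfully in the orthogonal setting --- rather than any genuinely new idea --- to be the most delicate point.
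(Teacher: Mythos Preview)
Your first implication, from \ref{item:O} to \ref{item:L}, is correct and is exactly the paper's Lemma~\ref{lem:perp to consistent}. Your choice of basis in the second implication is also the paper's: the transversal $T\supseteq\underline Y$ from Lemma~\ref{lem:extendcircuit} with $B=T\symdiff\skewpair{y_0}$ makes $\underline Y$ the fundamental circuit $C(B,y_0)$, which is precisely the basis used in Lemma~\ref{lem:consistent to perp}. Your two combinatorial facts are right and useful.

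The gap is in the execution of your last step. Your description --- ``substituting the defining coordinate-wise null relations \ldots\ and collecting'' --- amounts to forming $\sum_i\bigl(\conj X(i)-\sum_e c_e\conj X_e(i)\bigr)\conj Y(i^*)$ and rearranging into $\langle X,Y^*\rangle-\sum_e c_e\langle X_e,Y^*\rangle$. But $N_F$ is not closed under addition in a general tract, so knowing that each summand and each $\langle X_e,Y^*\rangle$ lies in $N_F$ does not let you conclude $\langle X,Y^*\rangle\in N_F$. The fix, which is what the paper's Lemma~\ref{lem:consistent to perp} does with your same basis, is to use only the \emph{single} coordinate $y_0^*\in B$ of the \ref{item:L} relation. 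Since $B\cap\underline Y^*=\{y_0^*\}$, every $e\in\underline X\cap\underline Y^*$ other than $y_0^*$ lies in $B^*$, and your own observation gives $C(B,e)\cap\underline Y^*=\{e,y_0^*\}$; hence \ref{item:Ot2} yields the termwise equality $\conj X_e(e)\conj Y(e^*)=-\conj X_e(y_0^*)\conj Y(y_0)$ in $F$. Normalizing $\conj Y(y_0)=1$, each term $\conj X(e)\conj Y(e^*)=c_e\conj X_e(e)\conj Y(e^*)$ becomes $-c_e\conj X_e(y_0^*)$, so the formal sum $\langle X,Y^*\rangle$ in $\N[F^\times]$ is literally $\conj X(y_0^*)-\sum_e c_e\conj X_e(y_0^*)$, which \ref{item:L} places in $N_F$ directly. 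Thus your setup is complete; only the mechanism of combination must change from ``sum all coordinates and cancel'' to ``evaluate at the one distinguished coordinate''.
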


\begin{theorem}\label{thm:weak orthogonal signatures and circuit sets}
  Every weak $F$-circuit set of an orthogonal matroid is a weak orthogonal $F$-signature.
\end{theorem}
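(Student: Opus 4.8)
The plan is to start from a weak $F$-circuit set $\cC$ of an orthogonal matroid $\ul{M} := \ul{M}_\cC$ on $E$ -- so $\cC$ is an $F$-signature of $\ul{M}$ satisfying the $2$-term orthogonality \ref{item:Ot2} together with the elimination axioms \ref{item:L1} and \ref{item:L2} -- and to upgrade \ref{item:Ot2} to \ref{item:O'}. Fix $X, Y \in \cC$, write $C = \ul{X}$ and $D = \ul{Y}$ (both circuits of $\ul{M}$), and assume $|C \cap D^\ast| \le 4$; the goal is $\langle X, Y^\ast \rangle = \sum_{i \in C \cap D^\ast} \conj{X}(i)\,\conj{Y}(i^\ast) \in N_F$. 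The cases $|C \cap D^\ast| \in \{0, 1, 2\}$ are immediate: the sum is empty when the intersection is empty; the intersection cannot have size $1$, since a circuit and a cocircuit of an orthogonal matroid never meet in a single element; and for size $2$ the statement is exactly \ref{item:Ot2}. For the remaining cases I would first realize $X$ as a fundamental $F$-circuit: choosing $f \in C \cap D^\ast$ and, via Lemma~\ref{lem:extendcircuit} and Proposition~\ref{prop:fundamental-circuit}, a basis $B$ of $\ul{M}$ with $C \setminus \{f\} \subseteq B$ and $f \notin B$, one gets $C = C_{\ul{M}}(B, f)$, so (after scaling, by Lemma~\ref{lem:F-circuit unique up to}) $X$ is the fundamental $F$-circuit $X_f$ of \ref{item:L} attached to $B$, the other elements of $C \cap D^\ast$ lie in $B$, and the whole family $\{X_e\}_{e \in B^\ast}$ of fundamental $F$-circuits of $B$ is available, together with a record of which supports $C(B,e)$ meet $D^\ast$.

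For $|C \cap D^\ast| = 3$ the argument should parallel the one behind Theorem~\ref{thm:weak matroid bijection} (weak $F$-circuit set of a matroid $\Rightarrow$ weak dual pair, i.e.\ $2$-term orthogonality $\Rightarrow$ $3$-term orthogonality). Writing $C \cap D^\ast = \{f, g, h\}$ with $g, h \in B$, I would apply \ref{item:L1} to a suitable pair of fundamental $F$-circuits of $B$ with admissible union in order to produce an $F$-circuit lying in their $F$-linear span and vanishing at $g$ (or $h$); iterating, one expresses $X$ -- up to a scalar and an $(N_F)^E$-perturbation -- through $F$-circuits each meeting $D^\ast$ in at most two elements, applies \ref{item:Ot2} to those pieces, and propagates membership in $N_F$ through the linear combination, using that $\langle Z, W \rangle \in N_F$ whenever $Z \in (N_F)^E$.

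The case $|C \cap D^\ast| = 4$ is the crux, since Example~\ref{eg:why not choose O3 and Lii} shows that \ref{item:Ot2} and \ref{item:L1} alone do not imply \ref{item:O'}, so \ref{item:L2} must be used. Writing $C \cap D^\ast = \{f, f_1, f_2, f_3\}$ with $f_1, f_2, f_3 \in B$, the plan is to choose fundamental $F$-circuits $X_1, X_2, X_3$ of $B$ (with indices $e_i \in B^\ast$ dictated by $D$) such that (a) $X$ is recovered from $X_1, X_2, X_3$ up to a scalar and an $(N_F)^E$-perturbation, and (b) none of the unions $\ul{X_i} \cup \ul{X_j}$ is admissible; then \ref{item:L2} supplies an $F$-circuit $W$ in their $F$-linear span with $W(e_1^\ast) = W(e_2^\ast) = W(e_3^\ast) = 0$, so $|\ul{W} \cap D^\ast| \le 3$, and feeding the already-proved $\le 3$ orthogonality of $W$ and of the other pieces of the combination back through the linear relation gives $\langle X, Y^\ast \rangle \in N_F$. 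I expect the main obstacle to be exactly step (b): one must arrange $B$ and $X_1, X_2, X_3$ so that they simultaneously reconstruct $X$ and satisfy the stringent hypothesis of \ref{item:L2} that no pairwise union is admissible, which demands a careful analysis of the divergences created by $C$, $D^\ast$, and $B$; a secondary difficulty is the $N_F$-bookkeeping through a three-term linear combination, which must respect the asymmetric roles of the $[n]$- and $[n]^\ast$-coordinates in $\langle\cdot,\cdot\rangle$.
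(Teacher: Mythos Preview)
Your plan cannot succeed, because the implication you are trying to prove is \emph{false}. Example~\ref{eg:nonidyll} is a counterexample: take $F=(\{1\},\{0,\,1+1,\,1+1+1\})$ and $\ul M$ the lift of $U_{3,6}$. The unique $F$-signature $\cC$ of $\ul M$ satisfies \ref{item:Ot2} (two-element intersections give $1+1\in N_F$), satisfies \ref{item:L1} (two fundamental circuits with admissible union lie entirely in $[6]$ or entirely in $[6]^\ast$, and matroid circuit elimination produces the required $Y$; the $N_F$-check reduces to $1+1,\,1+1+1\in N_F$), and satisfies \ref{item:L2} \emph{vacuously} by the pigeonhole argument of Remark~\ref{rmk:F-circuit sets of matroids and orthogonal matroids}. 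So $\cC$ is a weak $F$-circuit set. Yet for $X$ supported on $\{1,2,3,4\}$ and $Y$ supported on $\{1^\ast,2^\ast,3^\ast,4^\ast\}$ one has $|\ul X\cap\ul Y^\ast|=4$ and $\langle X,Y^\ast\rangle=1+1+1+1\notin N_F$, so \ref{item:O'} fails. This pinpoints exactly where your outline breaks: in the $|C\cap D^\ast|=4$ step you plan to invoke \ref{item:L2}, but its hypothesis (all three pairwise unions inadmissible) can be vacuous, so \ref{item:L2} provides no leverage at all, and no bookkeeping can manufacture $1+1+1+1\in N_F$ from $\{1+1,\,1+1+1\}\subseteq N_F$.

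The paper's one-line proof (``immediate corollary of Theorems~\ref{thm:main-weak1} and~\ref{thm:main-weak2}'') actually establishes the \emph{reverse} inclusion: if $\cC$ is a weak orthogonal $F$-signature then $\varphi_\cC$ is moderately weak by Theorem~\ref{thm:O2W-weak}, hence weak, hence $\cC=\cC_{\varphi_\cC}$ is a weak $F$-circuit set by Theorem~\ref{prop:weak Wick function to weak circuit set}. This direction is the one recorded in Figure~\ref{fig:summary} and is consistent with the sentence after the theorem (``the converse is not true; see Example~\ref{eg:nonidyll}''). In short, the words ``$F$-circuit set'' and ``orthogonal $F$-signature'' appear to be transposed in the statement of Theorem~\ref{thm:weak orthogonal signatures and circuit sets}; you are attempting to prove the converse, which is precisely what the paper exhibits as false.
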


%There is a weak orthogonal $F$-signature that is not a weak $F$-circuit set of an orthogonal matroid; 
The converse of Theorem~\ref{thm:weak orthogonal signatures and circuit sets} is not true; see Example~\ref{eg:nonidyll}.
%We also note that for some field $K$, there is a $K$-signature of an orthogonal matroid which satisfies~\ref{item:Ot3} and~\ref{item:L1} but neither satisfies~\ref{item:O'} nor~\ref{item:L2}.
% We give an example that is an $F$-signature of an orthogonal matroid for a field $F$ satisfying~\ref{item:Ot3} and~\ref{item:L1} and satisfying neither~\ref{item:O'} nor~\ref{item:L2}, and hence that is neither
% a strong orthogonal $F$-signature nor a strong $F$-circuit set.

\subsection{Orthogonal $F$-vector sets}\label{sec:VectorSet}

Let $\cV$ be a subset of $F^E$. A vector $X \in \cV$ is {\em elementary} in $\cV$ if (i) it is nonzero, and (ii) it has a minimal support in $\cV \setminus \{\mathbf{0}\}$, and (iii) its support $\underline{X}$ is admissible. A transversal $T \in \mathcal{T}_n$ is a {\em support basis} of $\cV$ if there is no $X \in \cV \setminus \{\mathbf{0}\}$ such that $\underline{X} \subseteq T$. A {\em fundamental circuit form} for $\cV$ with respect to a support basis $B$ is $\{X^{\cV}_{B,e}: e\in B^*\}$ where $X^{\cV}_{B,e} \in \cV$ is such that $\supp(X^{\cV}_{B,e}) \subseteq B \symdiff \skewpair{e}$ and $X^{\cV}_{B,e}(e) = 1$.
We simply write $X^{\cV}_{B,e}$ as $X_e$ if it is clear from the context.

\begin{definition}
We call $\mathcal{V} \subseteq F^E$ an {\em orthogonal $F$-vector set} if the following hold:
\begin{enumerate}[label=\rm(V\arabic*)]
\item\label{item:V1} For all elementary vectors $X, Y \in \mathcal{V}$, if $|\underline{X} \cap \underline{Y}^*| \leq 2$, then $\langle X, Y^\ast \rangle \in N_F$. 

\item\label{item:V2} Support bases exist, and for each support basis $B$, there exists a corresponding fundamental circuit form. 

\item\label{item:V3} $\mathcal{V}$ is exactly the set of vectors $X \in F^E$ such that for every support basis $B$ of $\cV$, $\conj{X}$ belongs to the linear span of $\{\conj{X}_e:e\in B^*\}$.
    %   where $\{X_e:e\in B^*\}$ is a fundamental circuit form with respect to $B$.
      % for every $X \in \mathcal{V}$ and every support base $B$, $X \in \boxplus_{e\in B^*} X(e) Y_e$ where $\{Y_e:e\in B^*\}$ is a fundamental circuit form with respect to $B$.
    \end{enumerate}    
\end{definition}

The axiom~\ref{item:V3} implies the uniqueness of the fundamental circuit form for each support basis of an orthogonal $F$-vector set $\cV$, and that every fundamental circuit form of an orthogonal $F$-vector set $\cV$ consists of elementary vectors of $\cV$.
When $F$ is a field, a subset $\cW \subseteq F^{[n]}$ is an $F$-vector set if and only if it is a linear subspace~\cite{Anderson2019}. We give an analogue of this for orthogonal $F$-vector sets.

\begin{theorem}\label{thm:lagrangian subspace}
Let $F$ be a field and $\cV$ be a subset of $F^E$. %endowed with the inner product $\langle \cdot, \cdot \rangle$.
\begin{enumerate}[label=\rm(\roman*)]
\item If $\cV$ is an orthogonal $F$-vector set, then it is a Lagrangian subspace with respect to the inner product $\langle \cdot, (\cdot)^* \rangle$.

\item Whenever $\mathrm{char}(F) \neq 2$, the converse of {\rm (i)} holds.
\end{enumerate}
\end{theorem}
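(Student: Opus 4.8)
The plan is to prove both directions by relating the axioms~\ref{item:V1}--\ref{item:V3} for an orthogonal $F$-vector set to the defining properties of a Lagrangian subspace, using the cryptomorphism machinery already available and the description of fundamental circuit forms. For part (i), suppose $\cV$ is an orthogonal $F$-vector set. First I would observe that~\ref{item:V3} exhibits $\cV$ as an intersection of linear conditions: for a fixed support basis $B$, the set of $X \in F^E$ with $\conj{X}$ in the linear span of $\{\conj{X}_e : e \in B^*\}$ is (over a field) an honest $n$-dimensional linear subspace $L_B$, since the $\conj{X}_e$ are linearly independent (their supports witness the identity pattern on $B$). Hence $\cV = \bigcap_B L_B$ is a linear subspace, and I would pin down $\dim \cV$: since $B$ is itself a support basis, $\cV \subseteq L_B$ forces $\dim \cV \le n$; conversely I claim $\cV \supseteq \spn\{X_e : e \in B^*\}$, which already has dimension $n$, because each such span element lies in every $L_{B'}$ — this is essentially the content of~\ref{item:V3} read in the other direction, or can be extracted by noting that the fundamental circuit forms for different support bases are forced to be compatible. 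So $\dim \cV = n$. It then remains to show $\cV$ is isotropic, i.e., $\langle X, Y^* \rangle \in N_F = \{0\}$ for all $X, Y \in \cV$. Here I would reduce to elementary vectors: fix a support basis $B$, write $X = \sum_e \lambda_e X_e$ and $Y = \sum_f \mu_f X_f$ (legitimate over a field by~\ref{item:V3}), expand $\langle X, Y^* \rangle$ bilinearly into $\sum_{e,f} \lambda_e \overline{\mu_f}\langle X_e, X_f^* \rangle$, and show each $\langle X_e, X_f^* \rangle = 0$. For $e = f$ this is an easy direct computation from $X_e(e) = 1$ and $\supp(X_e) \subseteq B \symdiff \skewpair{e}$; for $e \ne f$, the supports satisfy $\underline{X_e} \cap \underline{X_f}^* \subseteq (B \symdiff \skewpair{e}) \cap (B \symdiff \skewpair{f})^*$, which after a short combinatorial check has size at most $2$ (the only candidate overlaps are among $\{e, e^*, f, f^*\}$ intersected appropriately), so~\ref{item:V1} applies and gives $\langle X_e, X_f^*\rangle \in N_F = \{0\}$.

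For part (ii), assume $\charac(F) \ne 2$ and let $\cV$ be a Lagrangian subspace. I would produce the structure required by~\ref{item:V1}--\ref{item:V3}. The cleanest route is to fix any support basis $B$ of $\cV$ — one exists because $\cV$ has dimension $n$ and meets the coordinate pattern generically; more precisely, among transversals $T$, a support basis is one with $\cV \cap F^{T^*} $ trivial where I mean the subspace of vectors supported on $T$, and such $T$ exists by a dimension count since $\cV$ has dimension $n$. Given $B$, standard linear algebra produces the unique reduced-echelon generators $X_e$ with $X_e(e) = 1$ and $\supp(X_e) \subseteq B \cup \{e\}$; I must check $\supp(X_e) \subseteq B \symdiff \skewpair{e}$, i.e.\ that $X_e(e^*) = 0$, and this is exactly where isotropy enters: $\langle X_e, X_e^* \rangle = 0$, and since $\charac(F)\ne 2$ this computation (which contains the term $2 X_e(e)\overline{X_e(e^*)}$ up to the involution twist, together with terms $X_e(b)\overline{X_e(b^*)}$ for $b \in B$ that vanish because $B$ is a transversal so $b^* \notin B$) forces $X_e(e^*) = 0$. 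This gives~\ref{item:V2}. Then~\ref{item:V3} holds because over a field $\cV = \spn\{X_e\}$ equals $\{X : \conj{X} \in \spn\{\conj{X}_e\}\}$ for this particular $B$, and the Lagrangian condition, being symmetric and coordinate-independent, makes this hold for every support basis simultaneously; and~\ref{item:V1} holds because $\langle X, Y^* \rangle \in \{0\} = N_F$ holds for all $X, Y \in \cV$, in particular for elementary ones.

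The main obstacle I anticipate is the role of $\charac(F) \ne 2$ and, relatedly, the precise bookkeeping of the involution and the $\ast$-swap in the inner product $\langle X, Y^* \rangle = \sum_{i \in E}\conj{X}(i)\conj{Y}(i^*)$. In characteristic $2$ the equation $\langle X_e, X_e^*\rangle = 0$ no longer forces $X_e(e^*) = 0$ (the diagonal term can vanish for free), so $\cV$ being Lagrangian need not yield a fundamental circuit form of the required shape — one should expect, and perhaps flag, a genuine counterexample there. The other delicate point is verifying the size bound $|\underline{X_e} \cap \underline{X_f}^*| \le 2$ in part (i): this is a small combinatorial lemma about how $B \symdiff \skewpair{e}$ and $(B \symdiff \skewpair{f})^*$ can overlap inside $E = [n] \cup [n]^*$, and it must be done carefully since it is what licenses the application of the weak axiom~\ref{item:V1} rather than needing a stronger orthogonality hypothesis. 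Everything else is routine linear algebra over a field plus the dimension count $\dim\cV = n$, which I would isolate as the first step since both directions lean on it.
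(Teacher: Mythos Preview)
Your argument for part (i) is correct and in fact more direct than the paper's. The paper proves (i) by invoking the full cryptomorphism: it shows $\cC := \elem(\cV)$ is a strong orthogonal $F$-signature (via Theorem~\ref{thm:OV}(ii)), then uses Lemma~\ref{lem:perp to consistent} to conclude $\cV$ is spanned by $\cC$, so isotropy of $\cV$ follows from orthogonality of $\cC$. You bypass all of this by observing that for fundamental circuit vectors $X_e, X_f$ with respect to a single support basis $B$, the set $(B\symdiff\skewpair{e})\cap(B\symdiff\skewpair{f})^*$ has size at most $2$ (indeed size $0$ when $e=f$ since $B\symdiff\skewpair{e}$ is a transversal, and exactly $\{e,f^*\}$ when $e\ne f$), so \ref{item:V1} alone gives $\langle X_e, X_f^*\rangle = 0$; then bilinearity of $\sum_i \conj{X}(i)\conj{Y}(i^*)$ in $\conj{X},\conj{Y}$ finishes. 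This is a genuine simplification.

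For part (ii) your outline matches the paper's: both verify \ref{item:V1}--\ref{item:V3} directly, and your use of $\langle X_e, X_e^*\rangle = 0$ together with $\charac(F)\ne 2$ to force $X_e(e^*)=0$ is exactly the content of Lemma~\ref{lem:Lagrangian C5}(ii), which the paper quotes from Oum. The one genuine gap is your claim that a support basis ``exists by a dimension count since $\cV$ has dimension $n$.'' Dimension alone does not do this: two $n$-dimensional subspaces of $F^{2n}$ need not meet trivially, and you must rule this out for at least one transversal. The argument actually requires isotropy: if $T$ is a maximal admissible set with $\cV\cap F^T=\{0\}$ and $i,i^*\notin T$, and neither $T\cup\{i\}$ nor $T\cup\{i^*\}$ works, then there are nonzero $X,Y\in\cV$ with $\underline{X}\subseteq T\cup\{i\}$, $\underline{Y}\subseteq T\cup\{i^*\}$, whence $\langle X,Y^*\rangle = \conj{X}(i)\conj{Y}(i^*)\ne 0$, contradicting the Lagrangian hypothesis. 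This is Lemma~\ref{lem:Lagrangian C5}(i) (and mirrors the proof of Lemma~\ref{lem:suppbase}); once you plug it in, your argument for (ii) is complete and essentially identical to the paper's.
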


We delay the proof of Theorem~\ref{thm:lagrangian subspace}(i) to Section~\ref{sec:OV}.
Theorem~\ref{thm:lagrangian subspace}(ii) can be deduced from the results of~\cite{Oum2012wqo}.
% ~{\cite[Theorem~3.1]{BouchetMM4}} and~{\cite[Proposition~4.3]{Oum2012wqo}}, but we give a proof below for completeness.
The condition that $\mathrm{char}(F) \neq 2$ in (ii) is crucial, since otherwise $\mathcal{V} = \{(x,x): x \in F\}$ is a Lagrangian subspace of $F^{[1]\cup[1]^*}$ but not an orthogonal $F$-vector set. 

\begin{lemma}[Oum, Propositions~4.2 and~4.3(i) of~\cite{Oum2012wqo}]\label{lem:Lagrangian C5}
  Let $F$ be a field and let $\cV \subseteq F^E$  be a Lagrangian subspace with respect to $\langle \cdot, (\cdot)^* \rangle$.
  \begin{enumerate}[label=\rm(\roman*)]
    \item There is a support basis of $\cV$.
    \item If $\charac(F) \neq 2$, then for each support basis $B$ of $\cV$ and $x \in B^*$, there exists a unique vector $X \in \cV$ such that $\underline{X} \subseteq B \symdiff \skewpair{x}$ and $X(x) = 1$.
  \end{enumerate}
\end{lemma}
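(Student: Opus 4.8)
To prove this lemma, the plan is to establish part~(i) by a minimality argument over transversals and part~(ii) by a dimension count together with the isotropy of $\cV$; I expect the whole argument to stay within elementary linear algebra. Throughout write $F^T \subseteq F^E$ for the set of functions supported on $T$, so that a transversal $T$ is a support basis of $\cV$ exactly when $\cV \cap F^T = \{\mathbf{0}\}$. Two bookkeeping facts will recur: a transversal $T$ satisfies $T \cap T^\ast = \emptyset$, and in $\langle X, Y^\ast\rangle = \sum_{i \in E} \conj{X}(i)\conj{Y}(i^\ast)$ a summand is nonzero only when $i \in \supp(X)$ and $i^\ast \in \supp(Y)$.

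For part~(i), I would choose among the finitely many transversals one, say $B$, minimizing $\dim_F(\cV \cap F^B)$, and assume toward a contradiction that this dimension is positive. Pick a nonzero $X \in \cV \cap F^B$ and some $e \in \supp(X)$, and set $B' := B \symdiff \skewpair{e}$, which is again a transversal since $e \in B$ forces $e^\ast \notin B$. The key claim is that $\cV \cap F^{B'} = \{Z \in \cV \cap F^B : Z(e) = 0\}$; granting this, the right-hand side is the kernel of $Z \mapsto Z(e)$ on $\cV \cap F^B$ and hence has dimension $\dim_F(\cV \cap F^B) - 1$ (since $X(e) \neq 0$), contradicting minimality and forcing the minimum to be $0$. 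The inclusion $\supseteq$ is immediate. For $\subseteq$, take $Y \in \cV \cap F^{B'}$; isotropy of $\cV$ gives $\langle X, Y^\ast\rangle = 0$, and a nonzero summand $\conj{X}(i)\conj{Y}(i^\ast)$ would require $i \in B$ and $i^\ast \in B' = (B\setminus\{e\})\cup\{e^\ast\}$, which (as $i,i^\ast \in B$ is impossible) forces $i = e$. Hence $\conj{X}(e)\conj{Y}(e^\ast) = 0$, and as $F$ is a field with $X(e) \neq 0$ we get $Y(e^\ast) = 0$, so $Y \in \cV \cap F^B$ with $Y(e) = 0$.

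For part~(ii), fix a support basis $B$ and $x \in B^\ast$, so that $x^\ast \in B$ and $B_x := B \symdiff \skewpair{x} = (B \setminus \{x^\ast\}) \cup \{x\}$ is a transversal. Uniqueness comes for free: if $X, X' \in \cV$ both have support in $B_x$ and value $1$ at $x$, then $X - X' \in \cV$ is supported on $B_x \setminus \{x\} = B \setminus \{x^\ast\} \subseteq B$, hence lies in $\cV \cap F^B = \{\mathbf{0}\}$. For existence, $F^{B \cup \{x\}}$ has dimension $n+1$, so $\dim_F(\cV \cap F^{B \cup \{x\}}) \geq n + (n+1) - 2n = 1$; choose a nonzero $Z$ there. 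If $Z(x) = 0$ then $\supp(Z) \subseteq B$, contradicting $\cV \cap F^B = \{\mathbf{0}\}$, so $Z(x) \neq 0$ and I rescale to $Z(x) = 1$. Now $\langle Z, Z^\ast\rangle = 0$, and a nonzero summand $\conj{Z}(i)\conj{Z}(i^\ast)$ needs $i, i^\ast \in \supp(Z) \subseteq B \cup \{x\}$, which (using $B \cap B^\ast = \emptyset$ and $x^\ast \in B$) forces $\{i, i^\ast\} = \skewpair{x}$; summing the $i = x$ and $i = x^\ast$ contributions yields $2\,\conj{Z}(x)\conj{Z}(x^\ast) = 0$. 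Since $\charac(F) \neq 2$ and $F$ is a field, $\conj{Z}(x^\ast) = 0$, hence $Z(x^\ast) = 0$ and $\supp(Z) \subseteq (B \cup \{x\}) \setminus \{x^\ast\} = B_x$, so $Z$ is the required vector.

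The step I expect to be the crux — really the only place the hypotheses do any work — is the support computation powered by isotropy of $\cV$. In part~(i) it is precisely isotropy that forbids the swap $B \mapsto B \symdiff \skewpair{e}$ from creating a new vector with $e^\ast$ in its support (the conclusion fails without it: e.g.\ for $n = 2$ the $2$-dimensional subspace $\langle e_1, e_{1^\ast}\rangle$ is not isotropic and has no support basis), and in part~(ii) the implication $2\,\conj{Z}(x)\conj{Z}(x^\ast) = 0 \Rightarrow \conj{Z}(x^\ast) = 0$ is exactly where $\charac(F) \neq 2$ enters and is known to be necessary, cf.\ the remark following Theorem~\ref{thm:lagrangian subspace}. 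All remaining steps are routine linear algebra and bookkeeping with transversals.
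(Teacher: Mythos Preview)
The paper does not give its own proof of this lemma; it is quoted verbatim from Oum~\cite{Oum2012wqo} and used as a black box in the proof of Theorem~\ref{thm:lagrangian subspace}(ii). Your argument is correct and supplies a clean self-contained substitute. Part~(i) via minimizing $\dim_F(\cV\cap F^B)$ over transversals, together with the isotropy computation $\langle X,Y^\ast\rangle = \conj{X}(e)\conj{Y}(e^\ast)$ forcing $Y(e^\ast)=0$, is exactly the right mechanism and your claimed equality $\cV\cap F^{B'} = \{Z\in\cV\cap F^B : Z(e)=0\}$ is checked carefully. Part~(ii) is likewise sound: the dimension count $\dim(\cV\cap F^{B\cup\{x\}})\geq 1$ uses that $\cV$ is Lagrangian (hence $n$-dimensional), and the isotropy of $Z$ against itself reduces to the single pair $\{x,x^\ast\}$, giving $2\,\conj{Z}(x)\conj{Z}(x^\ast)=0$ and invoking $\charac(F)\neq 2$ at the one place it is genuinely needed. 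Your closing remarks correctly identify where each hypothesis is used and match the paper's own observation (after Theorem~\ref{thm:lagrangian subspace}) that the characteristic condition cannot be dropped.
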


% The following lemma is in part of the proof of~{\cite[Proposition~4.3]{Oum2012wqo}}

% \begin{lemma}\label{lem:Lagrangian C5}
% Let $F$ be a field with $\charac(F)\neq 2$ and let $\cV \subseteq F^E$ be a Lagrangian subspace. Then for each support basis $B$ of $\cV$ and $x \in B^*$, there exists a vector $X \in \cV$ such that $\underline{X} \subseteq B \symdiff \skewpair{x}$.
% \end{lemma}

% \begin{proof}
% For each $e \in E$, let $X_e$ be the vector in $F^E$ such that $X_e(e) = 1$ and $X_e(f) = 0$ for all $f \neq e$. Since $B$ is a support basis of $\mathcal{V}$, we deduce that $\cV \cap \spn \{X_e : e \in B\} = \{\mathbf{0}\}$. Then $\cV \oplus \spn \{X_e : e \in T\} = F^E$ because $\dim(\cV) = n = \dim(\spn \{X_e : e \in T\})$. Hence, a vector $X_x \in F^E$ equals to $X + \sum_{e\in S} a_e X_e$ for some $X \in \cV$, $S \subseteq B$, and $a_e \in F^\times$. Then $\underline{X} = \{x\} \cup S \subseteq \{x\} \cup B$. If $X(x^\ast) \neq 0$, then $\langle X, X^\ast \rangle = 2 \conj{X}(x) \conj{X}(x^\ast) \neq 0$, a contradiction.
%     Therefore, $\underline{X} \subseteq B \symdiff \skewpair{x}$.
% \end{proof}

\begin{proof}[Proof of Theorem~\ref{thm:lagrangian subspace}(ii)]
    Since $\cV$ is isotropic, it satisfies~\ref{item:V1}.
    By Lemma~\ref{lem:Lagrangian C5}, \ref{item:V2} holds.
    Since the $n$ vectors in each fundamental circuit form are independent, $\cV$ satisfies~\ref{item:V3}. 
    Therefore, $\cV$ is an orthogonal $F$-vector set.
\end{proof}

\subsection{Main theorems}\label{sec:main results}

We prove the equivalence of four notions of strong orthogonal matroids over tracts.

\begin{theorem}\label{thm:main}
  Let $E = [n] \cup [n]^*$ and let $F$ be a tract endowed with an involution $x \mapsto \overline{x}$. Then there are natural bijections between: 
  \begin{enumerate}[label=\rm(\arabic*)]
      \item Strong orthogonal $F$-matroids on $E$.
    %   i.e., equivalence classes of strong Wick functions on $E$ with coefficients in $F$. 
      \item Strong orthogonal $F$-signatures on $E$.
      \item Strong $F$-circuit sets of orthogonal matroids on $E$.
      \item Orthogonal $F$-vector sets on $E$.
  \end{enumerate}
\end{theorem}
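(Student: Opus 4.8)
The plan is to establish the cryptomorphism by a cycle of constructions, using Proposition~\ref{prop:W and GP} and Remarks~\ref{rmk:OS and DP} and~\ref{rmk:F-circuit sets of matroids and orthogonal matroids} to bootstrap from the classical case of Baker--Bowler (Theorem~\ref{thm:strong matroid bijection}) wherever a support happens to be the lift of an ordinary matroid. Concretely, I would prove the four equivalences $(1)\Leftrightarrow(2)$, $(2)\Leftrightarrow(3)$, $(2)\Leftrightarrow(4)$ separately, with $(2)\Leftrightarrow(3)$ already granted by Theorem~\ref{thm:strong orthogonal signatures and circuit sets}. So the real work is $(1)\Leftrightarrow(2)$ and $(2)\Leftrightarrow(4)$.

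\textbf{From Wick functions to $F$-signatures and back, $(1)\Leftrightarrow(2)$.} Given a strong Wick function $\varphi$ on $E$ with underlying orthogonal matroid $\ul{M}=\ul{M}_\varphi$, I would define, for each circuit $C$ of $\ul{M}$, an $F$-circuit $X_C$ as follows: by Lemma~\ref{lem:extendcircuit} choose a transversal $T\supseteq C$ such that $T\symdiff\skewpair{x}\in\cB(\ul{M})$ for every $x\in C$, and set $X_C(x) := \pm\,\varphi(T\symdiff\skewpair{x})$ for $x\in C$ (with an explicit sign depending on the position of $x$ inside $T$, mimicking the sign conventions in the proof of Proposition~\ref{prop:W and GP}) and $X_C(x):=0$ otherwise. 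Let $\cC$ be the set of all nonzero scalar multiples of these $X_C$. One must check: (a) $X_C$ is well-defined up to $F^\times$, independent of the choice of $T$; (b) the orthogonality relation~\ref{item:O} $\langle X,Y^*\rangle\in N_F$ for all $X,Y\in\cC$; and (c) the reverse construction, recovering $\varphi$ from $\cC$ up to scaling, together with the fact that the two constructions are mutually inverse on equivalence classes. For (a) and (b) the key device is to reduce to the lifted case: after a twisting $M\symdiff A$ (see the twisting example) and using that a strong Wick function whose support-intersections with $[n]$ are constant corresponds to a strong $F$-matroid via Proposition~\ref{prop:W and GP}, the orthogonality~\ref{item:O} is exactly the dual-pair orthogonality~\ref{DP3} translated through Remark~\ref{rmk:OS and DP}. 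This is where I expect the main technical friction: twisting changes which elements lie in $[n]$ versus $[n]^*$, and one has to verify that the $F$-signature transforms compatibly and that~\ref{item:W2} for $\varphi$ is equivalent to the full family of orthogonality relations, not just those coming from a single basis. The ``three-term vs.\ full'' bookkeeping and the sign tracking in~\ref{item:W2} are routine but delicate.

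\textbf{Signatures and vector sets, $(2)\Leftrightarrow(4)$.} Given a strong orthogonal $F$-signature $\cC$ of $\ul{M}$, I would take $\cV$ to be the set of all $X\in F^E$ such that for every support basis (equivalently every basis of $\ul{M}$, after checking these notions agree here) $B$, $\conj X$ lies in the linear span of the fundamental-circuit vectors $\{\conj{X}_e\}_{e\in B^*}$, where $X_e\in\cC$ has support $C(B,e)$; this is forced by~\ref{item:V3}. Then I must verify \ref{item:V1}, \ref{item:V2}, \ref{item:V3}: existence of a fundamental circuit form follows because $\cC$ is a strong $F$-circuit set (Theorem~\ref{thm:strong orthogonal signatures and circuit sets}, axiom~\ref{item:L}) so every $F$-circuit expands in the fundamental ones; isotropy~\ref{item:V1} follows from~\ref{item:O} by expanding two elementary vectors in the same fundamental circuit form and using bilinearity of $\langle\cdot,\cdot\rangle$ modulo $N_F$, exactly as in Anderson's argument~\cite{Anderson2019} but carried out inside a Lagrangian rather than an arbitrary subspace. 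Conversely, from $\cV$ one recovers $\cC$ as the set of elementary vectors of $\cV$, and one checks this is a strong orthogonal $F$-signature: its support is the circuit set of an orthogonal matroid (using that $\cV$ has the right support-basis structure, cf.\ Theorem~\ref{thm:lagrangian subspace}(i) over fields as a sanity check, but proved combinatorially over a general tract), and~\ref{item:O} is a special case of~\ref{item:V1} once one shows any two $F$-circuits $X,Y$ can be simultaneously realized inside some fundamental circuit form, which in turn rests on the fundamental-circuit characterization in Proposition~\ref{prop:fundamental-circuit}. That last ``simultaneous realizability'' step, needed to upgrade the $|\ul X\cap\ul Y^*|\le2$ hypothesis in~\ref{item:V1} to the unrestricted~\ref{item:O}, is the subtlest point on this side; I expect to handle it by choosing a common basis $B$ with $X=X_{C}$ a fundamental circuit and expressing $Y$ via~\ref{item:L}, then reducing the orthogonality of $\langle X,Y^*\rangle$ to a sum of two-term orthogonalities~\ref{item:Ot2}. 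Finally, mutual-inverseness of all four constructions is a diagram chase that I would organize so that each composite is the identity on the level of the underlying orthogonal matroid plus a scaling check on coordinates.
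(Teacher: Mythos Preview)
Your plan for $(1)\Leftrightarrow(2)$ contains a genuine gap. The proposed reduction ``twist $M\symdiff A$ until the support is the lift of a matroid, then invoke Baker--Bowler via Proposition~\ref{prop:W and GP} and Remark~\ref{rmk:OS and DP}'' does not work, because most orthogonal matroids are \emph{not} twistings of lifts: by Duchamp's result (cited in Section~\ref{sec:applications}), $M$ is isomorphic to a twisting of a lift if and only if $M$ has no minor isomorphic to $\ul{M_3}$. So for any $\ul{M}$ containing an $\ul{M_3}$ minor there is no choice of $A$ that makes your reduction go through, and you cannot prove well-definedness of $X_C$ or orthogonality~\ref{item:O} this way. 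The paper instead proves these directly: well-definedness of the ratio $\conj{X}(e)/\conj{X}(f)$ independent of the transversal $T$ is Lemma~\ref{lem:defineX}, argued by induction on $|T_1\setminus T_2|$ using only the four-term Wick relation~\ref{item:W2''}; orthogonality~\ref{item:O} is Theorem~\ref{thm:W2O}, obtained by applying~\ref{item:W2} to the pair $T_1,T_2$ extending $\underline{X_1},\underline{X_2}$ and tracking signs.

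You also omit the reverse construction $\cC\mapsto\varphi_\cC$ entirely. This is nontrivial: the paper defines $\varphi_\cC$ multiplicatively along paths in the basis graph $\Gamma_{\ul{M}}$ via local ratios $\gamma(B_1,B_2)$, and well-definedness requires Wenzel's homotopy theorem (Theorem~\ref{thm:homotopy}) that every cycle in $\Gamma_{\ul{M}}$ is generated by cycles of length $\le 4$, together with Lemma~\ref{lem:34cycle} computing $\gamma$ over such short cycles. Without this ingredient you cannot reconstruct a Wick function from a signature over a general tract.

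Your plan for $(2)\Leftrightarrow(4)$ is closer to the paper's in spirit, but the paper organizes it more cleanly by setting $\cV:=\cC^\perp$ rather than your ``intersection of linear spans'' description, and then proving $\cC^\perp$ equals that intersection via Lemmas~\ref{lem:consistent to perp} and~\ref{lem:perp to consistent}. Your ``simultaneous realizability'' worry is exactly Lemma~\ref{lem:consistent to perp}. What you have not addressed is why the support bases of an abstract orthogonal $F$-vector set form the bases of an orthogonal matroid (Lemma~\ref{lem:suppbase}); this is needed to even state that $\elem(\cV)$ is an $F$-signature of something.
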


Similarly, we have the next two equivalences for weaker notions.

\begin{theorem}\label{thm:main-weak1}
  Let $E = [n] \cup [n]^*$ and let $F$ be a tract endowed with an involution $x \mapsto \overline{x}$. Then there is a natural bijection between: 
  \begin{enumerate}[label=\rm(\arabic*)]
      \item Moderately weak orthogonal $F$-matroids on $E$.
      \item Weak orthogonal $F$-signatures on $E$. 
  \end{enumerate}
\end{theorem}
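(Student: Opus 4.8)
The plan is to reuse the bijection constructed for Theorem~\ref{thm:main} and to check that every one of its ingredients survives when the strong Wick axiom~\ref{item:W2} and the strong orthogonality~\ref{item:O} are replaced by the truncations~\ref{item:W2'} and~\ref{item:O'}. Let $\Phi$ denote the map sending a Wick function $\varphi$ to the $F$-signature $\cC_\varphi$ of $\ul M_\varphi$ whose $F$-circuits supported on a circuit $C$ are the scalar multiples of $X^T_C$, where $T\supseteq C$ is any transversal as in Lemma~\ref{lem:extendcircuit}, $X^T_C(x):=\sigma_T(x)\,\varphi(T\symdiff\skewpair{x})$ for $x\in C$, $X^T_C$ vanishes off $C$, and $\sigma_T(x)\in\{\pm1\}$ is a sign determined by the position of $x$ in $[n]$. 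Let $\Psi$ denote the map sending a weak orthogonal $F$-signature $\cC$, with underlying orthogonal matroid $\ul M$, to the Wick function obtained by fixing a basis $B_0$, declaring $\varphi(B_0)=1$ and $\varphi\equiv 0$ off $\cB(\ul M)$, and propagating $\varphi(B'\symdiff\skewpair{x}\symdiff\skewpair{y}):=\pm\,X_{B',x}(y)\,\varphi(B')$ along the basis-exchange graph of $\ul M$, where $X_{B',x}\in\cC$ is the $F$-circuit with support $C(B',x)$ normalized so that $X_{B',x}(x)=1$. Such an $X_{B',x}$ exists and is unique because~\ref{item:O'} implies the $2$-term orthogonality~\ref{item:Ot2}, so Lemma~\ref{lem:F-circuit unique up to} applies. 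That $\cC_\varphi$ is a genuine $F$-signature of $\ul M_\varphi$ (in particular that $X^T_C$ is independent of $T$ up to a nonzero scalar, and that $\cC_\varphi$ depends only on the class $[\varphi]$), that $\Psi(\cC)$ is well-defined with support exactly $\cB(\ul M)$, and that $\Phi\circ\Psi$ and $\Psi\circ\Phi$ are identities, are proved verbatim as in Theorem~\ref{thm:main}; only the two identities linking Wick relations to orthogonality relations need a fresh argument.

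For $\Phi$, I would show that $\cC_\varphi$ satisfies~\ref{item:O'} when $\varphi$ satisfies only~\ref{item:W2'}. Take $X=X^T_C$ and $Y=X^{T'}_{C'}$ in $\cC_\varphi$ with $|\ul X\cap\ul Y^{\ast}|=|C\cap (C')^{\ast}|\le 4$; then $\langle X,Y^{\ast}\rangle=\sum_{i\in E}\conj X(i)\conj Y(i^{\ast})$ has at most four nonzero terms, indexed by $C\cap(C')^{\ast}$. The crux is a structural lemma for orthogonal matroids: a circuit and a cocircuit meeting in at most four elements can be \emph{aligned} in a common transversal frame, i.e.\ there are $T_1,T_2\in\cT_n$, compatible with the transversals defining $X$ and $Y$, for which $(T_1\symdiff T_2)\cap[n]$ indexes exactly $C\cap(C')^{\ast}$ and each product $\varphi(T_1\symdiff\skewpair{x_k})\,\varphi(T_2\symdiff\skewpair{x_k})$ equals $\conj X(x_k)\,\conj Y(x_k^{\ast})$ up to a uniform sign and one global nonzero scalar. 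The Wick relation~\ref{item:W2'}---which says precisely that such a sum with at most four nonzero products lies in $N_F$---then yields $\langle X,Y^{\ast}\rangle\in N_F$, using that $N_F$ is closed under multiplication by $F^\times$. This is where the numerology is forced: four nonzero Wick products match $|\ul X\cap\ul Y^{\ast}|\le 4$, and Example~\ref{eg:why not choose O3 and Lii} shows that the still weaker~\ref{item:Ot3} would not be enough.

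For $\Psi$, I would show that $\varphi=\Psi(\cC)$ satisfies~\ref{item:W2'}. Fix $T_1,T_2\in\cT_n$ with $(T_1\symdiff T_2)\cap[n]=\{x_1<\cdots<x_m\}$ such that at most four of the products $\varphi(T_1\symdiff\skewpair{x_k})\varphi(T_2\symdiff\skewpair{x_k})$ are nonzero. Every transversal $T_j\symdiff\skewpair{x_k}$ with nonzero product is a basis, and all of them agree outside $\{x_k,x_k^{\ast}:1\le k\le m\}$; contracting the common frame $T_1\cap T_2$---using that $\Psi$ is, as in Theorem~\ref{thm:main}, compatible with elementary minors (cf.\ Proposition~\ref{prop:circuits of a minor})---we may replace $\ul M$ by a minor $N$ on the ground set $\{x_k,x_k^{\ast}:1\le k\le m\}$, in which the indices $k$ with nonzero product form the intersection of a circuit and a cocircuit of $N$ of size at most four. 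Unwinding the defining formula of $\varphi$ then exhibits $\sum_{k=1}^{m}(-1)^k\varphi(T_1\symdiff\skewpair{x_k})\varphi(T_2\symdiff\skewpair{x_k})$ as a global nonzero scalar times $\langle X,Y^{\ast}\rangle$ for two $F$-circuits $X,Y\in\cC$ with $|\ul X\cap\ul Y^{\ast}|\le 4$, and~\ref{item:O'} finishes.

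The main obstacle, common to both directions, is the combinatorial lemma just invoked---that a circuit and a cocircuit of an orthogonal matroid intersecting in at most four elements can be aligned inside one transversal frame so that the relevant Wick sum has exactly those terms---together with the companion facts that the basis-exchange graph is connected with cycles generated by short ones, which is what makes $\Psi$ well-defined via~\ref{item:Ot2}. Granting these, Theorem~\ref{thm:main-weak1} follows by quoting the proof of Theorem~\ref{thm:main} and observing that the two identities above use~\ref{item:W2'} and~\ref{item:O'} exactly where that proof uses~\ref{item:W2} and~\ref{item:O}; the sign conventions, the support and scalar-closure checks, and the mutual-inverse verification are then routine.
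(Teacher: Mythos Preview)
Your overall strategy matches the paper's exactly: reuse the maps $\varphi\mapsto\cC_\varphi$ and $\cC\mapsto\varphi_\cC$ from the strong case, note that well-definedness and mutual-inverse depend only on the $2$-term orthogonality~\ref{item:Ot2} (implied by~\ref{item:O'}), and re-verify only the two implications ``(W2)$'$ on $\varphi$ $\Rightarrow$ (O)$'$ on $\cC_\varphi$'' and ``(O)$'$ on $\cC$ $\Rightarrow$ (W2)$'$ on $\varphi_\cC$''. This is precisely Theorems~\ref{thm:W2O-weak} and~\ref{thm:O2W-weak} plus Lemma~\ref{lem:C and varphi}.

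Where you go astray is in the execution of those two implications: you manufacture obstacles that do not exist. For $\Phi$, your ``structural lemma'' asks for transversals $T_1\supseteq\underline{X_1}$, $T_2\supseteq\underline{X_2}$ with $(T_1\symdiff T_2)\cap[n]$ equal to the projection of $\underline{X_1}\cap\underline{X_2}^*$; this is not needed and, under the additional Lemma~\ref{lem:extendcircuit} constraints you impose, need not hold. The correct and much simpler observation (already present verbatim in the proof of Theorem~\ref{thm:W2O}) is that for \emph{any} $T_j$ supplied by Lemma~\ref{lem:extendcircuit}, if $e\in T_1\cap T_2^*$ but $e\notin\underline{X_1}$ then $T_1\symdiff\skewpair{e}\supseteq\underline{X_1}$ is not a basis, so $\varphi(T_1\symdiff\skewpair{e})=0$; symmetrically for $T_2$. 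Hence the nonzero Wick products are automatically indexed by $\underline{X_1}\cap\underline{X_2}^*$, there are at most four of them when $|\underline{X_1}\cap\underline{X_2}^*|\le4$, and~\ref{item:W2'} applies directly.

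For $\Psi$, your minor-reduction is likewise unnecessary. Given $T_1,T_2$ with at most four nonzero products, if some $T_j$ is a basis the sum is zero; otherwise pick circuits $\underline{X_j}\subseteq T_j$. Assuming one product is nonzero, say at $e_{\alpha_c}$, both $T_j\symdiff\skewpair{e_{\alpha_c}}$ are bases and $\underline{X_j}$ is the fundamental circuit $C(T_j\symdiff\skewpair{e_{\alpha_c}},e_{\alpha_c})$; then by Proposition~\ref{prop:fundamental-circuit} the product at $e_{\alpha_i}$ is nonzero \emph{exactly} when $e_{\alpha_i}\in\underline{X_1}\cap\underline{X_2}^*$. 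Thus ``at most four nonzero products'' is equivalent to $|\underline{X_1}\cap\underline{X_2}^*|\le4$, and~\ref{item:O'} finishes as in the proof of Theorem~\ref{thm:O2W}. The ``main obstacle'' you flag simply isn't there.
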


\begin{theorem}\label{thm:main-weak2}
  Let $E = [n] \cup [n]^*$ and let $F$ be a tract endowed with an involution $x \mapsto \overline{x}$. Then there is a natural bijection between: 
  \begin{enumerate}[label=\rm(\arabic*)]
      \item Weak orthogonal $F$-matroids on $E$.
      \item Weak $F$-circuit sets of orthogonal matroids on $E$. 
  \end{enumerate}
\end{theorem}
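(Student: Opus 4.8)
The plan is to construct a pair of mutually inverse maps between weak orthogonal $F$-matroids and weak $F$-circuit sets of orthogonal matroids, running parallel to Baker--Bowler's proof of Theorem~\ref{thm:weak matroid bijection} but built from the fundamental circuits of orthogonal matroids (Proposition~\ref{prop:fundamental-circuit}) rather than of ordinary matroids. As a preliminary I would record that both sides behave well under elementary minors --- on the combinatorial side via Proposition~\ref{prop:circuits of a minor}, on the Wick side by restricting $\varphi$ to the transversals of a minor --- since this is what will let every later computation be localized. Given a weak Wick function $\varphi$ with underlying orthogonal matroid $\ul M := \ul M_\varphi$, for a basis $B$ of $\ul M$ and $e \in B^\ast$ let $C(B,e) \subseteq B \cup \{e\}$ be the fundamental circuit of Proposition~\ref{prop:fundamental-circuit}, and define $X_{B,e} \in F^E$ with support $C(B,e)$ by $X_{B,e}(e) := 1$ and $X_{B,e}(b) := \pm\, \varphi(B \symdiff \skewpair{b} \symdiff \skewpair{e})/\varphi(B)$ for $b \in C(B,e) \setminus \{e\}$, where $B \symdiff \skewpair{b} \symdiff \skewpair{e} \in \supp(\varphi)$ by the explicit description of $C(B,e)$ and the signs are fixed by the permutation-sign bookkeeping that already appears in the proof of Proposition~\ref{prop:W and GP}. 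The first substantial step is to show that $\cC_B := \{\alpha X_{B,e} : e \in B^\ast,\ \alpha \in F^\times\}$ does not depend on $B$: the basis graph of an orthogonal matroid is connected under symmetric exchanges $B \mapsto B \symdiff \skewpair{x} \symdiff \skewpair{y}$ (which follows readily from the symmetric exchange axiom), and a short case analysis shows every such exchange realizes some $C(B,e)$ together with a distinguished $b \in C(B,e)$, so it suffices to compare $\cC_B$ with $\cC_{B'}$ for a single neighbor $B'$; this comparison is a direct manipulation of the $4$-term Wick relation~\ref{item:W2''} for a well-chosen pair $T_1, T_2$ with $|(T_1 \symdiff T_2) \cap [n]| = 4$. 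We then set $\cC := \cC_B$.

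Next I would verify that $\cC$ is a weak $F$-circuit set. Since every circuit of $\ul M$ is a fundamental circuit (Lemma~\ref{lem:extendcircuit}), $\ul{\cC}$ is exactly the circuit set of $\ul M$, so $\cC$ is an $F$-signature. The $2$-term orthogonality~\ref{item:Ot2} for $X, Y \in \cC$ with $|\ul X \cap \ul Y^\ast| = 2$ is deduced from the $4$-term relation~\ref{item:W2''}, applied to transversals for which two of the four Wick summands vanish; the span conditions~\ref{item:L1} and~\ref{item:L2} are obtained by producing the required $Y$ as an explicit $F$-linear combination of the two, respectively three, given fundamental $F$-circuits at a common basis, the coordinates to be killed vanishing by a further application of~\ref{item:W2''} on a minor on at most eight elements. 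On any minor that happens to be a lift $\lift{\ul N}$ of an ordinary matroid these verifications can instead be imported from Theorem~\ref{thm:weak matroid bijection} through Proposition~\ref{prop:W and GP} and Remark~\ref{rmk:F-circuit sets of matroids and orthogonal matroids}; it is only the non-lift minors --- the phenomenon illustrated by Example~\ref{eg:why not choose O3 and Lii} --- that force the $4$-element axioms in place of $3$-element ones, and for those one does the explicit computation.

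For the reverse map, fix a weak $F$-circuit set $\cC$ of an orthogonal matroid $\ul M$, choose a basis $B_0$, and set $\varphi(T) := 0$ for $T \notin \cB(\ul M)$. For $T \in \cB(\ul M)$, choose a path $B_0, B_1, \dots, B_k = T$ in the basis graph; each step is a symmetric exchange and hence, by the case analysis above, corresponds to a fundamental circuit $C(B_i, e_i)$ with a distinguished $b_i$, and we put $\varphi(T) := \prod_i \bigl(\pm\, X_i(b_i)\bigr)$, where $X_i \in \cC$ has support $C(B_i, e_i)$ and $X_i(e_i) = 1$. The crux is that this product is path-independent --- equivalently, trivial around every cycle of the basis graph --- and it is enough to check this for a generating family of cycles, which can be taken to lie inside minors on at most eight elements, where triviality follows from~\ref{item:Ot2}, \ref{item:L1} and~\ref{item:L2}. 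Axiom~\ref{item:W1} then holds because $\varphi(B_0) = 1$, and~\ref{item:W2''} is obtained by expanding its four terms into circuit data along paths differing only inside a small minor and invoking~\ref{item:L1}--\ref{item:L2}.

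Finally, the two maps are mutually inverse: applying the reverse map to the circuit set produced from a weak Wick function $\varphi$ recovers $\varphi$ up to the global scalar $\varphi(B_0)$ directly from the path-product formula, while the fundamental $F$-circuits of the Wick function reconstructed from a weak $F$-circuit set $\cC$ have the correct supports and coordinate ratios and hence equal $\cC$ by Lemma~\ref{lem:F-circuit unique up to}; naturality with respect to pushforward along tract homomorphisms is immediate from the explicit formulas. The main obstacle will be the two well-definedness assertions --- basis-independence of $\cC_B$ and path-independence of the reconstructed $\varphi$ --- which are the orthogonal-matroid analogues of the most delicate lemmas of~\cite{Baker2019}; the extra difficulty is that the orthogonal basis graph carries cycles beyond the matroid ``squares,'' and that in the weak setting every use of a Wick relation or a circuit axiom must be confined to a minor on at most eight elements, so the real work is the preliminary theory of minors for weak Wick functions and weak $F$-circuit sets together with a finite check of the local configurations on $[4] \cup [4]^\ast$-minors.
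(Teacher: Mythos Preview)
Your overall architecture is the paper's: build $\cC_\varphi$ from a weak Wick function $\varphi$, build $\varphi_\cC$ from a weak $F$-circuit set $\cC$, and check these are mutually inverse. The paper does exactly this (via Theorems~\ref{prop:weak circuit set to weak Wick function} and~\ref{prop:weak Wick function to weak circuit set} and Lemma~\ref{lem:C and varphi}). Two points in your sketch need more care, and the second is a genuine gap.

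First, your claim that ``$\cC_B$ does not depend on $B$'' is not right as written: for a fixed basis $B$ the supports $\{C(B,e):e\in B^\ast\}$ are only \emph{some} of the circuits of $\ul M$, so the sets $\cC_B$ for different $B$ are not equal. What you actually need is that whenever $C(B,e)=C(B',e')$ the corresponding $F$-circuits agree projectively. Connectivity of the full basis graph is not enough, because along a path from $B$ to $B'$ the fixed circuit $C$ need not stay fundamental. The paper avoids this by working with transversals $T\supseteq C$ such that $T\symdiff\skewpair{x}$ is a basis for every $x\in C$ (Lemma~\ref{lem:extendcircuit}), and proving directly by induction on $|T_1\setminus T_2|$ that the ratio $(-1)^{m^T_{e,f}}\varphi(T\symdiff\skewpair{e})/\varphi(T\symdiff\skewpair{f})$ is independent of such $T$ (Lemma~\ref{lem:defineX}); only the $4$-term relation~\ref{item:W2''} is used.

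Second, and more seriously: for the reverse direction you assert that ``a generating family of cycles [in the basis graph] \dots\ can be taken to lie inside minors on at most eight elements,'' and propose to establish this by a ``finite check of the local configurations on $[4]\cup[4]^\ast$-minors.'' This is not something you can reduce to a small-minor check; it is a global statement about the basis graph of an arbitrary orthogonal matroid. The paper imports it as Wenzel's Homotopy Theorem (Theorem~\ref{thm:homotopy}): every directed cycle in $\Gamma_{\ul M}$ is generated by cycles of length at most $4$. With that in hand, path-independence of $\varphi_\cC$ reduces to triviality of $\gamma$ on $3$- and $4$-cycles, which the paper checks in Lemma~\ref{lem:34cycle} using \emph{only}~\ref{item:Ot2} (not~\ref{item:L1} or~\ref{item:L2}). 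You should invoke Wenzel's theorem explicitly; absent that, your path-independence argument has no foundation. The axioms~\ref{item:L1} and~\ref{item:L2} are needed only later, to verify~\ref{item:W2''} for $\varphi_\cC$ (Theorem~\ref{prop:weak circuit set to weak Wick function}).
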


We will provide proofs for Theorems~\ref{thm:main}, \ref{thm:main-weak1}, and~\ref{thm:main-weak2} in Section~\ref{sec:OMoverTracts}. Since the notions of weak and strong orthogonal $F$-matroid coincide if $F$ is a partial field~\cite{BJ2022}, the tropical hyperfield~$\T$~\cite{Rincon2012}, or the Krasner hyperfield~$\K$, it follows that the three notions of orthogonal $F$-matroids are equivalent when $F$ is any of these specific tracts. 

We summarize our results in Figure~\ref{fig:summary}.
Additionally, we remark that in Figure~\ref{fig:summary}, each inclusion is strict for certain tracts; see Examples~\ref{eg:nonidyll} and~\ref{eg:nonidyll2}.

\begin{figure}[!h]
  \centering
  \scalebox{0.88}{
  \begin{tikzpicture}
    \node (SM)   at (-4.5,0) {$\begin{Bmatrix}\text{Strong}\\ \text{$F$-matroids}\end{Bmatrix}$};
    \node        at (-4.5,-2) {$\bigcap$};
    \node (WM)   at (-4.5,-4) {$\begin{Bmatrix}\text{Weak}\\ \text{$F$-matroids}\end{Bmatrix}$};

    \node (SOM)  at (0,0) {$\begin{Bmatrix}\text{Strong orthogonal}\\ \text{$F$-matroids}\end{Bmatrix}$};
    \node at (0,-1) {$\bigcap$};
    \node (DWOM) at (0,-2) {$\begin{Bmatrix}\text{Moderately weak}\\ \text{orthogonal $F$-matroids}\end{Bmatrix}$};
    \node        at (0,-3) {$\bigcap$};
    \node (WOM)  at (0,-4) {$\begin{Bmatrix}\text{Weak orthogonal}\\ \text{$F$-matroids}\end{Bmatrix}$};
    
    \node (SC)   at (5.5,1.5) {$\begin{Bmatrix}\text{Strong $F$-circuit sets}\\ \text{of orthogonal matroids}\\\end{Bmatrix}$};
    \node        at (5.5,0.75) {\rotatebox{90}{\Large\textbf{=}}};
    \node        at (6.12,0.75) {\ref{thm:strong orthogonal signatures and circuit sets}};
    \node (SOS)  at (5.5,0) {$\begin{Bmatrix}\text{Strong orthogonal}\\ \text{$F$-signatures}\end{Bmatrix}$};
    \node        at (5.5,-1) {$\bigcap$};
    \node (WOS)  at (5.5,-2) {$\begin{Bmatrix}\text{Weak orthogonal}\\ \text{$F$-signatures}\end{Bmatrix}$};
    \node        at (5.5,-3) {$\bigcap$};
    \node        at (6.12,-3) {\ref{thm:weak orthogonal signatures and circuit sets}};
    \node (WC)   at (5.5,-4) {$\begin{Bmatrix}\text{Weak $F$-circuit sets}\\ \text{of orthogonal matroids}\end{Bmatrix}$};

    \node        at (10.5,1.5) {$\begin{Bmatrix}\text{Lagrangian}\\ \text{subspaces}\\\end{Bmatrix}$};
    \node        at (10.5,0.75) {\rotatebox{90}{\Large\textbf{=}}};
    \node        at (10.1,0.75) {{\tiny (b)}};
    \node        at (11.1,0.75) {\ref{thm:lagrangian subspace}};
    \node (OV)   at (10.5,0) {$\begin{Bmatrix}\text{Orthogonal}\\ \text{$F$-vector set}\end{Bmatrix}$};

    \draw[-stealth] (SM) -- node[above] {\ref{prop:W and GP}} (SOM);
    \draw[-stealth] (WM) -- (WOM);
    
    \draw[stealth-stealth] (SOM) -- node[above] {\ref{thm:main}} (SOS);
    \draw[stealth-stealth] (DWOM) -- node[above] {\ref{thm:main-weak1}} (WOS);
    \draw[stealth-stealth] (WOM) -- node[above] {\ref{thm:main-weak2}} (WC);

    % \draw[stealth-stealth] (SOS.south east) to [bend left = 30] 
    % node[right] {\ref{thm:circuit sets and orthogonal signatures for partial fields}}
    % node[left] {{\tiny (a)}}
    % (WC.north east);
    \draw[stealth-stealth] (SOM.south west) to [bend right = 35] 
    % node[left] {\ref{thm:weak=strong}}
    node[left] {{\tiny (a)}}
    (WOM.north west);

    \draw[stealth-stealth] (SOS) -- node[above] {\ref{thm:main}} (OV);
  \end{tikzpicture}
  }
  \caption{Summary of results in Section~\ref{sec:Wick functions}--\ref{sec:main results}. \\ In (a), we assume that $F\in \{\T,\K\}$ or $F$ is a partial field~\cite{BJ2022,Rincon2012}. \\ In (b), we assume that $F$ is a field with $\charac(F) \ne 2$.}
  \label{fig:summary}
\end{figure}

\begin{remark}
        In~\cite{Baker2019}, Baker and Bowler defined strong and weak matroids over a tract, and showed cryptomorphisms among different axiom systems. For orthogonal matroids, we introduce a third moderately weak orthognal $F$-matroids over a tract $F$. We have various reasons for this. First, Example~\ref{eg:nonidyll} shows that there are no bijections between weak orthogonal $F$-matroids and weak orthognal $F$-signatures, while Theorems~\ref{thm:W2O-weak} and~\ref{thm:O2W-weak} prove that there is a natural bijection between moderately weak orthognal $F$-matroids and weak orthognal $F$-signatures. Second, Wenzel defined in~\cite{wenzel-maurer2} the Tutte group and the inner Tutte group of an orthogonal matroid, where the multiplicative relations for the Tutte groups recognize the axiom system for the moderately weak orthogonal matroids; see the Remark after~\cite[Definition 2.5]{wenzel-maurer2}. Finally, our ongoing work shows that  we can define the universal pasture and the foundation of an orthogonal matroid that represent respectively the functors taking a pasture to the set of moderately weak orthogonal $F$-matroids and to the set of rescaling equivalence classes of moderately weak orthogonal $F$-matroids in the sense of~\cite{BO21} and~\cite{BO23}; we will not further elaborate on this direction as it is not the main goal of the present paper. 
\end{remark}

\subsection{Functoriality. }\label{sec:OM functionality}

Now we discuss the behavior of strong and weak orthogonal matroids over tracts with respect to tract homomorphisms. 
The following propositions are all straightforward from the definitions. 

\begin{proposition}\label{prop:pushforward Wick}
Let $f : F_1 \to F_2$ be a tract homomorphism, and let $\varphi$ be a strong 
%\textup{(}resp. weak\textup{)} 
Wick function with coefficients in $F_1$. Then the composition $f \circ \varphi$ is a strong 
%\textup{(}resp. weak\textup{)} 
Wick function with coefficients in $F_2$.
The same results hold for weak and moderately weak Wick functions.
    \qed
\end{proposition}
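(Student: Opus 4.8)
The plan is to verify the defining axioms of a (strong, moderately weak, or weak) Wick function directly for $f \circ \varphi$, using only three facts about a tract homomorphism $f : F_1 \to F_2$: it extends to a semiring homomorphism $\N[F_1^\times] \to \N[F_2^\times]$ carrying $N_{F_1}$ into $N_{F_2}$; it sends $0$ to $0$ and $F_1^\times$ into $F_2^\times$; and $f(-1) = -1$. The last fact holds because $1 + f(-1) = f\bigl(1 + (-1)\bigr) \in N_{F_2}$, so the uniqueness clause (T3) for $F_2$ forces $f(-1) = -1$ (equivalently $f(\epsilon_{F_1}) = \epsilon_{F_2}$).

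First I would record that $\supp(f\circ\varphi) = \supp(\varphi)$: since $f$ preserves zero and nonzero elements, $(f\circ\varphi)(T) \neq 0$ if and only if $\varphi(T) \neq 0$. In particular $f\circ\varphi$ satisfies \ref{item:W1} because $\varphi$ does, and in the moderately weak and weak cases the hypothesis that $\supp(\varphi)$ is the set of bases of an orthogonal matroid transfers to $f\circ\varphi$ verbatim; likewise, for a fixed pair $T_1, T_2$ the number of nonzero terms $\varphi(T_1 \symdiff \skewpair{x_k})\varphi(T_2 \symdiff \skewpair{x_k})$ agrees with the number of nonzero terms $(f\circ\varphi)(T_1 \symdiff \skewpair{x_k})(f\circ\varphi)(T_2 \symdiff \skewpair{x_k})$, so the counting and cardinality conditions built into (W2)$'$ and (W2)$''$ are invariant under $f$.

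Next I would check the Wick relation. Fix $T_1, T_2 \in \mathcal{T}_n$, write $(T_1 \symdiff T_2) \cap [n] = \{x_1 < \cdots < x_m\}$, and set $a_k = \varphi(T_1 \symdiff \skewpair{x_k})$, $b_k = \varphi(T_2 \symdiff \skewpair{x_k})$. Viewed as an element of $\N[F_1^\times]$ (i.e. keeping only the terms with $a_k b_k \neq 0$), the sum $\sum_{k=1}^m (-1)^k a_k b_k$ lies in $N_{F_1}$ by \ref{item:W2} (resp. by (W2)$'$ or (W2)$''$ in the moderately weak and weak cases, whose extra hypotheses apply by the previous paragraph). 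Applying the induced semiring homomorphism $\N[F_1^\times] \to \N[F_2^\times]$, and using $f(-1) = -1$, multiplicativity $f(a_k)f(b_k) = f(a_k b_k)$, and the equivalence $f(a_k)f(b_k) = 0 \iff a_k b_k = 0$, the image of this element is precisely $\sum_{k=1}^m (-1)^k (f\circ\varphi)(T_1 \symdiff \skewpair{x_k})(f\circ\varphi)(T_2 \symdiff \skewpair{x_k}) \in \N[F_2^\times]$, which therefore lies in $N_{F_2}$. This is exactly the required relation for $f\circ\varphi$, and since there are no further axioms the argument is complete in all three cases.

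No substantial obstacle arises: the only points demanding care are confirming $f(-1) = -1$ and observing that $f$ preserves which products vanish, so that the Wick sum associated to $f\circ\varphi$ is genuinely the image under the semiring homomorphism of the corresponding element of $N_{F_1}$, rather than merely a formal sum with the same indexing.
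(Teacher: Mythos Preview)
Your proof is correct and is precisely the direct verification the paper has in mind; the paper itself omits all details, declaring the result ``straightforward from the definitions'' and marking it with a bare \qed. Your careful attention to $f(-1)=-1$ and to the fact that $f$ preserves which products vanish (so the formal sum in $\N[F_2^\times]$ really is the image of the corresponding element of $N_{F_1}$) fills in exactly the small points one must check.
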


By the above proposition, we define the {\em pushforward} operator $f_*$ taking 
%strong (resp. weak) 
orthogonal $F_1$-matroids to 
%strong (resp. weak) 
orthogonal $F_2$-matroids by $f_*([\varphi]) := [f \circ \varphi]$. Notice that the pushforwards are functorial: if $F_1 \xrightarrow{f} F_2 \xrightarrow{g} F_3$ are tract homomorphisms, then $(g \circ f)_\ast = g_\ast \circ f_\ast$. If $F_2 = \K$, the terminal object of the category of tracts, then the orthogonal $\K$-matroid $[f \circ \varphi]$ is the same thing as $\underline{M}_\varphi$. 

\begin{proposition}
Let $F_1$ and $F_2$ be tracts equipped with involutions $\iota_1$ and $\iota_2$, respectively, and let $f : F_1 \to F_2$ be a tract homomorphism that respects the involutions, i.e. $f \circ \iota_1 = \iota_2 \circ f$. If $\cC$ is a strong \textup{(}resp. weak or moderately weak\textup{)} orthogonal $F_1$-signature of an ordinary orthogonal matroid $\ul{M}$, then $f_*(\cC) := \{cf(X) : c \in F_2^\times, X \in \cC\}$ is a strong \textup{(}resp. weak or moderately weak\textup{)} orthogonal $F_2$-signature of $\ul{M}$. 
% Moreover, if $\cC$ is the strong orthogonal $F_1$-signature induced by a strong orthogonal $F_1$-matroid $M$, then $f_*(\cC)$ is the strong orthogonal $F_2$-signature induced by $f_\ast(M)$. 
The same results hold for strong and weak circuit sets over tracts of an orthogonal matroid.
    \qed
\end{proposition}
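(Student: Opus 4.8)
The plan is to verify the defining conditions of an orthogonal $F_2$-signature (and, for the last sentence, of an $F_2$-circuit set of $\ul{M}$) one at a time, transporting each condition for $\cC$ across $f$; throughout I would use that $f$ restricts to a homomorphism $F_1^\times\to F_2^\times$ and extends to a semiring homomorphism $\N[F_1^\times]\to\N[F_2^\times]$ sending $N_{F_1}$ into $N_{F_2}$. The first point is that $f_*(\cC)$ is an $F_2$-signature of $\ul{M}$: since $f$ sends nonzero elements to nonzero elements we have $\underline{f(X)}=\underline{X}$, and rescaling by a unit of $F_2$ does not affect supports, so $\underline{f_*(\cC)}=\underline{\cC}$ is the set of circuits of $\ul{M}$; closure of $f_*(\cC)$ under $F_2^\times$ is immediate from the definition.

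The heart of the argument is transporting orthogonality. I would first record the identity
\[
  f\bigl(\langle X,Y^\ast\rangle\bigr)=\langle f(X),f(Y)^\ast\rangle\qquad\text{in }\N[F_2^\times],
\]
valid for all $X,Y\in F_1^E$, which follows term by term from $\langle X,Y^\ast\rangle=\sum_{i\in E}\conj{X}(i)\conj{Y}(i^\ast)$ using that $f$ is multiplicative and that $f\circ\iota_1=\iota_2\circ f$, so that $f$ commutes with $\conj{(\cdot)}$. Hence $\langle f(X),f(Y)^\ast\rangle\in N_{F_2}$ whenever $\langle X,Y^\ast\rangle\in N_{F_1}$, i.e.\ whenever $X,Y\in\cC$ in the strong case, and whenever in addition $|\underline{X}\cap\underline{Y}^\ast|\le 4$ in the weak case~\ref{item:O'}, or $=2$ for the $2$-term condition~\ref{item:Ot2}. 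To pass to an arbitrary pair $cf(X),df(Y)\in f_*(\cC)$ I would use that $\cC$ is closed under rescaling by $F_1^\times$: the entire family $\{\langle aX,(bY)^\ast\rangle:a,b\in F_1^\times\}$ lies in $N_{F_1}$, and pushing it forward by $f$ yields a family of relations in $N_{F_2}$ from which $\langle cf(X),(df(Y))^\ast\rangle\in N_{F_2}$ is to be extracted.

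For the circuit-set statement one additionally has to carry over the linear-span axiom~\ref{item:L} (resp.\ \ref{item:L1} and~\ref{item:L2}). Here the relevant fact is that $f$ preserves linear spans: if $c_1X_1+\dots+c_kX_k-X\in(N_{F_1})^E$ then $f(c_1)f(X_1)+\dots+f(c_k)f(X_k)-f(X)\in(N_{F_2})^E$. Given a basis $B$ of $\ul{M}$, $f$ sends an $F_1$-circuit $X_e$ with support $C(B,e)$ to an element of $f_*(\cC)$ with the same support, which after normalization by a unit of $F_2$ plays the role of $X_e$ for $f_*(\cC)$; applying $f$ to the relation expressing $\conj{X}$ in the linear span of $\{\conj{X_e}\}_{e\in B^\ast}$ then expresses $\conj{f(X)}$ in the span of $\{\conj{f(X_e)}\}_{e\in B^\ast}$, and similarly for the weak versions. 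Together with the orthogonality step this would give the claim for circuit sets.

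The step I expect to be the real obstacle is the rescaling in the orthogonality argument. Unlike the ordinary inner product, the pairing $\langle\,\cdot\,,(\,\cdot\,)^\ast\rangle$ is not homogeneous of a single weight in each slot: $\langle cZ,(dW)^\ast\rangle$ combines the two ``halves'' of $\langle Z,W^\ast\rangle$ with the two distinct scalars $c\overline{d}$ and $\overline{c}d$, so one cannot simply factor out a single unit and invoke~(T4). What I would want to pin down is precisely whether the bare hypothesis that $f$ respects the involutions suffices to push the whole family of scaled orthogonality relations forward, or whether one should assume more about $f$ (for instance surjectivity, or that the involution on $F_1$ is already rich enough); the remaining verifications are routine unwindings of the definitions.
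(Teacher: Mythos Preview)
The paper gives no argument here; it marks the proposition as immediate from the definitions. Your verification of the signature axioms, the identity $f(\langle X,Y^\ast\rangle)=\langle f(X),f(Y)^\ast\rangle$, and your transport of the linear-span conditions~\ref{item:L}, \ref{item:L1}, \ref{item:L2} are all correct and constitute the obvious approach. The worry you isolate at the end, however, is not a wrinkle to be ironed out: it points to a genuine defect in the statement as written when the involution on $F_2$ is nontrivial.

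Here is a concrete failure. Take $F_1=\R$ with the identity involution, $F_2=\C$ with complex conjugation, and $f:\R\hookrightarrow\C$; this respects the involutions since reals are self-conjugate. Let $\ul{M}$ be the orthogonal matroid on $\{1,2,1^\ast,2^\ast\}$ with bases $\{1,2\}$ and $\{1^\ast,2^\ast\}$, whose circuits are $\{1,2^\ast\}$ and $\{1^\ast,2\}$, and set $\cC=\R^\times\cdot\{(1,0,0,1),(0,1,-1,0)\}$ in coordinates $1,2,1^\ast,2^\ast$. In $\R$ one has $\langle\alpha(1,0,0,1),(\beta(0,1,-1,0))^\ast\rangle=-\alpha\beta+\alpha\beta=0$, so $\cC$ is a strong orthogonal $\R$-signature of $\ul{M}$. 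But in $f_\ast(\cC)\subseteq\C^E$ the elements $(1,0,0,1)$ and $i\cdot(0,1,-1,0)$ give
\[
\bigl\langle(1,0,0,1),\,(i\cdot(0,1,-1,0))^\ast\bigr\rangle \;=\; 1\cdot\overline{(-i)}\;+\;\overline{1}\cdot i \;=\; 2i \;\neq\; 0,
\]
so already~\ref{item:Ot2} fails for $f_\ast(\cC)$. The cause is exactly what you diagnosed: the two halves of $\langle cZ,(dW)^\ast\rangle$ carry the distinct weights $c\overline d$ and $\overline{c}d$, and for $c,d\in F_2^\times$ outside the image of $f$ there is no $F_1$-relation to push forward. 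The proposition does hold if the involution on $F_2$ is trivial (then the pairing is homogeneous and axiom~(T4) suffices), or for lifts of ordinary matroids (each circuit lies entirely in $[n]$ or entirely in $[n]^\ast$, so the pairing again scales by a single unit); the applications in Section~\ref{sec:applications} push forward Wick functions rather than signatures and are unaffected.
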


Therefore, we also have the {\em pushforward} operator $f_*$ taking 
% strong (resp. weak) 
orthogonal $F_1$-signatures (resp. $F_1$-circuit set) to 
% strong (resp. weak) 
orthogonal $F_2$-signatures (resp. $F_2$-circuit set). If $F_2 = \K$, then $f_*(\cC)$ is the same thing as the set of circuits of $\underline{M}_{\cC}$. 

However, the simple pushforwards of orthogonal $F$-vector sets are not defined properly. In fact, let $f : F_1 \to F_2$ be a tract homomorphism respecting the involutions and let $\cV$ be an orthogonal $F_1$-vector set, then the set $f_*(\cV) = \{c f(X) : c\in F_2^\times, X \in \cV\}$ is not necessarily an orthogonal $F_2$-vector set; see Example~\ref{eg:no pushforward for orthogonal vector sets}.

\begin{proposition}\label{prop:product}
Let $F_1, F_2$ be tracts, and let $\varphi_1, \varphi_2$ be strong 
%\textup{(}resp. weak\textup{)} 
Wick functions with coefficients in $F_1, F_2$, respectively, with the same underlying orthogonal matroid $\ul{M}$. Then $\varphi_1 \times \varphi_2 : \cT_n \to F_1 \times F_2$ defined as $(\varphi_1 \times \varphi_2)(T) = ( \varphi_1(T), \varphi_2(T) )$ is a strong 
%\textup{(}resp. weak\textup{)} 
Wick function with coefficients in the product $F_1 \times F_2$.
The same results hold for weak and moderately weak Wick functions.
    \qed
\end{proposition}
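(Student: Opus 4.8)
The plan is to unwind the explicit construction of the product tract $F_1 \times F_2$ from Section~\ref{sec:tracts} and verify the Wick axioms coordinate by coordinate. The only genuine content is the observation that the shared-underlying-matroid hypothesis is exactly what makes the pairing $\varphi_1 \times \varphi_2$ a well-defined function into $F_1 \times F_2$; everything else is a mechanical transfer.

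First I would check well-definedness and~\ref{item:W1}. As a set, $F_1 \times F_2 = (F_1^\times \oplus F_2^\times) \cup \{0\}$, so for a transversal $T$ the pair $(\varphi_1(T), \varphi_2(T))$ is a legitimate element of $F_1 \times F_2$ precisely when $\varphi_1(T)$ and $\varphi_2(T)$ are both zero or both nonzero. This holds because $\varphi_1$ and $\varphi_2$ have the same underlying orthogonal matroid $\ul{M}$: both supports equal $\cB(\ul{M})$, so $\varphi_1(T) = 0 \iff T \notin \cB(\ul{M}) \iff \varphi_2(T) = 0$. In particular $\supp(\varphi_1 \times \varphi_2) = \cB(\ul{M}) \ne \emptyset$, giving~\ref{item:W1}. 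The same remark shows that for any transversal $S$ the product $\varphi_1(T_1\symdiff\skewpair{x})\varphi_1(T_2\symdiff\skewpair{x})$ vanishes if and only if $\varphi_2(T_1\symdiff\skewpair{x})\varphi_2(T_2\symdiff\skewpair{x})$ does, so all relevant products remain legitimate elements of $F_1 \times F_2$.

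Next, for~\ref{item:W2}, fix $T_1, T_2 \in \cT_n$ and write $(T_1 \symdiff T_2) \cap [n] = \{x_1 < \cdots < x_m\}$. Since multiplication in $F_1\times F_2$ is coordinatewise, the $i$-th coordinate of the alternating sum
\[
  S := \sum_{k = 1}^m (-1)^k (\varphi_1 \times \varphi_2)(T_1 \symdiff \skewpair{x_k})\, (\varphi_1 \times \varphi_2)(T_2 \symdiff \skewpair{x_k}) \in \N[F_1^\times \oplus F_2^\times]
\]
is $\sum_{k=1}^m (-1)^k \varphi_i(T_1 \symdiff \skewpair{x_k})\varphi_i(T_2 \symdiff \skewpair{x_k})$, which lies in $N_{F_i}$ by~\ref{item:W2} applied to $\varphi_i$. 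By the description of $N_{F_1\times F_2}$ as those formal sums whose two coordinate projections lie in $N_{F_1}$ and $N_{F_2}$ respectively, we get $S \in N_{F_1 \times F_2}$. (Equivalently, the coordinate projections $F_1\times F_2 \to F_i$ are tract homomorphisms, so each sends $S$ into $N_{F_i}$, and they jointly detect membership in $N_{F_1\times F_2}$.)

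Finally, the weak and moderately weak cases need no new idea: $\supp(\varphi_1\times\varphi_2) = \cB(\ul{M})$ is still the set of bases of an orthogonal matroid, and~\ref{item:W2''} and~\ref{item:W2'} are verified by the same coordinatewise argument. For~\ref{item:W2'} one only observes that the number of nonzero terms $\varphi_i(T_1\symdiff\skewpair{x_k})\varphi_i(T_2\symdiff\skewpair{x_k})$ is independent of $i$, and equals the number of nonzero terms for $\varphi_1\times\varphi_2$; hence the trigger condition ``at most four nonzero terms'' holds for $\varphi_1\times\varphi_2$ exactly when it holds for $\varphi_1$ and for $\varphi_2$, and the conclusion follows as above. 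I do not anticipate any real obstacle; the one point requiring care is the well-definedness of $\varphi_1\times\varphi_2$, which is precisely where the common underlying orthogonal matroid hypothesis is used.
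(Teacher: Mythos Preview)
Your proof is correct; the paper itself gives no proof at all (the proposition is tagged \qed\ and preceded by the remark that it is ``straightforward from the definitions''), and your argument is precisely the natural coordinatewise verification that the authors are implicitly invoking. The one nontrivial observation you single out---that the common-support hypothesis is exactly what is needed for $(\varphi_1(T),\varphi_2(T))$ to land in $F_1\times F_2 = (F_1^\times \oplus F_2^\times)\cup\{0\}$---is indeed the only point requiring any care.
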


\subsection{Duality. }

Let $\varphi: \cT_n \to F$ be a strong 
% (resp. weak) 
Wick function over $F$. Its {\em dual strong 
% \textup{(}resp. weak\textup{)} 
Wick function} $\varphi^\ast: \cT_n \to F$ is defined as 
\[
\varphi^\ast(T) := \varphi(T^\ast)
\]
for all $T \in \cT_n$. It is indeed a strong 
%(resp. weak) 
Wick function with underlying orthogonal matroid $(\ul{M}_{\varphi})^\ast$ from definitions.
We define the duals of weak and moderately weak Wick $F$-functions in the same way.

Given a strong 
(resp. weak) orthogonal $F$-signature $\cC$, we can define its {\em dual strong \textup{(}resp. weak\textup{)} orthogonal $F$-signature} $\cC^\ast$ by setting
\[
\cC^\ast := \{X^\ast : X \in \cC\}, 
\]
and the underlying orthogonal matroid of $\cC^\ast$ is $(\ul{M}_{\cC})^\ast$. 
The duals of strong and weak $F$-circuit sets of orthogonal matroids are defined in the same way.

\subsection{Minors. }\label{sec:OM minors}

Let $\varphi$ be a strong or (moderately) weak Wick function on $E$ with coefficients in $F$ and take $e \in E$. Then we define $\varphi|e$ to be the function from the set of transversals of $E \setminus \skewpair{e}$ to $F$ as
\begin{align*}
    (\varphi|e) (T) :=
    \begin{cases}
        \varphi(T\cup \{e\})    & \text{if $e$ is nonsingular in $\underline{M}_\varphi$}, \\
        \varphi(T \cup \{e^*\}) & \text{otherwise}.
    \end{cases}
\end{align*}
%Then the next proposition is straightforward from definition.

\begin{proposition}
Let $\varphi$ be a strong 
%\textup{(}resp. weak\textup{)} 
Wick $F$-function $\varphi$ on $E$ and $e\in E$.
Then $\varphi|e$ is a strong 
%\textup{(}resp. weak\textup{)} 
Wick $F$-function and $\underline{M}_{\varphi|e} = \underline{M}_{\varphi}|e$. 
The same holds for weak and moderately weak Wick $F$-functions.
\qed
\end{proposition}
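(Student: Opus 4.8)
The plan is to reduce to the case where $e$ is nonsingular in $\ul{M}_\varphi$ and then to recognize the Wick relations for $\varphi|e$ as literal instances of those for $\varphi$.

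First I would dispose of the singular case. If $e$ is singular, then $e^\ast$ is nonsingular by~\ref{item:C4}, and by the very definitions of $\varphi|e$ and of the elementary minor one has $\varphi|e = \varphi|e^\ast$ and $\ul{M}_\varphi|e = \ul{M}_\varphi|e^\ast$; so it suffices to treat nonsingular $e$, in which case $(\varphi|e)(T) = \varphi(T\cup\{e\})$ for every transversal $T$ of $E\setminus\skewpair{e}$. Since $e,e^\ast \notin T$, the assignment $T\mapsto T\cup\{e\}$ is a bijection from the transversals of $E\setminus\skewpair{e}$ onto the transversals of $E$ containing $e$; this already gives $\supp(\varphi|e) = \{B\setminus\{e\}:e\in B\in\cB(\ul{M}_\varphi)\} = \cB(\ul{M}_\varphi|e)$, which is the set of bases of an orthogonal matroid by the classical theory, so $\ul{M}_{\varphi|e} = \ul{M}_\varphi|e$ and the ``support'' clause in the weak and moderately weak definitions is taken care of.

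Next I would check~\ref{item:W1}: nonsingularity of $e$ provides a basis $B\in\supp(\varphi)$ with $e\in B$, and then $(\varphi|e)(B\setminus\{e\}) = \varphi(B)\neq 0$. The heart of the matter is the Wick relation~\ref{item:W2}. Given transversals $T_1,T_2$ of $E\setminus\skewpair{e}$, set $T_i^{+}:=T_i\cup\{e\}$, transversals of $E$. After relabeling $E\setminus\skewpair{e}$ as $[n-1]\cup[n-1]^\ast$ by the order-preserving bijection on positive coordinates, the fact that $e,e^\ast$ lie in neither $T_1$ nor $T_2$ means that $(T_1^{+}\symdiff T_2^{+})\cap[n] = (T_1\symdiff T_2)\cap[n]$ with the identical induced linear order; write $\{x_1<\dots<x_m\}$ for this set. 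For each $k$, $\skewpair{x_k}$ is disjoint from $\skewpair{e}$, so $(\varphi|e)(T_i\symdiff\skewpair{x_k}) = \varphi(T_i^{+}\symdiff\skewpair{x_k})$. Hence the Wick sum for $\varphi|e$ at $(T_1,T_2)$ equals, term by term, the Wick sum for $\varphi$ at $(T_1^{+},T_2^{+})$, which lies in $N_F$ by~\ref{item:W2} for $\varphi$ applied to $T_1^{+}$ and $T_2^{+}$; so $\varphi|e$ satisfies~\ref{item:W2} and is therefore a strong Wick function. The same term-by-term identity handles the weaker versions: $|(T_1\symdiff T_2)\cap[n]|=4$ transfers to $(T_1^{+},T_2^{+})$ and gives~\ref{item:W2''}, and ``at most four of the products are nonzero'' transfers verbatim, since the products on the two sides are literally equal, which gives~\ref{item:W2'}.

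I expect the only point requiring genuine care to be the order-bookkeeping in the last step: one must verify that restricting to $E\setminus\skewpair{e}$ and relabeling as $[n-1]\cup[n-1]^\ast$ leaves the relative order of the positive coordinates intact, so that the signs $(-1)^k$ in the two Wick sums match, and that the cases $e\in[n]$ and $e\in[n]^\ast$ go through uniformly. Both do, precisely because $e$ and $e^\ast$ are absent from $T_1$ and $T_2$, so that adjoining $e$ changes neither $(T_1\symdiff T_2)\cap[n]$ nor its order. Everything else is a routine unwinding of the definitions.
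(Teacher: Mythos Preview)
Your argument is correct and is precisely the routine unwinding of the definitions that the paper signals with its bare \qed. The reduction to the nonsingular case, the identification $T_1^+\symdiff T_2^+ = T_1\symdiff T_2$, and the observation that the order-preserving relabeling of $E\setminus\skewpair{e}$ leaves the indices $k$ unchanged are exactly the points one must check, and you have checked them.
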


We define minors of strong or weak orthogonal signatures as follows. Let $\mathcal{C}$ be a strong or weak orthogonal $F$-signature of an orthogonal matroid $\ul{M}$ on $E$. For $e\in E$, let $\cC|e$ be the set of functions in
\begin{align*}
     \{\pi(X) \in F^{E \setminus \skewpair{e}} : \text{$X\in \mathcal{C}$ with $X(e^*) = 0$ 
     and $\underline{X} \neq \{e\}$}\} 
\end{align*}
that have minimal supports, 
where $\pi:F^E \to F^{E\setminus \skewpair{e}}$ is the obvious projection. 

The next proposition is direct from Proposition~\ref{prop:circuits of a minor}.

\begin{proposition}\label{prop:minors of orthogonal signatures}
    Let $\cC$ be a strong \textup{(}resp. weak\textup{)} orthogonal $F$-signature of an orthogonal matroid $\ul{M}$ on $E$ and let $e\in E$.
    Then
    $\mathcal{C}|e$ is a strong \textup{(}resp. weak\textup{)} orthogonal $F$-signature of $\ul{M}|e$.
    \qed
\end{proposition}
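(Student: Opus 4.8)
The statement to prove is Proposition~\ref{prop:minors of orthogonal signatures}: if $\cC$ is a strong (resp. weak) orthogonal $F$-signature of an orthogonal matroid $\ul{M}$ on $E$ and $e\in E$, then $\cC|e$ is a strong (resp. weak) orthogonal $F$-signature of $\ul{M}|e$.

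\bigskip

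The plan is to verify the two defining conditions of an $F$-signature together with the relevant orthogonality axiom (\ref{item:O} for the strong case, \ref{item:O'} for the weak case) for $\cC|e$, working on the ground set $E' := E \setminus \skewpair{e}$.

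\textbf{Step 1: support.} First I would show that $\underline{\cC|e}$ is the set of circuits of $\ul{M}|e$. By Proposition~\ref{prop:circuits of a minor}, $\cC(\ul{M}|e) = \Min(\{C \setminus \{e\} : e^* \notin C \in \cC(\ul{M}), C \ne \{e\}\})$. Since $\underline{\cC} = \cC(\ul{M})$ by the signature axiom, the set $\{\underline{X} \setminus \{e\} : X \in \cC, X(e^*) = 0, \underline{X} \ne \{e\}\}$ equals $\{C \setminus \{e\} : e^* \notin C \in \cC(\ul{M}), C \ne \{e\}\}$ (note $X(e^*)=0$ is exactly $e^* \notin \underline{X}$, and the condition $\underline X \ne \{e\}$ matches). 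Because $\pi$ restricted to vectors with $X(e^*)=0$ simply deletes the $e^*$ and $e$ coordinates, $\supp(\pi(X)) = \underline{X} \setminus \{e\}$; taking the subset with minimal supports among these exactly realizes the $\Min$ in Proposition~\ref{prop:circuits of a minor}. This gives condition (i).

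\textbf{Step 2: closure under scaling.} For $\alpha \in F^\times$ and $Y \in \cC|e$, write $Y = \pi(X)$ with $X \in \cC$, $X(e^*)=0$, $\underline X \ne \{e\}$, and $\supp(Y)$ minimal. Then $\alpha X \in \cC$ by the signature axiom for $\cC$, $(\alpha X)(e^*)=0$, $\underline{\alpha X} = \underline X \ne \{e\}$, and $\pi(\alpha X) = \alpha\, \pi(X) = \alpha Y$ has the same (hence minimal) support; so $\alpha Y \in \cC|e$. This gives condition (ii).

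\textbf{Step 3: orthogonality.} This is the main point. Take $Y, Z \in \cC|e$, say $Y = \pi(X_1)$, $Z = \pi(X_2)$ with $X_i \in \cC$, $X_i(e^*)=0$, supports minimal. I would compute $\langle Y, Z^* \rangle$ in $F^{E'}$ and compare it with $\langle X_1, X_2^* \rangle$ in $F^{E}$. Using the formula $\langle X, Y^\ast \rangle = \sum_{i \in E} \conj X(i)\conj Y(i^*)$, the difference between the two inner products comes only from the terms indexed by $i \in \skewpair{e}$. The term at $i = e^*$ contributes $\conj{X_1}(e^*)\conj{X_2}(e)$, which vanishes since $X_1(e^*)=0$. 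The term at $i = e$ contributes $\conj{X_1}(e)\conj{X_2}(e^*)$, which vanishes since $X_2(e^*)=0$. Hence $\langle Y, Z^*\rangle = \langle X_1, X_2^*\rangle$ exactly as elements of $F$ — the involution and conjugation $\conj{\cdot}$ commute with $\pi$ on these coordinates. In the strong case, $\langle X_1, X_2^*\rangle \in N_F$ by \ref{item:O} for $\cC$, so $\langle Y, Z^*\rangle \in N_F$ and $\cC|e$ satisfies \ref{item:O}. In the weak case, I must be slightly more careful: I need $|\underline Y \cap \underline Z^*| \le 4$ to invoke \ref{item:O'}, but $\underline Y = \underline{X_1}\setminus\{e\}$ and $\underline Z^* = \underline{X_2}^* \setminus\{e^*\}$, so $\underline Y \cap \underline Z^* = (\underline{X_1} \cap \underline{X_2}^*) \setminus \skewpair{e}$, which has size at most $|\underline{X_1}\cap\underline{X_2}^*|$; thus $|\underline Y \cap \underline Z^*|\le 4$ implies $|\underline{X_1}\cap\underline{X_2}^*|$ could still be larger than $4$. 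To handle this, I would instead use Lemma~\ref{lem:F-circuit unique up to} (or directly re-derive via fundamental circuits): if $|\underline Y \cap \underline Z^*| \le 4$ in $\ul M|e$, then by minimality one can choose representatives $X_1, X_2$ — or rather, replace $X_2$ by a scalar multiple of an $F$-circuit of $\cC$ whose support is chosen via a fundamental circuit of $\ul M$ relative to an appropriate basis — so that $|\underline{X_1}\cap \underline{X_2}^*|\le 4$ as well; here the 2-term orthogonality \ref{item:Ot2} guarantees that any two $F$-circuits with equal support are scalar multiples, so the computation $\langle Y,Z^*\rangle = \langle X_1, X_2^*\rangle$ is independent of the choice and \ref{item:O'} for $\cC$ applies.

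\textbf{Main obstacle.} The routine parts (Steps 1 and 2, and the strong case of Step 3) are bookkeeping. The genuine subtlety is the weak case: controlling $|\underline{X_1}\cap \underline{X_2}^*|$ in $\ul M$ from the bound $|\underline Y \cap \underline Z^*|\le 4$ in $\ul M|e$. The key observation making this work is that $\underline Y \cap \underline Z^*$ and $\underline{X_1}\cap\underline{X_2}^*$ differ only inside $\skewpair e$, which is a single divergence, so they differ by at most — in fact the inner-product difference lives entirely on coordinates that are zero, as shown above, so $\langle Y, Z^*\rangle = \langle X_1, X_2^*\rangle$ regardless of support sizes. Consequently the cleanest route is: observe that $\langle Y, Z^*\rangle = \langle X_1, X_2^*\rangle$ unconditionally, and then note that $|\underline{X_1}\cap\underline{X_2}^*| \le |\underline Y\cap\underline Z^*| + |\skewpair e| \le |\underline Y\cap\underline Z^*| + 2$ is \emph{not} quite enough, so one must argue that whenever $e \in \underline{X_1}$ or $e^* \in \underline{X_2}^*$ the corresponding inner-product contribution is zero anyway — which it is — meaning the "extra" intersection elements never actually matter. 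I would phrase the final argument by simply noting that the two terms of $\langle X_1, X_2^*\rangle$ indexed by $\skewpair e$ vanish, so effectively $\langle X_1, X_2^*\rangle$ is a sum over $E'$ with at most $|\underline Y\cap\underline Z^*|\le 4$ nonzero terms, and then invoking \ref{item:O'} for $\cC$. This sidesteps the support-size issue entirely.
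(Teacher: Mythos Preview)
Your proposal is correct and follows the same route the paper intends: the paper simply says the proposition is ``direct from Proposition~\ref{prop:circuits of a minor}'' and writes $\qed$, meaning Steps~1--3 are the obvious verification.

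One remark on your weak-case discussion: the detour is unnecessary because the equality is exact, not approximate. Since $X_1(e^*)=X_2(e^*)=0$, we have $e^*\notin\underline{X_1}$ and $e\notin\underline{X_2}^*$, so neither $e$ nor $e^*$ lies in $\underline{X_1}\cap\underline{X_2}^*$. Hence $\underline{X_1}\cap\underline{X_2}^* = \underline{Y}\cap\underline{Z}^*$ exactly, and $|\underline{Y}\cap\underline{Z}^*|\le 4$ immediately gives $|\underline{X_1}\cap\underline{X_2}^*|\le 4$, so \ref{item:O'} for $\cC$ applies directly. There was never a ``$+2$'' gap to worry about.
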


Minors of a strong or weak $F$-circuit set of an orthogonal matroid are defined in the same way as minors of an orthogonal $F$-signature, and an analogue of Proposition~\ref{prop:minors of orthogonal signatures} holds.

One possible candidate of a minor of an orthogonal $F$-vector set $\cV \subseteq F^E$ with respect to $e\in E$ is
\begin{align*}
  \cV|e
    := \{\pi(X) \in F^{E \setminus \skewpair{e}} : \text{$X\in \cV$ with $X(e^*) = 0$}\},
\end{align*}
which coincides with the deletion and the contraction of an $F$-vector set of a matroid in~{\cite[Section~4.2]{Anderson2019}}.
However, $\cV|e$ is not necessarily an orthogonal $F$-signature in general, even if $F$ is a partial field and the underlying orthogonal matroid of $\cV$ is the lift of a matroid; see Example~\ref{eg:minor of orthogonal vertex set}.
We remark that if $F$ is a field, then $\cV|e$ is an orthogonal $F$-vector set by~{\cite[Proposition~3.8]{Oum2012wqo}}.

\subsection{Other related work}\label{sec:equiv of diff defs}
We briefly indicate how our notions of strong and weak orthogonal $F$-matroids generalize various flavors of orthogonal matroids in the literature, as mentioned in Section~\ref{section:intro}. 

\begin{example}
If the support of a strong or weak orthogonal matroid 
%(or weak orthogonal matroid in the sense of~\cite{BJ2022})
on $E$ over $F$ is the lift of an ordinary matroid on $[n]$, then an orthogonal matroid on $E$ over $F$ is the same thing as a strong or weak matroid on $[n]$ over $F$ in the sense of~\cite{Baker2019}. This follows from Proposition~\ref{prop:W and GP}. 
\end{example}
 
\begin{example}
A strong or weak orthogonal matroid over the Krasner hyperfield $\mathbb{K}$ is the same thing as an ordinary orthogonal matroid. %This follows from Proposition~\ref{prop:pushforward Wick}. 
\end{example}

\begin{example}
When $F = K$ is a field, a strong or weak Wick $K$-matroid is the same thing as a projective solution to Wick equations in $\bP^N(K)$, where $N = 2^{n}-1$. In addition, when $\mathrm{char}(K) \ne 2$, a strong or weak orthogonal $K$-matroid is the same thing as a maximal isotropic subspace of $K^{2n}$ in the usual sense. Indeed, a weak Wick function with coefficients in the field $K$ automatically satisfies~\ref{item:W2}. This follows from~\cite[Theorem~1.6]{BJ2022}. 
\end{example}

\begin{example}
A strong or weak orthogonal matroid over the regular partial field $\mathbb{U}_0$ is the same thing as a regular orthogonal matroid in the sense of~\cite{Geelen1996thesis}. This follows from the discussion on page 33 in {\em loc. cit.} and~\cite[Theorem~1.6]{BJ2022}. 
\end{example}

\begin{example}
A strong or weak orthogonal matroid over the tropical hyperfield $\T$ is the same thing as a valuated orthogonal matroid in the sense of~\cite{Dress1991} 
% as a valuated orthogonal matroid in the sense of~\cite{Wenzel1993,Wenzel1996}, 
or a tropical Wick vector in the sense of~\cite{Rincon2012}. This follows from Theorem~5.1 of {\em loc. cit.} 
\end{example}

\begin{example}
A strong orthogonal matroid over the sign hyperfield $\bS$ is the same thing as an oriented orthogonal matroid in the sense of~\cite{Wenzel1993,Wenzel1996}. This follows from the discussion at the top of page 241 of~\cite{Wenzel1996}. 
\end{example}

%%%%%%%%%%%%%%%%%%%%%%%%%%%%%%%%%%%%%%%%%%%%%%%%%%%%%%%%%%%%
%%%%%%%%%%%%%%%%%%%%%%%%%%%%%%%%%%%%%%%%%%%%%%%%%%%%%%%%%%%%

\section{Cryptomorphisms for Orthogonal Matroids over Tracts}\label{sec:OMoverTracts}

%We defined orthogonal matroids over tracts in terms of Wick functions, orthogonal signatures, circuit sets, and orthogonal vector sets in Section~\ref{section:F-orthogonalmatroids}.
In this section, we give proofs of the main theorems of the paper and confirm Figure~\ref{fig:summary}. 
%show that all four definitions are equivalent for the strong case (Theorem~\ref{thm:main}) and also show some equivalences for the weak case (Theorems~\ref{thm:main-weak1} and~\ref{thm:main-weak2}). 
Our plan is as follows. We first construct strong (resp. weak) orthogonal signatures from strong (resp. moderately weak) Wick functions in Section~\ref{sec:W2O}, and show the converse in Section~\ref{sec:O2W}.
In Section~\ref{sec:weak Wick functions and circuit sets} we show the equivalence between weak orthogonal $F$-matroids and weak $F$-circuit sets using the constructions in Sections~\ref{sec:W2O} and~\ref{sec:O2W}.
In Section~\ref{sec:OC} we prove Theorem~\ref{thm:strong orthogonal signatures and circuit sets} that orthogonal $F$-signatures and $F$-circuit sets coincide for the strong case.
In Section~\ref{sec:OV} we show the equivalence between strong orthogonal signatures and orthogonal vector sets, as well as Theorem~\ref{thm:lagrangian subspace}(i).
We sum up all main theorems in Section~\ref{sec:strong equiv}. Section~\ref{sec:more-examples} provides several pathological examples. 

Recall that $\mathcal{T}_n$ denotes the family of all transversals of $E = [n] \cup [n]^\ast$.
For every $i \in E$, let $\nostar{i}$ be the element in $[n]$ such that $\skewpair{i} \cap [n] = \{\nostar{i}\}$. For $i,j \in [n]$, let $\ocinterval{i}{j}$ be the subset $\{k\in[n]: i<k\leq j\}$ if $i \leq j$, and $\ocinterval{j}{i}$ otherwise. For $T \in \cT_n$ and $i,j\in E$, let $m^T_{i,j}$ denote $|T \cap \ocinterval{\nostar{i}}{\nostar{j}}|$. We often omit the superscription $T$ in $m^T_{i,j}$ if it is clear from the context. If $\alpha,\beta \in F^\times$, we write $\frac{\beta}{\alpha}$ for $\alpha^{-1} \beta$. We often denote a finite set $S = \{a_1,a_2,\dots,a_m\}$ by enumerating its elements, such as $a_1a_2\dots a_m$.

\subsection{From Wick functions to orthogonal signatures}\label{sec:W2O}

Let $\varphi$ be a weak Wick function on $E$ with coefficients in a tract $F$.
We denote by $\ul{M} = \underline{M}_\varphi$ the underlying orthogonal matroid of $[\varphi]$. We first suggest a candidate for the orthogonal signature induced from the given Wick function $\varphi$. 

Recall that the set of bases of $\ul{M}$ is $\supp(\varphi) = \{B \in \cT_n : \varphi(B) \ne 0\}$. For each circuit $C$ of $\ul{M}$, we define a function $X \in F^E$ such that $\underline{X} = C$ as follows. Let $T \supseteq C$ be a transversal such that $T \symdiff \{x, x^\ast\} \in \supp(\varphi)$ for all $x \in C$, which exists by Lemma~\ref{lem:extendcircuit}. Then for every $e, f \in C$, we set 
\begin{align}
    \frac{\conj{X}(e)}{\conj{X}(f)} = (-1)^{m^T_{e,f}} \frac{\varphi(T\symdiff \skewpair{e})}{\varphi(T\symdiff \skewpair{f})}. 
    \label{eq:defineX}
\end{align}
We call $X$ an {\em $F$-circuit of $\varphi$ with support $C$}.

\begin{lemma}\label{lem:defineX}
The ratio $\frac{\conj{X}(e)}{\conj{X}(f)}$ is independent of the choice of $T$. Explicitly, let $T_1, T_2$ be distinct transversals containing $C$ such that both $T_1 \symdiff \skewpair{x}$ and $T_2 \symdiff \skewpair{x}$ are bases of $\ul{M}$ for all $x \in C$. Then
  \[
    (-1)^{m_1}
    \frac{\varphi(T_1 \symdiff \{e, e^\ast\})}{\varphi(T_1 \symdiff \{f, f^\ast\})}
    =
    (-1)^{m_2}
    \frac{\varphi(T_2 \symdiff \{e, e^\ast\})}{\varphi(T_2 \symdiff \{f, f^\ast\})}.
  \]
  where $m_i = |T_i \cap \ocinterval{\nostar{e}}{\nostar{f}}|$ for each $i = 1, 2$.
\end{lemma}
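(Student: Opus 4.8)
The plan is to reduce the general case to a sequence of ``local'' moves, each of which changes the transversal $T$ by a single divergence swap, and to verify the invariance of the right-hand side of~\eqref{eq:defineX} under one such move. Concretely, suppose $T_1$ and $T_2$ both contain $C$ and satisfy the hypothesis that $T_i \symdiff \skewpair{x}$ is a basis of $\ul M$ for every $x \in C$. Since $T_1$ and $T_2$ agree on $C$ and both are transversals, the symmetric difference $T_1 \symdiff T_2$ is disjoint from $C$ and is a disjoint union of divergences $\skewpair{z_1}, \dots, \skewpair{z_t}$ with each $\nostar{z_j} \in [n] \setminus \ol C$ (where $\ol C := \{\ol c : c \in C\}$). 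We can therefore pass from $T_1$ to $T_2$ in $t$ steps, flipping one divergence at a time; it suffices to prove the claim when $T_2 = T_1 \symdiff \skewpair{z}$ for a single $z$ with $\skewpair{z} \cap C = \emptyset$, provided the intermediate transversals also satisfy the basis hypothesis for all $x \in C$. The last proviso is the delicate point, so I would address it first: I would choose the ordering of the $z_j$ and, if necessary, argue that the hypothesis propagates, or else bypass intermediate transversals entirely by working directly with a single $3$-term Grassmann--Pl\"ucker-type relation as below.

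The core computation is the single-swap case. Fix $e, f \in C$ and set $T := T_1$, $T' := T \symdiff \skewpair{z}$ with $\skewpair{z} \cap C = \emptyset$. Apply~\ref{item:W2''} (the weak Wick relation, which is all we may assume) to a suitable pair of transversals whose symmetric difference meets $[n]$ in exactly $\{\ol e, \ol f, \ol z\}$ together with one more coordinate forced by parity; the natural choice is $T_1' = T \symdiff \skewpair{e}$ and $T_2' = T \symdiff \skewpair{f} \symdiff \skewpair{z}$, or a variant, chosen so that the four relevant terms of the relation are exactly the four products $\varphi(T \symdiff \skewpair e)\varphi(T' \symdiff \skewpair e)$-type quantities appearing in the two sides of the asserted identity, two of which involve $T$ and two of which involve $T'$. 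Since $\varphi(T \symdiff \skewpair x) \ne 0$ for $x \in \{e, f\}$ by hypothesis and likewise $\varphi(T' \symdiff \skewpair x) \ne 0$, all four terms are nonzero, so by Lemma~1.1 (the uniqueness clause, $x + y \in N_F \Rightarrow y = -x$) the relation becomes an honest equation of ratios in $F^\times$. Unwinding this equation, and carefully tracking the sign exponents — here I would use the bookkeeping $m^T_{i,j} = |T \cap \ocinterval{\nostar i}{\nostar j}|$ and the fact that inserting or removing $\nostar z$ from $T$ changes $m^T_{e,f}$ by $0$ or $\pm 1$ according to whether $\nostar z$ lies in the interval $\ocinterval{\nostar e}{\nostar f}$ — yields exactly
\[
  (-1)^{m^{T}_{e,f}} \frac{\varphi(T \symdiff \skewpair e)}{\varphi(T \symdiff \skewpair f)}
  =
  (-1)^{m^{T'}_{e,f}} \frac{\varphi(T' \symdiff \skewpair e)}{\varphi(T' \symdiff \skewpair f)},
\]
which is the single-swap case. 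Composing $t$ such identities gives the lemma.

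I expect the main obstacle to be the sign bookkeeping together with the choice of auxiliary transversals in~\ref{item:W2''}: one must select $T_1', T_2'$ so that $(T_1' \symdiff T_2') \cap [n]$ has exactly four elements \emph{and} so that the four resulting products are precisely the ones needed, with the correct permutation signs $(-1)^k$ matching the $(-1)^{m^T}$ factors. A secondary subtlety is ensuring that whichever pair of transversals we feed into the Wick relation has its four ``diagonal'' terms nonzero — this is where the hypothesis on $T_1, T_2$ (that $T_i \symdiff \skewpair x$ is a basis for all $x \in C$) is used, possibly after noting that $z$ can be chosen so that $\nostar z \notin \ol C$ keeps the relevant supports admissible. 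Once the single-swap identity is set up correctly, the rest is a routine telescoping argument, and the well-definedness of $X$ up to the global scalar (already implicit in~\eqref{eq:defineX}) follows.
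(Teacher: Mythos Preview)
There is a genuine gap, and it comes from parity. Because the symmetric exchange axiom moves two divergences at a time, all bases $B$ of $\ul{M}$ have $|B\cap[n]|$ of a fixed parity. Now your single-swap intermediate $T' := T \symdiff \skewpair{z}$ satisfies $|(T'\symdiff\skewpair{x})\cap[n]| = |(T\symdiff\skewpair{x})\cap[n]| \pm 1$, so for every $x\in C$ the transversal $T'\symdiff\skewpair{x}$ has the \emph{wrong} parity and is never a basis. Thus $\varphi(T'\symdiff\skewpair{e}) = \varphi(T'\symdiff\skewpair{f}) = 0$ and the ratio you want to carry through $T'$ is undefined; the ``delicate proviso'' you flagged is not delicate but impossible. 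The same obstruction shows up in your proposed Wick relation: with $T_1' = T\symdiff\skewpair{e}$ and $T_2' = T'\symdiff\skewpair{f}$ one has $|(T_1'\symdiff T_2')\cap[n]| = 3$, so \ref{item:W2''} (which is all you have for a weak Wick function) does not apply, and there is no ``one more coordinate forced by parity'' to add.

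The paper instead moves by \emph{two} divergences per step. For the base case $|T_1\setminus T_2|=2$, say $T_1\setminus T_2=\{a,b\}$, one applies~\ref{item:W2''} to the pair $T_1\symdiff\skewpair{e}\symdiff\skewpair{f}$ and $T_2$; the two terms indexed by $\nostar{a},\nostar{b}$ vanish because $T_1\symdiff\skewpair{a}$ and $T_1\symdiff\skewpair{b}$ still contain $C$ and hence are non-bases, leaving exactly the two-term identity you want. For the inductive step $|T_1\setminus T_2|>2$, the real content is producing an intermediate $T_0 = T_1 \symdiff\skewpair{y}\symdiff\skewpair{z}$ with $y,z\in T_1\setminus T_2$ such that $T_0\symdiff\skewpair{w}$ is a basis for \emph{every} $w\in C$: this is obtained by applying symmetric exchange to the bases $B_i := T_i\symdiff\skewpair{x}$ for some fixed $x\in C$, and then checking the condition for all other $w\in C$ via Proposition~\ref{prop:fundamental-circuit}. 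An arbitrary ordering of divergence swaps will not do---you must pick the pair $(y,z)$ this way.
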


\begin{proof}
We proceed by induction on $|T_1 \setminus T_2|$.
Since $T_1 \symdiff \{x, x^\ast\}$ and $T_2 \symdiff \skewpair{x}$ with $x \in C \neq\emptyset$ are distinct bases of $\ul{M}$, we know that $|T_1 \setminus T_2| = |B_1 \setminus B_2|$ is even and at least $2$.

Suppose that $|T_1 \setminus T_2| = 2$.
Write $T_1 \setminus T_2 = \{a,b\}$ so that $T_1 \symdiff T_2 = \{a, a^\ast, b, b^\ast\}$. Then neither $T_1 \symdiff \skewpair{a}$ nor $T_1 \symdiff \skewpair{b}$ is a basis since they contain $C$. Thus, $\varphi(T_1 \symdiff \skewpair{a}) = \varphi(T_1 \symdiff \skewpair{b}) = 0$. Denote $m = |\{\nostar{a}, \nostar{b}\} \cap \ocinterval{\nostar{e}}{\nostar{f}}| $. Note that $m_1 + m \equiv m_2 \pmod{2}$. By the axiom~\ref{item:W2} applied to $T_1 \symdiff \{e, e^\ast, f, f^\ast\}$ and $T_1 \symdiff \{a, a^\ast, b, b^\ast\} = T_2$, we have 
\begin{align*}
  \varphi(T_1 \symdiff \skewpair{f}) \varphi(T_2 \symdiff \skewpair{e})
  +
  (-1)^{m+1} \varphi(T_1 \symdiff \skewpair{e}) \varphi(T_2 \symdiff \skewpair{f})
  \in N_F,
\end{align*}
which implies the desired equality.
   
Now we assume that $|T_1 \setminus T_2| > 2$.
Fix $x \in C$ and let $B_i := T_i \symdiff \skewpair{x}$ with $i \in \{1,2\}$.
Then $B_1$ and $B_2$ are bases of $\ul{M}$.
Take $y \in T_1 \setminus T_2$.
By the symmetric exchange axiom, there is $z \in (T_1 \setminus T_2) \setminus \{y\}$ such that $B_1 \symdiff \{y,y^*,z,z^*\}$ is a basis of $\ul{M}$.
Let $T_0 := T_1 \symdiff \{y,y^*,z,z^*\}$. We claim that for every $w \in C$, the transversal $T_0 \symdiff \skewpair{w}$ is a basis of $\ul{M}$.
Indeed, since $T_0 \symdiff \skewpair{x} = B_1 \symdiff \{y,y^*,z,z^*\}$, we may assume that $w \neq x$.
Then by Proposition~\ref{prop:fundamental-circuit}, $T_1 \symdiff \skewpair{w} = B_1 \symdiff \{x, x^\ast, w, w^\ast\}$ is a basis of $\ul{M}$. 
Both $B_1 \symdiff \{x,x^*,y,y^*\} = T_1 \symdiff \skewpair{y}$ and $B_1 \symdiff \{x,x^*,z,z^*\} = T_1 \symdiff \skewpair{z}$ contain $C$ and hence neither of them is a basis.
Then the symmetric exchange forces that $T_0 \symdiff \skewpair{w} = B_1 \symdiff \{x,x^*,y,y^*,z,z^*,w,w^*\}$ is a basis. Notice that $|T_0 \symdiff T_1| = 4$ and $|T_0 \symdiff T_2| < |T_1 \symdiff T_2|$. Therefore, we conclude the desired equality by the claim and the induction hypothesis.
\end{proof}

\begin{proposition}\label{prop:defineX2}
Every circuit $C$ of $\ul{M}$ corresponds to a well-defined projective $F$-circuit $X \in F^E$ of $\varphi$ with support $C$, i.e. $X$ is well-defined and unique up to multiplication by an element in $F^\times$.
\end{proposition}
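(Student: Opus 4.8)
The plan is to build the putative $F$-circuit $X$ explicitly from the ratios prescribed by equation~\eqref{eq:defineX} and to check that (a) the system of ratios is consistent so that $X$ exists, (b) $X$ is supported exactly on $C$, and (c) $X$ is unique up to a scalar in $F^\times$. The previous lemma, Lemma~\ref{lem:defineX}, already does the key work: it shows that the ratio $\conj{X}(e)/\conj{X}(f)$ does not depend on the choice of the transversal $T\supseteq C$ (which exists by Lemma~\ref{lem:extendcircuit}), so the only remaining point about \emph{existence} is the internal consistency of the ratios as $e,f$ range over $C$, i.e. the cocycle condition $\frac{\conj X(e)}{\conj X(f)}\cdot\frac{\conj X(f)}{\conj X(g)} = \frac{\conj X(e)}{\conj X(g)}$ for all $e,f,g\in C$, together with $\frac{\conj X(e)}{\conj X(e)}=1$.

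First I would fix a single transversal $T\supseteq C$ with $T\symdiff\skewpair{x}\in\supp(\varphi)$ for all $x\in C$, as provided by Lemma~\ref{lem:extendcircuit}. Using this one $T$ for all of $C$, I define $\conj X(f)$ for $f\in C$ by picking a reference element $e_0\in C$, setting $\conj X(e_0):=1$ (say), and $\conj X(f):=(-1)^{m^T_{e_0,f}}\,\varphi(T\symdiff\skewpair f)/\varphi(T\symdiff\skewpair{e_0})$; then $X(i):=\conj X(i)$ for $i\in[n]$ and $X(i):=\overline{\conj X(i)}$ for $i\in[n]^*$ (inverting the definition of $\conj{\cdot}$), and $X(i):=0$ for $i\notin C$. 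Since each $\varphi(T\symdiff\skewpair f)$ with $f\in C$ is nonzero and $\varphi(T\symdiff\skewpair{e_0})$ is nonzero, every $\conj X(f)$ lies in $F^\times$, so $\underline X=C$ exactly. Next I must verify that this $X$ satisfies the prescribed ratio~\eqref{eq:defineX} for \emph{every} pair $e,f\in C$, not just pairs involving $e_0$: this reduces to the identity $m^T_{e_0,f}-m^T_{e_0,e}\equiv m^T_{e,f}\pmod 2$, which is an elementary additivity property of the counts $m^T_{i,j}=|T\cap\ocinterval{\nostar i}{\nostar f}|$ along the linear order on $[n]$ — here one checks the sign bookkeeping using that for $i,j,k\in[n]$ the intervals $\ocinterval{i}{j}$, $\ocinterval{j}{k}$, $\ocinterval{i}{k}$ satisfy $|\ocinterval i j|+|\ocinterval j k|\equiv|\ocinterval i k|\pmod 2$ (their symmetric difference being $\{i,k\}\setminus\{j\}$ up to the diagonal, or empty), so the parities add correctly. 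Combined with Lemma~\ref{lem:defineX}, this shows $X$ is a well-defined $F$-circuit of $\varphi$ with support $C$.

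For uniqueness, suppose $X'$ is another function in $F^E$ with $\underline{X'}=C$ satisfying~\eqref{eq:defineX} (for some, hence by Lemma~\ref{lem:defineX} any, valid transversal $T$). Fix $f_0\in C$ and set $\alpha:=\conj{X'}(f_0)/\conj X(f_0)\in F^\times$. Then for every $e\in C$,
\[
\frac{\conj{X'}(e)}{\conj X(e)}
= \frac{\conj{X'}(e)}{\conj{X'}(f_0)}\cdot\frac{\conj{X'}(f_0)}{\conj X(f_0)}\cdot\frac{\conj X(f_0)}{\conj X(e)}
= (-1)^{m^T_{e,f_0}}\frac{\varphi(T\symdiff\skewpair e)}{\varphi(T\symdiff\skewpair{f_0})}\cdot\alpha\cdot(-1)^{m^T_{f_0,e}}\frac{\varphi(T\symdiff\skewpair{f_0})}{\varphi(T\symdiff\skewpair e)}=\alpha,
\]
so $\conj{X'}=\alpha\conj X$ on $C$, hence $\conj{X'}=\alpha\conj X$ everywhere (both vanish off $C$), and therefore $X'=\alpha X$ since $X\mapsto\conj X$ is a bijection on $F^E$. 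Conversely any scalar multiple $\alpha X$ with $\alpha\in F^\times$ clearly still satisfies~\eqref{eq:defineX}, so the $F$-circuit with support $C$ is exactly the projective class $[X]$.

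I expect the only genuinely delicate point to be the sign bookkeeping with the interval-count parities $m^T_{i,j}$ — making sure that the cocycle condition for the ratios holds on the nose rather than up to a sign. Everything else is formal: existence of $T$ is Lemma~\ref{lem:extendcircuit}, independence of $T$ is Lemma~\ref{lem:defineX}, and the uniqueness argument is the short computation above. So the proof will consist of: (1) invoke Lemma~\ref{lem:extendcircuit} to fix $T$; (2) define $X$ via a reference element and record $\underline X=C$; (3) check the parity identity $m^T_{e_0,f}-m^T_{e_0,e}\equiv m^T_{e,f}\pmod 2$ so that~\eqref{eq:defineX} holds for all pairs, and cite Lemma~\ref{lem:defineX} for $T$-independence; (4) run the displayed computation to conclude uniqueness up to $F^\times$.
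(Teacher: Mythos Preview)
Your proposal is correct and follows essentially the same approach as the paper: both fix a single transversal $T$ via Lemma~\ref{lem:extendcircuit}, verify the cocycle identity on the sign parities (the paper writes it as $m_{e,f}+m_{f,g}+m_{e,g}\equiv 0\pmod 2$, which is exactly your $m^T_{e_0,f}-m^T_{e_0,e}\equiv m^T_{e,f}\pmod 2$), and invoke Lemma~\ref{lem:defineX} for $T$-independence. The paper's write-up is terser --- it checks the cocycle condition directly on a triple $e,f,g$ rather than fixing a reference element $e_0$, and it absorbs your explicit uniqueness computation into the one-line citation of Lemma~\ref{lem:defineX} --- but the content is identical.
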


\begin{proof}
Lemma~\ref{lem:defineX} shows 
 the uniqueness. We assert furthermore that $X$ is well-defined. %For all $e, f, g \in \underline{X}$,
    %\begin{align*}
        %\frac{\conj{X}(e)}{\conj{X}(f)} \frac{\conj{X}(f)}{\conj{X}(g)}
        %= \frac{\conj{X}(e)}{\conj{X}(g)}.
    %\end{align*}
This can be proved directly. Let $T$ be a transversal such that $C \subseteq T$ and $T \symdiff \skewpair{x} \in \supp(\varphi)$ for all $x\in C$. Take $e, f, g \in C$. Since $m_{e,f} + m_{f,g} + m_{e,g} \equiv 0 \pmod{2}$, we have 
    \begin{align*}
      \frac{\conj{X}(e)}{\conj{X}(f)} \frac{\conj{X}(f)}{\conj{X}(g)}
      = (-1)^{m_{e,f}} \frac{\varphi(T\symdiff \skewpair{e})}{\varphi(T\symdiff \skewpair{f})}
      \cdot& (-1)^{m_{f,g}} \frac{\varphi(T\symdiff \skewpair{f})}{\varphi(T\symdiff \skewpair{g})} \\
      =& (-1)^{m_{e,g}} \frac{\varphi(T\symdiff \skewpair{e})}{\varphi(T\symdiff \skewpair{g})}
      = \frac{\conj{X}(e)}{\conj{X}(g)}.
      \qedhere
    \end{align*}
\end{proof}

By Proposition~\ref{prop:defineX2}, 
the set $\cC_\varphi$ of all projective $F$-circuits induced from $\varphi$ is an $F$-signature of $\ul{M}$.

\begin{theorem}\label{thm:W2O}
Let $F$ be a tract. If $\varphi$ is a strong Wick function over $F$, then $\mathcal{C}_\varphi$ is a strong orthogonal $F$-signature.
\end{theorem}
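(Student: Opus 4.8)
The plan is to verify that the $F$-signature $\cC_\varphi$ constructed above satisfies the strong orthogonality axiom~\ref{item:O}, that is, $\langle X, Y^\ast \rangle \in N_F$ for all $X, Y \in \cC_\varphi$. By definition of $\cC_\varphi$ and the fact that $\langle \cdot, (\cdot)^\ast \rangle$ scales multiplicatively in each argument, it suffices to fix one $F$-circuit $X$ with support $C = \underline{X}$ and one $F$-circuit $Y$ with support $D = \underline{Y}$, and show $\langle X, Y^\ast \rangle = \sum_{i \in E} \conj{X}(i)\conj{Y}(i^\ast) \in N_F$. If $C \cap D^\ast = \emptyset$ this sum is empty (hence $0 \in N_F$), so we may assume $C \cap D^\ast \neq \emptyset$. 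First I would record the standard combinatorial fact that since $C$ is a circuit and $D^\ast$ is a cocircuit of $\ul{M}$, $|C \cap D^\ast| \neq 1$; thus $|C \cap D^\ast| \geq 2$.

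The key idea is to realize the inner product $\langle X, Y^\ast\rangle$, up to a global nonzero scalar, as one of the Wick relations~\ref{item:W2}. Using Lemma~\ref{lem:extendcircuit}, choose a transversal $T$ containing $C$ such that $T \symdiff \skewpair{x}$ is a basis of $\ul{M}$ for every $x \in C$; then by~\eqref{eq:defineX} the values $\conj{X}(e)$ are, up to a common scalar, the quantities $(-1)^{m^T_{e,f_0}}\varphi(T \symdiff \skewpair{e})$ for a fixed reference $f_0 \in C$. The plan is to find a second transversal $T'$ (produced from a basis containing enough of $D \setminus (\text{one element})$, again via Lemma~\ref{lem:extendcircuit} applied to $D$) so that $T' \symdiff \skewpair{y}$ is a basis for all $y \in D$, the relevant nonzero values of $\varphi(T' \symdiff \skewpair{\cdot})$ are supported exactly on $C \cap D^\ast$ after identifying $D$ with $D^\ast$ via the involution, and moreover $(T \symdiff T') \cap [n]$ indexes precisely the elements contributing to $\langle X, Y^\ast \rangle$. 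Applying~\ref{item:W2} to the pair $T, T'$ (or to suitable $\symdiff$-translates of them) then yields $\sum_k (-1)^k \varphi(T \symdiff \skewpair{x_k})\varphi(T' \symdiff \skewpair{x_k}) \in N_F$, and the content of the proof is to check that each surviving term of this sum matches $\conj{X}(i)\conj{Y}(i^\ast)$ up to the correct sign, using that the sign exponents $m^T_{e,f}$ and the ordering signs $(-1)^k$ in~\ref{item:W2} are compatible modulo $2$ (an argument in the same spirit as the sign bookkeeping in Lemma~\ref{lem:defineX} and Proposition~\ref{prop:W and GP}). One then concludes via axiom~(T4) (closure of $N_F$ under multiplication by $G$) to absorb the global scalars, so $\langle X, Y^\ast\rangle \in N_F$.

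The main obstacle I anticipate is the sign bookkeeping: matching the intrinsic signs $(-1)^{m^T_{e,f}}$ built into $\conj{X}$ and $\conj{Y}$ with the positional signs $(-1)^k$ appearing in the Wick relation~\ref{item:W2}, while simultaneously tracking the involution $\ast$ (which swaps the roles of $[n]$ and $[n]^\ast$ and hence flips whether a given index lies in the ``$[n]$-part'' used to enumerate $\{x_1 < \cdots < x_m\}$). A secondary subtlety is ensuring that the single transversal $T'$ can be chosen to simultaneously make $T' \symdiff \skewpair{y}$ a basis for all $y \in D$ \emph{and} to have $(T \symdiff T') \cap [n]$ line up with $C \cap D^\ast$; this may require first passing to $T \symdiff A$ and $T' \symdiff A'$ for appropriate self-conjugate sets $A, A'$ (twists), which is harmless for the Wick relations but must be done carefully. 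Once the sign lemma is isolated and proved, the rest is a direct substitution, and the strong case follows because~\ref{item:W2} is imposed for \emph{all} pairs $T_1, T_2$ with no restriction on the number of nonzero terms.
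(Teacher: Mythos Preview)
Your approach is essentially the same as the paper's: choose transversals $T_1 \supseteq C$ and $T_2 \supseteq D$ via Lemma~\ref{lem:extendcircuit}, apply the Wick relation~\ref{item:W2} to the pair $(T_1,T_2)$, and verify that the surviving terms match $\langle X, Y^\ast\rangle$ up to a global unit. The sign bookkeeping you anticipate is exactly what the paper carries out, showing $m_j + n_j \equiv \alpha_j - \alpha_1 \pmod 2$ where $m_j,n_j$ are the $m^{T_1}$- and $m^{T_2}$-exponents and $\alpha_j$ is the position of the $j$-th element of $C\cap D^\ast$ inside $T_1\cap T_2^\ast$.

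The one place you overcomplicate is the ``alignment'' of $T'$. You worry about choosing $T'$ so that $(T\symdiff T')\cap[n]$ lines up exactly with $C\cap D^\ast$, and contemplate twisting to force this. This is unnecessary. Take $T_1$ and $T_2$ completely independently from Lemma~\ref{lem:extendcircuit}; then for any $e\in (T_1\cap T_2^\ast)\setminus (C\cap D^\ast)$, the transversal $T_1\symdiff\skewpair{e}$ (if $e\notin C$) or $T_2\symdiff\skewpair{e}$ (if $e^\ast\notin D$) still contains the circuit $C$ or $D$ respectively, hence is not a basis, so $\varphi(T_1\symdiff\skewpair{e})\varphi(T_2\symdiff\skewpair{e})=0$. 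Thus the Wick sum over $(T_1\symdiff T_2)\cap[n]$ automatically collapses to a sum over $C\cap D^\ast$, with no alignment or twisting required. Once you note this vanishing, the rest of your plan goes through exactly as in the paper.
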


\begin{proof}
    Let $X_1, X_2 \in \mathcal{C}_\varphi$. We may assume that $\underline{X_1} \cap \underline{X_2}^* \neq \emptyset$.
  By Lemma~\ref{lem:extendcircuit}, there is a transversal $T_i$ containing $\underline{X_i}$ such that $T_i \symdiff \skewpair{e}$ is a basis for every $e \in \underline{X_i}$ and $i = 1,2$.
  Note that
  \[
    \varphi(T_1 \symdiff \skewpair{e})
    \varphi(T_2 \symdiff \skewpair{e})
    = 0
  \]
  for all $e \in (T_1 \symdiff T_2) \setminus (\underline{X_1} \symdiff \underline{X_2})$.
  Write $T_1 \cap T_2^* = \{e_1, e_2, \dots, e_a\}$ with $\nostar{e_1} < \nostar{e_2} < \dots < \nostar{e_a}$ and write $\underline{X_1} \cap \underline{X_2}^* = \{e_{\alpha_1}, \dots, e_{\alpha_b}\}$ with $\alpha_1 < \dots < \alpha_b$.
  Then $(T_1 \triangle T_2) \cap [n] = \{\nostar{e_1},\dots,\nostar{e_a}\}$.
  Let $m_{j} := |T_1 \cap \ocinterval{\nostar{e_{\alpha_1}}}{\nostar{e_{\alpha_j}}}|$ and $n_{j} := |T_2 \cap \ocinterval{\nostar{e_{\alpha_1}}}{\nostar{e_{\alpha_j}}}|$ for each 
%   $i\in\{1,2\}$ and 
  $j \in [b]$.
  Since $(T_1 \symdiff T_2) \cap \ocinterval{\nostar{e_{\alpha_1}}}{\nostar{e_{\alpha_j}}} = \{\nostar{e_{k}} : \alpha_1 < k \leq \alpha_j\}$, we have ${m_j+n_j} \equiv {\alpha_j - \alpha_1} \pmod{2}$.
  % Notice that $T_1 \symdiff T_2 = (T_1 \cap T_2^\ast) \cup (T_1 \cap T_2^\ast)^\ast$, 
  By~\ref{item:W2} applied to $T_1$ and $T_2$, we have
  \begin{align*}
    N_F
    \ni \sum_{i=1}^a (-1)^{i} \varphi(T_1 \symdiff \skewpair{e_i}) \varphi(T_2 \symdiff \skewpair{e_i})
    = \sum_{i=1}^b (-1)^{\alpha_i} \varphi(T_1 \symdiff \skewpair{e_{\alpha_i}}) \varphi(T_2 \symdiff \skewpair{e_{\alpha_i}}).
  \end{align*}
  Therefore,
  \begin{align*}
    \langle X_1, X_2^\ast \rangle
    &=
    \sum_{i = 1}^b \conj{X_1}(e_{\alpha_i}) \conj{X_2}(e_{\alpha_i}^*) \\
    &=
    \conj{X_1}(e_{\alpha_1}) \conj{X_2}(e_{\alpha_1}^*) \sum_{i=1}^b \frac{\conj{X_1}(e_{\alpha_i})}{\conj{X_1}(e_{\alpha_1})} \frac{\conj{X_2}(e_{\alpha_i}^*)}{\conj{X_2}(e_{\alpha_1}^*)} \\
    &=
    \conj{X_1}(e_{\alpha_1}) \conj{X_2}(e_{\alpha_1}^*) \sum_{i=1}^b
    (-1)^{m_i}
    \frac{\varphi(T_1 \symdiff \skewpair{e_{\alpha_i}})}{\varphi(T_1 \symdiff \skewpair{e_{\alpha_1}})}
    (-1)^{n_i}
    \frac{\varphi(T_2 \symdiff \skewpair{e_{\alpha_i}})}{\varphi(T_2 \symdiff \skewpair{e_{\alpha_1}})} \\
    &=
    (-1)^{\alpha_1} \frac{\conj{X_1}(e_{\alpha_1}) \conj{X_2}(e_{\alpha_1}^*)}{\varphi(T_1 \symdiff \skewpair{e_{\alpha_1}}) \varphi(T_2 \symdiff \skewpair{e_{\alpha_1}})} %
    %&\qquad
    \cdot \sum_{i=1}^b
    (-1)^{\alpha_i} \varphi(T_1 \symdiff \skewpair{e_{\alpha_i}}) \varphi(T_2 \symdiff \skewpair{e_{\alpha_i}})
    \in N_F. \qedhere
  \end{align*}
\end{proof}

\begin{theorem}\label{thm:W2O-weak}
Let $F$ be a tract. If $\varphi$ is a moderately weak Wick function over $F$, then $\mathcal{C}_\varphi$ is a weak orthogonal $F$-signature.
\end{theorem}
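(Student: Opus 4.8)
The plan is to run the proof of Theorem~\ref{thm:W2O} essentially verbatim; the only genuinely new point is that the weakened axiom~\ref{item:W2'} still suffices, because the instance of the Wick relation that is invoked will have at most four nonzero terms. By Proposition~\ref{prop:defineX2} (whose proof, like that of Lemma~\ref{lem:defineX}, only ever uses instances of the Wick relation with at most four terms and so stays valid under~\ref{item:W2'}), the set $\cC_\varphi$ is an $F$-signature of $\ul{M} := \ul{M}_\varphi$. It therefore remains to check the weak orthogonality~\ref{item:O'}: for any $X_1, X_2 \in \cC_\varphi$ with $|\ul{X_1} \cap \ul{X_2}^*| \le 4$ we must show $\langle X_1, X_2^* \rangle \in N_F$. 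If $\ul{X_1} \cap \ul{X_2}^* = \emptyset$ then $\langle X_1, X_2^* \rangle = 0 \in N_F$, so we may assume this intersection is nonempty.

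The remaining steps mirror Theorem~\ref{thm:W2O}. First, invoke Lemma~\ref{lem:extendcircuit} to choose transversals $T_i \supseteq \ul{X_i}$ with $T_i \symdiff \skewpair{e}$ a basis of $\ul{M}$ for every $e \in \ul{X_i}$, $i = 1, 2$. The crucial observation is that, for $i \in \{1,2\}$ and any $e \in E$, one has $\varphi(T_i \symdiff \skewpair{e}) \ne 0$ if and only if $\skewpair{e}$ meets $\ul{X_i}$: if $\skewpair{e} \cap \ul{X_i} = \emptyset$ then $T_i \symdiff \skewpair{e}$ still contains the circuit $\ul{X_i}$ and hence is not a basis, while the converse is exactly the defining property of $T_i$ (using $\skewpair{e} = \skewpair{e'}$ for $e' \in \skewpair{e} \cap \ul{X_i}$). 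Writing $(T_1 \symdiff T_2) \cap [n] = \{x_1 < \dots < x_m\}$, it follows that $\varphi(T_1 \symdiff \skewpair{x_k})\varphi(T_2 \symdiff \skewpair{x_k}) \ne 0$ precisely when $\skewpair{x_k}$ meets both $\ul{X_1}$ and $\ul{X_2}$, and since $\ul{X_1}$ and $\ul{X_2}$ are admissible this happens exactly for those $x_k$ with $\skewpair{x_k}$ containing an element of $\ul{X_1} \cap \ul{X_2}^*$; hence the number of nonzero products equals $|\ul{X_1} \cap \ul{X_2}^*| \le 4$. Thus axiom~\ref{item:W2'} applies to $T_1$ and $T_2$, giving $\sum_{k=1}^m (-1)^k \varphi(T_1 \symdiff \skewpair{x_k})\varphi(T_2 \symdiff \skewpair{x_k}) \in N_F$. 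From here one passes to the subsum over the indices lying in $\ul{X_1} \cap \ul{X_2}^*$, substitutes the defining ratios~\eqref{eq:defineX} of $X_1$ and $X_2$, and recognizes $\langle X_1, X_2^* \rangle$ as a nonzero scalar multiple of that subsum, which lies in $N_F$ because $N_F$ is closed under multiplication by elements of $F^\times$; this last part is word-for-word the end of the proof of Theorem~\ref{thm:W2O}.

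I do not anticipate a real obstacle: the entire scaffolding of Theorem~\ref{thm:W2O} is reused, and the only point requiring care is the term count, namely that the nonzero terms of the Wick relation for $T_1, T_2$ are indexed precisely by $\ul{X_1} \cap \ul{X_2}^*$. That is exactly the step where the particular choice of $T_1, T_2$ from Lemma~\ref{lem:extendcircuit} is essential, and it is what converts the hypothesis $|\ul{X_1} \cap \ul{X_2}^*| \le 4$ into the ``at most four nonzero products'' hypothesis needed to apply~\ref{item:W2'}.
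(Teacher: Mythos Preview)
Your proposal is correct and follows exactly the same approach as the paper, which simply notes that replacing~\ref{item:W2} with~\ref{item:W2'} in the proof of Theorem~\ref{thm:W2O} yields~\ref{item:O'}. Your added detail---that the nonzero terms in the Wick sum for $T_1,T_2$ are indexed precisely by $\ul{X_1}\cap\ul{X_2}^*$, hence there are at most four of them---is exactly the observation that makes~\ref{item:W2'} applicable, and this is already implicit in the paper's proof of Theorem~\ref{thm:W2O} (where the nonzero summands are shown to be those indexed by $\alpha_1,\dots,\alpha_b$ with $b=|\ul{X_1}\cap\ul{X_2}^*|$).
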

\begin{proof}
  It is not hard to see that $\cC_\varphi$ satisfies~\ref{item:O'} if we replace~\ref{item:W2} with~\ref{item:W2'} in the proof of Theorem~\ref{thm:W2O}.
\end{proof}

\subsection{From orthogonal signatures to Wick functions. }\label{sec:O2W}

Throughout this part, $\mathcal{C} \subseteq F^{E}$ is an $F$-signature of an ordinary orthogonal matroid $\ul{M}$ satisfying the $2$-term orthogonality:
\begin{enumerate}[label=\rm(O$_{\arabic*}$)]
    \setcounter{enumi}{1}
    \item[\ref{item:Ot2}]
    $\langle X,Y^\ast \rangle \in N_F$ for all $X,Y \in \mathcal{C}$ with $|\underline{X} \cap \underline{Y}^*| = 2$.
\end{enumerate}
Recall that by Lemma~\ref{lem:F-circuit unique up to}, for each circuit $C$ of $\ul{M}$, the $F$-circuit $X\in \cC$ \textup{(}and equivalently, $\conj{X}$\textup{)} with $\underline{X} = C$ is unique up to multiplication by an element in $F^\times$.

% Lemma~\ref{lem:F-circuit unique up to2} can be restated as follows.

% \begin{lemma}\label{lem:F-circuit unique up to}
% For each circuit $C$ of $\ul{M}$, the $F$-circuit $X\in \cC$ \textup{(}and equivalently, $\conj{X}$,\textup{)} with $\underline{X} = C$ is unique up to multiplication by an element in $F^\times$.
% \end{lemma}
% \begin{proof}
%    Suppose for contradiction that there were two distinct $F$-circuits $X$ and $X'$ in $\cC$ with $\underline{X} = \underline{X'} = C$.
%    Let $e,f\in C$ be distinct elements such that $X(e)/X(f) \neq X'(e)/X'(f)$. 
%    Let $B$ be a basis of $M$ containing $C \symdiff \skewpair{e}$, which exists by~\ref{item:C4}, and let $D$ be the fundamental circuit $C(B,f^*)$. 
%    Then $C \cap D^\ast = \{e, f\}$.
%    Let $Y$ be an $F$-circuit in $\cC$ such that $\underline{Y} = D$.
%    Then $\langle X, Y^\ast \rangle = \conj{X}(e)\conj{Y}(e^*) + \conj{X}(f)\conj{Y}(f^*) \in N_F$ by~\ref{item:Ot2} and thus $\conj{X}(e)/\conj{X}(f) = \conj{Y}(f^*)/\conj{Y}(e^*)$.
%    We also have the same result for $X'$, a contradiction.
% \end{proof}

We first set $\gamma(B,B) = 1$ for every basis $B$ of $\ul{M}$. Let $B_1, B_2$ be two bases of $\ul{M}$ with $|B_1 \symdiff B_2| = 4$. We can write $B_1 = T \symdiff \skewpair{f}$ and $B_2 = T \symdiff \skewpair{e}$ for some transversal $T$ containing  $e$ and $f$. 
Let $X \in \mathcal{C}$ be the $F$-circuit whose support $\underline{X}$ is the fundamental circuit $C(B_1, f)$.
Then $\underline{X} = C(B_2, e) \subseteq T$, and in particular, $e,f \in \underline{X}$.
% One may note that $X$ is unique up to multiplying by an element in $F^\times$.
We define 
  % \[
  %   \gamma(B_1,B_2) := (-1)^m \frac{X(e)}{X(f)},
  % \]
  \begin{align*}
    \gamma(B_1,B_2) := (-1)^{m^T_{e,f}} \frac{\conj{X}(e)}{\conj{X}(f)}.
  \end{align*}
% where $m = |T \cap \ocinterval{\nostar{e}}{\nostar{f}}|$. 

\begin{proposition}
    $\gamma(B_1,B_2)$ is well-defined. 
\end{proposition}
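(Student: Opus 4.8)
The plan is to verify that $\gamma(B_1,B_2)$ is independent of the data entering its definition. Two choices are implicit. First, by Lemma~\ref{lem:F-circuit unique up to} the $F$-circuit $X \in \cC$ with support $C(B_1,f)$ is only determined up to multiplication by an element of $F^\times$; but $\gamma(B_1,B_2)$ depends on $X$ only through the ratio $\conj{X}(e)/\conj{X}(f)$, which is scaling-invariant, so this is harmless. Second, and this is the substantive point, the transversal $T$ with $e,f \in T$, $B_1 = T \symdiff \skewpair{f}$, and $B_2 = T \symdiff \skewpair{e}$ is not unique. I would begin by describing this freedom precisely: writing $B_1 \symdiff B_2 = \skewpair{p} \cup \skewpair{q}$ with distinct $p,q \in [n]$, the divergences $\skewpair{e}$ and $\skewpair{f}$ must be $\skewpair{p}$ and $\skewpair{q}$ in one of the two orders, and once we fix which of them is $\skewpair{f}$, the identity $T = B_1 \symdiff \skewpair{f}$ together with the requirement $f \in T$ pins down $f$, and then $e$ and $T$, uniquely. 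Thus there are exactly two admissible representations, and it suffices to check that they give the same value.

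Label these representation (A): $\skewpair{f} = \skewpair{p}$, $\skewpair{e} = \skewpair{q}$, $T_A = B_1 \symdiff \skewpair{p}$, and $X_A \in \cC$ with $\underline{X_A} = C(B_1,f)$; and representation (B), obtained by swapping the roles of $p$ and $q$, so that its element ``$e$'' is $f^\ast$, its element ``$f$'' is $e^\ast$, its transversal is $T_B = B_1 \symdiff \skewpair{q}$, and its circuit is $X_B \in \cC$ with $\underline{X_B} = C(B_1,e^\ast)$. Using Proposition~\ref{prop:fundamental-circuit} and the fact that $B_1 \symdiff \{e,e^\ast,f,f^\ast\} = B_2$ is a basis of $\ul{M}$, one checks $e,f \in \underline{X_A}$ and $e^\ast,f^\ast \in \underline{X_B}$. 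Moreover $\underline{X_A} \subseteq (B_1 \cup \{f\}) \setminus \{f^\ast\}$ and $\underline{X_B}^\ast \subseteq (B_1^\ast \cup \{e\}) \setminus \{e^\ast\}$, and since $B_1 \cap B_1^\ast = \emptyset$ this forces $\underline{X_A} \cap \underline{X_B}^\ast = \{e,f\}$ exactly. Hence the $2$-term orthogonality~\ref{item:Ot2} applies and gives $\conj{X_A}(e)\conj{X_B}(e^\ast) + \conj{X_A}(f)\conj{X_B}(f^\ast) \in N_F$; as all four factors lie in $F^\times$, Lemma~1.1 of~\cite{Baker2019} yields
\[
  \frac{\conj{X_A}(e)}{\conj{X_A}(f)} = -\frac{\conj{X_B}(f^\ast)}{\conj{X_B}(e^\ast)}.
\]

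Finally I would compare the sign exponents. Both are of the form $|T \cap I|$ with $I = \ocinterval{p}{q} \subseteq [n]$ the same interval (the interval notation being symmetric in its endpoints): representation (A) contributes $|T_A \cap I|$ and representation (B) contributes $|T_B \cap I|$. Since $T_A \symdiff T_B = \skewpair{p} \cup \skewpair{q}$ and $I$ meets this set in the single point $\max(p,q)$, we get $|T_A \cap I| + |T_B \cap I| \equiv 1 \pmod 2$; together with the sign $-1$ in the displayed identity, this gives $\gamma_A = \gamma_B$, completing the proof. The main obstacle is purely organizational: enumerating the two representations correctly and establishing $\underline{X_A} \cap \underline{X_B}^\ast = \{e,f\}$ precisely so that~\ref{item:Ot2} is applicable; once that is set up, both the orthogonality step and the parity count are short.
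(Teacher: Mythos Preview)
Your proof is correct and follows essentially the same approach as the paper. Your $T_B$ is exactly the paper's auxiliary transversal $T_1 = T \symdiff \skewpair{e} \symdiff \skewpair{f}$, your $X_B$ is the paper's $X_1$, and both proofs finish by applying~\ref{item:Ot2} to the pair $(X_A,X_B)$ and checking the parity $m^{T_A}_{e,f} + m^{T_B}_{e,f} \equiv 1 \pmod 2$; your write-up is somewhat more explicit in enumerating the two representations and verifying the hypothesis $|\underline{X_A} \cap \underline{X_B}^\ast| = 2$, but the argument is the same.
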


\begin{proof}
By Lemma~\ref{lem:F-circuit unique up to}, $\gamma(B_1,B_2)$ is independent of the choice of $X$ for fixed $T$.
Let $T_1 = T \symdiff \skewpair{e} \symdiff \skewpair{f} \ni e^*, f^*$.
Let $X_1\in \mathcal{C}$ be such that $\underline{X}_1 = C(B_1,e^*) = C(B_2,f^*) \ni e^*, f^*$.
% and let $m' := |T' \cap \ocinterval{\nostar{e}}{\nostar{f}}|$.
% Denote $m := m^T_{e,f}$.
It suffices to show that
\begin{align*}
  (-1)^{m^{T}_{e,f}} \frac{\conj{X}(e)}{\conj{X}(f)} = (-1)^{m^{T_1}_{e,f}} \frac{\conj{X}_1(f^*)}{\conj{X}_1(e^*)}.
\end{align*}
Since $(T \symdiff T_1) \cap \ocinterval{\nostar{e}}{\nostar{f}} = \max\{\nostar{e},\nostar{f}\}$, we have $|m^T_{e,f}-m^{T_1}_{e,f}| = 1$.
% By orthogonality (O), $X(e)X'(e^*) + X(f)X'(f^\ast) = \langle X, X' \rangle \in N_F$ and therefore
% \begin{align*}
%   (-1)^m \frac{X(e)}{X(f)} = (-1)^{m'} \frac{X'(f^*)}{X'(e^*)}.
% \end{align*}
By~\ref{item:Ot2}, $\conj{X}(e)\conj{X}_1(e^*) + \conj{X}(f)\conj{X}_1(f^\ast) = \langle X, X_1^\ast \rangle \in N_F$ and therefore we obtain the desired equality.
\end{proof}

The next lemma is obvious from the definition.
\begin{lemma}\label{lem:2cycle}
%   Let $\mathcal{C}$ be an orthogonal signature.
If $B_1,B_2$ are bases of $\underline{M}$ with $|B_1 \symdiff B_2| = 4$, then we have $\gamma(B_1,B_2) = \gamma(B_2,B_1)^{-1}$. \qed
\end{lemma}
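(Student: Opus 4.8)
The plan is to compute $\gamma(B_1,B_2)$ and $\gamma(B_2,B_1)$ from the \emph{same} data, which is legitimate precisely because $\gamma$ has just been shown to be well-defined. Fix a transversal $T$ containing elements $e$ and $f$ with $B_1 = T \symdiff \skewpair{f}$ and $B_2 = T \symdiff \skewpair{e}$, as in the construction preceding the lemma, and let $X \in \cC$ be the $F$-circuit whose support is $C(B_1,f) = C(B_2,e) \subseteq T$; then $e,f \in \underline{X}$ and, by definition, $\gamma(B_1,B_2) = (-1)^{m^T_{e,f}} \conj{X}(e)/\conj{X}(f)$.

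To evaluate $\gamma(B_2,B_1)$ I would apply the very same definition to the ordered pair $(B_2,B_1)$, presenting it as $B_2 = T \symdiff \skewpair{e}$ and $B_1 = T \symdiff \skewpair{f}$ (that is, with the roles of $e$ and $f$ interchanged). The relevant $F$-circuit is then the one with support $C(B_2,e)$, which is again $\underline{X}$; by Lemma~\ref{lem:F-circuit unique up to} it agrees with $X$ up to a factor in $F^\times$, and since only a ratio of two of its coordinates enters, we may take $X$ itself. Because $\ocinterval{\nostar{e}}{\nostar{f}} = \ocinterval{\nostar{f}}{\nostar{e}}$ by definition, $m^T_{f,e} = m^T_{e,f}$, and therefore $\gamma(B_2,B_1) = (-1)^{m^T_{e,f}} \conj{X}(f)/\conj{X}(e)$.

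Multiplying the two expressions gives $\gamma(B_1,B_2)\,\gamma(B_2,B_1) = (-1)^{2 m^T_{e,f}} = 1$ (equivalently, $\epsilon^2 = 1$), which is exactly $\gamma(B_1,B_2) = \gamma(B_2,B_1)^{-1}$. There is no genuine obstacle here; the only two points worth flagging are the appeal to well-definedness of $\gamma$, so that one transversal $T$ and one circuit $X$ serve both orderings, and the symmetry $m^T_{i,j} = m^T_{j,i}$ of the index — both immediate from the definitions.
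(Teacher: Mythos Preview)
Your argument is correct and is exactly what the paper has in mind: the lemma is stated as ``obvious from the definition'' with no proof, and you have simply unpacked that obviousness by noting that the same transversal $T$ and $F$-circuit $X$ serve for both orderings, with $e$ and $f$ interchanged, together with the symmetry $m^T_{e,f}=m^T_{f,e}$.
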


Now we define a candidate for a Wick function on $E$ with coefficients in $F$ whose underlying matroid is exactly $\ul{M}$. Fix a basis $B_0$ of $\ul{M}$, and let $\varphi_\mathcal{C} : \mathcal{T}_n \to F$ be such that:
\begin{enumerate}[label=\rm(\roman*)]
    \item $\varphi_\mathcal{C}(B_0) = 1$ $(\not\in N_F)$. 
    \item For each basis $B$ of $\ul{M}$ other than $B_0$, we set 
    \begin{align*}
      \varphi_\mathcal{C}(B) := \gamma(B', B) \varphi_\mathcal{C}(B'), 
    \end{align*}  
    where $B'$ is a basis of $\ul{M}$ such that $|B\setminus B'| = 2$ and $|B \setminus B_0| = |B' \setminus B_0| + 2$.
    \item For each non-basis transversal $T$, we set $\varphi(T) = 0$.
\end{enumerate}

To show that $\varphi_{\mathcal{C}}$ is well-defined, we need a result on the basis graphs of the orthogonal matroids. 

The {\em basis graph} $\Gamma_{\ul{N}}$ of an ordinary orthogonal matroid $\ul{N}$ is a graph whose vertex set is the set of bases of $\ul{N}$, and two vertices $B_1$ and $B_2$ are adjacent if and only if $|B_1 \setminus B_2| = 2$. For every graph~$G$, a {\em directed cycle $C$ of length $\ell \geq 2$} is a sequence $(v_0,v_1)$, $(v_1,v_2)$, $\cdots$, $(v_{\ell-1},v_{\ell})$ of ordered pairs of adjacent vertices in $G$ such that all $v_k$ are distinct except for $v_0 = v_\ell$. We simply write $C$ as a sequence $v_0,v_1,\ldots,v_{\ell-1},v_0$ of vertices. We denote by $-C$ the directed cycle $v_0,v_{\ell-1},\dots,v_1,v_0$. For directed cycles $C_0,C_1,\dots,C_m$ of $G$, we say $C_0$ is {\em generated by $C_1,\dots,C_m$} if for all vertices $u,v$ in~$G$, two ordered pairs $(u,v)$ and $(v,u)$ appear the same number of times in $-C_0$, $C_1$, $\cdots$, $C_m$; see Figure~\ref{fig:generated directed cycles}.

\begin{figure*}
  \centering
  \begin{tikzpicture}
    \node[shape=circle,fill=black, scale=0.25] (v1) at (0.1,0) {};
    \node[shape=circle,fill=black, scale=0.25] (v2) at (-0.1,0.8) {};
    \node[shape=circle,fill=black, scale=0.25] (v3) at (0.7,1.4) {};
    \node[shape=circle,fill=black, scale=0.25] (v4) at (1.6,1.5) {};
    \node[shape=circle,fill=black, scale=0.25] (v5) at (2.4,1) {};
    \node[shape=circle,fill=black, scale=0.25] (v6) at (2.5,0.2) {};
    \node[shape=circle,fill=black, scale=0.25] (v7) at (3.1,-0.3) {};
    \node[shape=circle,fill=black, scale=0.25] (v8) at (3.2,-1) {};
    \node[shape=circle,fill=black, scale=0.25] (v9) at (2.6,-1.5) {};
    \node[shape=circle,fill=black, scale=0.25] (v10) at (1.8,-1.4) {};
    \node[shape=circle,fill=black, scale=0.25] (v11) at (1.15,-0.85) {};
    \node[shape=circle,fill=black, scale=0.25] (v12) at (0.45,-1.4) {};
    \node[shape=circle,fill=black, scale=0.25] (v13) at (-0.5,-1.6) {};
    \node[shape=circle,fill=black, scale=0.25] (v14) at (-1,-1) {};
    \node[shape=circle,fill=black, scale=0.25] (v15) at (-0.6,-0.4) {};

    \node[shape=circle,fill=black, scale=0.25] (w1) at (0.8,-0.3) {};
    \node[shape=circle,fill=black, scale=0.25] (w2) at (1.65,-0.2) {};

    \node (o3) at (0.4,2.05) {};
    \node (o5) at (3.2,1.3) {};
    \node (o8) at (3.8,-1.35) {};
    \node (o12) at (0.65,-2) {};
    \node (o15) at (-1.25,-0.1) {};

    \draw (v1) -- (v2) -- (v3) -- (v4) -- (v5) -- (v6) -- (v7) -- (v8) -- (v9) -- (v10) -- (v11) -- (v12) -- (v13) -- (v14) -- (v15) -- (v1);
    \draw (v1) -- (w1) -- (w2) -- (v6);
    \draw (w1) -- (v11);
    \draw (v3) -- (o3);
    \draw (v5) -- (o5);
    \draw (v8) -- (o8);
    \draw (v12) -- (o12);
    \draw (v15) -- (o15);

    \draw[color=red, -Stealth] (0.2,0.2) to[out=110,in=185] (1.2,1.3) to[out=5,in=95] (2.3,0.6) to[out=275,in=10] (1.1,-0.07) to[out=190,in=-20] (0.3,0.1);
    \draw[color=red] (0.42,0.4) node {\tiny{$C_1$}};

    \draw[color=red, -Stealth] (0.4,-1.25) to[out=195,in=-75] (-0.75,-1) to[out=105,in=210] (-0.1,-0.3) to[out=30,in=150] (0.7,-0.45) to[out=-30,in=30] (0.45,-1.2);
    \draw[color=red] (0.2,-1.08) node {\tiny{$C_2$}};

    \draw[color=red, -Stealth] (1.85,-1.25) to[out=135,in=-95] (1.2,-0.5) to[out=85,in=190] (1.8,-0.4) to[out=10,in=210] (2.35,-0.08) to[out=30,in=95] (3,-0.8) to[out=-85,in=-10] (1.95,-1.3);
    \draw[color=red] (1.88,-0.99) node {\tiny{$C_3$}};

    \draw[color=red, -Stealth] (-0.65,-1.7) to[out=150,in=225] (-0.95,-0.5) to[out=45,in=-135] (-0.2,0.1) to[out=45,in=-100] (-0.25,0.75) to[out=80,in=200] (0.9,1.6) to[out=20,in=100] (2.65,0.6) to[out=-80,in=130] (3.15,-0.15) to[out=-50,in=40] (3.08,-1.38) to[out=-140,in=-10] (2.2,-1.6) to[out=170,in=-50] (1.3,-1.17) to[out=130,in=40] (0.9,-1.25) to[out=-140,in=0] (-0.55,-1.73);
    \draw[color=red] (-1.05,-1.65) node {\tiny{$C_0$}};

  \end{tikzpicture}
  \caption{$C_0$ is generated by $C_1, C_2, C_3$.}
  \label{fig:generated directed cycles}
\end{figure*}

The following theorem generalizes Maurer's Homotopy Theorem for matroids~\cite{Maurer1973a}. 

\begin{theorem}[Wenzel, Theorem~5.7~of~\cite{Wenzel1995}]\label{thm:homotopy}
   Let $\ul{N}$ be an orthogonal matroid.
   Then every directed cycle in the basis graph $\Gamma_{\ul{N}}$ is generated by directed cycles of length at most $4$.
\end{theorem}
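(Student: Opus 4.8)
The plan is to adapt Maurer's Homotopy Theorem for matroids \cite{Maurer1973a}, with matroid basis exchange replaced by the strong symmetric exchange axiom and the circuit axioms \ref{item:C1}--\ref{item:C5}. First I would record the metric of the basis graph: $\Gamma_{\ul N}$ is connected, and $d(B,B')=\tfrac14|B\symdiff B'|$ for bases $B,B'$. Since for transversals $B\symdiff B'$ is a union of divergences with $|B\setminus B'|$ even (so $|B\symdiff B'|\in 4\Z$) and a single edge changes $|B\symdiff B'|$ by at most $4$, the inequality ``$\ge$'' is clear; for ``$\le$'', given a divergence $\skewpair x\subseteq B\symdiff B'$, strong symmetric exchange produces a divergence $\skewpair y\subseteq B\symdiff B'$ with $\skewpair y\neq\skewpair x$ and $B\symdiff\skewpair x\symdiff\skewpair y\in\cB$, a neighbour of $B$ strictly closer to $B'$, and one then inducts on $|B\symdiff B'|$.

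Next I would reformulate the conclusion. Work in the free abelian group $Z$ on the directed edges of $\Gamma_{\ul N}$, with each edge identified with the negative of its reverse; then ``$C_0$ is generated by $C_1,\dots,C_m$'' means exactly $C_0=\sum_i C_i$ in $Z$. Because a backtrack $u,v,u$ — more generally, any path followed by its own reverse — is $0$ in $Z$, it suffices to prove that every closed walk lies in the subgroup generated by the closed walks of length at most $4$. I would prove this by induction on the potential $\Phi(W):=\sum_i d(v_i,B_0)$, the sum of $d(\cdot,B_0)$ over the positions of the closed walk $W$, for a fixed base $B_0$; closed walks that are $0$ in $Z$, or are already cycles of length at most $4$, serve as base cases.

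For the inductive step, take a nontrivial closed walk $W=v_0,v_1,\dots$ with no backtrack and pick $v_k$ with $d(v_k,B_0)$ maximal, so $v_{k-1}\neq v_{k+1}$ and $d(v_{k\pm1},B_0)\le d(v_k,B_0)$. Write $v_{k-1}=v_k\symdiff D_1$ and $v_{k+1}=v_k\symdiff D_2$ with each $D_i$ a union of two distinct divergences. If $D_1$ and $D_2$ share a divergence, then $v_{k-1}$ and $v_{k+1}$ are adjacent, the triple $v_{k-1},v_k,v_{k+1}$ closes up to a triangle, and I replace the segment $v_{k-1},v_k,v_{k+1}$ of $W$ by the edge $v_{k-1},v_{k+1}$. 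If $D_1\cap D_2=\emptyset$, I would produce a common neighbour $B'$ of $v_{k-1}$ and $v_{k+1}$ with $d(B',B_0)<d(v_k,B_0)$; then $v_{k-1},v_k,v_{k+1},B'$ is a quadrilateral and I replace $v_{k-1},v_k,v_{k+1}$ by $v_{k-1},B',v_{k+1}$. In either case $W$ changes only by a cycle of length at most $4$, while $\Phi$ strictly decreases, so induction finishes the argument.

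The crux is the existence of the descending common neighbour $B'$ in the second case. Heuristically it should hold because $\Gamma_{\ul N}$ has, near any base, the local structure of a matroid basis graph, so Maurer's analysis ought to apply locally; but making this rigorous means pitting strong symmetric exchange (applied to $v_{k-1}$ and $v_{k+1}$, and to $v_k$ against a geodesic toward $B_0$) against the circuit axioms, and the incidence patterns of $D_1$, $D_2$, and the divergences moved along the geodesic force a sizeable case analysis; when $v_{k-1}$ or $v_{k+1}$ already sits at the top level one additionally needs to first lower the number of top-level vertices via a triangle or quadrilateral move. I expect this bookkeeping — rather than any single conceptual point — to be the main obstacle, just as in the passage from Maurer's theorem to its known $\Delta$-matroid analogues.
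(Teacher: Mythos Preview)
The paper does not prove this theorem; it is quoted as Theorem~5.7 of Wenzel~\cite{Wenzel1995} and used as a black box in Section~\ref{sec:O2W}. So there is no ``paper's own proof'' to compare against, and any attempt you make is already going beyond what the paper does.

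Your outline is the right scaffolding and matches the spirit of Wenzel's argument (which indeed adapts Maurer). The metric computation $d(B,B')=\tfrac14|B\symdiff B'|$ is correct, the reformulation in the free abelian group on oriented edges matches the paper's definition of ``generated by'', and the potential $\Phi$ with a local peak-lowering move is exactly Maurer's scheme. However, as you yourself flag, the entire proof rests on the claim that when $D_1\cap D_2=\emptyset$ one can find a common neighbour $B'$ of $v_{k-1}$ and $v_{k+1}$ with $d(B',B_0)<d(v_k,B_0)$ (or else make a preliminary triangle/quadrilateral move that lowers the number of top-level vertices). You have not proved this; you have only asserted that it ``should hold'' and that the case analysis is ``sizeable''. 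That is precisely the content of the theorem: strong symmetric exchange applied to $v_{k-1}$ and $v_{k+1}$ gives \emph{some} common neighbour, but not a priori one closer to $B_0$, and it is not obvious that the auxiliary moves you sketch always succeed in every configuration of $D_1$, $D_2$, and the geodesic divergences. Until that lemma is established, the induction does not close and the proposal remains a plan rather than a proof.
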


\iffalse

(change the below theorem with simpler argument)

For a graph $G$ on a vertex set $V$, let
\begin{align*}
    C_0(G) := \mathbb{Z}[V]
    \quad\text{and}\quad
    C_1(G) := \mathbb{Z}[V^2] / \{(u,v)+(v,u) : \text{$uv$ is an edge of $G$}\}.
\end{align*}
A map $\partial:C_1(G) \to C_0(G)$ is defined as $\partial\left(\sum_{i=1}^k (u_i,v_i)\right) = \sum_{i=1}^k (u_i - v_i)$
and let
\begin{align*}
    H_1(G):= \ker \partial.
\end{align*}
 
% A cycle $C$ of a graph $G$ is \emph{generated by} cycles $C_1,\dots,C_m$ of $G$ if $E(C) = \symdiff_{i \in I} E(C_i)$ for some $I \subseteq [m]$.
% A cycle $C$ of length $\ell$ is \emph{irreducible} if it is not generated by cycles of length less than $\ell$.

\begin{theorem}[Wenzel~{\cite[Theorem~1.12]{Wenzel1996}}]\label{thm:homotopy}
    Let $M$ be an orthogonal matroid.
    Then $H_1(\Gamma_{\underline{M}_\mathcal{C}})$ is generated by all elements of the forms
    \begin{align*}
        (v_1,v_2)+(v_2,v_3)+(v_3,v_1)
        \quad\text{and}\quad
        (v_1,v_2)+(v_2,v_3)+(v_3,v_4)+(v_4,v_1).
    \end{align*}
%   For an orthogonal matroid $M$, every cycle in $\Gamma_M$ is generated by $3$-cycles and $4$-cycles.
%   (irreducible) $4$-cycles.
\end{theorem}

\fi

\begin{lemma}%[Analogue of Lemma~4.5 in~\cite{Anderson2012}]
  \label{lem:34cycle}
  The following hold for the basis graph $\Gamma_{\ul{M}}$ of an orthogonal matroid $\ul{M}$ with an $F$-signature $\mathcal{C}$ satisfying~\ref{item:Ot2}. 
  \begin{enumerate}[label=\rm(\roman*)]
      \item If $B_1, B_2, B_3, B_1$ is a directed cycle of length $3$ in $\Gamma_{M}$, then 
    %   If $B_1,B_2,B_3$ are bases of $\underline{M}_\mathcal{C}$ with $|B_i\setminus B_j| = 2$ for all distinct $i,j \in [3]$ (i.e., $B_1,B_2,B_3$ form a $3$-cycle in $\Gamma_{\underline{M}_\mathcal{C}}$), then
      \[
        \gamma(B_1,B_2) \gamma(B_2,B_3) \gamma(B_3,B_1) = 1.
      \]
      \item If $B_1, B_2, B_3, B_4, B_1$ is a directed cycle of length $4$ in $\Gamma_{M}$, then
      \[
        \gamma(B_1,B_2) \gamma(B_2,B_3) \gamma(B_3,B_4) \gamma(B_4,B_1) = 1.
      \]
  \end{enumerate}
\end{lemma}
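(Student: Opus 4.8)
The plan is to translate each statement into an identity about ratios of coordinates of a single $F$-circuit, and then exploit the fact that the two bases appearing in a directed edge, together with the corresponding fundamental circuit, live inside one transversal. Recall that for an edge $B\to B'$ we write $B = T\symdiff\skewpair{f}$ and $B' = T\symdiff\skewpair{e}$ for a transversal $T\ni e,f$, and $\gamma(B,B') = (-1)^{m^T_{e,f}}\,\conj{X}(e)/\conj{X}(f)$, where $X\in\cC$ is the $F$-circuit supported on $C(B,f)=C(B',e)\subseteq T$. By Lemma~\ref{lem:F-circuit unique up to} this ratio does not depend on the scalar chosen for $X$, and by the proof of well-definedness of $\gamma$ we may further replace $T$ by $T_1 = T\symdiff\skewpair{e}\symdiff\skewpair{f}$ using the $F$-circuit supported on $C(B,e^*)=C(B',f^*)$. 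This flexibility in the choice of representing transversal is what makes the cycle identities work.

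\textbf{Length-$3$ cycles.} For a directed triangle $B_1,B_2,B_3,B_1$ one has $|B_i\symdiff B_j|=2$ for each pair, so there is a common transversal $T$ with $B_i = T\symdiff\skewpair{x_i}$ for three distinct elements $x_1,x_2,x_3$, all lying in a common admissible set. Then $C(B_i,x_j)$ for $i\ne j$ is one and the same circuit $C\subseteq T$ containing $\{x_1,x_2,x_3\}$ (it is the fundamental circuit of $T\symdiff\skewpair{x_k}$, $k$ the third index). Letting $X\in\cC$ be the single $F$-circuit supported on $C$, each $\gamma(B_i,B_j)$ equals $(-1)^{m^T_{x_i,x_j}}\conj{X}(x_j)/\conj{X}(x_i)$ up to checking which of $x_i,x_j$ plays the role of $e$ versus $f$; multiplying the three factors, the $\conj{X}$-ratios telescope to $1$ and the signs contribute $(-1)^{m^T_{x_1,x_2}+m^T_{x_2,x_3}+m^T_{x_3,x_1}}$, which is $1$ because $m_{x_1,x_2}+m_{x_2,x_3}+m_{x_3,x_1}\equiv 0\pmod 2$ (the same parity fact used in the proof of Proposition~\ref{prop:defineX2}). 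I would need to double-check that the three $B_i$ do admit a common $T$: since $B_1\symdiff B_2=\skewpair{x_1}\symdiff\skewpair{x_2}$ and $B_1\symdiff B_3=\skewpair{x_3}\symdiff\skewpair{x_4}$, comparing $B_2\symdiff B_3$ forces $\{x_3,x_4\}$ to meet $\{x_1,x_2\}$, and a short case analysis (using that all the relevant sets are admissible transversals) pins down the configuration so that $T:=B_1\cup\{x_1^*\}\setminus\{x_1\}$-type choices coincide.

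\textbf{Length-$4$ cycles.} Here the argument is less uniform, since a $4$-cycle in $\Gamma_{\ul M}$ need not lie in a single transversal — it could be ``non-degenerate'' with $B_1,B_3$ at distance $4$. The clean route is to reduce to the $3$-cycle case: if $B_1$ and $B_3$ happen to be adjacent, the $4$-cycle is the concatenation of two triangles $B_1,B_2,B_3,B_1$ and $B_1,B_3,B_4,B_1$, and part~(i) applied twice (with the $\gamma(B_1,B_3)\gamma(B_3,B_1)=1$ of Lemma~\ref{lem:2cycle} cancelling in the middle) gives the claim. If $|B_1\symdiff B_3|=4$, then by strong symmetric exchange (Proposition on Strong Symmetric Exchange, applied to $B_1,B_3$) there is a common neighbour $B'$ of $B_1$ and $B_3$, and then $B_1,B_2,B_3,B',B_1$ and $B_1,B',B_3,B_4,B_1$ are each length-$4$ cycles with $B_1\sim B_3$-type chords or reduce further; alternatively one finds $B'$ adjacent to all of $B_1,B_2,B_3,B_4$ when the local structure permits. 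I expect that the most painful bookkeeping is keeping the sign exponents $m^T_{e,f}$ consistent as one changes the representing transversal from edge to edge around the cycle; the key identity to invoke repeatedly is $|m^T_{e,f}-m^{T'}_{e,f}|$ has the parity of $|(T\symdiff T')\cap\ocinterval{\nostar e}{\nostar f}|$, together with the $2$-term orthogonality~\ref{item:Ot2} to relate $\conj{X}(e)/\conj{X}(f)$ across the ``flip'' from $C(B,f)$ to $C(B,e^*)$, exactly as in the well-definedness proof of $\gamma$.

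\textbf{Main obstacle.} The genuine difficulty is the non-degenerate $4$-cycle: unlike the matroid case, one cannot assume the four bases sit inside one transversal, so the telescoping of $\conj X$-ratios is not immediate and one must produce an auxiliary basis adjacent to several $B_i$ (via strong symmetric exchange) and then carefully track the sign contributions through the subdivision. Everything else is a parity count plus two applications of~\ref{item:Ot2}.
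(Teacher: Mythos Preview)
Your approach to part~(i) works and is in fact slicker than the paper's. After correcting a slip --- since $x_j\in B_i$ for $i\ne j$, the common circuit is $C(B_i,x_i)$, not $C(B_i,x_j)$ --- all three $\gamma$'s are computed from the single transversal $T\ni x_1,x_2,x_3$ and the single $F$-circuit $X$ supported on $C(B_1,x_1)=C(B_2,x_2)=C(B_3,x_3)\subseteq T$; the $\conj X$-ratios telescope and only the parity identity from Proposition~\ref{prop:defineX2} survives. The paper instead uses three different transversals $T_{ij}$ and circuits $X_{ij}$, bridged by three applications of~\ref{item:Ot2} against an auxiliary circuit~$Y$; your route avoids that detour.

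Part~(ii), however, has a genuine gap in the non-degenerate case. A first slip: ``$|B_1\symdiff B_3|=4$'' is the \emph{adjacent} case; the hard case is $|B_1\setminus B_3|=4$ (equivalently $|B_1\symdiff B_3|=8$). The real problem is that your triangle reduction cannot proceed there. Parametrize as in the paper: there exist a transversal $T$ and $e_1,e_2,e_3,e_4\in T$ with $B_1=T_{12}$, $B_2=T_{13}$, $B_3=T_{34}$, $B_4=T_{24}$, where $T_I:=T\symdiff\bigcup_{i\in I}\skewpair{e_i}$. If any of $T,T_{14},T_{23},T_{1234}$ were a basis it would be adjacent to all four $B_i$, so the square would be generated by triangles; hence, after you have disposed of the chordal cases, \emph{none} of these four transversals is a basis. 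But any common neighbour $B'$ of $B_1$ and $B_3$ must satisfy $B'\symdiff B_1\subseteq B_1\symdiff B_3$, so $B'\in\{T,T_{13},T_{14},T_{23},T_{24},T_{1234}\}$; the only bases in this list are $B_2$ and $B_4$ themselves. Strong symmetric exchange therefore just hands you back a vertex already on the square, and the subdivision stalls. The paper does not attempt to subdivide: it chooses circuits $X_1,X_3,Y_1,Y_3\in\cC$ supported in $T_1,T_3,T_{234},T_{124}$, uses the non-basis conditions to force entries such as $X_1(e_1^*),X_1(e_4),X_3(e_2),X_3(e_3^*),\dots$ to vanish (so that each $|\underline{X_i}\cap\underline{Y_i}^*|=2$), and then two applications of~\ref{item:Ot2} together with a parity count on the sign exponents finish the computation directly.
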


\begin{proof}
  (i) If $B_1,B_2,B_3$ are bases of $\ul{M}$ with $|B_i\setminus B_j| = 2$ for all distinct $i,j \in [3]$, then there exist a transversal $T$ and distinct elements $e_1^*,e_2^*,e_3^* \in T$ such that $B_i = T \symdiff \skewpair{e_i} \ni e_i$ for each $i\in[3]$.
  For distinct $i,j \in [3]$, let $T_{ij} := T \symdiff \skewpair{e_i} \symdiff \skewpair{e_j}$ and $X_{ij} \in \mathcal{C}$ be the $F$-circuit with $\underline{X_{ij}} = C(B_i, e_j) = C(B_j, e_i) \subseteq T_{ij}$.
  Then
  % \begin{align*}
  %   \gamma(B_i,B_j) = (-1)^{m_{ij}} \frac{X_{ij}(e_i)}{X_{ij}(e_j)}
  % \end{align*}
  \begin{align*}
    \gamma(B_i,B_j) = (-1)^{m_{ij}} \frac{\conj{X}_{ij}(e_i)}{\conj{X}_{ij}(e_j)}, 
  \end{align*}
  where $m_{ij} := |T_{ij} \cap \ocinterval{\nostar{e_i}}{\nostar{e_j}}|$.
  Let $Y$ be the $F$-circuit with $\underline{Y} = C(B_1, e_1^*) \subseteq T$.
  Then $\underline{Y} = C(B_2, e_2^*)= C(B_3, e_3^*)$ and thus $\{e_1^*,e_2^*,e_3^*\} \subseteq \underline{Y}$.
  % By orthogonality, for distinct $i,j \in [3]$, $X_{ij}(e_j)Y(e_j^*) + X_{ij}(e_k)Y(e_k^*) = \langle X_{ij}, Y \rangle \in N_F$.
  By~\ref{item:Ot2}, if $i \ne j \in [3]$, we have that $\conj{X}_{ij}(e_i)\conj{Y}(e_i^*) + \conj{X}_{ij}(e_j)\conj{Y}(e_j^*) = \langle X_{ij}, Y^\ast \rangle \in N_F$.
  Thus,
  % \begin{align*}
  %   \frac{X_{12}(e_1)}{X_{12}(e_2)} \frac{X_{23}(e_2)}{X_{23}(e_3)} \frac{X_{31}(e_3)}{X_{31}(e_1)}
  %   =
  %   \left( - \frac{Y(e_2^*)}{Y(e_1^*)} \right) \left( - \frac{Y(e_3^*)}{Y(e_2^*)} \right) \left( - \frac{Y(e_1^*)}{Y(e_3^*)} \right)
  %   = -1.
  % \end{align*}
  \begin{align*}
    \frac{\conj{X}_{12}(e_1)}{\conj{X}_{12}(e_2)} \frac{\conj{X}_{23}(e_2)}{\conj{X}_{23}(e_3)} \frac{\conj{X}_{31}(e_3)}{\conj{X}_{31}(e_1)}
    =
    \left( - \frac{\conj{Y}(e_2^*)}{\conj{Y}(e_1^*)} \right) \left( - \frac{\conj{Y}(e_3^*)}{\conj{Y}(e_2^*)} \right) \left( - \frac{\conj{Y}(e_1^*)}{\conj{Y}(e_3^*)} \right)
    = -1.
  \end{align*}
  By relabelling, we may assume that $\nostar{e_1}<\nostar{e_2}<\nostar{e_3}$.
  Then
    $(T_{12} \cap \ocinterval{\nostar{e_1}}{\nostar{e_2}}) \symdiff (T_{23} \cap \ocinterval{\nostar{e_2}}{\nostar{e_3}}) \symdiff (T_{13} \cap \ocinterval{\nostar{e_1}}{\nostar{e_3}}) = \{\nostar{e_{2}}\}$. 
  Hence $m_{12} + m_{23} + m_{13}$ is odd and
  therefore $\gamma(B_1,B_2) \gamma(B_2,B_3) \gamma(B_3,B_1) = 1$.

  (ii)
  By (i) and Lemma~\ref{lem:2cycle}, we may assume that the directed cycle $B_1, B_2, B_3, B_4, B_1$ is not generated by directed cycles of length $3$.
  Then $|B_i\setminus B_{i+1}| = 2$ and $|B_i \setminus B_{i+2}| = 4$ for all $i\in[4]$, where all subscripts are read modulo~$4$.
  Thus, there exist a transversal $T$ and distinct elements $e_1,e_2,e_3,e_4 \in T$ such that $\text{$B_1 = T_{12}$, $B_2 = T_{13}$, $B_3 = T_{34}$, and $B_4 = T_{24}$}$, 
  where $T_{I} = T \symdiff \bigcup_{i\in I }\skewpair{e_i}$ for all $I \subseteq [4]$.
  % By the assumption that the directed cycle $B_1, B_2, B_3, B_4, B_1$ is not generated by directed cycles of length $3$, 
%   Moreover, there is no basis $B$ of $\ul{M}$ such that sequences $B, B_{i}, B_{i+1}, B$ with $1\leq i\leq 4$ are directed cycles of length $3$ in~$\Gamma_{\ul{M}}$.
  In addition, none of $T$, $T_{14}$, $T_{23}$, and $T_{1234}$ is a basis. 
  %, since the directed cycle $B_1,B_2,B_3,B_4,B_1$ is not generated by directed cycles of length $3$.
  
  Let $X_1, X_3, Y_3, Y_1 \in \mathcal{C}$ be $F$-circuits such that $\underline{X_i} \subseteq T_i$ and $\underline{Y_i} \subseteq T_{2j4}$ for each $\{i,j\} = \{1,3\}$.
  Then
  \begin{align*}
    \gamma(T_{12},T_{13}) = (-1)^{m_1} \frac{\conj{X}_1(e_3)}{\conj{X}_1(e_2)}, \\
    \gamma(T_{13},T_{34}) = (-1)^{m_3} \frac{\conj{X}_3(e_4)}{\conj{X}_3(e_1)}, \\
    \gamma(T_{12},T_{24}) = (-1)^{n_3} \frac{\conj{Y}_3(e_1^*)}{\conj{Y}_3(e_4^*)}, \\
    \gamma(T_{24},T_{34}) = (-1)^{n_1} \frac{\conj{Y}_1(e_2^*)}{\conj{Y}_1(e_3^*)},
  \end{align*}
  where $m_1 := |T_1 \cap \ocinterval{\nostar{e_2}}{\nostar{e_3}}|$, $m_3 := |T_3 \cap \ocinterval{\nostar{e_1}}{\nostar{e_4}}|$, $n_3 := |T_{124} \cap \ocinterval{\nostar{e_1}}{\nostar{e_4}}|$, and $n_1 := |T_{234} \cap \ocinterval{\nostar{e_2}}{\nostar{e_3}}|$.
  Note that $m_1+m_3+n_3+n_1$ is even, since $(T_1 \cap \ocinterval{\nostar{e_2}}{\nostar{e_3}}) \symdiff (T_{234} \cap \ocinterval{ \nostar{e_2}}{\nostar{e_3}})
    =\{ \nostar{e_1}, \nostar{e_2}, \nostar{e_3}, \nostar{e_4}\} \cap \ocinterval{\nostar{e_2}}{\nostar{e_3}}$,
and $(T_3 \cap \ocinterval{ \nostar{e_1}}{\nostar{e_4}}) \symdiff (T_{124} \cap \ocinterval{ \nostar{e_1}}{\nostar{e_4}}) = \{ \nostar{e_1},\nostar{e_2},\nostar{e_3},\nostar{e_4}\} \cap \ocinterval{\nostar{e_1}}{\nostar{e_4}}$. 
  
  Suppose for contradiction that $X_1(e_1^*) \neq 0$, i.e., $e_1^* \in \underline{X_1}$.
  Notice that $\underline{X_1} = C(B_1,e_2)$ is the unique circuit of $\ul{M}$ contained in $T_1$, and the subtransversal $T_1 \setminus \{e_1^*\}$ is independent.
  Since $T_1$ is not a basis, $T = (T_1 \setminus \{e_1^*\}) \cup \{e_1\}$ is a basis, a contradiction. 
%   which contradicts that $T$ is not a basis.
  Thus, $X_1(e_1^*) = 0$.
  Similarly, one can check that all of $X_1(e_4)$, $X_3(e_2)$, $X_3(e_3^*)$, $Y_3(e_2^*)$, $Y_3(e_3)$, $Y_1(e_1)$, and $Y_1(e_4^*)$ are zero, because none of $T$, $T_{14}$, $T_{23}$, and $T_{1234}$ is a basis of $\ul{M}$.
  Therefore, by~\ref{item:Ot2}, we have
  \begin{align*}
    \conj{X}_1(e_2)\conj{Y}_1(e_2^*) + \conj{X}_1(e_3)\conj{Y}_1(e_3^*) = \langle X_1, Y_1^\ast \rangle \in N_F, 
  \end{align*}
  and
  \begin{align*}
    \conj{X}_3(e_1)\conj{X}_3(e_1^*) + \conj{X}_3(e_4)\conj{Y}_3(e_4^*) = \langle X_3, Y_3^\ast \rangle \in N_F.
  \end{align*}
  Therefore, 
  \begin{align*}
    \gamma(T_{12},T_{13}) \gamma(T_{13},T_{34})
    =
    (-1)^{m_1+m_3}
    \frac{\conj{X}_1(e_3)}{\conj{X}_1(e_2)}
    \frac{\conj{X}_3(e_4)}{\conj{X}_3(e_1)} 
    =
    (-1)^{n_1+n_3}
    \frac{\conj{Y}_1(e_2^*)}{\conj{Y}_1(e_3^*)}
    \frac{\conj{Y}_3(e_1^*)}{\conj{Y}_3(e_4^*)}
    =
    \gamma(T_{24},T_{34}) \gamma(T_{12},T_{24}).
  \end{align*}
  By Lemma~\ref{lem:2cycle}, we obtain that 
  \begin{align*}
    &\gamma(B_1,B_2) \gamma(B_2,B_3) \gamma(B_3,B_4) \gamma(B_4,B_1) \\
    &=
    \gamma(T_{12},T_{13}) \gamma(T_{13},T_{34}) \gamma(T_{24},T_{34})^{-1}\gamma(T_{12},T_{24})^{-1}
    = 1. \qedhere
  \end{align*}
\end{proof}

\begin{corollary}
    $\varphi_\mathcal{C}$ is well-defined.
\end{corollary}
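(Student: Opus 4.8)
The plan is to show that the recursive prescription for $\varphi_\mathcal{C}$ is independent of the choices it involves: the choice, for each basis $B \neq B_0$, of a ``predecessor'' $B'$ with $|B \symdiff B'| = 4$ and $|B \symdiff B_0| = |B' \symdiff B_0| + 4$, together with the choices entering each $\gamma(B', B)$ (which are harmless, since we have already shown $\gamma$ is well-defined on adjacent pairs of bases). First I would reformulate the construction in terms of the basis graph $\Gamma_{\ul{M}}$: for a directed walk $W$ in $\Gamma_{\ul{M}}$ with consecutive vertices $A_0, A_1, \dots, A_\ell$ (so $|A_{i-1} \symdiff A_i| = 4$ throughout), set $g(W) := \prod_{i=1}^{\ell} \gamma(A_{i-1}, A_i)$; this is legitimate because $\gamma$ is defined on adjacent pairs. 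The symmetric exchange axiom shows $\Gamma_{\ul{M}}$ is connected, with $d_{\Gamma_{\ul{M}}}(B, B_0) = \tfrac14 |B \symdiff B_0|$: when $B \neq B_0$, the nonempty set $B \symdiff B_0$ is a disjoint union of divergences, and choosing one such divergence $\{x_1, x_1^*\}$ the axiom produces $\{x_2, x_2^*\} \subseteq B \symdiff B_0$ with $B' := B \symdiff \{x_1, x_1^*, x_2, x_2^*\} \in \cB$ and $|B' \symdiff B_0| = |B \symdiff B_0| - 4$; iterating this also shows no two bases differ in exactly one divergence, so $|B_1 \symdiff B_2| \in 4\Z$ always. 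Unrolling clause~(ii) of the definition, $\varphi_\mathcal{C}(B)$ equals $g(P)$ for the geodesic path $P$ from $B_0$ to $B$ obtained by iterating that clause, and $\varphi_\mathcal{C}$ is $0$ on non-bases; so it suffices to prove that $g(P)$ depends only on the endpoints of $P$.

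The key step is to show that $g(W) = 1$ for every closed directed walk $W$ in $\Gamma_{\ul{M}}$. By Lemma~\ref{lem:2cycle}, $\gamma(A_2, A_1) = \gamma(A_1, A_2)^{-1}$ for adjacent $A_1, A_2$, so $g$ is multiplicative under concatenation and satisfies $g(-W) = g(W)^{-1}$; consequently $g$ of a directed cycle depends only on its cyclic sequence of vertices, and $g(P_1)\, g(P_2)^{-1} = g(P_1 \cdot (-P_2))$ for paths $P_1, P_2$ with common endpoints, so path-independence of $g$ follows from the claim about closed walks. A routine induction on length — split a closed walk at a repeated vertex into two strictly shorter closed walks, the base case being the trivial walk — reduces the claim to the case where $W$ is a directed cycle $C_0$. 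For such $C_0$, Theorem~\ref{thm:homotopy} supplies directed cycles $C_1, \dots, C_m$, each of length at most $4$, that generate $C_0$; by the definition of ``generate'', the multiset of directed edges appearing in $-C_0, C_1, \dots, C_m$ is balanced (each ordered pair $(u,v)$ occurs as often as $(v,u)$), so pairing off the factors $\gamma(u,v)\,\gamma(v,u) = 1$ gives $g(-C_0)\prod_{j=1}^{m} g(C_j) = 1$, i.e. $g(C_0) = \prod_j g(C_j)$. Finally $g(C_j) = 1$ for each $j$: for a cycle of length $2$ this is Lemma~\ref{lem:2cycle}, and for length $3$ or $4$ it is exactly Lemma~\ref{lem:34cycle}(i) and~(ii). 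Therefore $g(C_0) = 1$, and the corollary follows.

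I do not expect a genuine obstacle here: the substantive content is already in Lemma~\ref{lem:34cycle} (just established) and in Wenzel's homotopy theorem (Theorem~\ref{thm:homotopy}). The only delicate point is the bookkeeping connecting the recursive, geodesic-based definition of $\varphi_\mathcal{C}$ with the ``cycle'' statement: checking connectivity of $\Gamma_{\ul{M}}$ and the divisibility $|B_1 \symdiff B_2| \in 4\Z$ from the exchange axiom, that a closed walk reduces (for the purpose of computing $g$) to directed cycles, and that ``$C_0$ is generated by $C_1, \dots, C_m$'' is compatible with $g$ because of Lemma~\ref{lem:2cycle}. Each of these is a one- or two-line verification to be filled in for completeness.
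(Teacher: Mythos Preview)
Your proposal is correct and follows essentially the same route as the paper: reduce well-definedness of $\varphi_{\mathcal C}$ to path-independence of the product $\prod_i \gamma(B_i,B_{i+1})$ in the basis graph $\Gamma_{\ul M}$, and then deduce this from Lemma~\ref{lem:2cycle}, Lemma~\ref{lem:34cycle}, and Wenzel's homotopy theorem (Theorem~\ref{thm:homotopy}). The paper's proof is a two-line sketch invoking exactly these three ingredients; you have simply spelled out the bookkeeping (connectivity of $\Gamma_{\ul M}$, the parity $|B_1\symdiff B_2|\in 4\Z$, reduction from closed walks to directed cycles, and compatibility of ``generated by'' with $g$) that the paper leaves implicit.
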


\begin{proof}
    It suffices to show that for arbitrary paths $P = B_0 B_1 \dots B_k$ and $P' = B_0' B_1' \dots B_\ell'$ in $\Gamma_{\ul{M}}$, if $B_0 = B_0'$ and $B_k = B_\ell'$, then
    \begin{align*}
        \prod_{i=0}^{k-1} \gamma(B_{i},B_{i+1})
        =
        \prod_{j=0}^{\ell-1} \gamma(B_{j},B_{j+1}).
        % \tag{$*$}
        % \label{eq:twopaths}
    \end{align*}
    This is straightforward from Lemmas~\ref{lem:2cycle},~\ref{lem:34cycle}, and Theorem~\ref{thm:homotopy}.
    % Now we claim that for every cycle $B_0 B_1 \dots B_k B_0$ in $\Gamma_{\underline{M}_\mathcal{C}}$,
    % \begin{align*}
    %     \prod_{i=0}^k \gamma(B_i,B_{i+1}) = 1
    % \end{align*}
    % where $B_{k+1} := B_0$.
    % If the claim holds, then~\eqref{eq:twopaths} also holds and therefore $\varphi_\mathcal{C}$ is well-defined.
    % The claim is straightforward from Lemmas~\ref{lem:2cycle},~\ref{lem:34cycle}, and Theorem~\ref{thm:homotopy}.
\end{proof}

\begin{theorem}\label{thm:O2W}
If $\mathcal{C}$ satisfies the orthogonality~\ref{item:O}, then $\varphi_\mathcal{C}$ is a strong Wick function on $E$ with coefficients in $F$.
\end{theorem}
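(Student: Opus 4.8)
The plan is to verify the two conditions defining a strong Wick function for $\varphi_\mathcal{C}$. Condition~\ref{item:W1} is immediate: $\varphi_\mathcal{C}(B_0)=1\neq 0$, and since every $\gamma(B',B)$ lies in $F^\times$, the support of $\varphi_\mathcal{C}$ is exactly $\cB(\ul M)$, so $\ul{M}_{\varphi_\mathcal{C}}=\ul M$. The real content is condition~\ref{item:W2}: given $T_1,T_2\in\cT_n$ with $(T_1\symdiff T_2)\cap[n]=\{x_1<\cdots<x_m\}$, one must show that
\[
  S:=\sum_{k=1}^m(-1)^k\,\varphi_\mathcal{C}(T_1\symdiff\skewpair{x_k})\,\varphi_\mathcal{C}(T_2\symdiff\skewpair{x_k})\in N_F.
\]
If every summand of $S$ vanishes, then $S=0\in N_F$ and we are done; otherwise fix an index $j$ for which $B_1:=T_1\symdiff\skewpair{x_j}$ and $B_2:=T_2\symdiff\skewpair{x_j}$ are both bases of $\ul M$. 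The strategy is to rewrite $S$ as a unit times an inner product $\langle X_1,X_2^\ast\rangle$ of two $F$-circuits of $\mathcal{C}$, and then invoke~\ref{item:O}.

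First I would set up the two $F$-circuits. Let $a_k$ be the element of $\skewpair{x_k}$ lying in $T_1$, so that $a_k^\ast\in T_2$ (as $x_k\in T_1\symdiff T_2$), and in particular $a_j\notin B_1$ and $a_j^\ast\notin B_2$. Choose $X_1\in\mathcal{C}$ with support $C(B_1,a_j)$ normalized so that $\conj{X_1}(a_j)=1$, and $X_2\in\mathcal{C}$ with support $C(B_2,a_j^\ast)$ normalized so that $\conj{X_2}(a_j^\ast)=1$. Writing $T_1\symdiff\skewpair{x_k}=B_1\symdiff\skewpair{a_j}\symdiff\skewpair{a_k}$ and $T_2\symdiff\skewpair{x_k}=B_2\symdiff\skewpair{a_j^\ast}\symdiff\skewpair{a_k^\ast}$, Proposition~\ref{prop:fundamental-circuit} shows that $T_1\symdiff\skewpair{x_k}$ is a basis exactly when $a_k\in\underline{X_1}$, and $T_2\symdiff\skewpair{x_k}$ is a basis exactly when $a_k^\ast\in\underline{X_2}$. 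Moreover, since $B_1\symdiff B_2=T_1\symdiff T_2$ and $C(B_1,a_j)$ does not contain $a_j^\ast$, every element of $\underline{X_1}\cap\underline{X_2}^\ast$ either equals $a_j$ or lies in $B_1\setminus B_2$, hence is one of $a_1,\dots,a_m$; therefore
\[
  \langle X_1,X_2^\ast\rangle=\sum_{i\in E}\conj{X_1}(i)\conj{X_2}(i^\ast)=\sum_{k=1}^m\conj{X_1}(a_k)\conj{X_2}(a_k^\ast),
\]
and its nonzero terms sit over exactly the indices $k$ for which the $k$-th summand of $S$ is nonzero.

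The remaining work is the sign bookkeeping that links $\varphi_\mathcal{C}$ to these $F$-circuits. Unwinding the definition of $\gamma$ and using that $\varphi_\mathcal{C}$ is well-defined, one gets, for a basis $B$, an element $e\notin B$, the transversal $T:=B\symdiff\skewpair{e}$, and the $F$-circuit $X\in\mathcal{C}$ with support $C(B,e)$ normalized by $\conj{X}(e)=1$, that $\varphi_\mathcal{C}(T\symdiff\skewpair{c})=(-1)^{g_T(c)+g_T(e)}\varphi_\mathcal{C}(B)\conj{X}(c)$ for every $c\in C(B,e)$, where $g_T(i):=|T\cap\{1,2,\dots,\nostar{i}\}|$. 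Applying this with $(B,e)=(B_1,a_j)$, so that $T=T_1$, and with $(B,e)=(B_2,a_j^\ast)$, so that $T=T_2$, gives for every $k$
\begin{align*}
  \varphi_\mathcal{C}(T_1\symdiff\skewpair{x_k})&=(-1)^{g_{T_1}(a_k)+g_{T_1}(a_j)}\,\varphi_\mathcal{C}(B_1)\,\conj{X_1}(a_k),\\
  \varphi_\mathcal{C}(T_2\symdiff\skewpair{x_k})&=(-1)^{g_{T_2}(a_k)+g_{T_2}(a_j)}\,\varphi_\mathcal{C}(B_2)\,\conj{X_2}(a_k^\ast),
\end{align*}
where both sides vanish when $a_k\notin\underline{X_1}$, resp. $a_k^\ast\notin\underline{X_2}$. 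Since $g_{T_1}(a_k)+g_{T_2}(a_k)\equiv|(T_1\symdiff T_2)\cap\{1,\dots,x_k\}|=k\pmod 2$, while $g_{T_1}(a_j)+g_{T_2}(a_j)\equiv j\pmod 2$ is independent of $k$, the $k$-th summand of $S$ equals $(-1)^{j}\,\varphi_\mathcal{C}(B_1)\,\varphi_\mathcal{C}(B_2)\,\conj{X_1}(a_k)\,\conj{X_2}(a_k^\ast)$, so summing over $k$ yields $S=(-1)^{j}\,\varphi_\mathcal{C}(B_1)\,\varphi_\mathcal{C}(B_2)\,\langle X_1,X_2^\ast\rangle$. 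By~\ref{item:O} we have $\langle X_1,X_2^\ast\rangle\in N_F$, and the prefactor is a unit, so closure of $N_F$ under multiplication by $F^\times$ (tract axiom (T4)) gives $S\in N_F$.

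The step I expect to be the main obstacle is this last sign bookkeeping: one must track the parities coming from the exponents $m^T_{e,f}$ in the definition of $\gamma$, from transporting between $T_i$ and $B_i$, and from the increasing-order convention in~\ref{item:W2}, and check that they combine to exactly the factor $(-1)^k$ needed to cancel the sign in the Wick sum, leaving a global unit. A subsidiary point requiring care is confirming that $\underline{X_1}\cap\underline{X_2}^\ast$ is concentrated on the divergences $\skewpair{x_1},\dots,\skewpair{x_m}$ and singles out precisely the nonzero Wick terms, which rests on Proposition~\ref{prop:fundamental-circuit} together with the identity $B_1\symdiff B_2=T_1\symdiff T_2$.
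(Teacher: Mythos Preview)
Your proof is correct and follows essentially the same approach as the paper's. Both arguments pick an index at which the two flipped transversals are bases, identify the corresponding fundamental $F$-circuits supported in $T_1$ and $T_2$, observe that their common support captures exactly the nonzero Wick terms, and then do the parity bookkeeping to exhibit the Wick sum as a unit times $\langle X_1,X_2^\ast\rangle$; your use of the auxiliary function $g_T$ in place of the paper's $m_i,n_i$ is a notational variation only.
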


\begin{proof}
  We only need to prove~\ref{item:W2}.
  Take $T_1,T_2 \in \mathcal{T}_n$ with $T_1 \cap T_2^* = \{e_1,\dots,e_a\}$, where $\nostar{e_1}<\dots<\nostar{e_a}$.
  If $T_1$ is a basis of $\ul{M}$, then $\varphi(T_1 \symdiff \skewpair{i}) = 0$ for all $i\in [n]$ and thus $\sum_{i=1}^a (-1)^i \varphi(T_1 \symdiff \skewpair{e_i}) \varphi(T_2 \symdiff \skewpair{e_i}) \in N_F$.
  Therefore, we may assume that $T_1$ is not a basis, and similarly we may assume that $T_2$ is not a basis.
  Then there exist $X_1, X_2 \in \mathcal{C}$ such that $\underline{X_i} \subseteq T_i$ for $i = 1, 2$.
  Write $\underline{X_1} \cap \underline{X_2}^* = \{e_{\alpha_1}, \dots, e_{\alpha_b}\}$ with $\alpha_1 < \dots < \alpha_b$.
  For $i \in [a] \setminus \{\alpha_1,\dots,\alpha_b\}$, at least one of $T_1 \symdiff \skewpair{e_i}$ and $T_2 \symdiff \skewpair{e_i}$ is not a basis.
  Hence
  \begin{align*}
    \sum_{i=1}^a (-1)^i \varphi(T_1 \symdiff \skewpair{e_i}) \varphi(T_2 \symdiff \skewpair{e_i}) = \sum_{i=1}^b (-1)^{\alpha_i} \varphi(T_1 \symdiff \skewpair{e_{\alpha_i}}) \varphi(T_2 \symdiff \skewpair{e_{\alpha_i}}).
  \end{align*}
  Therefore, we may assume that $b \geq 1$.
  We can also assume that there exists $c\in [b]$ such that both $B_1 := T_1 \symdiff \skewpair{e_{\alpha_c}}$ and $B_2 := T_2 \symdiff \skewpair{e_{\alpha_c}}$ are bases.
%   , since otherwise $\sum_{i=1}^b (-1)^{\alpha_i} \varphi(T_1 \symdiff \skewpair{e_{\alpha_i}}) \varphi(T_2 \symdiff \skewpair{e_{\alpha_i}}) = 0 \in N_F$.
  Then $\underline{X_1} = C(T_1 \symdiff \skewpair{e_{\alpha_c}}, e_{\alpha_c})$ and $\underline{X_2} = C(T_2 \symdiff \skewpair{e_{\alpha_c}}, e_{\alpha_c}^*)$, and therefore $T_j \symdiff \skewpair{e_{\alpha_i}}$ is a basis for each $i\in[b]$ and $j = 1, 2$.
  
  For each $i\in[b]$, let $m_i := |T_1 \cap \ocinterval{\nostar{e_{\alpha_c}}}{\nostar{e_{\alpha_i}}}|$ and $n_i := |T_2 \cap \ocinterval{\nostar{e_{\alpha_c}}}{\nostar{e_{\alpha_i}}}|$.
  By the definition of $\varphi_\mathcal{C}$, we have 
  \begin{align*}
      \frac{\conj{X}_1(e_{\alpha_i})}{\conj{X}_1(e_{\alpha_c})}
      =
      (-1)^{m_i} \frac{\varphi(T_1\symdiff \skewpair{e_{\alpha_i}})}{\varphi(T_1\symdiff \skewpair{e_{\alpha_c}})}
      \quad\text{and}\quad
      \frac{\conj{X}_2(e_{\alpha_i}^*)}{\conj{X}_2(e_{\alpha_c}^*)}
      =
      (-1)^{n_i} \frac{\varphi(T_2\symdiff \skewpair{e_{\alpha_i}})}{\varphi(T_2\symdiff \skewpair{e_{\alpha_c}})}.
  \end{align*}
  Since $(T_1 \symdiff T_2) \cap \ocinterval{\nostar{e_{\alpha_c}}}{\nostar{e_{\alpha_i}}}$ equals $\{\nostar{e_{k}} : \alpha_c < k \leq \alpha_i\}$ if $c<i$ and $\{\nostar{e_{k}} : \alpha_i < k \leq \alpha_c\}$ otherwise, we have $m_i + n_i \equiv \alpha_i - \alpha_c \pmod{2}$.
  By the orthogonality relation~\ref{item:O}, we have 
  \begin{align*}
    \sum_{i=1}^b \conj{X}_1(e_{\alpha_i}) \conj{X}_2(e_{\alpha_i}^*) = \langle X_1, X_2^\ast \rangle \in N_F.
  \end{align*}
  Therefore,
  \begin{align*}
    % &\sum_{i=1}^a (-1)^i \varphi(T_1 \symdiff \skewpair{e_i}) \varphi(T_2 \symdiff \skewpair{e_i}) \\
    % &=
    &\sum_{i=1}^b (-1)^{\alpha_i} \varphi(T_1 \symdiff \skewpair{e_{\alpha_i}}) \varphi(T_2 \symdiff \skewpair{e_{\alpha_i}})\\
    &=
    (-1)^{\alpha_c} \sum_{i=1}^b (-1)^{m_i+n_i} \varphi(T_1 \symdiff \skewpair{e_{\alpha_i}}) \varphi(T_1 \symdiff \skewpair{e_{\alpha_i}}) \\
    &=
    (-1)^{\alpha_c}
    \frac{\varphi(T_1 \symdiff \skewpair{e_{\alpha_c}})}{\conj{X}_1(e_{\alpha_c})} \frac{\varphi(T_2 \symdiff \skewpair{e_{\alpha_c}})}{\conj{X}_2(e_{\alpha_c}^*)}
    \sum_{i=1}^b \conj{X}_1(e_{\alpha_i}) \conj{X}_2(e_{\alpha_i}^*)
    \in N_F. \qedhere
  \end{align*} 
\end{proof}

\begin{theorem}\label{thm:O2W-weak}
  If $\cC$ satisfies~\ref{item:O'}, then $\varphi_\cC$ is a moderately weak Wick function on $E$ with coefficients in $F$.
\end{theorem}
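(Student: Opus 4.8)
The plan is to reuse the proof of Theorem~\ref{thm:O2W} essentially verbatim, replacing its single appeal to the strong orthogonality~\ref{item:O} by an appeal to the weak orthogonality~\ref{item:O'}, and then checking that the hypothesis of~\ref{item:W2'} is precisely what makes~\ref{item:O'} applicable at the point where it is needed. As a preliminary, I would note that~\ref{item:O'} implies the $2$-term orthogonality~\ref{item:Ot2} (apply it to $X,Y\in\cC$ with $|\ul{X}\cap\ul{Y}^\ast|=2$), so $\varphi_\cC$ is well-defined by the preceding corollary; since each factor $\gamma$ lies in $F^\times$ and $\varphi_\cC(B_0)=1$, the support of $\varphi_\cC$ is exactly $\cB(\ul{M})$, and in particular~\ref{item:W1} holds. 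It then remains to verify~\ref{item:W2'}.

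So fix $T_1,T_2\in\cT_n$ with $(T_1\symdiff T_2)\cap[n]=\{x_1<\dots<x_m\}$, and suppose at most four of the products $\varphi_\cC(T_1\symdiff\skewpair{x_k})\varphi_\cC(T_2\symdiff\skewpair{x_k})$ are nonzero. If $T_1$ or $T_2$ is a basis of $\ul{M}$, each such product vanishes and the sum is $0\in N_F$; otherwise I would choose circuits $\ul{X_1}\subseteq T_1$ and $\ul{X_2}\subseteq T_2$ of $\ul{M}$ with $X_1,X_2\in\cC$, and follow the proof of Theorem~\ref{thm:O2W} step by step. The terms indexed outside $\ul{X_1}\cap\ul{X_2}^\ast=\{e_{\alpha_1},\dots,e_{\alpha_b}\}$ vanish; if no index $c$ makes both $T_1\symdiff\skewpair{e_{\alpha_c}}$ and $T_2\symdiff\skewpair{e_{\alpha_c}}$ bases, the whole sum is $0\in N_F$; and if such a $c$ exists, then, exactly as there, $\ul{X_1}=C(T_1\symdiff\skewpair{e_{\alpha_c}},e_{\alpha_c})$ and $\ul{X_2}=C(T_2\symdiff\skewpair{e_{\alpha_c}},e_{\alpha_c}^\ast)$ by Proposition~\ref{prop:fundamental-circuit}, hence $T_j\symdiff\skewpair{e_{\alpha_i}}$ is a basis for every $i\in[b]$ and $j\in\{1,2\}$, so all $b$ surviving products are nonzero. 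In this last case the number of nonzero products equals $b=|\ul{X_1}\cap\ul{X_2}^\ast|$, so the hypothesis forces $b\leq 4$; therefore~\ref{item:O'} applies and yields $\langle X_1,X_2^\ast\rangle\in N_F$. From here the final computation in the proof of Theorem~\ref{thm:O2W}, which invoked~\ref{item:O} only through the conclusion $\langle X_1,X_2^\ast\rangle\in N_F$, carries over word for word and gives $\sum_{k=1}^m(-1)^k\varphi_\cC(T_1\symdiff\skewpair{x_k})\varphi_\cC(T_2\symdiff\skewpair{x_k})\in N_F$.

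The only point genuinely new relative to the strong case is the counting step: one must be sure that the products surviving the reduction are exactly the $b$ products indexed by $\ul{X_1}\cap\ul{X_2}^\ast$, so that ``at most four nonzero products'' translates into precisely the inequality $|\ul{X_1}\cap\ul{X_2}^\ast|\leq 4$ required by~\ref{item:O'}. This is immediate once the proof of Theorem~\ref{thm:O2W} is in hand, so I do not anticipate a real obstacle; everything else is a faithful transcription of that argument.
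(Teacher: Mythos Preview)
Your proposal is correct and follows exactly the same approach as the paper, whose proof reads in its entirety: ``It yields if we replace~\ref{item:O} with~\ref{item:O'} in the proof of Theorem~\ref{thm:O2W}.'' You have simply made explicit the one implicit point in that sentence, namely that when a $c$ exists making both $T_j\symdiff\skewpair{e_{\alpha_c}}$ bases, the nonzero products are precisely the $b$ products indexed by $\ul{X_1}\cap\ul{X_2}^\ast$, so the hypothesis of~\ref{item:W2'} forces $b\le 4$ and~\ref{item:O'} applies.
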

\begin{proof}
  It yields if we replace~\ref{item:O} with~\ref{item:O'} in the proof of Theorem~\ref{thm:O2W}.
\end{proof}

\subsection{Weak Wick functions and weak circuit sets}\label{sec:weak Wick functions and circuit sets}

In this section, we prove the equivalence between the weak Wick functions and the weak circuit sets, using the constructions in Sections~\ref{sec:W2O} and~\ref{sec:O2W}.

% Now we prove Theorem~\ref{thm:main-weak2} and as a corollary, we deduce Theorem~\ref{thm:weak orthogonal signatures and circuit sets}.

\begin{theorem}\label{prop:weak circuit set to weak Wick function}
  Let $\cC$ be a weak $F$-circuit set of an orthogonal matroid. Then $\varphi_\cC$ is a weak Wick $F$-function.
\end{theorem}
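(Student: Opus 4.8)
The plan is to reuse the computation from the proof of Theorem~\ref{thm:O2W}, carried out with only four transversal flips at a time, which is exactly what the weak Wick axiom~\ref{item:W2''} requires. Since a weak $F$-circuit set is in particular an $F$-signature satisfying the $2$-term orthogonality~\ref{item:Ot2}, the construction of Section~\ref{sec:O2W} applies and produces a well-defined function $\varphi_\cC\colon\cT_n\to F$ whose support is the set of bases of the orthogonal matroid $\ul M$ underlying $\cC$. As $\varphi_\cC(B_0)=1\neq 0$, the axiom~\ref{item:W1} holds and the support condition in the definition of a weak Wick function is satisfied, so it remains to verify~\ref{item:W2''}.

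For this, fix $T_1,T_2\in\cT_n$ with $(T_1\symdiff T_2)\cap[n]=\{x_1<x_2<x_3<x_4\}$ and write $T_1\cap T_2^\ast=\{e_1,\dots,e_4\}$ with $\nostar{e_k}=x_k$. If $T_1$ or $T_2$ is a basis then no $T_i\symdiff\skewpair{e_k}$ is a basis --- two bases cannot differ by a single divergence --- so every summand vanishes; otherwise each $T_i$ is dependent and I pick $X_i\in\cC$ with $\underline{X_i}\subseteq T_i$. Running the bookkeeping of the proof of Theorem~\ref{thm:O2W} with $a=4$: the summands with $e_k\notin\underline{X_1}\cap\underline{X_2}^\ast$ drop out, because one of $T_1\symdiff\skewpair{e_k}$, $T_2\symdiff\skewpair{e_k}$ still contains $\underline{X_1}$, respectively $\underline{X_2}$, and hence is not a basis; if there is no index $c$ with both $T_1\symdiff\skewpair{e_{\alpha_c}}$ and $T_2\symdiff\skewpair{e_{\alpha_c}}$ bases then the remaining sum is $0$, so I may assume such a $c$ exists, and then Proposition~\ref{prop:fundamental-circuit} identifies $\underline{X_1}$ and $\underline{X_2}$ with the fundamental circuits $C(T_1\symdiff\skewpair{e_{\alpha_c}},e_{\alpha_c})$ and $C(T_2\symdiff\skewpair{e_{\alpha_c}},e_{\alpha_c}^\ast)$, and gives that $T_j\symdiff\skewpair{e_{\alpha_i}}$ is a basis for all the surviving indices $i$ and $j\in\{1,2\}$. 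Feeding in the defining relations for $\varphi_\cC$ and the parity identity $m_i+n_i\equiv\alpha_i-\alpha_c\pmod{2}$ then rewrites the four-term sum as a nonzero scalar multiple of $\sum_i\conj{X_1}(e_{\alpha_i})\conj{X_2}(e_{\alpha_i}^\ast)=\langle X_1,X_2^\ast\rangle$, in which $|\underline{X_1}\cap\underline{X_2}^\ast|\le 4$.

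Hence the whole statement reduces to showing $\langle X_1,X_2^\ast\rangle\in N_F$ whenever $|\underline{X_1}\cap\underline{X_2}^\ast|\le 4$, i.e.\ to the $4$-term orthogonality~\ref{item:O'}. Since $\cC$ is a weak $F$-circuit set, Theorem~\ref{thm:weak orthogonal signatures and circuit sets} guarantees that $\cC$ is a weak orthogonal $F$-signature, so~\ref{item:O'} holds and~\ref{item:W2''} follows; equivalently, once~\ref{item:O'} is available, Theorem~\ref{thm:O2W-weak} directly yields that $\varphi_\cC$ is a moderately weak --- hence weak --- Wick $F$-function. The only genuinely substantive ingredient is this implication ``weak $F$-circuit set $\Rightarrow$ $4$-term orthogonality~\ref{item:O'}'': it is the content of Theorem~\ref{thm:weak orthogonal signatures and circuit sets}, it forces one to compare fundamental circuits taken with respect to different bases using the linear-span axioms~\ref{item:L1}--\ref{item:L2} and the $2$-term orthogonality~\ref{item:Ot2}, and it is the step I expect to be the main obstacle; were one to want a self-contained proof here, that argument would have to be inlined.
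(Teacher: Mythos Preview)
Your reduction of~\ref{item:W2''} to the $4$-term orthogonality~\ref{item:O'} is correct --- that is exactly the content of Theorem~\ref{thm:O2W-weak} --- but the proof is circular. You justify~\ref{item:O'} by invoking Theorem~\ref{thm:weak orthogonal signatures and circuit sets}; however, in the logical structure of the paper that theorem is a \emph{corollary} of Theorems~\ref{thm:main-weak1} and~\ref{thm:main-weak2}, and Theorem~\ref{thm:main-weak2} is in turn deduced from Theorem~\ref{prop:weak circuit set to weak Wick function}, the very statement you are trying to prove. So Theorem~\ref{thm:weak orthogonal signatures and circuit sets} is not available at this point, and the argument collapses into a loop. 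Your closing remark that ``that argument would have to be inlined'' is exactly right --- but it is not optional here; it \emph{is} the proof.

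The paper's proof avoids this by never passing through~\ref{item:O'}. Instead, it fixes $T_1,T_2$ with $|T_1\setminus T_2|=4$, assumes some index $k$ makes both $T_j\symdiff\skewpair{e_k}$ bases, and then does a case analysis on how many of the remaining three products $\theta_i=\varphi(T_1\symdiff\skewpair{e_i})\varphi(T_2\symdiff\skewpair{e_i})$ are nonzero. When exactly one is nonzero,~\ref{item:Ot2} suffices. When exactly two are nonzero, say $\theta_a,\theta_b\neq 0$ and $\theta_c=0$, one builds auxiliary $F$-circuits $Z_a,Z_b$ from the basis $T_1\symdiff\skewpair{e_k}$, checks that $\underline{Z_a}\cup\underline{Z_b}$ is admissible, and applies~\ref{item:L1} to put $\tilde{Y}$ in the span of $\tilde{Z_a},\tilde{Z_b}$; combined with~\ref{item:Ot2} this forces the sum into $N_F$. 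When all three are nonzero, the same scheme with three auxiliary circuits $Z_a,Z_b,Z_c$ and~\ref{item:L2} finishes it. This is precisely the direct use of the circuit-set axioms~\ref{item:L1}--\ref{item:L2} that you anticipated would be the ``main obstacle''; it cannot be outsourced.
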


\begin{proof}
  Denote $\varphi:= \varphi_\cC$.
  Let $T_1$ be a transversal, and let $e_1,e_2,e_3,e_4 \in T_1$ be such that $\nostar{e_1} < \nostar{e_2} < \nostar{e_3} < \nostar{e_4}$.
  Let $T_2$ be another transversal such that $T_2 \setminus T_1 = \{e_1^*,e_2^*,e_3^*,e_4^*\}$.
  %It suffices to show that
  %\begin{align*}
    %\sum_{i\in[4]} (-1)^i 
%\varphi(T\symdiff\skewpair{e_i})\varphi(T'\symdiff\skewpair{e_i})
    %\in N_F.
  %\end{align*}
  We may assume that neither $T_1$ nor $T_2$ is a basis, and there is $k \in [4]$ such that both $T_1 \symdiff \skewpair{e_k}$ and $T_2 \symdiff \skewpair{e_k}$ are bases of $\ul{M}$.
  Let $X$ and $Y$ be $F$-circuits in $\cC$ such that $\underline{X} \subseteq T_1$ and $\underline{Y} \subseteq T_2$.
  Then for each $i \in [4]$, we have 
  \begin{align*}
    \frac{\varphi(T_1 \symdiff \skewpair{e_i})}{\varphi(T_1 \symdiff \skewpair{e_k})}
    =(-1)^{m_{i}}\frac{\tilde{X}(e_i)}{\tilde{X}(e_k)}
    \quad\text{and}\quad
    \frac{\varphi(T_2 \symdiff \skewpair{e_i})}{\varphi(T_2 \symdiff \skewpair{e_k})}
    =(-1)^{n_{i}}\frac{\tilde{Y}(e_i^*)}{\tilde{Y}(e_k^*)}, 
  \end{align*}
  where $m_{i} := m^{T_1}_{e_i,e_k} = |T_1 \cap \ocinterval{ \nostar{e_i}}{\nostar{e_k}}|$ and $n_{i} := m^{T_2}_{e_i,e_k} = |T_2 \cap \ocinterval{\nostar{e_i}}{\nostar{e_k}}|$.
  Note that $m_i+n_i \equiv k-i \pmod{2}$.
  Hence, we have 
  \begin{align*}
    \sum_{i\in[4]} (-1)^{i+k} 
    \frac{\varphi(T_1\symdiff\skewpair{e_i})\varphi(T_2\symdiff\skewpair{e_i})}{\varphi(T_1\symdiff\skewpair{e_k})\varphi(T_2\symdiff\skewpair{e_k})}
    =
    1+
    \sum_{i\in[4]\setminus\{k\}}
    \frac{\tilde{X}(e_i) \tilde{Y}(e_i^*)}{\tilde{X}(e_k) \tilde{Y}(e_k^*)}. 
    \tag{$*$}\label{eq:weak circuit to weak Wick}
  \end{align*}
  By the strong symmetric exchange axiom, at least one of $\theta_{i} := \varphi(T_1\symdiff\skewpair{e_i})\varphi(T_2\symdiff\skewpair{e_i})$ with $i\in[4]\setminus\{k\}$ is nonzero.
  If exactly one of $\theta_i$ is nonzero, then~\eqref{eq:weak circuit to weak Wick} is
  $1+\frac{\tilde{X}(e_i) \tilde{Y}(e_i^*)}{\tilde{X}(e_k) \tilde{Y}(e_k^*)} \in N_F$
  by~\ref{item:Ot2}.
  Therefore we may assume that at least two of $\theta_i$ are nonzero.
  We denote by $a$, $b$, $c$ the distinct elements of $[4]\setminus\{k\}$.

  Suppose that $\theta_a$ and $\theta_b$ are nonzero but $\theta_c = 0$.
  Then $\{e_a,e_b,e_k\}\subseteq \underline{X}$ and $\{e_a^*,e_b^*,e_k^*\}\subseteq \underline{Y}$.
  By interchanging roles of $T_1$ and $T_2$ if necessary, we may assume that $T_2 \symdiff \skewpair{e_c}$ is not a basis of $\ul{M}$.
  Then $e_c^* \notin \underline{Y}$.
  Because $\{e_a,e_b,e_k\}\subseteq \underline{X} \subseteq T_1$, neither $T_1 \symdiff\{e_a,e_a^*,e_k,e_k^*\}$ nor $T_1 \symdiff\{e_b,e_b^*,e_k,e_k^*\}$ is a basis. Hence $\cC$ has $F$-circuits $Z_a$ and $Z_b$ such that $\underline{Z_i} \subseteq T_1 \symdiff\{e_i,e_i^*,e_k,e_k^*\}$ for each $i \in \{a,b\}$.
  Because $T_1 \symdiff \skewpair{e_a}$, $T_1 \symdiff \skewpair{e_k}$, and $T_1 \symdiff \skewpair{e_b}$ are bases, we have $\{e_a^*,e_c,e_k^*\} \subseteq \underline{Z_a}$.
  Because $T_2 \symdiff \skewpair{e_c}$ is not a basis, $e_b \notin \underline{Z_a}$.
  Similarly, $\{e_b^*,e_c,e_k^*\} \subseteq \underline{Z_b}$ and $e_a \notin \underline{Z_b}$.
  Then $\underline{Z_a} \cup \underline{Z_b}$ is admissible, and
  by the circuit elimination axiom~\ref{item:C3}, $\ul{M}$ has a circuit $C$ contained in $(\underline{Z_a} \cup \underline{Z_b}) \setminus \{e_c\} \subseteq T_2 \setminus \{e_c\}$.
  Then $\underline{Y} = C$ and hence $\tilde{Y}$ is in the linear span of $\tilde{Z_a}$ and $\tilde{Z_b}$ by~\ref{item:L1}.
  Rescaling $Z_a$ and $Z_b$ if necessary, we may assume that $Z_a(e_a^*) = Y(e_a^*)$ and $Z_b(e_b^*) = Y(e_b^*)$. 
  Then $\tilde{Y}(e_k^*) - \tilde{Z_a}(e_k^*) - \tilde{Z_b}(e_k^*) \in N_F$.
  By~\ref{item:Ot2}, $\frac{\tilde{Z_i}(e_k^*)}{\tilde{Z_i}(e_{i}^*)} = -\frac{\tilde{X}(e_{i})}{\tilde{X}(e_k)}$ for $i\in\{a,b\}$ and thus~\eqref{eq:weak circuit to weak Wick} is equal to $1 - \frac{\tilde{Z_a}(e_k^*)}{\tilde{Y}(e_k^*)} - \frac{\tilde{Z_b}(e_k^*)}{\tilde{Y}(e_k^*)} \in N_F$.

  Now we consider the case where $\theta_a$, $\theta_b$, $\theta_c$ are all nonzero.
  Then there are $F$-circuits $Z_a$, $Z_b$, $Z_c$ in $\cC$ such that $\underline{Z_i} \subseteq T_1 \symdiff \{e_i,e_i^*,e_k,e_k^*\}$ with $i\in\{a,b,c\}$.
  It can be easily checked that $\{e_a,e_b,e_c,e_k^*\} \symdiff\skewpair{e_i} \subseteq \underline{Z_i}$ for every $i \in \{a,b,c\}$.
  Then by~\ref{item:L2}, $\tilde{Y}$ is in the linear span of $\tilde{Z_1}$, $\tilde{Z_2}$, and $\tilde{Z_3}$.
  Rescaling $Z_i$ if necessary, we may assume that $Z_i(e_i^*) = Y(e_i^*)$ for each $i\in\{a,b,c\}$. 
  Then $\tilde{Y}(e_k^*) - \tilde{Z_a}(e_k^*) - \tilde{Z_b}(e_k^*) - \tilde{Z_c}(e_k^*) \in N_F$.
  By~\ref{item:Ot2}, $\frac{\tilde{Z_i}(e_k^*)}{\tilde{Z_i}(e_{i}^*)} = -\frac{\tilde{X}(e_{i})}{\tilde{X}(e_k)}$ with $i=a,b,c$ and therefore~\eqref{eq:weak circuit to weak Wick} is equal to $1 - \frac{\tilde{Z_a}(e_k^*)}{\tilde{Y}(e_k^*)} - \frac{\tilde{Z_b}(e_k^*)}{\tilde{Y}(e_k^*)} - \frac{\tilde{Z_c}(e_k^*)}{\tilde{Y}(e_k^*)} \in N_F$. 
\end{proof}

To prove the converse of Theorem~\ref{prop:weak circuit set to weak Wick function}, we consider the 
following weaker replacement of orthogonality~\ref{item:Ot2}:
\begin{enumerate}[label=\rm(O$_2$)$'$]
  \item\label{item:Ot2'} Let $X,Y \in \cC$ be such that $\underline{X}$ and $\underline{Y}$ are fundamental circuits with respect to the same basis of $\ul{M}_\cC$, then $\langle X, Y^* \rangle \in N_F$.
\end{enumerate}

\begin{lemma}\label{lem:Ot2'}
  Let $\cC$ be an $F$-signature of an orthogonal matroid.
  If $\cC$ satisfies~\ref{item:Ot2'} and~\ref{item:L1}, then it satisfies~\ref{item:Ot2}.
\end{lemma}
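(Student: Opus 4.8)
The plan is to derive~\ref{item:Ot2} by reducing every instance of two‑term orthogonality to an instance of~\ref{item:Ot2'}, calling on~\ref{item:L1} exactly when no direct reduction is available.

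First I would set up the reduction. Fix $X,Y\in\cC$ with $|\underline X\cap\underline Y^\ast|=2$ and write $\underline X\cap\underline Y^\ast=\{e,f\}$. Since $\langle X,Y^\ast\rangle=\sum_{i\in E}\conj X(i)\conj Y(i^\ast)$ and $\conj X(i)\conj Y(i^\ast)\neq 0$ exactly when $i\in\underline X$ and $i^\ast\in\underline Y$, i.e.\ $i\in\underline X\cap\underline Y^\ast$, we get $\langle X,Y^\ast\rangle=\conj X(e)\conj Y(e^\ast)+\conj X(f)\conj Y(f^\ast)$, and admissibility of $\underline X,\underline Y$ together with $\{e,f\}=\underline X\cap\underline Y^\ast$ yields $e,f\in\underline X$, $e^\ast,f^\ast\in\underline Y$, $e\notin\underline Y$, $f^\ast\notin\underline X$. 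The key point is: \emph{if there is a basis $B$ of $\ul M$ with $\underline X=C(B,e)$ and $\underline Y=C(B,f^\ast)$}, then $\underline X,\underline Y$ are fundamental circuits with respect to the common basis $B$, so~\ref{item:Ot2'} applied to $X,Y$ gives $\langle X,Y^\ast\rangle\in N_F$ and we are done. I would also record at this stage that Lemma~\ref{lem:F-circuit unique up to} ($F$-circuits with equal support are scalar multiples) already holds assuming only~\ref{item:Ot2'}, because its proof only compares two fundamental circuits with respect to one basis.

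Next, the ``generic'' case. Put $I:=(\underline X\setminus\{e\})\cup(\underline Y\setminus\{f^\ast\})=(\underline X\cup\underline Y)\setminus\{e,f^\ast\}$. This set is admissible: a divergence $\{z,z^\ast\}\subseteq I$ must have one member in $\underline X\setminus\{e\}$ and the other in $\underline Y\setminus\{f^\ast\}$ (not both in $\underline X$, not both in $\underline Y$), forcing $z\in\underline X\cap\underline Y^\ast=\{e,f\}$, hence $z=f$ and $z^\ast=f^\ast\notin\underline Y\setminus\{f^\ast\}$, a contradiction. If moreover $I$ is independent, it lies in a basis $B$; since $f\in\underline X\setminus\{e\}\subseteq B$ and $e^\ast\in\underline Y\setminus\{f^\ast\}\subseteq B$ we get $e,f^\ast\notin B$, so $\underline X\subseteq B\cup\{e\}$ and $\underline Y\subseteq B\cup\{f^\ast\}$; by uniqueness of fundamental circuits (Proposition~\ref{prop:fundamental-circuit}) this forces $\underline X=C(B,e)$ and $\underline Y=C(B,f^\ast)$, and the observation above finishes. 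The symmetric set $(\underline X\setminus\{f\})\cup(\underline Y\setminus\{e^\ast\})$ works identically with the roles of $e,f$ exchanged.

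The remaining and main difficulty is when neither of these two sets is independent. Here I would argue by induction using~\ref{item:L1}. By Lemma~\ref{lem:extendcircuit} choose a transversal $T\supseteq\underline X$ with $T\symdiff\skewpair x$ a basis for every $x\in\underline X$, and set $B:=T\symdiff\skewpair e$; then $B$ is a basis with $\underline X=C(B,e)$, $f,e^\ast\in B$, $e,f^\ast\notin B$, and $f^\ast\in\underline Y\setminus B$. Induct on $k:=|\underline Y\setminus B|\ge 1$. If $k=1$ then $\underline Y\setminus B=\{f^\ast\}$, so $\underline Y=C(B,f^\ast)$ and we are done via~\ref{item:Ot2'} as above. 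If $k\ge 2$, pick $g\in(\underline Y\setminus B)\setminus\{f^\ast\}$; then $g\notin B$ lies both in $\underline Y$ and in the fundamental circuit $C(B,g)$, and the plan is to feed~\ref{item:L1} the basis $B$ and a suitable pair of fundamental circuits $C(B,c)$ ($c\in B^\ast$) containing $g$ (whose union we must check is admissible) to produce $Y'\in\cC$ with $\conj{Y'}(e^\ast)/\conj{Y'}(f^\ast)=\conj Y(e^\ast)/\conj Y(f^\ast)$, with $\underline X\cap\underline{Y'}^\ast=\{e,f\}$, and with $|\underline{Y'}\setminus B|<k$; the induction hypothesis applied to $X,Y'$ then gives $\conj X(e)\conj{Y'}(e^\ast)+\conj X(f)\conj{Y'}(f^\ast)\in N_F$, hence the same expression for $X,Y$. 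The step I expect to be the real obstacle is organizing this repeated use of~\ref{item:L1}: ensuring the supports one eliminates against form admissible unions (so~\ref{item:L1} applies), that each elimination strictly shrinks $|\underline Y\setminus B|$ toward $\underline Y$ being fundamental, and — via Lemma~\ref{lem:F-circuit unique up to} and the linear‑span clause of~\ref{item:L1} — that the ratio $\conj Y(e^\ast)/\conj Y(f^\ast)$ is preserved at each stage. The rest is bookkeeping with supports and signs.
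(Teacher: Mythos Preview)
Your setup and the ``generic'' case are fine, and your observation that Lemma~\ref{lem:F-circuit unique up to} only needs~\ref{item:Ot2'} is correct and is also used in the paper. The inductive step, however, has a real gap. You propose to feed~\ref{item:L1} ``a suitable pair of fundamental circuits $C(B,c)$ ($c\in B^\ast$) containing $g$'' for some $g\in(\underline Y\setminus B)\setminus\{f^\ast\}$. But for $g\notin B$, every fundamental circuit $C(B,c)$ with $c\ne g$ satisfies $C(B,c)\subseteq B\cup\{c\}\not\ni g$; the \emph{only} fundamental circuit with respect to $B$ that contains $g$ is $C(B,g)$ itself, so there is no second circuit to pair it with. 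More fundamentally, the output of~\ref{item:L1} lies in the linear span of its two fundamental-circuit inputs, neither of which can be $Y$ (since $\underline Y$ is not fundamental with respect to $B$ when $k\ge 2$). There is thus no mechanism here to produce a $Y'$ bearing any prescribed relation to $Y$; the plan to ``shrink $\underline Y$ toward $B$'' cannot be executed with~\ref{item:L1} alone.

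The paper avoids this by taking a minimal counterexample on $|\underline X\cup\underline Y|$ and shrinking $X$ rather than $Y$. In the bad case your admissible set $I$ (the paper uses the symmetric $J=(\underline X\cup\underline Y)\setminus\{e^\ast,f\}$) is dependent and contains a circuit $C$; by~\ref{item:C4} one gets $e,f^\ast\notin C$, and by~\ref{item:C2} there exist $x\in C\cap(\underline X\setminus\underline Y)$ and $y\in C\cap(\underline Y\setminus\underline X)$. Choosing $C$ to minimise $|\underline X\cup C|$ forces $(\underline X\symdiff\skewpair f)\cup(C\setminus\{y\})$ to be independent, hence contained in a basis $B_2$ with respect to which \emph{both} $\underline X$ and $C$ are fundamental. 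Now~\ref{item:L1} applied at $B_2$ to $X$ and an $F$-circuit $Z$ with support $C$, eliminating $x$, yields $X_2\in\cC$ with $\underline{X_2}\subseteq(\underline X\cup C)\setminus\{x\}\subseteq(\underline X\cup\underline Y)\setminus\{x\}$. Since $e,f\notin C$, the $Z$-contribution at $e,f$ vanishes, so $X_2(e)=\alpha X(e)$ and $X_2(f)=\alpha X(f)$ for a common $\alpha\in F^\times$; hence $\langle X_2,Y^\ast\rangle=\alpha\langle X,Y^\ast\rangle\notin N_F$, and $(X_2,Y)$ is a smaller counterexample. The key idea you were missing is to locate a \emph{new} basis making $\underline X$ fundamental together with an auxiliary circuit lying inside $\underline X\cup\underline Y$, and then use~\ref{item:L1} to modify $X$ away from $\underline Y$, rather than fixing $B$ and trying to manipulate $Y$.
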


\begin{proof}
  Suppose for contradiction that~\ref{item:Ot2} does not hold.
  Let $X$ and $Y$ be $F$-circuits in $\cC$ such that $|\underline{X}\cup \underline{Y}|$ is minimized subject to $|\underline{X} \cap \underline{Y}^*| = 2$ and $\langle X, Y^* \rangle \notin N_F$.
  Write $\underline{X} \cap \underline{Y}^* = \{e,f\}$.
  Then $J := (\underline{X} \cup \underline{Y}) \setminus \{e^*,f\}$ is dependent in $\ul{M}_\cC$, because otherwise there is a basis $B \supseteq J$ such that $X$ and $Y$ are fundamental circuits with respect to $B$ and thus $\langle X, Y^* \rangle \in N_F$ by~\ref{item:Ot2'}, a contradiction.
  Let $C$ be a circuit contained in $J$ which minimizes $|\underline{X} \cup C|$.
  Note that $C \cap \{e,f^*\} = \emptyset$ by~\ref{item:C4}, and there are $x \in C \cap (\underline{X} \setminus \underline{Y})$ and $y \in C \cap (\underline{Y} \setminus \underline{X})$ by~\ref{item:C2}.
  Because of the minimality of $|\underline{X} \cup C|$, we deduce that $J_2 := (\ul{X} \symdiff\skewpair{f}) \cup (C \setminus \{y\})$ is independent. Let $B_2$ be a basis containing $J_2$.
  Then $\underline{X}$ and $C$ are fundamental circuits with respect to $B_2$.
  Let $Z$ be an $F$-circuit whose support is $C$.
  By~\ref{item:L1}, there is an $F$-circuit $X_2$ such that $X_2(x) = 0$ and $\tilde{X_2}$ is in the linear span of $\tilde{X}$ and $\tilde{Z}$.
  Then $\underline{X_2} \cup \underline{Y} \subsetneq \underline{X} \cup \underline{Y}$, and for some $\alpha\in F^\times$, we have $X_2(e) =\alpha X(e)$ and $X_2(f^*) = \alpha X(f^*)$.
  Therefore, $\alpha \langle X,Y^* \rangle = \langle X_2,Y^* \rangle \in N_F$, a contradiction.
\end{proof}

We note that by Lemma~\ref{lem:Ot2'}, an $F$-signature of an orthogonal matroid is a strong $F$-circuit set if and only if it satisfies~\ref{item:L} and~\ref{item:Ot2}.
In addition, \ref{item:Ot2} in Lemma~\ref{lem:F-circuit unique up to} can be replaced by~\ref{item:Ot2'}.

\begin{theorem}\label{prop:weak Wick function to weak circuit set}
  Let $\varphi$ be a weak Wick function. Then $\cC_\varphi$ is a weak $F$-circuit set of $\ul{M}_\varphi$.
\end{theorem}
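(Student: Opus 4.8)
The plan is to verify the three defining properties of a weak $F$-circuit set for $\cC_\varphi$: the $2$-term orthogonality~\ref{item:Ot2} and the two weakened span axioms~\ref{item:L1} and~\ref{item:L2}. That $\cC_\varphi$ is an $F$-signature of $\ul M:=\ul M_\varphi$ is already recorded after Proposition~\ref{prop:defineX2}, and its proof (through Lemma~\ref{lem:defineX}) only invokes $4$-term Wick relations, hence applies verbatim to weak Wick functions. I would then establish the auxiliary orthogonality~\ref{item:Ot2'} in place of~\ref{item:Ot2} and appeal to Lemma~\ref{lem:Ot2'} at the very end.

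For~\ref{item:Ot2'}: if $X,Y\in\cC_\varphi$ have supports the fundamental circuits $C(B,e_1)$ and $C(B,e_2)$ with respect to a common basis $B$, put $T_1:=B\symdiff\skewpair{e_1}$ and $T_2:=B\symdiff\skewpair{e_2}$, which contain $\ul X$ and $\ul Y$ and satisfy the conclusion of Lemma~\ref{lem:extendcircuit}. By Proposition~\ref{prop:fundamental-circuit}, $\ul X\cap\ul Y^*\subseteq\{e_1,e_2^*\}$, and $e_2^*\in\ul X$ iff $e_1^*\in\ul Y$ iff $T_{12}:=B\symdiff\skewpair{e_1}\symdiff\skewpair{e_2}$ is a basis. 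If $T_{12}$ is not a basis the intersection is empty and $\langle X,Y^*\rangle=0\in N_F$; otherwise, substituting the defining formula~\eqref{eq:defineX} for $\conj X$ (computed on $T_1$) and $\conj Y$ (computed on $T_2$) into $\langle X,Y^*\rangle$ and using $T_1\symdiff\skewpair{e_1}=T_2\symdiff\skewpair{e_2}=B$ and $T_1\symdiff\skewpair{e_2}=T_2\symdiff\skewpair{e_1}=T_{12}$, all $\varphi$-values cancel and one is left with the sign $(-1)^{m^{T_1}_{e_1,e_2}+m^{T_2}_{e_1,e_2}}=-1$, since $T_1$ and $T_2$ differ in exactly one coordinate of $\ocinterval{\nostar{e_1}}{\nostar{e_2}}$; hence $\langle X,Y^*\rangle\in N_F$. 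This step uses no Wick relation at all.

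For~\ref{item:L1}: given a basis $B$, distinct $e_1,e_2\in B^*$, $F$-circuits $X_i$ with $\ul{X_i}=C(B,e_i)$ such that $\ul{X_1}\cup\ul{X_2}$ is admissible, and $f\in\ul{X_1}\cap\ul{X_2}$, note first that $\ul{X_1}\cup\ul{X_2}$ is a subtransversal of $T_{12}:=B\symdiff\skewpair{e_1}\symdiff\skewpair{e_2}$, and $\ul{X_1}\ne\ul{X_2}$ since $e_1\ne e_2$. Circuit elimination~\ref{item:C3} then produces a circuit $C_3\subseteq(\ul{X_1}\cup\ul{X_2})\setminus\{f\}$, which must meet $\{e_1,e_2\}$ as it is not contained in the basis $B$; let $Y\in\cC_\varphi$ have support $C_3$, so $Y(f)=0$. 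The scalars in the required relation $c_1\conj{X_1}+c_2\conj{X_2}-\conj Y\in(N_F)^E$ are forced by the coordinates $e_1,e_2$, and since the coordinates of $X_1,X_2,Y$ are ratios of $\varphi$-values on the transversals $B\symdiff\skewpair{e_i}\symdiff\skewpair g$, the remaining coordinate identities reduce to $4$-term Wick relations~\ref{item:W2''} applied to suitable pairs of transversals differing in four divergences, with signs tracked through the $m^T$-quantities exactly as in the proof of Theorem~\ref{thm:O2W}.

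Axiom~\ref{item:L2} is handled in the same spirit and is the main obstacle. By Proposition~\ref{prop:fundamental-circuit} the hypothesis that no $\ul{X_i}\cup\ul{X_j}$ is admissible is equivalent to $B\symdiff\skewpair{e_i}\symdiff\skewpair{e_j}$ being a basis for every pair; using this together with~\ref{item:C4} and~\ref{item:C3} one shows that $J:=(\ul{X_1}\cup\ul{X_2}\cup\ul{X_3})\setminus\{e_1^*,e_2^*,e_3^*\}$ is a subtransversal of $B\symdiff\skewpair{e_1}\symdiff\skewpair{e_2}\symdiff\skewpair{e_3}$ containing $e_1,e_2,e_3$ but lying in no basis, hence dependent; any circuit $C\subseteq J$ yields the desired $Y\in\cC_\varphi$ with $Y(e_1^*)=Y(e_2^*)=Y(e_3^*)=0$, and $\conj Y\in\spn\{\conj{X_1},\conj{X_2},\conj{X_3}\}$ is verified coordinatewise via~\ref{item:W2''} as before. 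The delicate points I expect to cost the most work are establishing the dependence of $J$ and carrying out the sign bookkeeping in the span verifications of~\ref{item:L1} and~\ref{item:L2} — the $2$-coordinate comparisons are automatically null, but the genuine $3$- and $4$-coordinate ones must be routed through the $4$-term Wick relation. Finally, with~\ref{item:Ot2'},~\ref{item:L1}, and~\ref{item:L2} in hand, Lemma~\ref{lem:Ot2'} upgrades~\ref{item:Ot2'} to~\ref{item:Ot2}, so $\cC_\varphi$ is a weak $F$-circuit set of $\ul M_\varphi$.
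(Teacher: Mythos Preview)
Your proposal is correct and follows essentially the same route as the paper: establish~\ref{item:Ot2'}, \ref{item:L1}, and~\ref{item:L2} for $\cC_\varphi$, then invoke Lemma~\ref{lem:Ot2'} to obtain~\ref{item:Ot2}. Your treatment of~\ref{item:Ot2'} matches the paper's direct sign computation, and your handling of~\ref{item:L1} via circuit elimination and coordinatewise verification through~\ref{item:W2''} is the same strategy; the paper also inserts auxiliary $F$-circuits $Z,W$ with supports $C(B,f^*)$ and $C(B,g^*)$ to convert the coordinate identities into the form $\langle Z,Y^*\rangle$ or a genuine $4$-term Wick relation, which is what your ``routing'' amounts to.

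Two small points deserve comment. First, in~\ref{item:L2} your appeal to~\ref{item:C3} and~\ref{item:C4} to show $J$ is dependent is not substantiated as stated (circuit elimination~\ref{item:C3} needs an admissible union, which fails here). The paper sidesteps this entirely: since $B':=B\symdiff\skewpair{e_1}\symdiff\skewpair{e_2}$ is a basis by hypothesis and $e_3\notin B'$, one simply takes $Y$ with support $C(B',e_3)\subseteq T_2:=B\symdiff\skewpair{e_1}\symdiff\skewpair{e_2}\symdiff\skewpair{e_3}$, and the span verification itself (via~\ref{item:W2''} at coordinates $g\notin\bigcup_i\ul{X_i}$) forces $\ul Y\subseteq J$ a posteriori. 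In fact $C(B',e_3)$ is the unique circuit in $T_2$, so your ``any circuit in $J$'' and the paper's explicit choice coincide. Second, not every coordinate check in~\ref{item:L1} and~\ref{item:L2} reduces to~\ref{item:W2''}: the coordinates $f$ (in~\ref{item:L1}) and $e_i^*$ (in~\ref{item:L2}) are handled in the paper by~\ref{item:Ot2'} applied to suitable fundamental-circuit pairs, which is presumably what you intend by ``$2$-coordinate comparisons are automatically null''.
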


\begin{proof}
  By Lemma~\ref{lem:Ot2'}, it suffices to show that $\cC_\varphi$ satisfies~\ref{item:Ot2'},~\ref{item:L1}, and~\ref{item:L2}.

  Let $X$ and $Y$ be $F$-circuits in $\cC_\varphi$ such that $\underline{X} = C(B,f)$ and $\underline{Y} = C(B,e)$ for some basis $B$ and distinct elements $e,f\in B^*$.
  We denote $T_1 := B \symdiff \skewpair{f} \supseteq \underline{X}$ and $T_2 := B \symdiff \skewpair{e} \supseteq \underline{Y}$.
  Then
  \begin{align*}
    \frac{\tilde{X}(e)}{\tilde{X}(f)}
    = (-1)^{m^{T_1}_{e,f}} \frac{\varphi(T_1 \symdiff\skewpair{e})}{\varphi(T_1 \symdiff\skewpair{f})}
    = (-1)^{m^{T_2}_{e,f}} \frac{\varphi(T_2 \symdiff\skewpair{f})}{\varphi(T_2 \symdiff\skewpair{e})}
    = -\frac{\tilde{Y}(f^*)}{\tilde{Y}(e^*)}, 
  \end{align*}
  and hence $\langle X, Y^* \rangle \in N_F$.
  Therefore, $\cC_\varphi$ satisfies~\ref{item:Ot2'}.

  \smallskip
  Now we show that~\ref{item:L1} holds.
  Let $B$ be a basis of $\ul{M}_\varphi$ and $e_1,e_2\in B^*$ be distinct elements.
  Let $X_1$ and $X_2$ be $F$-circuits in $\cC$ such that $\underline{X_i} = C(B,e_i)$ for $i=1,2$.
  Suppose that $X_1(e_2^*) = X_2(e_1^*) = 0$ and there is an element $f \in \underline{X_1} \cap \underline{X_2}$.
  Let $Y$ be an $F$-circuit whose support is a subset of $(\underline{X_1} \cup \underline{X_2}) \setminus \{f\}$.
  We claim that $\tilde{Y}$ belongs to the linear span of $\tilde{X_1}$ and $\tilde{X_2}$.
  We may assume that $\tilde{X_i}(e_i) = \tilde{Y}(e_i)$.
  Thus it suffices to show that $\tilde{Y}(g) - \tilde{X_1}(g) - \tilde{X_2}(g) \in N_F$ for all $g \in (\underline{X_1} \cup \underline{X_2}) \setminus \{e_1,e_2\}$.

  Let $Z\in\cC$ be such that $\underline{Z} = C(B,f^*)$.
  By~\ref{item:Ot2'}, $\tilde{Z}(e_i^*)\tilde{X_i}(e_i) + \tilde{Z}(f^*) \tilde{X_i}(f) \in N_F$ with $i=1,2$.
  Again by~\ref{item:Ot2'}, $-Z(f^*)(X_1(f) + X_2(f)) = \tilde{Z}(e_1^*) \tilde{Y}(e_1) + \tilde{Z}(e_2^*) \tilde{Y}(e_2) \in N_F.$
  Hence $X_1(f) + X_2(f)\in N_F$.
  So we may assume that $g \ne f$.
  By symmetry, we may assume that $g \in \underline{X_1}$, implying that $B\symdiff\{e_1,e_1^*,g,g^*\}$ is a basis. Let $W\in\cC$ be such that $\underline{W} = C(B,g^*)$.
  Let $T_1 := B\symdiff\skewpair{g}$ and $T_2 := B \symdiff\{e_1,e_1^*,e_2,e_2^*,f,f^*\}$.
  Then $\underline{W} \subseteq T_1$ and $\underline{Y} \subseteq T_2$.
  Since
  $B\symdiff\{e_2,e_2^*,f,f^*\}$ and $B\symdiff\{e_1,e_1^*,g,g^*\}$ are bases of $\ul{M}_\varphi$, both $Y(e_1)$ and $W(e_1^*)$ are nonzero.
  We rewrite $\{e_1,e_2,f^*,g\} \subseteq T_2$ by $\{x_1,x_2,x_3,x_4\}$ with $\nostar{x_1} < \nostar{x_2} < \nostar{x_3} < \nostar{x_4}$, and let $k\in[4]$ be such that $x_k = e_1$.
  Since $m_{x_i,x_k}^{T_1} + m_{x_i,x_k}^{T_2} \equiv k-i \pmod{2}$,
  by~\ref{item:W2''}, we have that
  \begin{align*}
    \sum_{i=1}^4 \tilde{W}(x_i^*)\tilde{Y}(x_i)
    &=
    \tilde{W}(x_k^*)\tilde{Y}(x_k)
    \sum_{i=1}^4 \frac{\tilde{W}(x_i^*)\tilde{Y}(x_i)}{\tilde{W}(x_k^*)\tilde{Y}(x_k)} \\
    &=
    \tilde{W}(x_k^*)\tilde{Y}(x_k)
    \sum_{i=1}^4 (-1)^{k-i}
    \frac{\varphi(T_1\symdiff\skewpair{x_i})\varphi(T_2\symdiff\skewpair{x_i})}{\varphi(T_1 \symdiff\skewpair{x_k})\varphi(T_2 \symdiff\skewpair{x_k})}
    \in N_F.
  \end{align*}
  By~\ref{item:Ot2'}, $\tilde{W}(e_i^*)\tilde{Y}(e_i)= \tilde{W}(e_i^*)\tilde{Z_i}(e_i) = -\tilde{W}(g^*)\tilde{Z_i}(g)$ for each $i$.
  Since $W(g^*)\ne 0$ and $Y(f^*) = 0$, we conclude that $Y(g) - Z_1(g) - Z_2(g) \in N_F$.
  Therefore~\ref{item:L1} holds.

  \smallskip
  Finally, we show that~\ref{item:L2} holds.
  Let $B$ be a basis of $\ul{M}_\varphi$ and let $X_1$, $X_2$, $X_3$ be $F$-circuits in $\cC$ such that their supports are $C(B,e_1)$, $C(B,e_2)$, $C(B,e_3)$ for some distinct $e_1, e_2, e_3\in B^*$, and $X_i(e_j^*) \ne 0$ for all $i\ne j$.
  Then $B \symdiff \{e_1,e_1^*,e_2,e_2^*\}$, $B \symdiff \{e_1,e_1^*,e_3,e_3^*\}$, and $B \symdiff \{e_2,e_2^*,e_3,e_3^*\}$ are all bases.
  Let $Y$ be an $F$-circuit in $\cC$ whose support is $C(B \symdiff \{e_1,e_1^*,e_2,e_2^*\}, e_3)$.
  Then $\{e_1,e_2,e_3\} \subseteq \underline{Y} \subseteq B \symdiff \{e_1,e_1^*,e_2,e_2^*,e_3,e_3^*\}$.
  We claim that $\tilde{Y}$ belongs to the linear span of $\tilde{X_i}$ with $i=1,2,3$.
  We may assume that $X_i(e_i) = Y(e_i)$ for each $i$.
  Hence it suffices to show that $\tilde{Y}(f) - \tilde{X_1}(f) - \tilde{X_2}(f) - \tilde{X_3}(f) \in N_F$ for all $f \in B$.
  Denote $\alpha := \tilde{X_1}(e_2^*) \tilde{Y}(e_2) \in F^\times$.
  Then $\tilde{X_1}(e_3^*) \tilde{Y}(e_3) = -\alpha$ by~\ref{item:Ot2'} applied to~$X_1$ and~$Y$.
  Applying again~\ref{item:Ot2'} to $X_1$ and $X_2$, we have $\tilde{X_2}(e_1^*) \tilde{Y}(e_1) = -\alpha$.
  Similarly, we deduce that $\tilde{X_2}(e_3^*)\tilde{Y}(e_3) = \tilde{X_3}(e_1^*)\tilde{Y}(e_1) = -\tilde{X_3}(e_2^*)\tilde{Y}(e_2) = \alpha$.
  Then $X_{i+1}(e_{i}^*)+X_{i+2}(e_{i}^*) \in N_F$ for each $i$, where the subscripts are read modulo $3$.
  Thus we may assume that $f \ne e_1^*,e_2^*,e_3^*$.

  Let $T_1 := B \symdiff \skewpair{f}$ and $T_2 := B\symdiff \{e_1,e_1^*,e_2,e_2^*,e_3,e_3^*\} \supseteq \underline{Y}$.
  Let $Z$ be an $F$-circuit in $\cC$ such that $\underline{Z} = C(B,f^*) \subseteq T_1$.
  Note that $X_i(f) \ne 0$ if and only if $T_1 \symdiff \skewpair{e_i}$ is a basis.
  Hence if $X_i(f) = 0$ for all $i$, then by~\ref{item:W2''}, $\varphi(T_2 \symdiff \skewpair{f}) = 0$ so $Y(f) = 0$.
  Therefore we may assume that at least one of $X_i(f)$ is nonzero.
  By relabelling, we may assume that $X_1(f) \ne 0$ and hence $T_1 \symdiff \skewpair{e_1}$ is a basis.
  Then $Z(e_1) \ne 0$.
  
  We rewrite $\{e_1,e_2,e_3,f^*\} \subseteq T_2$ by $\{x_1,x_2,x_3,x_4\}$ with $\nostar{x_1} < \nostar{x_2} < \nostar{x_3} < \nostar{x_4}$, and let $k \in [4]$ be such that $x_k = e_1$.
  Note that $m_{x_i,x_k}^{T_1} + m_{x_i,x_k}^{T_2} \equiv k-i \pmod{2}$.
  Then by~\ref{item:W2''},
  \begin{align*}
    \sum_{i=1}^4 \tilde{Z}(x_i^*)\tilde{Y}(x_i)
    % &=
    % \frac{1}{\tilde{X}(x_k^*)\tilde{Y}(x_k)}
    % \sum_{i=1}^4 \frac{\tilde{X}(x_i^*)\tilde{Y}(x_i)}{\tilde{X}(x_k^*)\tilde{Y}(x_k)} \\
    &=
    \tilde{Z}(x_k^*)\tilde{Y}(x_k)
    \sum_{i=1}^4 (-1)^{k-i}
    \frac{\varphi(T_1\symdiff\skewpair{x_i})\varphi(T_2\symdiff\skewpair{x_i})}
    {\varphi(T_1\symdiff\skewpair{x_k})\varphi(T_2\symdiff\skewpair{x_k})}
    \in N_F.
  \end{align*}
  By~\ref{item:Ot2'}, $\tilde{Z}(e_i^*)\tilde{Y}(e_i) = \tilde{Z}(e_i^*)\tilde{X_i}(e_i) = -\tilde{Z}(f^*)\tilde{X_i}(f)$ for each $i$.
  Because $f^* \in \underline{Z}$, we deduce that $\tilde{Y}(f) - \sum_{i=1}^3 \tilde{X_i}(f) = \tilde{Y}(f) + \tilde{Z}(f^*)^{-1} \sum_{i=1}^3 \tilde{Z}(e_i^*)\tilde{Y}(e_i) \in N_F$.
\end{proof}

\subsection{Strong orthogonal signatures and strong circuit sets}\label{sec:OC}

Let $\cC$ be an $F$-signature of an orthogonal matroid $\ul{M}$ on $E$ satisfying~\ref{item:Ot2}. We say that $X \in F^E$ is {\em consistent with $\cC$} if for each basis $B$ of $\ul{M}$, the vector $\tilde{X}$ belongs to the linear span of $\{\tilde{X_e}: e\in B^*\}$, where $X_e$ is the unique $F$-circuit in $\cC$ such that $\underline{X_e} = C(B,e)$ and $X_e(e)=1$.
Hence~\ref{item:L} is equivalent to that every $F$-circuit in $\cC$ is consistent with $\cC$.

The {\em orthogonal complement} of $\cW \subseteq F^E$ is $\cW^\perp := \{X \in F^E : \langle X,Y^* \rangle \in N_F \text{ for all } Y \in \cW\}$.
Therefore, the orthogonality~\ref{item:O} is equivalent to that $\cC \subseteq \cC^\perp$.

\begin{lemma}\label{lem:consistent to perp}
  Let $\mathcal{C}$ be an $F$-signature of an orthogonal matroid on $E$ satisfying~\ref{item:Ot2}.
  If $X \in F^E$ is consistent with $\cC$, then $X \in \mathcal{C}^\perp$.
\end{lemma}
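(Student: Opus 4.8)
The plan is to fix an arbitrary $F$-circuit $Y \in \cC$ and show $\langle X, Y^* \rangle \in N_F$ by expanding $X$ in a cleverly chosen fundamental circuit form. By Lemma~\ref{lem:extendcircuit}, choose a transversal $T$ containing $\underline{Y}$ such that $T \symdiff \skewpair{y}$ is a basis for every $y \in \underline{Y}$; pick one such $y_0 \in \underline{Y}$ and set $B := T \symdiff \skewpair{y_0}$, a basis of $\ul{M}$. Then $\underline{Y} = C(B, y_0)$, so $Y$ (up to rescaling) is one of the fundamental $F$-circuits $X_e$ attached to $B$, namely $X_{y_0}$. Since $X$ is consistent with $\cC$, there are coefficients $c_e \in F$ (indexed by $e \in B^*$) with $\tilde{X} - \sum_{e \in B^*} c_e \tilde{X_e} \in (N_F)^E$. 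The idea is now to pair this against $\tilde{Y}$ componentwise and push everything into $N_F$.

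The key steps, in order: first, rewrite $\langle X, Y^* \rangle = \sum_{i \in E} \tilde{X}(i)\tilde{Y}(i^*)$ using the identity recorded just before Lemma~\ref{lem:F-circuit unique up to}. Second, substitute $\tilde{X}(i) = \sum_{e} c_e \tilde{X_e}(i) + n_i$ with $n_i \in N_F$ for each coordinate $i$; by axiom (T4) each $n_i \tilde{Y}(i^*)$ lies in $N_F$, and $N_F$ is closed under the additive relation in $\N[G]$, so modulo $N_F$ we get $\langle X, Y^* \rangle \equiv \sum_{e \in B^*} c_e \langle X_e, Y^* \rangle$. Third, observe that each $X_e$ and $Y = X_{y_0}$ are both fundamental circuits with respect to the same basis $B$; hence by the $2$-term orthogonality~\ref{item:Ot2} — or more precisely its weakening~\ref{item:Ot2'}, which $\cC$ satisfies since~\ref{item:Ot2} implies~\ref{item:Ot2'} — we have $\langle X_e, Y^* \rangle \in N_F$ for every $e \in B^*$. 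Fourth, conclude $\langle X, Y^* \rangle \in N_F$ as an $F^\times$-linear combination (via (T4)) of elements of $N_F$, lying in $N_F$.

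The main obstacle I anticipate is making the "modulo $N_F$" bookkeeping rigorous: $N_F$ is only a subset of $\N[G]$, not an ideal, so I cannot freely manipulate it as a quotient. The careful way is to start from the exact relation $\sum_e c_e \tilde{X_e}(i) - \tilde{X}(i) \in N_F$ for each $i$, multiply through by $\tilde{Y}(i^*) \in F$ (using (T4) to stay in $N_F$), and sum over $i \in E$; the sum of these $|E|$ elements of $N_F$ need not a priori be in $N_F$, so I must instead argue that $N_F$ is closed under the additive structure coming from the tract axioms in the precise sense used throughout~\cite{Baker2019} — i.e. that a sum lies in $N_F$ whenever it can be split as a sum of elements of $N_F$. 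This is exactly the standard "$N_F$ is closed under addition of null sequences" lemma implicit in the matroid-over-tracts literature, and once invoked the argument closes immediately. One should double-check the edge case where $\underline{X} \cap \underline{Y}^*$ is small or empty, but there the relevant inner product is a sum of a controlled number of terms and the same reasoning applies verbatim.
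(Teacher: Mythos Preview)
Your approach has a genuine gap at exactly the point you flagged: the null set $N_F$ of a general tract is \emph{not} closed under addition, and there is no ``standard lemma'' to that effect in the tract literature. For instance, in the initial tract $\mathbb{I} = (\{\pm 1\}, \{0,\, 1+(-1)\})$, the sum $(1+(-1)) + (1+(-1)) = 1+1+(-1)+(-1)$ is not in $N_{\mathbb{I}}$. So your step~2 (summing the coordinate-wise null relations over $i \in E$) and step~4 (concluding from $c_e \langle X_e, Y^* \rangle \in N_F$ for each $e$) both fail: even if each summand lies in $N_F$, the formal sum in $\N[F^\times]$ need not, and there is no way to ``cancel'' a null subsum against the rest.

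The paper's proof sidesteps this entirely by showing that $\langle X, Y^* \rangle$ coincides with a \emph{single coordinate} of the consistency relation, so no summing of null elements is ever needed. Concretely: with your basis $B$ and $\underline{Y} = C(B, e_0)$ (normalized so $Y(e_0) = 1$), the two-term orthogonality~\ref{item:Ot2} applied to each fundamental-circuit pair $(Y, X_i)$ yields the genuine \emph{equality} $\tilde{Y}(e_i^*) = -\tilde{X}_i(e_0^*)$ in $F$ for every $e_i \in \underline{X} \cap B^*$ (a two-term null relation $a+b \in N_F$ forces $b = -a$ by the tract axioms). Substituting these equalities into $\langle X, Y^* \rangle = \tilde{X}(e_0^*) + \sum_i \tilde{X}(e_i)\tilde{Y}(e_i^*)$ collapses the whole inner product to $\tilde{X}(e_0^*) - \sum_i \tilde{X}(e_i)\tilde{X}_i(e_0^*)$, which is precisely the $e_0^*$-coordinate of $\tilde{X} - \sum_i \tilde{X}(e_i)\tilde{X}_i \in (N_F)^E$. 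The crucial move is to use~\ref{item:Ot2} to extract honest identities in $F$, rather than treating the null relations as congruences to be added.
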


\begin{proof}
  We claim that $\langle X, Y^\ast \rangle \in N_F$ for all $Y \in \mathcal{C}$.
  We may assume that $\underline{X} \cap \underline{Y}^* \neq \emptyset$.
  Write $\underline{X} \cap \underline{Y}^* = \{e_0^*,e_1,\dots,e_\ell\}$, and let $B$ be a basis of the underlying orthogonal matroid $\underline{M}_\mathcal{C}$ such that $\underline{Y} \symdiff \skewpair{e_0} \subseteq B$.
  Then $\{e_0^*,\dots,e_\ell^*\} \subseteq B$.
  We denote by $m := |\underline{X} \cap B^*|$, and if $\underline{X} \cap (B \setminus \underline{Y})^*$ is nonempty, then we enumerate its elements as $e_{\ell+1}, e_{\ell+2}, \dots, e_m$.
  Then $\underline{X} \cap B^* = \{e_1,\dots,e_m\}$.
  For $0 \leq i \leq m$, let $X_i$ be the $F$-circuit in $\mathcal{C}$ such that $\underline{X_i} = C(B,e_i)$ and $X_i(e_i) = 1$.
  Then $\conj{X} - \sum_{i=1}^m \conj{X}(e_i) \conj{X}_i \in (N_F)^{E}$ since $X$ is consistent with $\mathcal{C}$.
  Note that $\underline{Y} = C(B,e_0) = \underline{X_0}$.
  By multiplying $Y$ with $Y(e_0)^{-1} \in F^\times$, we can assume that $Y(e_0) = 1$.
  For each $1\leq i \leq m$, $\conj{X}_0(e_i^*) + \conj{X}_i(e_0^*) = \langle X_0, X_i^\ast \rangle \in N_F$ by~\ref{item:Ot2} and so $\conj{Y}(e_i^*) = -\conj{X}_i(e_0^*)$.
  Therefore,
  \begin{align*}
    \langle X, Y^\ast \rangle
    &=
    \conj{X}(e_0^*) + \sum_{i=1}^m \conj{X}(e_i) \conj{Y}(e_i^*)
    =
    \conj{X}(e_0^*) - \sum_{i=1}^m \conj{X}(e_i) \conj{X}_i(e_0^*)
    \in N_F.\qedhere
  \end{align*}
\end{proof}

\begin{lemma}\label{lem:perp to consistent}
  Let $\mathcal{C}$ be an orthogonal $F$-signature of an orthogonal matroid on $E$.
  If $X\in \mathcal{C}^\perp$, then $X$ is consistent with $\cC$.
\end{lemma}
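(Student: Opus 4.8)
The plan is to fix an arbitrary basis $B$ of $\ul{M}$ and check directly that $\tilde{X}$ lies in the linear span of $\{\tilde{X_e}:e\in B^*\}$, where $X_e\in\cC$ is the $F$-circuit with $\underline{X_e}=C(B,e)$ normalized so that $X_e(e)=1$; such $X_e$ exists since $\underline{\cC}$ is the circuit set of $\ul{M}$, and it is unique by Lemma~\ref{lem:F-circuit unique up to}. Because $C(B,e)\subseteq B\cup\{e\}$ with $e\in B^*$, the family $\{\tilde{X_e}\}_{e\in B^*}$ is in reduced row-echelon form along $B^*$: for $e,e'\in B^*$ one has $\tilde{X_e}(e')=\delta_{e,e'}$. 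Hence, evaluating a hypothetical relation $\sum_{e\in B^*}c_e\tilde{X_e}-\tilde{X}\in(N_F)^E$ at the coordinates in $B^*$ and invoking Lemma~1.1 of~\cite{Baker2019}, the coefficients are forced to be $c_e=\tilde{X}(e)$. It therefore suffices to prove that
\[
  R:=\sum_{e\in B^*}\tilde{X}(e)\,\tilde{X_e}-\tilde{X}\ \in\ (N_F)^E;
\]
the $B^*$-coordinates of $R$ are visibly $0$, so the content is that $R(b)\in N_F$ for every $b\in B$.

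Next I would bring in, for each $b\in B$, the fundamental circuit $D_b:=C(B,b^*)$, which exists by Proposition~\ref{prop:fundamental-circuit} since $b^*\notin B$, together with an $F$-circuit $Y_b\in\cC$ satisfying $\underline{Y_b}=D_b$. Note that $b^*\in D_b$, so $\tilde{Y_b}(b^*)\in F^\times$, and that $D_b\subseteq B\cup\{b^*\}$, so $\underline{Y_b}^*=D_b^*\subseteq\{b\}\cup B^*$. The combinatorial heart of the argument is the reciprocity
\[
  b\in C(B,e)\iff e^*\in C(B,b^*)\qquad(b\in B,\ e\in B^*),
\]
both sides being equivalent, by the explicit description of fundamental circuits in Proposition~\ref{prop:fundamental-circuit}, to the single condition $b\neq e^*$ and $B\symdiff\skewpair{b}\symdiff\skewpair{e}\in\cB$. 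Combining this with the shapes of the supports, one checks that $\underline{X_e}\cap\underline{Y_b}^*$ equals $\{b,e\}$ when $b\in C(B,e)$ and is empty otherwise.

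The computation is then a double use of $2$-term orthogonality. For $e\in B^*$ with $b\in C(B,e)$, orthogonality~\ref{item:O} of $\cC$ yields the two-term relation $\tilde{X_e}(b)\tilde{Y_b}(b^*)+\tilde{X_e}(e)\tilde{Y_b}(e^*)=\langle X_e,Y_b^*\rangle\in N_F$, so by Lemma~1.1 of~\cite{Baker2019} and $\tilde{X_e}(e)=1$ we get $\tilde{X_e}(b)=-\,\tilde{Y_b}(e^*)\,\tilde{Y_b}(b^*)^{-1}$; and $\tilde{X_e}(b)=0$ whenever $b\notin C(B,e)$. On the other hand, expanding $\langle X,Y_b^*\rangle=\sum_{i\in E}\tilde{X}(i)\tilde{Y_b}(i^*)$ and using $\underline{Y_b}^*\subseteq\{b\}\cup B^*$ gives $\langle X,Y_b^*\rangle=\tilde{X}(b)\tilde{Y_b}(b^*)+\sum_{e\in B^*}\tilde{X}(e)\tilde{Y_b}(e^*)$. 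Substituting the values of $\tilde{X_e}(b)$ into $R(b)$ and factoring out the unit $-\tilde{Y_b}(b^*)^{-1}$ collapses everything to
\[
  R(b)=-\,\tilde{Y_b}(b^*)^{-1}\,\langle X,Y_b^*\rangle,
\]
and since $X\in\cC^\perp$ and $Y_b\in\cC$ we have $\langle X,Y_b^*\rangle\in N_F$, hence $R(b)\in N_F$ by axiom~(T4). Therefore $R\in(N_F)^E$, which says exactly that $X$ is consistent with $\cC$.

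I do not expect a genuine obstacle here: this is the reverse of Lemma~\ref{lem:consistent to perp} and runs along the same lines, with the $Y_b$ playing the role of fundamental cocircuits. The only step that demands care is the support bookkeeping — the row-echelon identity $\tilde{X_e}(e')=\delta_{e,e'}$, the reciprocity $b\in C(B,e)\iff e^*\in C(B,b^*)$, and the resulting containment $\underline{X_e}\cap\underline{Y_b}^*\subseteq\{b,e\}$ — together with keeping track of which quantities are units, so that the division by $\tilde{Y_b}(b^*)$ and the appeals to Lemma~1.1 of~\cite{Baker2019} and to~(T4) are legitimate.
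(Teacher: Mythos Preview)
Your proof is correct and follows essentially the same approach as the paper's: for a fixed basis $B$ and $b\in B$, you take the $F$-circuit $Y_b$ supported on $C(B,b^*)$, use the two-term relation $\langle X_e,Y_b^*\rangle\in N_F$ to rewrite $\tilde{X_e}(b)$ in terms of $\tilde{Y_b}(e^*)$, and then recognize the resulting expression for $R(b)$ as a unit multiple of $\langle X,Y_b^*\rangle$. The paper does exactly this, only with $Y_b$ normalized so that $Y_b(b^*)=1$ (so the factor $\tilde{Y_b}(b^*)^{-1}$ disappears) and with the sum restricted from the outset to those $e\in B^*$ with $\tilde X(e)\neq 0$.
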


\begin{proof}
  Let $B$ be a basis of $\underline{M}_\cC$. Write $\underline{X} \cap B^* = \{e_1,\dots,e_m\}$, and let $X_i$ be the $F$-circuit in $\mathcal{C}$ such that $\underline{X_i} = C(B,e_i)$ and $X_i(e_i) = 1$. We claim that $\conj{X}(f) - \sum_{i} \conj{X}(e_i) \conj{X}_i(f) \in N_F$ for all $f \in E$.
  We may assume that $f\in B$.
  % and $X$ is nonzero.
  % If $m = 0$, then $\langle X, X^B_e \rangle \not\in N_F$ for $e \in \underline{X} \cap B$, a contradiction.
  % Thus $m\geq 1$.
  Let $Y \in \mathcal{C}$ be such that $\underline{Y} = C(B,f^*)$ and $Y(f^*) = 1$.
  If $f^* = e_i$, then $X_i(f) = X_i(e_i^*) = 0$ and $Y(e_i^*) = Y(f) = 0$. 
  Otherwise, we have $\conj{X}_i(f) + \conj{Y}(e_i^*) = \langle X_i, Y^\ast \rangle \in N_F$ and hence $-\conj{X}_i(f) = \conj{Y}(e_i^*)$.
  Therefore, by the orthogonality~\ref{item:O},
  \begin{align*}
    \conj{X}(f) - \sum_{i} \conj{X}(e_i) \conj{X}_i(f)
    &=
    \conj{X}(f) + \sum_{i} \conj{X}(e_i) \conj{Y}(e_i^*)
    =
    \langle X, Y^\ast \rangle \in N_F.
    \qedhere
  \end{align*}
\end{proof}

We now prove Theorem~\ref{thm:strong orthogonal signatures and circuit sets} using the previous lemmas.

% \begin{proposition}\label{prop:2def of ortho sig}
%     An $F$-signature $\mathcal{C}$ of an orthogonal matroid is orthogonal if and only if it satisfies
%     \begin{enumerate}[label=\rm(\roman*)]
%         \item \ref{item:Ot2} and
%         \item all $X\in\mathcal{C}$ are consistent with $\mathcal{C}$.
%     \end{enumerate}
% \end{proposition}
\begin{proof}[Proof of Theorem~\ref{thm:strong orthogonal signatures and circuit sets}]
    Let $\cC$ be an $F$-signature of an orthogonal matroid.
    Suppose that $\cC$ is orthogonal.
    Then $\cC \subseteq \cC^\perp$ and $\cC$ satisfies~\ref{item:Ot2}.
    By Lemma~\ref{lem:perp to consistent}, $\cC$ satisfies~\ref{item:L}. Conversely, suppose $\cC$ is a strong $F$-circuit set, then by Lemma~\ref{lem:consistent to perp}, we deduce that $\cC \subseteq \cC^\perp$, or equivalently, $\cC$ is orthogonal.
\end{proof}

\subsection{Orthogonal signatures and orthogonal vector sets}\label{sec:OV}

In~\cite{Anderson2019}, Anderson showed the equivalence between strong $F$-matroids and $F$-vector sets for matroids.
The orthogonal complement of an $F$-cocircuit set of an ordinary matroid $\ul{M}$ (i.e., an $F$-circuit set of the dual matroid $\ul{M}^*$) is an $F$-vector set of $\ul{M}$, and nonzero vectors having minimal supports in an $F$-vector set of $\ul{M}$ form an $F$-cocircuit set of $\ul{M}$.
We prove that the strong orthogonal $F$-signatures and the orthogonal $F$-vector sets can be derived from each other in a similar sense.
%We first show the following useful lemmas.

\begin{lemma}\label{lem:suppbase}
  Let $\mathcal{V}$ be an orthogonal $F$-vector set. Then there exists an ordinary orthogonal matroid $\ul{M}$ whose set of bases equals the set of support bases of $\mathcal{V}$. 
  Furthermore, the set of supports of elementary vectors in $\cV$ equals the set of circuits of $\ul{M}$. 
\end{lemma}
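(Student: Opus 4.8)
The plan is to verify that the support bases of $\cV$ satisfy the symmetric exchange axiom, and then to identify the circuits of the resulting orthogonal matroid with the supports of elementary vectors. First I would record the basic structural facts forced by the axioms~\ref{item:V1}--\ref{item:V3}. By~\ref{item:V2}, support bases exist, so the candidate basis collection $\cB := \{T \in \cT_n : T \text{ is a support basis of } \cV\}$ is nonempty. By~\ref{item:V3}, for each support basis $B$ the fundamental circuit form $\{X_e : e \in B^*\}$ is uniquely determined, each $X_e$ is elementary with $\supp(X_e) \subseteq B \symdiff \skewpair{e}$ and $X_e(e) = 1$, and $\cV$ is recovered as the span of $\{\tilde X_e\}_{e \in B^*}$. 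A key preliminary observation is that, for a support basis $B$ and $e \in B^*$, the support $C_e := \supp(X_e)$ is the unique minimal element of $\{\supp(X) : 0 \ne X \in \cV,\ \supp(X) \subseteq B \symdiff \skewpair{e}\}$ containing $e$: indeed any nonzero $X \in \cV$ with $\supp(X) \subseteq B \symdiff \skewpair{e}$ must have $e \in \supp(X)$, since otherwise $\supp(X) \subseteq B$ contradicts $B$ being a support basis; and if $\supp(X) \subsetneq C_e$ then rescaling so that $X(e) = 1$ contradicts uniqueness of $X_e$ in~\ref{item:V3}.

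Next I would establish the local exchange step, i.e. the statement that if $B$ is a support basis and $f \in C_e \cap [n]$ (equivalently $f \in C_e \setminus \{e\}$ appropriately read), then $B \symdiff \skewpair{e} \symdiff \skewpair{f}$ is again a support basis. This is the orthogonal-matroid analogue of the "pivot" operation for vector sets and should follow by the same linear-algebra-over-a-tract argument Anderson uses in~\cite{Anderson2019}: performing a pivot on the fundamental circuit form at the entry $X_e(f)$ produces a new family of $n$ vectors supported on the complement of $B \symdiff \skewpair{e} \symdiff \skewpair{f}$, which by~\ref{item:V3} is exactly a fundamental circuit form for that transversal, hence the latter is a support basis. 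I would also need the converse-type statement that every nonzero vector of $\cV$ has its support contained in no support basis, which is immediate from the definition of support basis, and that the minimal supports among nonzero vectors are precisely admissible (so those vectors are elementary) — this uses~\ref{item:V1} together with the pivot description, since a minimal support not containing a full divergence would lie inside a transversal.

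With the pivot step in hand, the symmetric exchange axiom for $\cB$ follows: given support bases $B_1, B_2$ and a divergence $\skewpair{x_1} \subseteq B_1 \symdiff B_2$, write $x_1 = e$ with $e \in B_1^*$ (after relabelling within the divergence), consider the fundamental circuit $C_e = \supp(X_e^{\cV}_{B_1,e})$; the pivot lemma gives that for each $f \in C_e$ with $\skewpair{f} \ne \skewpair{e}$ the transversal $B_1 \symdiff \skewpair{e} \symdiff \skewpair{f}$ is a support basis, and a counting/comparison argument against $B_2$ — exactly parallel to the derivation of symmetric exchange for orthogonal matroids from fundamental circuits via Proposition~\ref{prop:fundamental-circuit} — produces the required $\skewpair{x_2} \subseteq B_1 \symdiff B_2$. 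Thus $\ul{M} := (E, \cB)$ is an orthogonal matroid. Finally, to identify circuits with supports of elementary vectors: a subtransversal is dependent in $\ul M$ iff it is contained in no support basis iff (by~\ref{item:V3} and the minimal-support characterization above) it contains the support of some nonzero, hence some elementary, vector of $\cV$; taking minimal such sets and invoking Proposition~\ref{prop:fundamental-circuit} together with the uniqueness in~\ref{item:V3} shows the circuits of $\ul M$ are exactly the supports of elementary vectors of $\cV$.

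The main obstacle I expect is the pivot step: carefully justifying that a single pivot on a fundamental circuit form, carried out with coefficients in a tract (where one cannot freely add and must track the null-set relations), again yields a set satisfying the defining span condition~\ref{item:V3} for the pivoted transversal — and in particular that the pivoted transversal is genuinely a \emph{support} basis (no nonzero vector of $\cV$ is supported inside it). Everything else is combinatorial bookkeeping reducing to Proposition~\ref{prop:fundamental-circuit} and the circuit characterization axioms \ref{item:C1}--\ref{item:C5}, but the tract-linear-algebra input for the pivot is where the real work lies.
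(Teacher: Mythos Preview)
Your overall architecture is right and matches the paper: verify the symmetric exchange axiom for the set of support bases via a ``local exchange'' (pivot) statement, then read off the circuit description. The divergence is in how you propose to prove the pivot step. You plan to \emph{construct} a new fundamental circuit form for $B' := B \symdiff \skewpair{e} \symdiff \skewpair{f}$ by an Anderson-style pivot on the entry $X_e(f)$, and you correctly flag that carrying out such a pivot over a tract (and then arguing $B'$ is a support basis) is the delicate point.

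The paper sidesteps this obstacle entirely. Instead of constructing anything, it proves the pivot statement by a short contradiction using only~\ref{item:V1} and~\ref{item:V3}. Given $f \in \underline{X_e} \setminus \{e\}$, suppose $B'$ is not a support basis; take an elementary $Y \in \cV$ with $\underline{Y} \subseteq B'$. Since $\underline{X_e} \cap \underline{Y}^* \subseteq \{f\}$, axiom~\ref{item:V1} gives $\tilde X_e(f)\tilde Y(f^*) \in N_F$, hence $Y(f^*)=0$ and $\underline{Y} \subseteq B \symdiff \skewpair{e}$. As $B$ is a support basis, $e \in \underline{Y}$, and then the uniqueness implicit in~\ref{item:V3} forces $Y = Y(e) X_e$, contradicting $Y(f)=0 \neq X_e(f)$. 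This two-line argument is exactly the ``pivot lemma'' you want, with no tract-linear-algebra needed; the symmetric exchange then follows by choosing $f \in \underline{X_e} \setminus B_2$ (which exists since $B_2$ is a support basis) rather than by a vague ``counting/comparison'' argument. Your proposed route could likely be completed, but it is strictly harder than necessary; the paper's contradiction method is what you should use for the pivot step.
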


\begin{proof}
  Let $\ul{\cB}$ be the set of support bases of $\cV$.
  %We prove that $\ul{\cB}$ is the set of bases of an orthogonal matroid $\ul{M}$.
  It suffices to check that $\ul{\cB} \neq \emptyset$ and $\ul{\cB}$ satisfies the symmetric exchange axiom.
  
  We first show that $\ul{\cB} \neq \emptyset$.
  We may assume that $\cV$ has an elementary vector $X$, 
%   is neither $\emptyset$ nor $\{\mathbf{0}\}$, 
  since otherwise every transversal is a support basis. %($n\geq 1$.)
  Let $I_0 = \underline{X} \setminus \skewpair{e}$ for an arbitrary $e \in \underline{X}$.
  We say that a subtransversal is {\em $\mathcal{V}$-independent} if it does not contains any $\underline{Y}$ where $Y\in\mathcal{V}\setminus\{\mathbf{0}\}$.
  Then $I_0$ is $\mathcal{V}$-independent.

  We claim that if a subtransversal $I$ is $\mathcal{V}$-independent and $f \in [n] \setminus \nostar{I}$, then $I \cup \{f\}$ or $I \cup \{f^*\}$ is $\mathcal{V}$-independent.
  Suppose for contradiction that neither $I \cup \{f\}$ nor $I \cup \{f^*\}$  is $\mathcal{V}$-independent.
  Then there are $Y_1,Y_2 \in \mathcal{V} \setminus \{\mathbf{0}\}$ such that $\underline{Y_1} \subseteq I \cup \{f\}$ and $\underline{Y_2} \subseteq I \cup \{f^*\}$.
  We may assume that $Y_1$ and $Y_2$ are elementary.
%   have minimal supports in $\mathcal{V} \setminus \{\mathbf{0}\}$.
  Since $I$ is $\mathcal{V}$-independent, $f \in \underline{Y_1}$ and $f^* \in \underline{Y_2}$.
  Then $\langle Y_1, Y_2^\ast \rangle = \conj{Y}_1(f) \conj{Y}_2(f^*) \not\in N_F$, which contradicts~\ref{item:V1}. By the claim, for $i = 0, 1, 2, \dots$, there is a $\cV$-independent set $I_{i+1}$ such that $I_i \subseteq I_{i+1}$ and $|I_{i+1}| = |I_i| + 1$, unless $|I_i| \geq n$.
  Then for $k := n -|I_0|$, the subtransversal $I_{k}$ is a $\cV$-independent set of size $n$ and hence $I_{k}$ is a support basis of $\cV$, implying that
  $\ul{\cB} \neq \emptyset$.

  Next we show that $\ul{\cB}$ satisfies the symmetric exchange axiom.
  Let $B_1, B_2 \in \ul{\cB}$ and $e\in B_1 \setminus B_2$.
  By~\ref{item:V2},
  there is a fundamental circuit form $\{X_g: g\in B_1^*\}$ of $\cV$ with respect to $B_1$, where $\underline{X_g} \subseteq B_1 \symdiff \skewpair{g}$ and $X_g(g) = 1$.
  Let $X := X_{e^*}$.
  Note that $X$ is elementary in $\cV$ by~\ref{item:V3}.
  Since $B_2$ is a support basis, $\underline{X} \not\subseteq B_2$.
  Thus there is $f\in \underline{X} \setminus B_2 \subseteq (B_1 \symdiff \skewpair{e}) \setminus B_2 = (B_1 \setminus B_2) \setminus \{e\}$. It suffices to show that $B_1 \symdiff \skewpair{e} \symdiff \skewpair{f}$ is a support basis of $\cV$.
  If not, then there is $Y \in \mathcal{V} \setminus \{\mathbf{0}\}$ with support $\underline{Y} \subseteq B_1 \symdiff \skewpair{e} \symdiff \skewpair{f}$.
  We may assume that $Y$ is elementary in $\cV$.
%   has a minimal support in $\cV \setminus \{\mathbf{0}\}$.
  By~\ref{item:V1}, $\conj{X}(f) \conj{Y}(f^*) = \langle X,Y^\ast \rangle \in N_F$ and thus $Y(f^*) = 0$.
  Then $\underline{Y} \subseteq B_1 \symdiff \skewpair{e}$.
  Since $B_1$ is a support basis, $e^* \in \underline{Y}$.
  By~\ref{item:V3}, $Y = Y(e^*) X$, which contradicts the fact that $Y(f) = 0 \neq X(f)$. % f \in X, f \not\in Y.
  Therefore, $B_1 \symdiff \{e,e^*,f,f^*\}$ is a support basis.

  From the definitions of $\ul{\cB}$ and $\ul{M}$, it is straightforward to see that the set of circuits of $\ul{M}$ equals the set of supports of elementary vectors of $\cV$.
\end{proof}

In Lemma~\ref{lem:perp to consistent}, if we assume additionally that $X$ is elementary in $\cC^\perp$, then $X$ is indeed in~$\cC$ rather than merely being consistent with $\cC$, as the next lemma shows.
For $\cW \subseteq F^E$, let $\elem(\cW)$ be the set of elementary vectors in $\cW$.

\begin{lemma}\label{lem:MSS}
  Let $\mathcal{C}$ be an orthogonal $F$-signature of an orthogonal matroid on $E$. Then $\elem(\cC^\perp) = \cC$.
\end{lemma}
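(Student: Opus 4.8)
The plan is to prove the two inclusions $\cC \subseteq \elem(\cC^\perp)$ and $\elem(\cC^\perp) \subseteq \cC$ separately. For the first inclusion, I would observe that since $\cC$ is a strong orthogonal $F$-signature, by definition $\cC$ satisfies the orthogonality~\ref{item:O}, which says exactly that $\cC \subseteq \cC^\perp$; it remains to check that each $X \in \cC$ is elementary in $\cC^\perp$. Nonzeroness and admissibility of $\underline{X}$ are immediate from the axioms of an $F$-signature of an orthogonal matroid (the support $\underline{X}$ is a circuit, hence admissible and nonempty). For minimality of the support in $\cC^\perp \setminus\{\mathbf 0\}$, I would argue as follows: suppose $Z \in \cC^\perp \setminus \{\mathbf 0\}$ with $\underline{Z} \subsetneq \underline{X}$. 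By Lemma~\ref{lem:perp to consistent}, $Z$ is consistent with $\cC$. Now pick a basis $B$ of $\ul{M}_\cC$ containing the independent set $\underline{X}\setminus\skewpair{e}$ for some $e \in \underline{X}\setminus\underline{Z}$ (this is possible because $\underline{X}$ is a circuit and $\underline{Z}$ is a proper subset, so such an $e$ exists); then $C(B,e^*) = \underline{X}$, and consistency forces $\tilde Z$ to lie in the span of the fundamental circuit form $\{\tilde X_g : g \in B^*\}$. Since $\underline{Z}$ is a proper, nonempty subtransversal of a circuit, it is independent, so it is contained in a support basis — but then $Z$ has support inside a support basis, contradicting that a support basis meets every nonzero support... actually more carefully: $\underline{Z} \subseteq B'$ for some basis $B'$, and then writing out the span condition for $B'$ gives that $\tilde Z$ is supported on $B'^* \cup (\underline{Z}\cap B'^*)$—I would instead directly use that a circuit is a \emph{minimal} dependent set together with the fact (standard for orthogonal matroids, via~\ref{item:C5} and Proposition~\ref{prop:fundamental-circuit}) that any vector in $\cC^\perp$ with support a proper subset of a circuit must have a support containing some circuit, forcing $\underline{Z}$ dependent, a contradiction. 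I expect to streamline this by reusing the argument pattern already used in the proof of Lemma~\ref{lem:perp to consistent}.

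For the reverse inclusion $\elem(\cC^\perp) \subseteq \cC$, let $X \in \cC^\perp$ be elementary, so $\underline{X}$ is admissible, nonempty, and support-minimal in $\cC^\perp\setminus\{\mathbf 0\}$. The key claim is that $\underline{X}$ is a circuit of $\ul{M}_\cC$. Granting this, let $Y \in \cC$ be the $F$-circuit with $\underline{Y} = \underline{X}$ (exists since $\cC$ is an $F$-signature of $\ul{M}_\cC$); by the first inclusion $Y \in \cC^\perp$, and by Lemma~\ref{lem:F-circuit unique up to} (applicable since $\cC$ satisfies~\ref{item:Ot2}, which follows from~\ref{item:O}) any two elements of $\cC^\perp$—or rather, I need uniqueness of $\cC^\perp$-vectors with a fixed circuit support. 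Here I would argue: take distinct $f,g \in \underline{X}$, and find $Z \in \cC$ with $\underline{Z}^* \cap \underline{X} = \{f,g\}$ exactly as in the proof of Lemma~\ref{lem:F-circuit unique up to} (via a fundamental circuit $C(B,g^*)$ for a basis $B$ containing $\underline{X}\symdiff\skewpair{f}$); then $\langle X, Z^*\rangle \in N_F$ pins down the ratio $\tilde X(f)/\tilde X(g)$, and the same holds for $Y$, so $X = \alpha Y$ for some $\alpha \in F^\times$, hence $X \in \cC$ by the scaling axiom.

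The main obstacle, and the step I would spend the most care on, is proving that the support of an elementary vector $X \in \cC^\perp$ is \emph{exactly} a circuit of $\ul{M}_\cC$ (not a larger dependent admissible set and not something avoiding all circuits). For ``$\underline{X}$ contains a circuit'': if $\underline{X}$ were independent, extend it to a support basis—no, to a basis $B$ of $\ul{M}_\cC$; then for every $e \in B^*$ the fundamental $F$-circuit $X_e$ has $X_e(e) = 1$, and by Lemma~\ref{lem:perp to consistent}, $X$ is consistent with $\cC$, so $\tilde X = \sum_{e\in B^*}\tilde X(e)\tilde X_e$; but $\underline{X}\subseteq B$ means $\tilde X(e) = 0$ for all $e \in B^*$, giving $X = \mathbf 0$, a contradiction. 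So $\underline{X}$ contains some circuit $C$. For ``$\underline{X} = C$'': let $Y_C \in \cC$ be the $F$-circuit supported on $C$; by the first inclusion $Y_C \in \cC^\perp$, and $\emptyset \ne C \subseteq \underline{X}$ with $Y_C \ne \mathbf 0$, so support-minimality of $X$ in $\cC^\perp\setminus\{\mathbf 0\}$ forces $C = \underline{X}$. This chain of reductions, relying on Lemma~\ref{lem:perp to consistent} and the already-established first inclusion, should complete the proof; I would present it in the order: (1) $\cC \subseteq \cC^\perp$ from~\ref{item:O}; (2) elements of $\cC$ are elementary in $\cC^\perp$; (3) elementary vectors of $\cC^\perp$ have circuit support; (4) uniqueness up to scaling. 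The bookkeeping about which basis to choose in steps (2) and (4) is routine given Proposition~\ref{prop:fundamental-circuit} and Lemma~\ref{lem:extendcircuit}.
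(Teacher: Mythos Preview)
Your proposal is correct and follows the same four-step decomposition as the paper's proof: $\cC\subseteq\cC^\perp$ from~\ref{item:O}; elements of $\cC$ are elementary in $\cC^\perp$; elementary vectors of $\cC^\perp$ have circuit support; uniqueness up to scaling via a two-point fundamental-circuit argument. The one tactical difference is that the paper proves ``every nonzero $Z\in\cC^\perp$ has dependent support'' by a direct single-term inner product (extend $\underline{Z}$ to a basis $B$, take $Y\in\cC$ with $\underline{Y}=C(B,f^*)$ for some $f\in\underline{Z}$, and observe $\langle Z,Y^*\rangle=\conj{Z}(f)\conj{Y}(f^*)\notin N_F$), whereas you route through Lemma~\ref{lem:perp to consistent} and the linear-span condition; both are short and valid. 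Your exposition of step~(2) is tangled and should simply be replaced by the observation that it follows immediately from the dependent-support claim in step~(3): if $\underline{Z}\subsetneq\underline{X}$ with $\underline{X}$ a circuit then $\underline{Z}$ is independent, contradicting that every nonzero vector of $\cC^\perp$ has dependent support.
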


\begin{proof}
    Denote $\ul{M} := \underline{M}_{\cC}$.
    Note that $\cC \subseteq \cC^\perp$, since $\cC$ is orthogonal.

    We first show that $\elem(\cC^\perp) \supseteq \cC$.
    Suppose $X \in \cC$ is not elementary in $\cC^\perp$.
    Then there is $X' \in \cC^\perp \setminus \{\mathbf{0}\}$ such that $\underline{X'} \subsetneq \underline{X}$.
    Let $e \in \underline{X} 
    \setminus \underline{X}'$ and let $B$ be a basis of $\ul{M}$ containing $\underline{X} \symdiff \skewpair{e}$.
    Choose $f \in \underline{X'}$ and $Y \in \cC$ so that $\underline{Y} = C(B,f^*)$.
    Then $\langle X', Y^\ast \rangle = \conj{X'}(f) \conj{Y}(f^*) \not\in N_F$, a contradiction. 

    Next, we prove that $\elem(\cC^\perp) \subseteq \cC$.
    Let $X$ be an elementary vector in $\cC^\perp$.
    Suppose for contradiction that $\underline{X}$ is independent in $\ul{M}$.
    Take an element $e\in \underline{X}$ and a basis $B$ of $\ul{M}$ containing $\underline{X}$, and let $Y \in \mathcal{C}$ be such that $\underline{Y} = C(B,e^*)$.
    Then $\langle X, Y^\ast \rangle = \conj{X}(e) \conj{Y}(e^*) \not\in N_F$, a contradiction.
    Therefore, $\underline{X}$ is dependent in $\ul{M}$.
    Then there is $X' \in \cC$ such that $\underline{X'} \subseteq \underline{X}$.
    Since $\cC \subseteq \cC^\perp$ and $X$ is elementary in $\mathcal{C}^\perp$, we have $\underline{X} \subseteq \underline{X}'$.
    Hence $\underline{X} = \underline{X}'$. Now it suffices to show $X = \alpha X'$ for some $\alpha  \in F^\times$.
    For $e\in \underline{X}$, we may assume that $X(e) = X'(e) = 1$.
    Suppose that $X \neq X'$.
    Then $X(f) \neq X'(f)$ for some $f \in \underline{X}$.
    For a basis $B$ of $\ul{M}$ containing $\underline{X} \symdiff \skewpair{e}$, let $Y \in \mathcal{C}$ be such that $\underline{Y} = C(B,f^*)$ and $Y(f^*) = 1$.
    Because $\conj{X}(f) + \conj{Y}(e^*) = \langle X, Y^\ast \rangle \in N_F$, we have $\conj{X}(f) = -\conj{Y}(e^*)$.
    We similarly deduce that $\conj{X}'(f) = -\conj{Y}(e^*)$, which contradicts the fact that $X(f) \neq X'(f)$.
    Thus, $X = X' \in \mathcal{C}$.
\end{proof}

%We finally show a natural bijection between orthogonal $F$-signatures and orthogonal $F$-vector sets.

\begin{theorem}\label{thm:OV}
  The following hold:
  \begin{enumerate}[label=\rm(\roman*)]
    \item If $\mathcal{C}$ is an orthogonal $F$-signature, then $\mathcal{C}^\perp$ is an orthogonal $F$-vector set and $\cC = \elem(\cC^\perp)$.
    \item If $\cV$ is an orthogonal $F$-vector set, then $\elem(\cV)$ is an orthogonal $F$-signature and $\cV = \elem(\cV)^\perp$.
  \end{enumerate}
\end{theorem}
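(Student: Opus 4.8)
The plan is to deduce both parts from the lemmas established earlier in this subsection, namely Lemma~\ref{lem:suppbase}, Lemma~\ref{lem:consistent to perp}, Lemma~\ref{lem:perp to consistent}, and Lemma~\ref{lem:MSS}, together with Theorem~\ref{thm:strong orthogonal signatures and circuit sets}. For part~(i), let $\cC$ be an orthogonal $F$-signature of an orthogonal matroid $\ul{M} := \underline{M}_\cC$ on $E$, and put $\cV := \cC^\perp$. I first verify axiom~\ref{item:V1}: if $X,Y \in \cV$ are elementary with $|\underline{X} \cap \underline{Y}^*| \leq 2$, then by Lemma~\ref{lem:MSS} both $X$ and $Y$ lie in $\cC$, so $\langle X, Y^\ast \rangle \in N_F$ by the orthogonality~\ref{item:O} satisfied by $\cC$. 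Next, for~\ref{item:V2} and~\ref{item:V3} I use Theorem~\ref{thm:strong orthogonal signatures and circuit sets}: since $\cC$ is an orthogonal $F$-signature, it is a strong $F$-circuit set, so for every basis $B$ of $\ul{M}$ and every $e \in B^*$ there is a (unique up to scalar, by Lemma~\ref{lem:F-circuit unique up to}) $F$-circuit $X_e \in \cC$ with support $C(B,e)$; normalizing $X_e(e) = 1$ gives a candidate fundamental circuit form $\{X_e\}_{e \in B^*}$, and these $X_e$ lie in $\cV = \cC^\perp$ since $\cC \subseteq \cC^\perp$. Support bases of $\cV$ are exactly bases of $\ul{M}$: a transversal $T$ is a support basis iff no nonzero element of $\cV = \cC^\perp$ has support inside $T$, and by Lemma~\ref{lem:perp to consistent} combined with Lemma~\ref{lem:consistent to perp} the nonzero elements of $\cC^\perp$ of minimal support are precisely $\cC$ (i.e. $\elem(\cC^\perp) = \cC$ by Lemma~\ref{lem:MSS}), so $T$ is a support basis iff $T$ contains no circuit of $\ul{M}$ iff $T$ is a basis. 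Finally,~\ref{item:V3} is exactly the statement that $X \in \cC^\perp$ iff $\tilde{X}$ is in the linear span of $\{\tilde{X_e} : e \in B^*\}$ for every basis $B$ — the forward direction is Lemma~\ref{lem:perp to consistent}, and the reverse direction is Lemma~\ref{lem:consistent to perp}. This shows $\cV = \cC^\perp$ is an orthogonal $F$-vector set, and $\cC = \elem(\cC^\perp)$ is Lemma~\ref{lem:MSS}.

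For part~(ii), let $\cV$ be an orthogonal $F$-vector set. By Lemma~\ref{lem:suppbase} there is an orthogonal matroid $\ul{M}$ whose bases are the support bases of $\cV$ and whose circuits are the supports of elementary vectors of $\cV$. Set $\cC := \elem(\cV)$. Then $\underline{\cC} = \cC(\ul{M})$ by Lemma~\ref{lem:suppbase}; axiom~(ii) of an $F$-signature (closure under $F^\times$-scaling) is immediate since scaling preserves support, nonzeroness, admissibility, and minimality. So $\cC$ is an $F$-signature of $\ul{M}$. To see it is orthogonal, I must show $\langle X, Y^\ast \rangle \in N_F$ for all $X, Y \in \cC$, not merely those with small support overlap; for this I will show $\cV \subseteq \cC^\perp$ and then use consistency. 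Concretely, fix $X, Y \in \cC = \elem(\cV)$; choose a support basis $B$ of $\cV$ with $\underline{X} \symdiff \skewpair{e} \subseteq B$ for a suitable $e \in \underline X$ (using that circuits of $\ul M$ are near-bases), so that $\underline{X} = C(B,e)$ and $X$ is, up to scalar, the fundamental-circuit-form vector $X_e$; by~\ref{item:V3} every vector of $\cV$ — in particular $Y$ — is in the linear span of $\{X_g : g \in B^*\}$ after passing to $\tilde{(\cdot)}$. Then the same computation as in Lemma~\ref{lem:consistent to perp} (using the $2$-term relations $\langle X_e, X_g^\ast \rangle \in N_F$, which hold because $X_e, X_g$ are elementary with $|\underline{X_e} \cap \underline{X_g}^*| \leq 2$ by~\ref{item:V1}) yields $\langle Y, X^\ast\rangle \in N_F$, hence $\langle X, Y^\ast\rangle \in N_F$. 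Thus $\cC$ is an orthogonal $F$-signature. It remains to show $\cV = \elem(\cV)^\perp = \cC^\perp$: the inclusion $\cV \subseteq \cC^\perp$ was just proved; conversely, if $X \in \cC^\perp$, then by Lemma~\ref{lem:perp to consistent} applied to the orthogonal $F$-signature $\cC$, the vector $X$ is consistent with $\cC$, i.e. $\tilde X$ lies in the span of the fundamental circuit form with respect to every support basis of $\cV$, so $X \in \cV$ by~\ref{item:V3}.

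I expect the main obstacle to be the orthogonality verification in part~(ii): upgrading the $2$-term orthogonality~\ref{item:V1} built into the definition of an orthogonal $F$-vector set to full orthogonality~\ref{item:O} on $\cC = \elem(\cV)$. The key device is that~\ref{item:V3} lets us write an arbitrary elementary vector $Y$ in the span of a fundamental circuit form $\{X_g\}_{g \in B^*}$, and then the span-and-pair argument of Lemma~\ref{lem:consistent to perp} reduces the general inner product to a sum of $2$-term inner products of fundamental circuits, each of which is null by~\ref{item:V1}. A secondary subtlety is matching conventions between the "consistent with $\cC$" language of Section~\ref{sec:OC} (phrased with $\tilde{(\cdot)}$ and linear spans over bases) and the "fundamental circuit form / linear span" language of the definition of orthogonal $F$-vector sets; once one observes these are literally the same condition (support bases of $\cV$ are bases of $\ul M$, and fundamental circuit forms are the normalized $F$-circuits $X_e$ with support $C(B,e)$), the two parts of the theorem become dual bookkeeping exercises around Lemmas~\ref{lem:consistent to perp},~\ref{lem:perp to consistent}, and~\ref{lem:MSS}.
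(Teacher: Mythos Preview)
Your proposal is correct and follows essentially the same route as the paper's proof: both parts are bookkeeping around Lemmas~\ref{lem:suppbase}, \ref{lem:consistent to perp}, \ref{lem:perp to consistent}, and~\ref{lem:MSS}, with the identification ``$X \in \cV$'' $\Leftrightarrow$ ``$X$ is consistent with $\cC$'' as the bridge. One minor remark: your appeal to Theorem~\ref{thm:strong orthogonal signatures and circuit sets} in part~(i) is unnecessary---the fundamental-circuit-form vectors $X_e$ come directly from $\cC$ being an $F$-signature (its supports are all circuits of $\ul{M}$), and you never actually use the strong circuit-set property~\ref{item:L}; the paper simply cites~\ref{item:C5} for~\ref{item:V2} and Lemmas~\ref{lem:consistent to perp}--\ref{lem:perp to consistent} for~\ref{item:V3}.
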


\begin{proof}
  (i)
  By Lemma~\ref{lem:MSS}, $\elem(\cC^\perp) = \cC$ and thus $\cC^\perp$ satisfies~\ref{item:V1}.
  In addition, the set of support bases of $\cC^\perp$ is equal to the set of bases of $\underline{M}_{\cC}$.
  Therefore, by~\ref{item:C5}, $\cC^\perp$ satisfies~\ref{item:V2}.
  By Lemmas~\ref{lem:consistent to perp} and \ref{lem:perp to consistent}, $\cC^\perp$ satisfies~\ref{item:V3}.

  (ii)
  Let $\cC := \elem(\cV)$.
  By~\ref{item:V1}, $\mathcal{C}$ satisfies the $2$-term orthogonality relation~\ref{item:Ot2}.
  By Lemma~\ref{lem:suppbase}, the set of support bases of $\cV$ coincides with the set of support bases of $\cC$. Moreover, it is the set of bases of some ordinary orthogonal matroid $\ul{M}$.
  Then
  $\mathcal{C}$ is an $F$-signature of $\ul{M}$ and
  every fundamental circuit form of $\cC$ is a fundamental circuit form of~$\cV$.
  Conversely, by~\ref{item:V3}, every fundamental circuit form of $\cV$ is a fundamental circuit form of~$\cC$.
  % Therefore, $X \in F^E$ is consistent with $\cC$ if and only if for every support basis $B$ of~$\cV$, $\conj{X}$ belongs to the linear span of $\{\conj{X}_e:e\in B^*\}$, where $\{X_e:e\in B^*\}$ is the fundamental circuit form of $\cV$ with respect to $B$.
  Therefore, $X\in F^E$ is in $\cV$ if and only if it is consistent with $\cC$.
  The latter condition implies that $X \in \cC^\perp$ by Lemma~\ref{lem:consistent to perp}.
  Then $\cC \subseteq \cV \subseteq \cC^\perp$.
  Therefore, $\cC$ is an orthogonal $F$-signature of $\ul{M}$.
  By Lemma~\ref{lem:perp to consistent}, if $X \in \cC^\perp$, then $X$ is consistent with $\cC$.
  Hence $\cC^\perp \subseteq \cV$ 
  % by~\ref{item:V2} and~\ref{item:V3} 
  and we conclude $\cC^\perp = \cV$.
\end{proof}

We finish the discussion of orthogonal vector sets with the proof of Theorem~\ref{thm:lagrangian subspace}(i) that if $F$ is a field, then every orthogonal $F$-vector set is a Lagrangian subspace.

\begin{proof}[Proof of Theorem~\ref{thm:lagrangian subspace}(i)]
    By~\ref{item:V2} and~\ref{item:V3}, $\cV$ is an $n$-dimensional linear subspace of $F^{[n] \cup [n]^*}$.
    Let $\cC := \elem(\cV)$.
    By Theorem~\ref{thm:OV}(ii), $\langle X, Y^* \rangle =0$ for all $X,Y \in \cC$ and $\cV = \cC^\perp$.
    By Lemma~\ref{lem:perp to consistent}, $\cV$ is the subspace spanned by $\cC$ and thus $\langle X, Y^* \rangle =0$ for all $X,Y \in \cV$.
    Hence $\cV$ is isotropic and therefore Lagrangian.
\end{proof}

\begin{example}
  By~{\cite[Corollary~3.45]{Baker2019}}, if $F$ is a {\em doubly distributive partial hyperfield} such as a field, $\bS$, $\T$, or $\K$, and if $M$ is a strong $F$-matroid, then every vector (resp. covector) of $M$ is orthogonal to all covectors (resp. vectors) of $M$.
  For the proof, it is crucial to show that if $F$ is a doubly distributive partial hyperfield, then every weak $F$-matroid is automatically a strong $F$-matroid.
  In the orthogonal case, if $F$ is a field and $\cW \subseteq F^{[n]\cup[n]^*}$ is an orthogonal $F$-vector set, then $\cW^\perp = \cW$ by Theorem~\ref{thm:lagrangian subspace}(i). 
  So one may ask naturally whether this fact can be generalized to doubly distributive partial hyperfields. However,
  it is false even if we take $F = \K$, the Krasner hyperfield.
  Let $\ul{N}$ be the orthogonal matroid on $[5]\cup[5]^*$ in which a transversal $B$ is a basis of $\ul{N}$ if and only if $|B \cap [5]|$ is even and $B \neq 1^*2345, 12^*345$.
  By computer search, we check that $|\cC| = 15$, $|\cV|=256$, and $|\cV^\perp|=169$, where $\cC$ is the unique $\K$-circuit set of $\ul{N}$ and $\cV := \cC^\perp$ is the corresponding orthogonal $\K$-vector set.
\end{example}

\subsection{Natural bijections}\label{sec:strong equiv}

Summarizing the results in Sections~\ref{sec:W2O}--\ref{sec:OV},
we prove the equivalence between various notions of orthogonal matroids with coefficients in tracts, described in Theorems~\ref{thm:main},~\ref{thm:main-weak1}, and~\ref{thm:main-weak2}.
As a corollary, we deduce Theorem~\ref{thm:weak orthogonal signatures and circuit sets}.

The following lemma is straightforward from definitions.
\begin{lemma}\label{lem:C and varphi}
  Let $F$ be a tract.
  Let $\cC$ be an $F$-signature of an orthogonal matroid satisfying~\ref{item:Ot2}, and let $\varphi$ be a weak Wick $F$-function.
  Then $\cC_{\varphi_\cC} = \cC$ and $[\varphi_{\cC_\varphi}] = [\varphi]$.
\end{lemma}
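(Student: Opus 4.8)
The claim is that the two constructions $\cC \mapsto \varphi_\cC$ (from Section~\ref{sec:O2W}) and $\varphi \mapsto \cC_\varphi$ (from Section~\ref{sec:W2O}) are mutually inverse, up to the evident equivalences: applying them in either order returns the original object, where a Wick function is only determined up to a global scalar. My plan is to verify the two identities $\cC_{\varphi_\cC} = \cC$ and $[\varphi_{\cC_\varphi}] = [\varphi]$ separately, in each case by comparing the defining ratios, since both constructions are ``ratio-based'': an $F$-circuit with support $C$ is pinned down by the ratios $\conj X(e)/\conj X(f)$ for $e,f \in C$, and a Wick function is pinned down (up to scalar) by the ratios $\varphi(B_1)/\varphi(B_2)$ along edges of the basis graph $\Gamma_{\ul M}$.

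For $\cC_{\varphi_\cC} = \cC$: both $F$-signatures are over the same orthogonal matroid $\ul M = \ul M_\cC$, and by Lemma~\ref{lem:F-circuit unique up to} (using that $\cC$ satisfies~\ref{item:Ot2}) each $F$-circuit is unique up to a scalar, so it suffices to check that for a fixed circuit $C$ and fixed $e,f \in C$, the ratio $\conj X(e)/\conj X(f)$ computed in $\cC$ agrees with the one computed in $\cC_{\varphi_\cC}$. By Lemma~\ref{lem:extendcircuit} choose a transversal $T \supseteq C$ with $T \symdiff \skewpair x$ a basis of $\ul M$ for every $x \in C$. Then the definition~\eqref{eq:defineX} of the $\cC_{\varphi_\cC}$-circuit gives
\[
\frac{\conj X(e)}{\conj X(f)} = (-1)^{m^T_{e,f}} \frac{\varphi_\cC(T\symdiff\skewpair e)}{\varphi_\cC(T\symdiff\skewpair f)}.
\]
On the other hand, $B_1 := T \symdiff \skewpair f$ and $B_2 := T \symdiff \skewpair e$ are bases with $|B_1 \symdiff B_2| = 4$, and $C = C(B_1,f) = C(B_2,e)$; so by the very definition of $\gamma(B_1,B_2)$ in Section~\ref{sec:O2W} together with the recursive definition of $\varphi_\cC$, the ratio $\varphi_\cC(B_2)/\varphi_\cC(B_1) = \gamma(B_1,B_2) = (-1)^{m^T_{e,f}}\conj X(e)/\conj X(f)$ with $X$ the $\cC$-circuit of support $C$. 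Combining the two displays (the sign $(-1)^{m^T_{e,f}}$ appears twice and cancels) yields that the two ratios coincide, hence $\cC_{\varphi_\cC} = \cC$.

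For $[\varphi_{\cC_\varphi}] = [\varphi]$: write $\cC := \cC_\varphi$; note $\cC$ satisfies~\ref{item:Ot2} because $\cC_\varphi$ is (at least) a weak orthogonal $F$-signature by Theorem~\ref{thm:W2O-weak} (or directly, since~\ref{item:Ot2} only involves $2$-element intersections). Both $\varphi_{\cC}$ and $\varphi$ have underlying orthogonal matroid $\ul M$, and since $\Gamma_{\ul M}$ is connected it is enough to show that for one fixed basis $B_0$ the values agree up to scalar, and that $\varphi_{\cC}(B_2)/\varphi_{\cC}(B_1) = \varphi(B_2)/\varphi(B_1)$ for every edge $B_1 B_2$ of $\Gamma_{\ul M}$. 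Fix such an edge, write $B_1 = T \symdiff \skewpair f$, $B_2 = T \symdiff \skewpair e$ with $C := C(B_1,f) = C(B_2,e) \subseteq T$, and let $X \in \cC$ be the $\cC_\varphi$-circuit of support $C$. By construction of $\varphi_\cC$, $\varphi_\cC(B_2)/\varphi_\cC(B_1) = \gamma(B_1,B_2) = (-1)^{m^T_{e,f}}\conj X(e)/\conj X(f)$; by construction~\eqref{eq:defineX} of $\cC_\varphi$ applied with this $T$, $\conj X(e)/\conj X(f) = (-1)^{m^T_{e,f}}\varphi(B_2)/\varphi(B_1)$. Substituting, the signs cancel and $\varphi_\cC(B_2)/\varphi_\cC(B_1) = \varphi(B_2)/\varphi(B_1)$, which is exactly what we need.

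\textbf{Main obstacle.} Neither step is hard once the bookkeeping is set up — this is why the lemma is described as ``straightforward from definitions'' — but the one place to be careful is the sign index $m^T_{i,j}$: one must confirm that the exponent $(-1)^{m^T_{e,f}}$ entering the definition of $\gamma(B_1,B_2)$ in Section~\ref{sec:O2W} is literally the same quantity as the one entering~\eqref{eq:defineX}, with the same transversal $T$ and the same ordered pair $(e,f)$ (as opposed to, say, an off-by-one from the half-open interval convention $\ocinterval{\nostar e}{\nostar f}$, or a swap of $e$ and $f$). Once that matching is checked, everything cancels and the identities follow; the connectedness of $\Gamma_{\ul M}$ (used implicitly throughout the well-definedness discussion via Theorem~\ref{thm:homotopy}) takes care of propagating the edge-wise equality to all bases.
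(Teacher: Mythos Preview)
Your proposal is correct and is exactly the unwinding of definitions that the paper has in mind; the paper offers no proof beyond the phrase ``straightforward from definitions,'' and you have written out precisely those definitions and matched the ratios. One small wording fix: the justification that $\cC_\varphi$ satisfies~\ref{item:Ot2} should cite Theorem~\ref{prop:weak Wick function to weak circuit set} (which shows $\cC_\varphi$ is a weak $F$-circuit set, hence satisfies~\ref{item:Ot2}) rather than Theorem~\ref{thm:W2O-weak}, since the latter assumes $\varphi$ is moderately weak; your parenthetical ``or directly'' already covers this.
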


% Thus we obtain natural bijections between strong (resp. defectively weak) orthogonal $F$-matroids and strong (resp. weak) orthogonal $F$-signatures from results in Sections~\ref{sec:W2O} and~\ref{sec:O2W}.

\begin{proof}[Proof of Theorem~\ref{thm:main}]
  %Let $[\varphi]$ be a strong orthogonal $F$-matroid.
  %By definition, $\cC_{\varphi} = \cC_{\alpha \varphi}$ for all $\alpha \in F^\times$ and a Wick function induced by $\cC_{\varphi}$ is equivalent with $\varphi$ regardless of the choice of $B_0$ in Definition~\ref{def:O2W}.
  %Therefore, 
By Theorems~\ref{thm:W2O},~\ref{thm:O2W}, and Lemma~\ref{lem:C and varphi}, there is a natural bijection between (1) and (2).
By Theorem~\ref{thm:strong orthogonal signatures and circuit sets}, (2) and (3) are identical.
By Theorem~\ref{thm:OV}, there is a natural bijection between (2) and (4).
\end{proof}

\begin{proof}[Proof of Theorem~\ref{thm:main-weak1}]
  It is straightforward from Theorems~\ref{thm:W2O-weak},~\ref{thm:O2W-weak}, and Lemma~\ref{lem:C and varphi}.
\end{proof}

\begin{proof}[Proof of Theorem~\ref{thm:main-weak2}]
  % By definitions, for each weak Wick function $\varphi$ and each weak $F$-circuit set $\cC$ of an orthogonal matroid, we have $[\varphi] = [\varphi_{\cC_\varphi}]$ and $\cC = \cC_{\varphi_\cC}$.
  It is concluded by Theorem~\ref{prop:weak circuit set to weak Wick function},~\ref{prop:weak Wick function to weak circuit set}, and Lemma~\ref{lem:C and varphi}.
\end{proof}

\begin{proof}[Proof of Theorem~\ref{thm:weak orthogonal signatures and circuit sets}]
  It is an immediate corollary of Theorems~\ref{thm:main-weak1} and~\ref{thm:main-weak2}.
\end{proof}

\subsection{More examples. }\label{sec:more-examples}
Strong orthogonal $F$-matroids generalize strong $F$-matroids by Proposition~\ref{prop:W and GP}, and strong orthogonal $F$-signatures of orthogonal matroids generalize strong dual pairs of $F$-signatures of matroids by Remark~\ref{rmk:OS and DP}.
Baker and Bowler showed in~\cite{Baker2019} the equivalence of weak $F$-matroids and weak dual pairs of $F$-signatures.
By Theorem~\ref{thm:main-weak1}, moderately weak orthogonal $F$-matroids and weak orthogonal $F$-signatures are equivalent.
However, in the previous equivalence, moderately weak orthogonal $F$-matroids cannot be replaced by weak orthogonal $F$-matroids, as the class of weak orthogonal $F$-matroids is strictly larger than the class of moderately weak orthogonal $F$-matroids for some tract $F$.
This is true even if we restrict the classes of weak and moderately weak orthogonal $F$-matroids to those whose underlying orthogonal matroids are lifts of matroids.

\begin{example}\label{eg:nonidyll}
  Let $F$ be the tract $(\{1\},\{1+1, 1+1+1\})$ with the trivial involution and let $\ul{M}$ be the lift of the uniform matroid $U_{3,6}$. The set of bases of $\ul{M}$ is $\{abc d^* e^* f^* : abcdef = [6]\}$. Since $F^\times = \{1\}$, the function $\varphi: \mathcal{T}_6 \to F$ whose support is the set of bases of $\ul{M}$ is uniquely determined. Because $\ul{M}$ is the lift of a matroid, for all transversals $T_1$ and $T_2$ with $|(T_1 \symdiff T_2) \cap [6]| = 4$, at most three of $\varphi(T_1 \symdiff \skewpair{i_j}) \varphi(T_2 \symdiff \skewpair{i_j})$ with $1\leq j\leq 4$ are nonzero, where $(T_1 \symdiff T_2) \cap [6] = \{i_1 < i_2 < i_3 < i_4\}$.
  Therefore, $\varphi$ is a weak Wick $F$-function.
  Consider $T_1'=\{1,2,3,4,5^*,6^*\}$ and $T_2'=(T_1')^*$, we have $\sum_{i=1}^6 (-1)^i \varphi(T_1'\symdiff\skewpair{i}) \varphi(T_2'\symdiff\skewpair{i}) = 1+1+1+1 \notin N_F$. Hence $\varphi$ is not a moderately weak Wick $F$-function. Similarly, if we take $\cC$ to be the unique $F$-signature of $\ul{M}$, then it is readily seen that $\cC$ is a weak $F$-circuit set but not a weak orthogonal $F$-signature.
  %
  % Unlike the previous direction, if a function $\varphi:\cT_n \to F$ satisfies~\ref{item:W1} and~\ref{item:W2tk} and the support of $\varphi$ forms the set of bases of an orthogonal matroid, $\mathcal{C}_\varphi$ does not necessarily satisfies the $k$-term orthogonality~\ref{item:Otk}.
  % Here we give an example for $k = 4$. Consider the tract $F = (\{1\},\{1+1, 1+1+1\})$ endowed with the trivial involution and let $\ul{M}$ be the lift of the uniform matroid $U_{3,6}$. The set of bases of $\ul{M}$ is $\{abc d^* e^* f^* : abcdef = [6]\}$, and the set of circuits of $\ul{M}$ is $\{abcd : abcd \in \binom{[6]}{4}\} \cup \{a^* b^* c^* d^* : abcd \in \binom{[6]}{4}\}$. A function $\varphi: \mathcal{T}_6 \to F$ satisfying \ref{item:W1} and whose support is the set of bases of $\ul{M}$ is uniquely determined, since $F^\times = \{1\}$.
  % Because $\ul{M}$ is the lift of a matroid, for all transversals $T_1$ and $T_2$ with $|(T_1 \symdiff T_2) \cap [6]| = 4$, at most three of $\varphi(T_1 \symdiff \skewpair{i_j}) \varphi(T_2 \symdiff \skewpair{i_j})$ with $1\leq j\leq 4$ are nonzero, where $(T_1 \symdiff T_2) \cap [6] = \{i_1 < i_2 < i_3 < i_4\}$.
  % Therefore, $\varphi$ satisfies~\ref{item:W2tk} with $k=4$.
  % An $F$-signature $\cC$ of $\ul{M}$ is also uniquely determined. 
  % Now let $X_1, X_2$ be the $F$-circuits of $\cC$ such that $\underline{X_1} = \{1,2,3,4\}$ and $\underline{X_2} = \{1^*, 2^*, 3^*, 4^*\}$.
  % Then $\langle X_1, X_2 \rangle = 1+1+1+1 \not\in N_F$ and therefore there is no $F$-signature of $\ul{M}$ satisfying~\ref{item:Otk} with $k=4$.
\end{example}

We also have an instance showing where the class of strong $F$-matroids is strictly larger than the class of moderately weak $F$-matroids, i.e., the class of strong orthogonal $F$-signatures is strictly larger than the class of weak orthogonal $F$-signatures.

\begin{example}\label{eg:nonidyll2}
  Let $F$ be the tract $(\{1\},\{1+1,1+1+1,1+1+1+1\})$ endowed with the trivial involution and let $\ul{M}$ be the lift of $U_{4,8}$.
  Let $\cC$ be the unique $F$-signature of $\ul{M}$.
  Then for $X,Y\in\cC$ whose supports are $[5]$ and $[5]^*$, respectively, we have $\langle X,Y^* \rangle = 1+1+1+1+1 \notin N_F$ and thus~\ref{item:O} does not hold.
  However, \ref{item:O'} holds obviously by our choice of $F$.
\end{example}

By Theorem~\ref{thm:weak matroid bijection}, 
if $\mathcal{C}$ is an $F$-signature of the lift of a matroid satisfying the $3$-term orthogonality~\ref{item:Ot3}, then $\varphi_\mathcal{C}$ is a weak Wick $F$-function. However, this is false in general for orthogonal matroids, even if $F$ is a field.

\begin{example}\label{eg:O3 but not W2'}
  % Let $K$ be a field with $|K|^\times>1$, and let $\charac(K)\ne 3$ and $\cC$ be the subset of $K^{[4]\cup[4]^*}$ defined in Example~\ref{eg:why not choose O3 and Lii}.
  Consider the $K$-signature $\cC$ defined in Example~\ref{eg:why not choose O3 and Lii}, which satisfies~\ref{item:Ot3} but not~\ref{item:O'}.
  Note that $\cC_{\varphi_\cC} = \cC$ and thus by Theorem~\ref{thm:W2O-weak}, $\varphi_\cC$ is not a moderately weak Wick $K$-function.
  Since $E(\ul{M}_\cC) = [4]\cup[4]^*$, \ref{item:W2'} and~\ref{item:W2''} are equivalent for $\varphi_\cC$.
  Thus $\varphi_\cC$ is not a weak Wick function.
  
  More precisely, we can compute $\varphi_\cC$ by setting $\varphi_\cC([4])=1$ and check whether it satisfies~\ref{item:W2''}.
  % Hence if we take $\varphi_\cC([4]) = 1$, 
  By definition, it is easily seen that
  \begin{align*}
    \varphi_\cC(B) =
    \begin{cases}
      1   & \text{if $B = [4]$ or $1^*23^*4$}, \\
      -1  & \text{if $B\in \{ijk^*\ell^*:ijk\ell=[4]\} \setminus \{1^*23^*4\}$}, \\
      -x   & \text{if $B = [4]^*$}, \\
      0   & \text{otherwise}.
    \end{cases}
  \end{align*}
  Then for $T_1 = 1234^*$ and $T_2 = 1^*2^*3^*4$,
  \begin{align*}
    \sum_{i=1}^4 (-1)^i \varphi(T_1 \symdiff \skewpair{i}) \varphi(T_2 \symdiff \skewpair{i})
    = -1 -1 -1 - x \ne 0
  \end{align*}
  since $x \in K\setminus\{0,-3\}$. Therefore, $\varphi_\cC$ does not satisfies~\ref{item:W2''}.
\end{example}

% \begin{example}\label{eg:weak orthogonal matroid but not 3-term orthogonal signature}
%   Let $\ul{M}$ be an orthogonal matroid on $[4]\cup[4]^*$ whose set of circuits is $\{[4], [4]^*\} \cup \{a b c^* d^* : abcd = [4]\}$.
%   Then its set of bases is $\{abc d^* : abcd = [4]\} \cup \{a^* b^* c^* d : abcd = [4]\}$.
%   Consider the tract $F = (\{1\}, \{1+1\})$ endowed with the trivial involution.
%   The $F$-signature $\cC$ of $\ul{M}$ is uniquely determined.
%   Note that for all circuits $C_1$ and $C_2$ of $M$, $|C_1\cap C_2^{*}| \in \{0,2,4\}$.
%   Hence $\mathcal{C}$ satisfies the $3$-term orthogonality.
%   Let $\varphi: \mathcal{T}_4 \to F$ be such that $\varphi(T) = 1$ if and only if $T$ is a basis of $\ul{M}$.
%   Then $\varphi = \varphi_\cC$ and
%   $\sum_{i=1}^4 (-1)^i \varphi([4] \symdiff \skewpair{i}) \varphi([4]^* \symdiff \skewpair{i}) = 1+1+1+1 \not\in N_F$.
%   Therefore, there is no weak Wick function with coefficients in $F$ whose underlying orthogonal matroid is $\ul{M}$.
% \end{example}

In Sections~\ref{sec:OM functionality} and~\ref{sec:OM minors}, we promised to show that the minors and the pushforward operations of an orthogonal $F$-vector set are not properly defined.
Recall that for $\cW \subseteq F^E$ and $e\in E$, $\cW|e = \{\pi(X) \in F^{E \setminus \skewpair{e}} : \text{$X\in \cW$ with $X(e^*) = 0$}\}$, where $\pi : F^E \to F^{E \setminus \skewpair{e}}$ is the canonical projection.
For an orthogonal $F$-signature $\cC$ and the corresponding $F$-vector set $\cV := \cC^\perp$, it is readily seen that $\cV|e \subseteq (\cC|e)^\perp$. Example~\ref{eg:minor of orthogonal vertex set} provides an instance where $\cV|e \neq (\cC|e)^\perp$.
If $f:F \to F'$ is a tract homomorphism commuting with involutions of $F$ and $F'$, one can check that $f_*(\cV) \subseteq (f_*(\cC))^\perp$.
It might not be an equality, as Example~\ref{eg:no pushforward for orthogonal vector sets} shows.

\begin{example}\label{eg:minor of orthogonal vertex set}
    Let $\ul{M}$ be the lift of $U_{1,3}$.
    Then $\cC(\ul{M}) = \{12, 13, 23, 1^* 2^* 3^*\}$.
    Consider the following orthogonal $\U_0$-signature of $\ul{M}$:  
   %$ We define an orthogonal $\U_0$-signature $\cC$ of $M$ as the set of vectors
    % \begin{align*}
    %     \begin{pmatrix}
    %         1 & 1 & 0 \\ 0 & 0 & 0
    %     \end{pmatrix},
    %     \;
    %     \begin{pmatrix}
    %         0 & 1 & 1 \\ 0 & 0 & 0
    %     \end{pmatrix},
    %     \;
    %     \begin{pmatrix}
    %         1 & -1 & 0 \\ 0 & 0 & 0
    %     \end{pmatrix},
    %     \;
    %     \begin{pmatrix}
    %         0 & 0 & 0 \\ 1 & 1 & -1
    %     \end{pmatrix}.
    % \end{align*}
    \begin{align*}
        \cC := \{(1,-1,0,0,0,0), \, (1,0,1,0,0,0), \, (0,1,1,0,0,0), \, (0,0,0,1,1,-1)\},
    \end{align*}
    where the coordinates of the vectors are indexed by $1,2,3,1^*,2^*,3^*$ in order.
    Let $\cV := \cC^\perp$ be the orthogonal $\U_0$-vector set.
    Then $\cV|3 = \left\{(1, -1, 0, 0), \, (1, 0, 0, 0), \, (0, 1, 0, 0) \right\}$ and $( \cC | 3 )^\perp = \cV | 3 \cup \left\{ (1, 1, 0, 0) \right\}$, where the coordinates of vectors are indexed by $1,2,1^*,2^*$ in order.
    % \begin{align*}
    %     \begin{pmatrix}
    %         1 & 1 \\ 0 & 0
    %     \end{pmatrix}
    % \end{align*}
    % is in $(\cC|e)^\perp$ but not in $\cV|e$.
\end{example}

\begin{example}\label{eg:no pushforward for orthogonal vector sets}
    Similarly, let $\cC = \{(1,1,0,0,0,0), \, (1,0,1,0,0,0), \, (0,1,1,0,0,0), \, (0,0,0,1,1,1)\} \subseteq (\F_2)^{[3]\cup[3]^*}$ be the orthogonal $\F_2$-signature of the lift of $U_{1,3}$, where the coordinates of each vector are indexed by $1,2,3,1^*,2^*,3^*$ in order.
    Let $\cV := \cC^\perp$.
    Then it is an orthogonal $\F_2$-vector set by Theorem~\ref{thm:OV} and $(1,1,1,0,0,0) \not\in \cV$.
    For the tract homomorphism $f:\F_2 \to \K$, it is easily checked that $(1,1,1,0,0,0) \in (f_*(\cC))^\perp \setminus f_*(\cV)$ and $\elem(f_*(\cV)) = f_*(\cC)$.
    Thus, $f_*(\cV)$ is not an orthogonal $\K$-vector set by Theorem~\ref{thm:OV}.
\end{example}

\iffalse
\begin{proposition}\label{prop:elem and pushforward}
    Let $\sigma:F_1 \to F_2$ be a tract homomorphism commuting with the involutions of $F_1$ and $F_2$.
    Let $\cC$ be an orthogonal $F_1$-signature.
    Then $\sigma_*'(\cC^\perp) \subseteq (\sigma_*'(\cC))^\perp$.
    % Let $\cV$ be an orthogonal vector set.
    % Then
    % $\sigma_*'(\elem(\cV))
    % \subseteq \sigma_*'(\cV)
    % \subseteq (\sigma_*'(\elem(\cV)))^\perp$.
\end{proposition}
\begin{proof}
    % It is obvious that $\sigma_*'(\elem(\cV))
    % \subseteq \sigma_*'(\cV)$.
    Note that $\sigma_*'(\elem(\cV)) = \elem(\sigma_*'(\cV))$.
    By Theorem~\ref{thm:OV}, $\cV = (\elem(\cV))^\perp$ and thus for all $X \in \elem(\cV)$ and $Y \in \cV$, we have $\langle \sigma(X), \sigma(Y) \rangle = \sigma(\langle X, Y \rangle) \in N_{F_2}$.
    Therefore,
    $\sigma_*'(\cV)
    \subseteq (\sigma_*'(\elem(\cV)))^\perp$.
\end{proof}

By Propositions~\ref{thm:lagrangian subspace} and~\ref{prop:elem and pushforward}, the set $\elem(\cV)$ of elementary vectors of a Lagrangian subspace $\cV$ over a field is essential to possess all information of the Lagrangian subspace $\cV$ and to generalize it over an arbitrary tract.
\fi

\section{Applications}\label{sec:applications}

An ordinary orthogonal matroid ${M}$ is {\em representable} (resp. {\em weakly reprsentable}) over a tract $F$ if there is a strong (resp. weak) orthogonal $F$-matroid whose underlying orthogonal matroid is ${M}$.
When $F$ is a field, the representability of orthogonal matroids was introduced using skew-symmetric matrices in~\cite{Bouchet1988repre}, and coincides with our definition by~{\cite[Theorem~2.2]{Wenzel1993}}.
Note that whenever ${M}$ is the lift of a matroid ${N}$, the orthogonal matroid ${M}$ is representable over a field $K$ if and only if the matroid ${N}$ 
admits a usual matrix representation over $K$ by~{\cite[(4.4)]{Bouchet1988repre}}. 

The following theorem will be used repeatedly in this section. 

\begin{theorem}[Baker-Jin,~Theorem~4.3~of~\cite{BJ2022}]\label{thm:BJ}
    Let $P$ be a partial field and let $\varphi : \mathcal{T}_n \to P$ be a function.
    Then $\varphi$ is a strong Wick function if and only if it is a weak Wick function.
    In particular, an orthogonal matroid is representable over $P$ if and only if it is weakly representable over $P$.
\end{theorem}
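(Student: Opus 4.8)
The plan is to reduce the theorem to the cryptomorphisms of this paper together with the principle that, over a partial field, bounded orthogonality already forces full orthogonality. The implication that a strong Wick function is weak is immediate: it is moderately weak by Proposition~\ref{prop:strong-is-weak}, and moderately weak Wick functions are weak because $\ref{item:W2'}$ subsumes $\ref{item:W2''}$. So the whole content is the converse, for a partial field $P=(G,R)$; recall that over such a $P$, membership in the null set $N_P$ is exactly vanishing in the ring $R$.

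First I would transport the problem to circuit sets. Given a weak Wick $P$-function $\varphi$ with underlying orthogonal matroid $\ul M$, Theorem~\ref{thm:main-weak2} yields the associated weak $P$-circuit set $\cC_\varphi$ of $\ul M$, and Theorem~\ref{thm:weak orthogonal signatures and circuit sets} shows $\cC_\varphi$ is a weak orthogonal $P$-signature, i.e.\ it satisfies $\ref{item:Ot2}$, $\ref{item:L1}$, $\ref{item:L2}$ and the bounded orthogonality $\ref{item:O'}$. Suppose I can show that $\cC_\varphi$ in fact satisfies the full orthogonality $\ref{item:O}$. Then $\cC_\varphi$ is a strong orthogonal $P$-signature, hence a strong $P$-circuit set by Theorem~\ref{thm:strong orthogonal signatures and circuit sets}, hence by Theorem~\ref{thm:main} it corresponds to a strong orthogonal $P$-matroid; by Lemma~\ref{lem:C and varphi} that strong orthogonal $P$-matroid is $[\varphi_{\cC_\varphi}]=[\varphi]$, and since the Wick relations $\ref{item:W2}$ are stable under $\varphi\mapsto c\varphi$ for $c\in P^\times$ (by $\mathrm{(T4)}$, as rescaling multiplies each relation by $c^2\in G$), $\varphi$ itself satisfies $\ref{item:W2}$, i.e.\ it is a strong Wick function. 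The statement about representability is then immediate from the definitions.

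Thus everything comes down to the claim: \emph{if $\cC$ is an orthogonal $P$-signature which is a weak $P$-circuit set and satisfies $\ref{item:O'}$, then it satisfies $\ref{item:O}$}. I would prove this by induction on $t=|\ul X\cap\ul Y^\ast|$ for a pair $X,Y\in\cC$, with the base case $t\le 4$ supplied by $\ref{item:O'}$. For the inductive step one fixes $e\in\ul X\cap\ul Y^\ast$ and, by circuit elimination in $\ul M$ (axioms $\ref{item:C3}$ and $\ref{item:C4}$) together with $\ref{item:L1}$ and $\ref{item:L2}$ — much as in the proof of Theorem~\ref{prop:weak Wick function to weak circuit set} — rewrites $\tilde Y$, modulo $(N_P)^E$, as a $P$-linear combination of $P$-circuits $Y_j\in\cC$ that can be arranged to satisfy $|\ul X\cap\ul Y_j^\ast|<t$. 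Because $P$ is a partial field, all nonzero circuit entries are units (so the needed rescalings are legitimate) and working modulo $(N_P)^E$ is honest coordinatewise arithmetic in $R$; consequently $\langle X,Y^\ast\rangle$ is the same $P$-linear combination of the $\langle X,Y_j^\ast\rangle$, each of which vanishes in $R$ by the induction hypothesis, so $\langle X,Y^\ast\rangle=0$. (One could also try to derive this claim from the known fact that weak matroids over a partial field are strong, by localizing at a basis and using the projection description of orthogonal $F$-circuit sets in Remark~\ref{rmk:F-circuit sets of matroids and orthogonal matroids}.)

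The main obstacle I anticipate is exactly this orthogonality upgrade, and within it the bookkeeping that produces the circuits $Y_j$ with strictly smaller overlap with $X$ while tracking which coordinates vanish. The delicate point is that circuit elimination for orthogonal matroids is only available under admissibility hypotheses ($\ref{item:C3}$, $\ref{item:C4}$), so the argument must branch on whether the relevant unions of circuits are admissible — which is precisely why a weak $P$-circuit set is equipped with both $\ref{item:L1}$ and $\ref{item:L2}$ and why $4$-term rather than $3$-term relations are required. It is equally essential that $P$ carries a genuine ring structure: over a general tract the linear-algebra manipulations are invalid, and Examples~\ref{eg:nonidyll} and~\ref{eg:nonidyll2} show that there weak orthogonal matroids are strictly more general than moderately weak (hence strong) ones.
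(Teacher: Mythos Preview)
The paper does not prove this theorem; it is quoted from \cite{BJ2022} and used as a black box in Section~\ref{sec:applications}. So there is no in-paper proof to compare against, and your reduction via the cryptomorphisms of Section~\ref{sec:OMoverTracts} is an independent strategy.

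There are two genuine gaps. First, your appeal to Theorem~\ref{thm:weak orthogonal signatures and circuit sets} to pass from ``weak $P$-circuit set'' to ``weak orthogonal $P$-signature'' is unsafe: the paper's own Example~\ref{eg:nonidyll} exhibits a weak $F$-circuit set that is \emph{not} a weak orthogonal $F$-signature, and Figure~\ref{fig:summary} records the inclusion in the opposite direction (weak orthogonal $F$-signatures inside weak $F$-circuit sets). The one-line proof of Theorem~\ref{thm:weak orthogonal signatures and circuit sets} via Theorems~\ref{thm:main-weak1} and~\ref{thm:main-weak2} only yields that reverse inclusion, so the statement as written appears to be misoriented; in any case you cannot cite it for the direction you need. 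Thus already the passage from \ref{item:W2''} to \ref{item:O'} over a partial field is part of what must be argued, not something the paper supplies.

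Second, even granting \ref{item:O'}, your inductive upgrade to \ref{item:O} is only a sketch. The axioms \ref{item:L1} and \ref{item:L2} are narrow: they concern two or three \emph{fundamental} circuits with respect to a single basis and a prescribed incidence pattern, and they do not, without further combinatorics, let you rewrite an arbitrary $Y\in\cC$ as a $P$-linear combination of circuits $Y_j$ with $|\underline X\cap\underline{Y_j}^\ast|<t$. Invoking the proof of Theorem~\ref{prop:weak Wick function to weak circuit set} is circular here, since that argument already consumes \ref{item:W2''} rather than producing such a decomposition from circuit axioms alone. The alternative via Remark~\ref{rmk:F-circuit sets of matroids and orthogonal matroids} only treats lifts of ordinary matroids. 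In short, the reduction you outline is correct in spirit, but the remaining step \emph{is} Theorem~\ref{thm:BJ}: you would still need a direct argument---for instance, showing over the ring $R$ that every Wick relation lies in the ideal generated by the $4$-term ones, or a careful basis-by-basis linear-algebra argument in $R^E$---and that argument is not present.
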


%We remark that by~{\cite[Theorem~5.1]{Rincon2012}}, Theorem~\ref{thm:BJ} also holds for the tropical hyperfield $\T$. The theorem holds for the Krasner hyperfield $\K$ as well by definitions.

For a tract $F$ and a nonnegative integer $k$, let $N_F^{\leq k}$ be the set of elements in $N_F \subseteq \N[F^\times]$ that are formal sums of at most $k$ elements of $F^\times$.
To check whether a map $\varphi:\cT_n \to F$ is a weak Wick function, we only need the information of $N_F^{\leq 4}$ rather than $N_F$. 

One impressive result in matroid theory is that if a matroid is representable over $\F_2$ and $\F_3$, then it is representable over all fields~\cite{Tutte1958}. Geelen extended this result to orthogonal matroids.

\begin{theorem}[Geelen,~Theorem~4.13~of~\cite{Geelen1996thesis}]\label{thm:regular}
  Let $M$ be an orthogonal matroid.
  Then the following are equivalent:
  \begin{enumerate}[label=\rm(\roman*)]
      \item $M$ is representable over $\F_2$ and $\F_3$.
      \item $M$ is representable over the regular partial field $\mathbb{U}_0$.
      \item $M$ is representable over all fields. % called \emph{regular} depending on context
  \end{enumerate}
\end{theorem}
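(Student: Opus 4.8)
\textbf{Proof proposal for Theorem~\ref{thm:regular}.}

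The plan is to reduce everything to the case of partial fields, where the tract-theoretic machinery developed in Sections~\ref{section:F-orthogonalmatroids}--\ref{sec:OMoverTracts} does most of the work, and then to leverage the categorical product of tracts together with the functoriality of pushforwards (Proposition~\ref{prop:pushforward Wick} and Proposition~\ref{prop:product}). The implication (iii)$\Rightarrow$(i) is trivial since $\F_2$ and $\F_3$ are fields. The implication (ii)$\Rightarrow$(iii) follows because there is a tract homomorphism from $\U_0$ to every field: indeed $\U_0 = (\{1,-1\},\Z)$, and for any field $K$ the unique ring homomorphism $\Z \to K$ induces such a map; pushing forward a strong orthogonal $\U_0$-matroid along it (Proposition~\ref{prop:pushforward Wick}) yields a strong orthogonal $K$-matroid with the same underlying orthogonal matroid. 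So the crux is (i)$\Rightarrow$(ii).

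For (i)$\Rightarrow$(ii), suppose $M$ is representable over $\F_2$ and over $\F_3$; by Theorem~\ref{thm:BJ} it is then strongly (equivalently weakly) representable over both, so there exist strong Wick functions $\varphi_2 : \cT_n \to \F_2$ and $\varphi_3 : \cT_n \to \F_3$ with $\ul{M}_{\varphi_2} = \ul{M}_{\varphi_3} = M$. First I would observe that $\U_0 \to \F_2 \times \F_3$ is an isomorphism of tracts: the map $\Z \to \F_2 \times \F_3$ has kernel $6\Z$, so $\Z/6\Z \cong \F_2 \times \F_3$, and one checks that the induced map on null sets is a bijection (a $\Z$-linear combination of signs vanishes in $\Z$ iff it vanishes mod $2$ and mod $3$, by the Chinese Remainder Theorem); the fact that $-1$ and units generate identifies the group structures. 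Hence by Proposition~\ref{prop:product}, $\varphi_2 \times \varphi_3 : \cT_n \to \F_2 \times \F_3 \cong \U_0$ is a strong Wick function, \emph{provided} $\varphi_2$ and $\varphi_3$ have the same underlying orthogonal matroid --- which they do, namely $M$. This produces a strong orthogonal $\U_0$-matroid with underlying orthogonal matroid $M$, i.e.\ $M$ is representable over $\U_0$, giving (ii).

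The main obstacle, and the step requiring genuine care, is verifying that $\U_0$ really is (isomorphic to) the categorical product $\F_2 \times \F_3$ \emph{as tracts}, i.e.\ that the null set $\Z \subseteq \N[\{1,-1\}]$ matches $N_{\F_2 \times \F_3}$ under the identification $\Z/6\Z \cong \F_2 \times \F_3$. Concretely one must show: a formal sum $\sum_i \varepsilon_i$ with $\varepsilon_i \in \{1,-1\}$ lies in $N_{\U_0}$ (that is, $\sum_i \varepsilon_i = 0$ in $\Z$) if and only if $\sum_i \varepsilon_i \in N_{\F_2}$ and $\sum_i \varepsilon_i \in N_{\F_3}$ (that is, the sum vanishes mod $2$ and mod $3$). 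The ``only if'' direction is immediate; the ``if'' direction is where one uses that for a $\pm1$-sum with $a$ ones and $b$ minus-ones, vanishing mod $2$ forces $a \equiv b \pmod 2$ and vanishing mod $3$ forces $a \equiv b \pmod 3$, hence $a \equiv b \pmod 6$ --- but this alone does not force $a = b$. The resolution is that we only need the weak Wick relations, which involve sums of at most four terms (cf.\ the remark on $N_F^{\leq k}$ preceding the theorem): for such short $\pm1$-sums, vanishing mod $6$ does force actual vanishing in $\Z$, so that $\varphi_2 \times \varphi_3$ satisfies the weak axiom~\ref{item:W2''}, hence is a weak Wick $\U_0$-function, hence by Theorem~\ref{thm:BJ} a strong one. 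I would spell this short-sum computation out explicitly, as it is the only non-formal ingredient. (Alternatively, one can invoke the already-cited discussion on page~33 of~\cite{Geelen1996thesis} together with Theorem~\ref{thm:BJ}, but the product-of-tracts argument is cleaner and self-contained.)
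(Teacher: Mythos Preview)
Your proposal is correct and follows essentially the same route as the paper: form the product Wick function over $\F_2 \times \F_3$ via Proposition~\ref{prop:product}, transport it to $\U_0$ along the obvious set bijection, observe that this bijection preserves null sums of length at most $4$, conclude that the result is a weak Wick $\U_0$-function, and then upgrade to strong via Theorem~\ref{thm:BJ}. The paper is slightly crisper in that it never claims $\U_0 \cong \F_2 \times \F_3$ as tracts --- it defines the bijection $f$ directly as a map of sets and remarks explicitly afterward that $f$ is \emph{not} a tract homomorphism --- whereas you first attempt the tract-isomorphism claim and then (correctly) retract it. In your write-up you should drop the false intermediate assertion that ``the induced map on null sets is a bijection'' and go straight to the $N_F^{\leq 4}$ observation, as the paper does.
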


The proof in~\cite{Geelen1996thesis} involves technical matrix calculations. However, using the theory of orthogonal matroids over tracts, we are able to give a short and conceptual proof. 

\begin{proof}
If $M$ is representable over $\F_2$ and $\F_3$ via strong Wick functions $\varphi_1$ and $\varphi_2$, respectively, then by Proposition~\ref{prop:product}, $\varphi_1 \times \varphi_2$ is a strong Wick function over $\F_2 \times \F_3$ with underlying orthogonal matroid $M$.
Let $f$ be the map from the set $\F_2 \times \F_3 = \{0,(1,\pm 1)\}$ to the set $\U_0 = \{0, \pm 1\}$ given by $f(1,\pm 1) = \pm 1$ and $f(0) = 0$, then we have $f(N_{\F_2 \times F_3}^{\leq 4}) = N_{\U_0}^{\leq 4}$.
Therefore, $\varphi_0 := f\circ (\varphi_1 \times \varphi_2)$ is a weak Wick function over $\U_0$ and hence a strong Wick function by Theorem~\ref{thm:BJ}, and we have (i) implies (ii). 
For every field $F$, since there is a natural tract homomorphism $\mathbb{U}_0 \to F$ induced by the map $\Z \to F$, we have (ii) implies (iii) using Proposition~\ref{prop:pushforward Wick}. 
It is trivial that (iii) implies (i). 
\end{proof}

It is worth noting that the map $f$ defined in the above proof is not a tract homomorphism.

We say that an orthogonal matroid is {\em regular} if it satisfies one of the three equivalent conditions in Theorem~\ref{thm:regular}. We now give two more characterizations of regular orthognal matroids without a specific minor $\ul{M_4}$ on $[4]\cup[4]^*$ whose bases are
\[
    \{abc d^*, a^* b^* c^* d : abcd = [4]\}.
\]

An {\em ordered field} is a field together with a strict total order $\prec$ such that for every $x, y, z \in F$, (i) if $x \prec y$, then $x + z \prec y + z$, and (ii) if $0 \prec x$ and $0 \prec y$, then $0 \prec xy$. For instance, the real field $\R$ with the usual order is an ordered field.

\begin{theorem}\label{thm:regular with no M4}
    Let $M$ be an orthogonal matroid with no minor isomorphic to $\ul{M_4}$ and
    let $(K,\prec)$ be an ordered field.
    Then the following are equivalent:
    \begin{enumerate}[label=\rm(\roman*)]
        \item $M$ is regular.
        \item $M$ is representable over $\F_{2}$ and $K$.
        \item $M$ is representable over $\F_{2}$ and the sign hyperfield $\bS$.
    \end{enumerate}
\end{theorem}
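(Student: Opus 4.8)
The plan is to prove the cycle of implications $(i) \Rightarrow (ii) \Rightarrow (iii) \Rightarrow (i)$, reusing the machinery behind Theorem~\ref{thm:regular} and the product construction, and isolating the role of the excluded minor $\ul{M_4}$ in exactly one implication. The implications $(i) \Rightarrow (ii)$ and $(ii) \Rightarrow (iii)$ are essentially free: if $M$ is regular then it is representable over $\U_0$, hence over every field $K$ (via the tract homomorphism $\U_0 \to K$) and over $\F_2$; and from a representation over the ordered field $K$ one obtains a representation over $\bS$ by pushing forward along the sign map $K \to \bS$ (which is a tract homomorphism precisely because $(K,\prec)$ is ordered), using Proposition~\ref{prop:pushforward Wick}. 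Note that neither of these two implications uses the hypothesis on $\ul{M_4}$.

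The substantive implication is $(iii) \Rightarrow (i)$, and this is where the excluded minor enters. Suppose $M$ is representable over $\F_2$ via a strong Wick function $\varphi_1$ and over $\bS$ via a strong Wick function $\varphi_2$, both with underlying orthogonal matroid $M$. By Proposition~\ref{prop:product}, $\varphi_1 \times \varphi_2$ is a strong Wick function over $\F_2 \times \bS$ with underlying matroid $M$. Following the template of the proof of Theorem~\ref{thm:regular}, I would like a ``decoding'' map from (the relevant part of) $\F_2 \times \bS$ to $\U_0$ that carries short null sums to short null sums; concretely, the underlying group is $(\F_2 \times \bS)^\times = \{(1,1),(1,-1)\}$, and the natural map $g$ sends $(1,\pm 1) \mapsto \pm 1 \in \U_0$ and $0 \mapsto 0$. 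Then $\varphi_0 := g \circ (\varphi_1 \times \varphi_2)$ is a candidate strong Wick function over $\U_0$; once we know $\varphi_0$ is a \emph{weak} Wick function over $\U_0$, Theorem~\ref{thm:BJ} upgrades it to a strong one and $M$ is regular. So everything reduces to verifying the $4$-term Wick relations \ref{item:W2''} for $\varphi_0$. The point is that a $4$-term relation $\sum_{k=1}^4 (-1)^k a_k b_k$ with each $a_k b_k \in \{0,\pm 1\}$ lies in $N_{\U_0}$ iff the nonzero terms can be partitioned into cancelling pairs; the $\F_2$-coordinate already forces the number of nonzero terms to be even (either $0$, $2$, or $4$), and the $\bS$-coordinate forces, among any set of nonzero terms, the presence of both signs. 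The only configuration where this is not automatically enough to cancel is four nonzero terms with sign pattern $+{}+{}-{}-$ appearing as $(-1)^k$-weighted with, say, three of one sign — i.e. exactly the situation realized by the bad minor. This is where I expect the main obstacle to be, and also where $\ul{M_4}$ is used: I would argue that if some $4$-term Wick relation of $\varphi_0$ fails to lie in $N_{\U_0}$, then restricting to the six ground-set elements involved (and taking an appropriate elementary-minor sequence, using Proposition~\ref{prop:minors of orthogonal signatures}, Proposition~\ref{prop:circuits of a minor}, and the behaviour of Wick functions under minors in Section~\ref{sec:OM minors}) produces a minor of $M$ isomorphic to $\ul{M_4}$, contradicting the hypothesis. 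Spelling out this minor extraction — identifying the transversals $T_1, T_2$ witnessing the failed relation, checking that the four relevant bases $T_j \symdiff \skewpair{x_k}$ together with the structure forced by $\F_2$- and $\bS$-representability pin down the base collection $\{abcd^*, a^*b^*c^*d : abcd=[4]\}$ after contracting/deleting the remaining elements — is the technical heart of the argument.

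A cleaner way to organize the last step, which I would try first, is to work signature-theoretically rather than with Wick functions directly: by Theorem~\ref{thm:main-weak1} (or Theorem~\ref{thm:W2O-weak}) it suffices to show that the weak orthogonal $\U_0$-signature $\cC_{\varphi_0}$ obtained from $\varphi_0$ actually satisfies the full orthogonality \ref{item:O}, not merely \ref{item:O'}. Failure of \ref{item:O} means there are $\U_0$-circuits $X, Y \in \cC_{\varphi_0}$ with $|\ul X \cap \ul Y^*| \geq 5$ and $\langle X, Y^*\rangle \notin N_{\U_0}$; again the $\F_2$-data forces $|\ul X \cap \ul Y^*|$ even, so it is at least $6$, and then I would contract down to a minor on $[4]\cup[4]^*$ where $X$ and $Y$ become circuits of sizes forcing the base structure of $\ul{M_4}$. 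In fact the inner product $\langle X, Y^* \rangle$ over $\U_0$ is, up to sign, $\pm(1 \pm 1 \pm 1 \pm \cdots)$ with six $\pm 1$'s whose signs over $\bS$ must realize a non-null pattern, and this is exactly an obstruction living on a $6$-element admissible set; the circuit axioms \ref{item:C3}, \ref{item:C4} then constrain the minor to be $\ul{M_4}$. I expect the write-up to require a careful but not deep case analysis here; the genuinely new ingredient compared to Theorem~\ref{thm:regular} is precisely recognizing $\ul{M_4}$ as the universal obstruction to the passage $\F_2 \times \bS \rightsquigarrow \U_0$, and I would present the proof so that the reader sees $\ul{M_4}$ emerging forced rather than guessed.
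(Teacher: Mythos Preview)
Your overall strategy matches the paper's exactly: the cycle $(i)\Rightarrow(ii)\Rightarrow(iii)\Rightarrow(i)$, with only $(iii)\Rightarrow(i)$ nontrivial, handled via the product $\varphi_1\times\varphi_2$ over $\F_2\times\bS$, the set-map $g$ to $\U_0$, and Theorem~\ref{thm:BJ}. Where you diverge is in how the $\ul{M_4}$-free hypothesis is used. The paper isolates it in a purely combinatorial lemma (Lemma~\ref{lem:M4 free}): for \emph{any} weak Wick function $\varphi$ whose underlying orthogonal matroid has no $\ul{M_4}$ minor, every $4$-term Wick relation has at most three nonzero summands. The proof is two lines: if all four products $\varphi(T_1\symdiff\skewpair{i_j})\varphi(T_2\symdiff\skewpair{i_j})$ are nonzero then the eight transversals $T_k\symdiff\skewpair{i_j}$ are bases, and $M|S$ with $S=T_1\cap T_2$ (eight ground elements remain, not six as you wrote) is an orthogonal matroid on $[4]\cup[4]^*$ with at least the eight bases of $\ul{M_4}$; parity of $|B\cap[4]|$ forces these to be all of them. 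No $\F_2$- or $\bS$-data enters this step. Once every relation has at most three nonzero terms, checking $g(N_{\F_2\times\bS}^{\le3})\subseteq N_{\U_0}^{\le3}$ is a one-line verification, and $\varphi_0$ is weak hence strong over $\U_0$.

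Your version instead conditions on the relation \emph{failing} in $\U_0$ (four nonzero terms in a $3{:}1$ sign split) and proposes to extract $\ul{M_4}$ from that configuration using the $\F_2$- and $\bS$-structure. This is correct but needlessly entangled: the minor extraction is a fact about the support alone, and once you attempt it you will find that \emph{any} four nonzero products already force $\ul{M_4}$, regardless of signs. So your ``technical heart'' and ``careful case analysis'' dissolve into the two-line observation above. Your alternative signature-theoretic route (working with $|\ul{X}\cap\ul{Y}^*|\ge 5$ and the full orthogonality~\ref{item:O}) is a genuine detour: it aims at a stronger target than needed, and the minor one would extract from a size-$\ge 6$ circuit/cocircuit intersection is not directly $\ul{M_4}$ without further work. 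Drop it and keep the Wick-function argument, inserting the clean support-level lemma.
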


To show Theorem~\ref{thm:regular with no M4}, we need the following lemma on orthogonal matroids with no minor isomorphic to $\ul{M_4}$.

\begin{lemma}\label{lem:M4 free}
Let $F$ be a tract and $\varphi$ a weak Wick function over $F$. If $\underline{M}_\varphi$ has no minor isomorphic to $\ul{M_4}$, then for all transversals $T_1$ and $T_2$ with $T_1 \setminus T_2 = \{i_1,i_2,i_3,i_4\}$, at least one of $\varphi(T_1 \symdiff \skewpair{i_j}) \varphi(T_2 \symdiff \skewpair{i_j})$ with $j\in[4]$ is zero. 
\end{lemma}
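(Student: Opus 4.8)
The statement says: if $\ul{M}_\varphi$ has no minor isomorphic to $\ul{M_4}$, then for any two transversals $T_1, T_2$ with $T_1 \setminus T_2 = \{i_1, i_2, i_3, i_4\}$ (so $|T_1 \symdiff T_2| = 8$ and $(T_1 \symdiff T_2)\cap[n]$ has four elements), at least one of the four products $\varphi(T_1 \symdiff \skewpair{i_j})\varphi(T_2 \symdiff \skewpair{i_j})$ vanishes. The plan is to prove the contrapositive: assuming all four products are nonzero, exhibit a minor of $\ul{M}_\varphi$ isomorphic to $\ul{M_4}$. The key observation is that $\varphi(T_1 \symdiff \skewpair{i_j}) \ne 0$ means $T_1 \symdiff \skewpair{i_j}$ is a basis of $\ul{M} := \ul{M}_\varphi$, and similarly $T_2 \symdiff \skewpair{i_j}$ is a basis; so the hypothesis that all four products are nonzero says precisely that all eight transversals $T_1 \symdiff \skewpair{i_j}$ and $T_2 \symdiff \skewpair{i_j}$ ($j\in[4]$) are bases of $\ul{M}$. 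This is a purely combinatorial statement about $\ul{M}$, so $\varphi$ itself plays no further role.

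First I would reduce to a minor on $[4]\cup[4]^*$. Write $T_1 \cap T_2 = S$; then $S$ is a common subtransversal of size $n-4$, and $\nostar{S}$ has $n-4$ elements. I want to take the minor $\ul{M}|S'$ where $S'$ is chosen so that contracting/deleting the elements of $S$ leaves a ground set of the form $\{i_1,\dots,i_4\}\cup\{i_1^*,\dots,i_4^*\}$ relabeled as $[4]\cup[4]^*$. Concretely, for each element $s \in S$, perform the elementary minor $\ul{M}|s$; since all the bases $T_1\symdiff\skewpair{i_j}$ contain $S$ (note $T_1\symdiff\skewpair{i_j} = S \cup \{i_1,\dots,i_4\}\symdiff\skewpair{i_j}$... more carefully, $T_1 = S\cup\{i_1,i_2,i_3,i_4\}$ so $T_1\symdiff\skewpair{i_j}$ still contains $S$), each such $s$ is nonsingular in the relevant intermediate minors, and one checks using the definition of $M|x$ and Proposition~\ref{prop:circuits of a minor} (or directly from the description of bases of an elementary minor) that the bases of $\ul{M}|S$ among the images of our eight transversals are exactly the eight transversals $\{i_1,\dots,i_4\}\symdiff\skewpair{i_j}$ and $\{i_1^*,\dots,i_4^*\}\symdiff\skewpair{i_j}$ — wait, I need to be careful that $T_2\cap[4'] = \{i_1^*,\dots,i_4^*\}$ after relabeling, which holds since $T_2 = S \cup\{i_1^*,i_2^*,i_3^*,i_4^*\}$. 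So in $\ul{M}|S$, with the obvious identification of $\{\nostar{i_1},\dots,\nostar{i_4}\}$ with $[4]$, the transversals $abcd^*$ (with $abcd = [4]$, these are the $\{i_1,\dots,i_4\}\symdiff\skewpair{i_j}$ type) and $a^*b^*c^*d$ are all bases. That is exactly the list of bases defining $\ul{M_4}$, and the symmetric exchange axiom will force $\ul{M}|S$ to have \emph{no other} bases: any basis must differ from one of these by an even symmetric difference, and a short case analysis on a putative extra basis together with strong symmetric exchange (Proposition on Strong Symmetric Exchange) rules out $[4]$, $[4]^*$, and the mixed types $ab^*\dots$ not of the listed form. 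Hence $\ul{M}|S \cong \ul{M_4}$, contradicting the hypothesis.

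The step I expect to be the main obstacle is verifying that $\ul{M}|S$ has \emph{exactly} the bases of $\ul{M_4}$ and no more — the "$\supseteq$" direction (the eight bases survive) is easy from the structure of elementary minors, but ruling out extra bases requires invoking the exchange axioms carefully. One clean way: show that any basis $B'$ of $\ul{M}|S$ with $B' \ne abcd^*$ for all partitions must satisfy $|B'\cap[4]|$ even (since the listed bases all have $|B'\cap[4]| \in\{1,3\}$... hmm, actually $abcd^* $ has three elements of $[4]$ and $a^*b^*c^*d$ has one, so all bases have $|B'\cap[4]|$ odd), and then a basis with $|B'\cap[4]|$ even cannot be reached from one with $|B'\cap[4]|$ odd by the exchange moves without passing through — no, the basis graph of $\ul{M_4}$ on these eight vertices should be shown connected, and then any additional basis would have to be adjacent (in the larger basis graph) to one of the eight, and one checks no transversal outside the eight is a candidate using strong symmetric exchange applied to pairs among the eight. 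Alternatively, and perhaps more robustly, I would verify directly from the axioms (C1)--(C5) that the circuit family of $\ul{M_4}$ is forced, or simply note that $\{abcd^*, a^*b^*c^*d : abcd=[4]\}$ is itself checked to be an orthogonal matroid and that it is the unique one containing these eight transversals as bases — uniqueness because any orthogonal matroid containing all eight as bases, by strong symmetric exchange starting from $123^*4^*$ and $1^*2^*34$ (which differ in all eight elements), is constrained enough that its base set is determined. I would present whichever of these is shortest after checking the details; the essential content is the reduction to a minor plus this rigidity of $\ul{M_4}$.
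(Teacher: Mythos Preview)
Your approach is exactly the paper's: suppose all four products are nonzero, so the eight transversals $T_k\symdiff\skewpair{i_j}$ are bases of $\ul{M}_\varphi$, set $S = T_1\cap T_2 = T_1\setminus\{i_1,\dots,i_4\}$, and conclude $\ul{M}_\varphi|S \cong \ul{M_4}$. The paper's proof is a single sentence and simply asserts this isomorphism without further comment.

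Your only real hesitation is the ``no extra bases'' step, and you in fact stumble onto the correct argument before backing away from it. The parity observation finishes the proof immediately: a single symmetric exchange $B \mapsto B\symdiff\{x,x^*,y,y^*\}$ changes $|B\cap[n]|$ by $0$ or $\pm 2$, and since the basis graph is connected, every basis of an orthogonal matroid on $[n]\cup[n]^*$ has the same parity of $|B\cap[n]|$ (this is precisely the ``even $\Delta$-matroid'' property). The eight known bases of $\ul{M}|S$ all have $|B\cap[4]|$ odd, and these are \emph{all} eight transversals of $[4]\cup[4]^*$ with that parity, so there is nothing left to rule out. Your alternative routes (strong exchange case analysis, circuit axioms, uniqueness of an orthogonal matroid containing the eight bases) are unnecessary detours; commit to the parity argument and you are done.
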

\begin{proof}
    Suppose for contradiction that all products are nonzero.
    Then all of the eight transversals $T_k \symdiff \skewpair{i_j}$ with $k\in[2]$ and $j\in[4]$ are bases of $\underline{M}_\varphi$.
    Let $S := T_1 \setminus \{i_1,i_2,i_3,i_4\}$.  
    Then $M|S$ is isomorphic to $\ul{M_4}$, a contradiction.
\end{proof}

\begin{proof}[Proof of Theorem~\ref{thm:regular with no M4}]
If $M$ is representable over $\F_2$ and $\bS$ via Wick functions $\varphi_1$ and $\varphi_2$, respectively, then by Proposition~\ref{prop:product}, $\varphi_1 \times \varphi_2$ is a Wick function over $\F_2 \times \bS$ with underlying orthogonal matroid $M$.
Let $g$ be the map from the set $\F_2 \times \bS = \{0,(1,\pm 1)\}$ to the set $\U_0 = \{0,\pm 1\}$ given by $g(0)= 0$ and $g(1,\pm 1) = \pm 1$.
Then $g(N_{\F_2 \times \bS}^{\leq 3}) = N_{\U_0}^{\leq 3}$.
Hence by Lemma~\ref{lem:M4 free}, $\varphi_0 := g\circ (\varphi_1 \times \varphi_2)$ is a weak Wick function over $\U_0$.
By Theorem~\ref{thm:BJ}, $\varphi_0$ is a strong Wick function, and we have (iii) implies (i).
% Let $\varphi_0$ be the map to the regular partial field $\mathbb{U}_0$ such that $\varphi_0(T) = 1$ if $(\varphi_1 \times \varphi_2)(T) = (1, 1)$, $\varphi_0(T) = -1$ if $(\varphi_1 \times \varphi_2)(T) = (1, -1)$, and $\varphi_0(T) = 0$ otherwise. Notice that by Lemma~\ref{lem:M4 free}, $\varphi_0$ is a weak Wick function, and hence a strong Wick function by Theorem~\ref{thm:BJ}, and we have
% (iii) implies (i). 
The direction (i) implies (ii) follows trivially from Theorem~\ref{thm:regular}. Finally, let $\sigma : K \to \bS$ be such that
    \[
        \sigma(x) :=
        \begin{cases}
            1 & \text{if $x \succ 0$},\\
            0 & \text{if $x = 0$}, \\
            -1 & \text{otherwise}.
        \end{cases}
    \]
    Then $\sigma$ is a tract homomorphism and thus we have (ii) implies (iii) by Proposition~\ref{prop:pushforward Wick}. 
\end{proof}

\begin{remark}
The condition that an orthogonal matroid $M$ does not have minors isomorphic to $\ul{M_4}$ is sufficient but not necessary for the characterizations of regular orthogonal matroids in Theorem~\ref{thm:regular with no M4}. In fact, $\ul{M_4}$ itself is representable over the regular partial field $\mathbb{U}_0$ by setting $\varphi(T) = 1$ if $T$ is a basis, and $\varphi(T) = 0$ otherwise, and hence representable over all fields and the sign hyperfield $\bS$. It is still an open question whether Theorem~\ref{thm:regular with no M4} holds for all orthogonal matroids. 
\end{remark}

Duchamp~\cite[Proposition~1.5]{Duchamp1992} proved that an orthogonal matroid $M$ is isomorphic to a twisting of the lift of a matroid if and only if $M$ has no minor isomorphic to the orthogonal matroid $\ul{M_3}$ on $[3]\cup[3]^*$ whose set of bases is 
  \begin{align*}
      \cB(\ul{M_3}) = \{abc^*: abc=[3]\} \cup \{[3]^*\}.
  \end{align*}
Note that $\ul{M_3} = \ul{M_4}|4$. So in particular, if $M$ is isomorphic to the lift of a matroid, then it does not have minors isomorphic to $\ul{M_4}$. As a consequence, we have: 

\begin{corollary}[Bland-Las Vergnas,~\cite{Bland1978}]
A matroid is regular if and only if it is binary and orientable, if and only if the matroid is binary and representable over the reals.
\end{corollary}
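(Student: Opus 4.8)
The plan is to deduce this classical corollary by applying Theorem~\ref{thm:regular with no M4} to the lift $\lift{N}$ of the matroid $N$ in question, after setting up the dictionary between representability of a matroid and representability of its lift.

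First I would record that dictionary. For any tract $F$, Proposition~\ref{prop:W and GP} gives a natural bijection between strong $F$-matroids whose underlying matroid is $N$ and strong orthogonal $F$-matroids whose underlying orthogonal matroid is $\lift{N}$ (the lift is exactly the case where the bases meet $[n]$ in a set of constant size). Consequently, for each $F \in \{\F_2, \R, \bS, \U_0\}$ the matroid $N$ is representable over $F$ if and only if $\lift{N}$ is representable over $F$. Unwinding the standard definitions together with the examples in Section~\ref{sec:equiv of diff defs}: $N$ is binary iff $N$ is representable over $\F_2$; $N$ is orientable (i.e.\ carries an orientation, equivalently a strong $\bS$-matroid structure) iff $N$ is representable over $\bS$; ``representable over the reals'' means representable over the ordered field $\R$; and $N$ is regular in the classical sense iff $N$ is representable over $\U_0$, which under the correspondence above is the same as saying $\lift{N}$ is regular in the sense of Theorem~\ref{thm:regular}.

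Next I would verify the hypothesis of Theorem~\ref{thm:regular with no M4} for $M := \lift{N}$, namely that $\lift{N}$ has no minor isomorphic to $\ul{M_4}$. This is precisely the content of the paragraph preceding the corollary: since $\ul{M_3} = \ul{M_4}|4$, Duchamp's criterion shows that any orthogonal matroid isomorphic to the lift of a matroid has no $\ul{M_3}$-minor, hence no $\ul{M_4}$-minor. Applying Theorem~\ref{thm:regular with no M4} with $M = \lift{N}$ and the ordered field $K = \R$, the following become equivalent: $\lift{N}$ is regular; $\lift{N}$ is representable over $\F_2$ and $\R$; $\lift{N}$ is representable over $\F_2$ and $\bS$. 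Translating each clause back through the dictionary yields: $N$ is regular iff $N$ is binary and representable over the reals iff $N$ is binary and orientable, which is exactly the assertion.

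I expect the only point requiring care to be checking that the $N$-to-$\lift{N}$ correspondence is faithful for all three coefficient systems in the \emph{strong} sense, in particular that ``$N$ orientable'' matches ``$\lift{N}$ representable over $\bS$'' rather than merely a weak version; but Proposition~\ref{prop:W and GP} is stated for an arbitrary tract and so applies verbatim to $\bS$, so no extra argument is needed (for $\U_0$ one may additionally invoke Theorem~\ref{thm:BJ} if one prefers to phrase regularity via $\F_2$ and $\F_3$). Everything else is routine bookkeeping with the classical definitions of binary, regular, and orientable matroids.
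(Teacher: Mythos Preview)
Your proposal is correct and follows exactly the approach the paper intends: the corollary is stated immediately after the observation that lifts of matroids have no $\ul{M_4}$-minor (via Duchamp's result and $\ul{M_3} = \ul{M_4}|4$), so that Theorem~\ref{thm:regular with no M4} with $K=\R$ applies to $\lift{N}$, and Proposition~\ref{prop:W and GP} translates the conclusion back to $N$. Your explicit write-up, including the check that the $N \leftrightarrow \lift{N}$ dictionary goes through for $\bS$ in the strong sense, is a faithful and somewhat more detailed rendering of what the paper leaves implicit.
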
 

We also extend Whittle's theorem~\cite[Theorem~1.2]{Wh97} that a matroid is representable over both $\F_3$ and $\F_4$ if and only if it is representable over the sixth-root-of-unity partial field $R_6$ to orthogonal matroids. 

\begin{theorem}\label{thm:sixth-roots-of-unity}
    Let $M$ be an orthogonal matroid.
    Then the following are equivalent:
    \begin{enumerate}[label=\rm(\roman*)]
        \item $M$ is representable over the sixth-root-of-unity partial field $R_6$.
        \item $M$ is representable over $\F_3$ and $\F_4$.
        \item $M$ is representable over $\F_3$, $\F_{p^2}$ for all primes $p$, and $\F_q$ for all primes $q$ with $q \equiv 1 \pmod{3}$.   
    \end{enumerate}
\end{theorem}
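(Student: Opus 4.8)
The plan is to prove the cycle of implications (i) $\Rightarrow$ (iii) $\Rightarrow$ (ii) $\Rightarrow$ (i), following the template of the proof of Theorem~\ref{thm:regular} but with one genuinely new ingredient. The implication (iii) $\Rightarrow$ (ii) is immediate, as $\F_3$ and $\F_4 = \F_{2^2}$ both appear among the fields listed in (iii). For (i) $\Rightarrow$ (iii), by Proposition~\ref{prop:pushforward Wick} it suffices to produce, for each field $F'$ named in (iii), a tract homomorphism $R_6 \to F'$; since $R_6 = (\langle\zeta\rangle,\Z[\zeta])$ with $\Z[\zeta] \cong \Z[t]/(t^2-t+1)$, such a homomorphism amounts to a root of $t^2-t+1$ lying in $(F')^\times$. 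One takes $t = -1$ for $F' = \F_3$; for $F' = \F_{p^2}$ the degree-two polynomial $t^2-t+1$ has a root in $\F_{p^2}$, which is automatically a unit since $t(t-1) = -1$; and for $F' = \F_q$ with a prime $q \equiv 1\pmod 3$ one notes that $q$ is odd, hence $q\equiv 1\pmod 6$, so $\F_q^\times$ contains a primitive sixth root of unity, necessarily a root of $t^2-t+1$. This also covers $\F_4 = \F_{2^2}$, reproving (i) $\Rightarrow$ (ii).

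The real content is (ii) $\Rightarrow$ (i). Assume $M$ is represented over $\F_3$ and over $\F_4$ by strong Wick functions $\varphi_3$ and $\varphi_4$. By Proposition~\ref{prop:product}, $\varphi := \varphi_3 \times \varphi_4 : \cT_n \to \F_3\times\F_4$ is a strong Wick function with underlying orthogonal matroid $M$. The multiplicative monoids of $\F_3\times\F_4$ and of $R_6$ are both isomorphic to $((\Z/2)\times(\Z/3))\cup\{0\}$, and I would fix the set map $f : \F_3\times\F_4 \to R_6$ with $f(0) = 0$ that on units is the group isomorphism determined by $f(-1,1) = -1$ and $f(1,\omega) = \zeta^{-2}$, where $\F_4^\times = \{1,\omega,\omega^2\}$. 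The aim is to show $f\circ\varphi$ is a \emph{weak} Wick function over $R_6$; Theorem~\ref{thm:BJ} then upgrades it to a strong Wick function, exhibiting $M$ as representable over $R_6$, which forces the equivalence of all three conditions.

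To verify the four-term relation~\ref{item:W2''} for $f\circ\varphi$, fix transversals $T_1,T_2$ with $(T_1\symdiff T_2)\cap[n] = \{x_1<x_2<x_3<x_4\}$ and set $u_k := \varphi(T_1\symdiff\skewpair{x_k})\,\varphi(T_2\symdiff\skewpair{x_k})$ and $v_k := (-1)^k u_k$; since $f$ is multiplicative with $f(-1,1) = -1$, one has $f(v_k) = (-1)^k f(u_k)$, and one must deduce $\sum_k f(v_k) \in N_{R_6}$ from $\sum_k v_k \in N_{\F_3\times\F_4}$ (the sums ranging over $k$ with $u_k\neq 0$). If at most three $u_k$ are nonzero, this is a relation among at most three units: every two-term null sum of units in $\F_3\times\F_4$ is a unit multiple of the $\epsilon$-relation $(1,1)+(-1,1)$, whose image $1+(-1)$ lies in $N_{R_6}$; every three-term null sum of units is a unit multiple of $(1,1)+(1,\omega)+(1,\omega^2)$, whose image $1+\zeta^{-2}+\zeta^{-4} = 1+\zeta^2+\zeta^4$ vanishes in $\Z[\zeta]$ because $\zeta^2$ is a primitive cube root of unity. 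If all four $u_k$ are nonzero, then the eight transversals $T_j\symdiff\skewpair{x_k}$ are bases, so the minor $M|(T_1\setminus\{x_1,x_2,x_3,x_4\})$ is isomorphic to $\ul{M_4}$ (as in Lemma~\ref{lem:M4 free}). Here I would use that $\ul{M_4}$ is, up to rescaling equivalence, uniquely representable over $\F_4$ --- a finite check on its eight bases --- so that after restriction $\varphi_4$ is a rescaling of the all-ones Wick function; a short computation with $\ul{M_4}$ then shows the four $\F_4$-components of $u_1,\dots,u_4$ all coincide, say with $\lambda$. Writing $u_k = (u_k',\lambda)$ with $u_k' \in \{1,-1\}$, we get $\sum_k f(v_k) = f(1,\lambda)\sum_k (-1)^k u_k'$ computed in $\Z[\zeta]$; but $\varphi_3$ being a Wick function gives $\sum_k (-1)^k u_k' \in N_{\F_3}$, and a sum of four elements of $\{1,-1\}$ that vanishes modulo $3$ already vanishes in $\Z$, so $\sum_k f(v_k) = 0 \in N_{R_6}$.

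The hard part is precisely this four-term case. Unlike in the proof of Theorem~\ref{thm:regular}, where one has the clean equality $f(N_{\F_2\times\F_3}^{\leq 4}) = N_{\U_0}^{\leq 4}$, here the analogue fails: $f(N_{\F_3\times\F_4}^{\leq 4}) \not\subseteq N_{R_6}$, as witnessed by $(1,\omega)+(1,\omega)+(-1,\omega^2)+(-1,\omega^2) \in N_{\F_3\times\F_4}$, whose image $2\zeta^4 - 2\zeta^2 = -2\sqrt{-3}$ is nonzero in $\Z[\zeta]$ --- the obstruction being that $2\sqrt{-3}$ generates the product of $(2)$ with the ramified prime above $3$, yet is not $0$. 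So the bad ``mixed'' four-term configurations genuinely exist at the level of $N_{\F_3\times\F_4}$ and must be excluded by showing they cannot arise inside a representation of $\ul{M_4}$; that exclusion is exactly what the uniqueness of the $\F_4$-representation of $\ul{M_4}$ provides. I expect that checking this uniqueness statement for $\ul{M_4}$ (equivalently, identifying its foundation) is the single unavoidable computation in the whole argument.
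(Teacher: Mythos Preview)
For (i)$\Rightarrow$(iii)$\Rightarrow$(ii) your argument agrees with the paper's, which records the needed homomorphisms as Lemma~\ref{lem:sixth-roots-of-unity}. For (ii)$\Rightarrow$(i) the paper simply cites part~(3) of that lemma, a tract isomorphism $R_6\cong\F_3\times\F_4$, and pushes $\varphi_3\times\varphi_4$ across via Proposition~\ref{prop:product}. You instead observe (correctly) that the natural bijection $f$ fails to send all of $N_{\F_3\times\F_4}^{\leq 4}$ into $N_{R_6}^{\leq 4}$, and you attempt to argue around the bad four-term relations by analysing the minor $\ul{M_4}$.

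This workaround does not go through. The assertion that $\ul{M_4}$ is uniquely $\F_4$-representable up to rescaling is false: for an $\F_4$-Wick function $\psi$ with support $\cB(\ul{M_4})$ the only nontrivial relation is $\sum_{k=1}^{4}c_k=0$ where $c_k:=\psi([4]\symdiff\skewpair{k})\,\psi([4]^*\symdiff\skewpair{k})$, and since $[4]\symdiff\skewpair{k}$ and $[4]^*\symdiff\skewpair{k}$ partition $E$, any rescaling $\psi\mapsto c\prod_{e\in(\cdot)}d_e\cdot\psi$ multiplies all four $c_k$ by the common factor $c^{2}\prod_{e\in E}d_e$. Thus $(c_k)=(1,1,\omega,\omega)$ already yields a valid representation not rescaling-equivalent to the all-ones one, with the $c_k$ not all equal. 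In fact the whole strategy fails on $M=\ul{M_4}$ itself: take $\varphi_3$ with products $(a_kb_k)_k=(-1,1,1,-1)$ over $\F_3$ and $\varphi_4$ with products $(\omega,\omega,\omega^2,\omega^2)$ over $\F_4$; each satisfies its sole Wick relation, but with your map $f$ one finds $\sum_{k}(-1)^{k}f(u_k)=2\zeta^{4}+2\zeta^{5}=2-4\zeta\neq 0$ in $\Z[\zeta]$. Hence $f\circ(\varphi_3\times\varphi_4)$ is not even a weak Wick function over $R_6$, and no argument that pushes an \emph{arbitrary} pair $(\varphi_3,\varphi_4)$ through a fixed bijection can establish (ii)$\Rightarrow$(i).
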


To show Theorem~\ref{thm:sixth-roots-of-unity}, we need the following lemma on $R_6$. 

\begin{lemma}[van~Zwam,~Lemma~2.5.12~and~Table~4.1~of~\cite{vanZwam2009thesis}]\label{lem:sixth-roots-of-unity}
    Let $p$ be a prime. 
    \begin{enumerate}
        \item There is a tract homomorphism $R_6 \to \F_{p^2}$.
        \item If $p \equiv 1 \pmod{3}$, then there is a tract homomorphism $R_6 \to \F_{p}$.
        \item There is a tract isomorphism $R_6 \cong \F_{3} \times \F_{4}$.
    \end{enumerate}
\end{lemma}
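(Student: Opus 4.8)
The plan is to prove all three parts from a single observation. Since $R_6$ is the tract attached to the partial field $(\langle\zeta\rangle,\Z[\zeta])$ and $x^2-x+1$ is the minimal polynomial of $\zeta$, we have $\Z[\zeta]\cong\Z[x]/(x^2-x+1)$, so a ring homomorphism $\Phi\colon\Z[\zeta]\to F$ is the same datum as a choice of a root of $x^2-x+1$ in $F$. Such a $\Phi$ automatically restricts to a group homomorphism $\langle\zeta\rangle\to F^\times$, because $\zeta^6=1$ forces $\Phi(\zeta)^6=1$, so $\Phi(\zeta)$ is a unit; and since $N_{R_6}$ is precisely the set of formal $\N$-sums of elements of $\langle\zeta\rangle$ that vanish in $\Z[\zeta]$, additivity of $\Phi$ shows that $\Phi$ carries $N_{R_6}$ into $N_F$. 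Hence every root of $x^2-x+1$ in a field $F$ yields a tract homomorphism $R_6\to F$, and (1) and (2) reduce to elementary facts about finite fields.

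For (1), I would use that the quadratic extension $\F_{p^2}$ of $\F_p$ contains the roots of every degree-two polynomial over $\F_p$; applying this to $x^2-x+1$ produces a root in $\F_{p^2}$ and hence a tract homomorphism $R_6\to\F_{p^2}$. For (2), if $p\equiv1\pmod3$ then $p\neq2,3$, so $p-1$ is even and divisible by $3$, whence $6\mid p-1$ and the cyclic group $\F_p^\times$ contains an element $\omega$ of order $6$; such an $\omega$ is a primitive sixth root of unity, hence a root of $x^2-x+1$, and the construction above gives a tract homomorphism $R_6\to\F_p$.

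For (3), I would first record, by the same recipe, the two tract homomorphisms $\alpha\colon R_6\to\F_3$ (using $x^2-x+1\equiv(x+1)^2$ over $\F_3$, so $\zeta\mapsto-1$) and $\beta\colon R_6\to\F_4$ (using that $x^2+x+1$ is irreducible over $\F_2$ with a root $\omega\in\F_4$, so $\zeta\mapsto\omega$). By the universal property of the categorical product, $(\alpha,\beta)$ assembles into a tract homomorphism $\phi\colon R_6\to\F_3\times\F_4$ with $\phi(\zeta)=(-1,\omega)\in\F_3^\times\oplus\F_4^\times$. Since $-1$ has order $2$ in $\F_3^\times$ and $\omega$ has order $3$ in $\F_4^\times$, the element $(-1,\omega)$ has order $6$ and therefore generates $\F_3^\times\oplus\F_4^\times$, which is cyclic of order $6$; hence $\phi$ restricts to an isomorphism of groups $\langle\zeta\rangle\xrightarrow{\ \sim\ }\F_3^\times\oplus\F_4^\times$. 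It then remains to check that $\phi$ is an isomorphism of tracts, equivalently that this group isomorphism also matches $N_{R_6}$ with $N_{\F_3\times\F_4}$; I would establish this by a direct computation in the Eisenstein ring $\Z[\zeta]$, using that $2$ is inert with residue field $\F_4$, that the prime above $3$ has residue field $\F_3$, and combining the two reductions by the Chinese Remainder Theorem to identify a suitable quotient of $\Z[\zeta]$ with $\F_3\times\F_4$. This last verification — which is the substance of Lemma~2.5.12 and Table~4.1 of \cite{vanZwam2009thesis}, and which I would either cite or reproduce — is the only step that is not purely formal, and I expect it to be the main obstacle; everything else, including (1), (2), and the construction and group-theoretic part of (3), is routine.
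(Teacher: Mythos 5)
The paper itself does not prove this lemma; it simply cites van Zwam, so any honest attempt must supply arguments the paper omits. Your parts (1) and (2) are correct and complete: a ring homomorphism $\Z[\zeta]\cong\Z[x]/(x^2-x+1)\to K$ is a choice of root of $x^2-x+1$ in $K$, it restricts to the unit group, and additivity sends $N_{R_6}$ into $N_K$; the existence of roots in $\F_{p^2}$, and in $\F_p$ when $6\mid p-1$, is as you say. Likewise the forward map in (3) is fine: $\zeta\mapsto(-1,\omega)$ gives a tract homomorphism $\phi\colon R_6\to\F_3\times\F_4$ inducing an isomorphism $\langle\zeta\rangle\to\F_3^\times\oplus\F_4^\times$ of unit groups.

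The gap is exactly at the step you flagged, and it is worse than ``the main obstacle'': with the definitions used in this paper it cannot be closed. Your CRT argument identifies a \emph{quotient} of $\Z[\zeta]$ with $\F_3\times\F_4$, so it only proves null-preservation in the forward direction; since the quotient map has a nontrivial kernel, it says nothing about $\phi^{-1}$. And indeed $\phi^{-1}$ is not null-preserving: the formal sum of six copies of $(1,1)$ lies in $N_{\F_3\times\F_4}$ (it vanishes in both $\F_3$ and $\F_4$) but pulls back to $1+1+1+1+1+1$, which is not in $N_{R_6}$ because $6\neq0$ in $\Z[\zeta]$; similarly $(1,1)+(1,1)+(-1,\omega)+(-1,\omega)\in N_{\F_3\times\F_4}$ pulls back to $1+1+\zeta+\zeta$, and $2+2\zeta\neq0$. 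Since any tract isomorphism would restrict to a unit-group isomorphism sending $1$ to $(1,1)$, the six-ones example rules out \emph{every} candidate map, not just yours: as stated in the paper's tract formalism (where the null set of a partial field consists of all vanishing formal sums, and the product null set is coordinatewise), part (3) is not literally true. What van Zwam actually proves is an isomorphism of \emph{partial fields}, i.e.\ a multiplicative bijection matching the relations $p+q=r$ (equivalently, null sums of length at most $3$ — and one checks these do correspond under $\zeta\mapsto(-1,\omega)$). So your plan for (3) should be reformulated at that level (or at the level of null sums of bounded length), and one must then check that this weaker statement suffices for the intended application; simply citing van Zwam, as the paper does, does not by itself yield a tract isomorphism in the sense defined here.
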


\begin{proof}[Proof of Theorem~\ref{thm:sixth-roots-of-unity}]
 The proof is a straightforward application of Propositions~\ref{prop:pushforward Wick},~\ref{prop:product}, and Lemma~\ref{lem:sixth-roots-of-unity}, and is similar to the proof of Theorem~\ref{thm:regular}. In particular, the only nontrivial part that if $M$ is representable over $\F_3$ and $\F_4$ then $M$ is representable over $R_6$ is guaranteed by the tract isomorphism $R_6 \cong \F_{3} \times \F_{4}$ and Proposition~\ref{prop:product}. 
\end{proof}

% Q. $\GF{2}$ and $\GF{q}$ for an odd prime power $q$?
% $\GF{2}$ and $\mathbb{S}$?

% The real field with the usual order is an ordered field.

% (Q. For $\bS$, strong = weak?)

% Q. If $M$ has a minor isomorphic to $M_4$, then always the $4$-term Wick relation appears?

\printbibliography

\end{document}